\documentclass[a4paper,11pt]{article}
\usepackage{amssymb,amsmath,amsthm,amscd}
\usepackage[mathscr]{eucal}
\usepackage{fullpage}
\newtheorem{thm}{Theorem}[section]
\newtheorem{lem}[thm]{Lemma}
\newtheorem{cor}[thm]{Corollary}
\newtheorem{prop}[thm]{Proposition}

\theoremstyle{definition}
\newtheorem{defn}[thm]{Definition}
\theoremstyle{remark}

\newtheorem{exmp}[thm]{Example}

\newcommand{\Eroot}[8]{%
\begin{picture}(56,20)(0,-20)%
\put(4,-20){\hbox to0pt{\hss${#1}$\hss}}%
\put(20,-10){\hbox to0pt{\hss${#2}$\hss}}%
\put(12,-20){\hbox to0pt{\hss${#3}$\hss}}%
\put(20,-20){\hbox to0pt{\hss${#4}$\hss}}%
\put(28,-20){\hbox to0pt{\hss${#5}$\hss}}%
\put(36,-20){\hbox to0pt{\hss${#6}$\hss}}%
\put(44,-20){\hbox to0pt{\hss${#7}$\hss}}%
\put(52,-20){\hbox to0pt{\hss${#8}$\hss}}%
\end{picture}%
}

\title{On finite factors of centralizers of parabolic subgroups in Coxeter groups\footnotetext{MSC2000: 20F55 (primary), 20E34 (secondary)}\footnotetext{Keywords: Coxeter groups, reflections, parabolic subgroups, centralizers, finite factors}}
\author{Koji Nuida}
\begin{document}
\maketitle
\begin{abstract}
It has been known that the centralizer $Z_W(W_I)$ of a parabolic subgroup $W_I$ of a Coxeter group $W$ is a split extension of a naturally defined reflection subgroup by a subgroup defined by a $2$-cell complex $\mathcal{Y}$.
In this paper, we study the structure of $Z_W(W_I)$ further and show that, if $I$ has no irreducible components of type $A_n$ with $2 \leq n < \infty$, then every element of finite irreducible components of the inner factor is fixed by a natural action of the fundamental group of $\mathcal{Y}$.
This property has an application to the isomorphism problem in Coxeter groups.
\end{abstract}

\section{Introduction}
\label{sec:intro}

A pair $(W,S)$ of a group $W$ and its (possibly infinite) generating set $S$ is called a \emph{Coxeter system} if $W$ admits the following presentation
\begin{displaymath}
W=\langle S \mid (st)^{m(s,t)}=1 \mbox{ for all } s,t \in S \mbox{ with } m(s,t)<\infty \rangle \enspace,
\end{displaymath}
where $m \colon (s,t) \mapsto m(s,t) \in \{1,2,\dots\} \cup \{\infty\}$ is a symmetric mapping in $s,t \in S$ with the property that we have $m(s,t)=1$ if and only if $s=t$.
A group $W$ is called a \emph{Coxeter group} if $(W,S)$ is a Coxeter system for some $S \subseteq W$.
Since Coxeter systems and some associated objects, such as root systems, appear frequently in various topics of mathematics, algebraic or combinatorial properties of Coxeter systems and those associated objects have been investigated very well, forming a long history and establishing many beautiful theories (see e.g., \cite{Hum} and references therein).
For example, it has been well known that, given an arbitrary Coxeter system $(W,S)$, the mapping $m$ by which the above group presentation defines the same group $W$ is uniquely determined.

In recent decades, not only the properties of a Coxeter group $W$ associated to a specific generating set $S$, but also the group-theoretic properties of an arbitrary Coxeter group $W$ itself have been studied well.
One of the recent main topics in the study of group-theoretic properties of Coxeter groups is the \emph{isomorphism problem}, that is, the problem of determining which of the Coxeter groups are isomorphic to each other as abstract groups.
In other words, the problem is to investigate the possible \lq\lq types'' of generating sets $S$ for a given Coxeter group $W$.
For example, it has been known that for a Coxeter group $W$ in certain classes, the set of reflections $S^W := \{wsw^{-1} \mid w \in W \mbox{ and } s \in S\}$ associated to any possible generating set $S$ of $W$ (as a Coxeter group) is equal to each other and independent of the choice of $S$ (see e.g., \cite{Bah}).
A Coxeter group $W$ having this property is called \emph{reflection independent}.
A simplest nontrivial example of a Coxeter group which is not reflection independent is Weyl group of type $G_2$ (or the finite Coxeter group of type $I_2(6)$) with two simple reflections $s,t$, which admits another generating set $\{s,ststs,(st)^3\}$ of type $A_1 \times A_2$ involving an element $(st)^3$ that is not a reflection with respect to the original generating set.
One of the main branches of the isomorphism problem in Coxeter groups is to determine the possibilities of a group isomorphism between two Coxeter groups which preserves the sets of reflections (with respect to some specified generating sets).
Such an isomorphism is called \emph{reflection-preserving}.

In a recent study by the author of this paper, it is revealed that some properties of the centralizers $Z_W(r)$ of reflections $r$ in a Coxeter group $W$ (with respect to a generating set $S$) can be applied to the study of reflection independent Coxeter groups and reflection-preserving isomorphisms.
An outline of the idea is as follows.
First, by a general result on the structures of the centralizers of parabolic subgroups \cite{Nui11} or the normalizers of parabolic subgroups \cite{Bri-How} in Coxeter groups applied to the case of a single reflection, we have a decomposition $Z_W(r) = \langle r \rangle \times (W^{\perp r} \rtimes Y_r)$, where $W^{\perp r}$ denotes the subgroup generated by all the reflections except $r$ itself that commute with $r$, and $Y_r$ is a subgroup isomorphic to the fundamental group of a certain graph associated to $(W,S)$.
The above-mentioned general results also give a canonical presentation of $W^{\perp r}$ as a Coxeter group.
Then the unique maximal reflection subgroup (i.e., subgroup generated by reflections) of $Z_W(r)$ is $\langle r \rangle \times W^{\perp r}$.
Now suppose that $W^{\perp r}$ has no finite irreducible components.
In this case, the maximal reflection subgroup of $Z_W(r)$ has only one finite irreducible component, that is $\langle r \rangle$.
Now it can be shown that, if the image $f(r)$ of $r$ by a group isomorphism $f$ from $W$ to another Coxeter group $W'$ is not a reflection with respect to a generating set of $W'$, then the finite irreducible components of the unique maximal reflection subgroup of the centralizer of $f(r)$ in $W'$ have more elements than $\langle r \rangle$, which is a contradiction.
Hence, in such a case of $r$, the image of $r$ by any group isomorphism from $W$ to another Coxeter group is always a reflection.
See the author's preprint \cite{Nui_ref} for more detailed arguments.

As we have seen in the previous paragraph, it is worthy to look for a class of Coxeter groups $W$ for which the above subgroup $W^{\perp r}$ of the centralizer $Z_W(r)$ of each reflection $r$ has no finite irreducible components.
The aim of this paper is to establish a tool for finding Coxeter groups having the desired property.
The main theorem (in a special case) of this paper can be stated as follows:
\begin{quote}
\textbf{Main Theorem (in a special case).}
Let $r \in W$ be a reflection, and let $s_{\gamma}$ be a generator of $W^{\perp r}$ (as a Coxeter group) which belongs to a finite irreducible component of $W^{\perp r}$.
Then $s_{\gamma}$ commutes with every element of $Y_r$.
(See the previous paragraph for the notations.)
\end{quote}
By virtue of this result, to show that $W^{\perp r}$ has no finite irreducible components, it suffices to find (by using the general structural results in \cite{Nui11} or \cite{Bri-How}) for each generator $s_{\gamma}$ of $W^{\perp r}$ an element of $Y_r$ that does not commute with $s_{\gamma}$.
A detailed argument along this strategy is given in the preprint \cite{Nui_ref}.

In fact, the main theorem (Theorem \ref{thm:YfixesWperpIfin}) of this paper is not only proven for the above-mentioned case of single reflection $r$, but also generalized to the case of centralizers $Z_W(W_I)$ of parabolic subgroups $W_I$ generated by some subsets $I \subseteq S$, with the property that $I$ has no irreducible components of type $A_n$ with $2 \leq n < \infty$.
(We notice that there exists a counterexample when the assumption on $I$ is removed; see Section \ref{sec:counterexample} for details.)
In the generalized statement, the group $W^{\perp r}$ is replaced naturally with the subgroup of $W$ generated by all the reflections except those in $I$ that commute with every element of $I$, while the group $Y_r$ is replaced with a subgroup of $W$ isomorphic to the fundamental group of a certain $2$-cell complex defined in \cite{Nui11}.
We emphasize that, although the general structures of these subgroups of $Z_W(W_I)$ have been described in \cite{Nui11} (or \cite{Bri-How}), the main theorem of this paper is still far from being trivial; moreover, to the author's best knowledge, no other results on the structures of the centralizers $Z_W(W_I)$ which is in a significantly general form and involves much detailed information than those given in the general structural results \cite{Bri-How,Nui11} have been known in the literature.

The paper is organized as follows.
In Section \ref{sec:Coxetergroups}, we summarize some fundamental properties and definitions for Coxeter groups.
In Section \ref{sec:properties_centralizer}, we summarize some properties of the centralizers of parabolic subgroups relevant to our argument in the following sections, which have been shown in some preceding works (mainly in \cite{Nui11}).
In Section \ref{sec:main_result}, we give the statement of the main theorem of this paper (Theorem \ref{thm:YfixesWperpIfin}), and give a remark on its application to the isomorphism problem in Coxeter groups (also mentioned in a paragraph above).
The proof of the main theorem is divided into two main steps: First, Section \ref{sec:proof_general} presents some auxiliary results which do not require the assumption, put in the main theorem, on the subset $I$ of $S$ that $I$ has no irreducible components of type $A_n$ with $2 \leq n < \infty$.
Then, based on the results in Section \ref{sec:proof_general}, Section \ref{sec:proof_special} deals with the special case as in the main theorem that $I$ has no such irreducible components, and completes the proof of the main theorem.
The proof of the main theorem makes use of the list of positive roots given in Section \ref{sec:Coxetergroups} several times.
Finally, in Section \ref{sec:counterexample}, we describe in detail a counterexample of our main theorem when the assumption that $I$ has no irreducible components of type $A_n$ with $2 \leq n < \infty$ is removed.

\paragraph*{Acknowledgments.}
The author would like to express his deep gratitude to everyone who helped him, especially to Professor Itaru Terada who was the supervisor of the author during the graduate course in which a part of this work was done, and to Professor Kazuhiko Koike, for their invaluable advice and encouragement.
The author would also like to the anonymous referee for the precious comments, especially for suggestion to reduce the size of the counterexample shown in Section \ref{sec:counterexample} which was originally of larger size.
A part of this work was supported by JSPS Research Fellowship (No.~16-10825).

\section{Coxeter groups}
\label{sec:Coxetergroups}

The basics of Coxeter groups summarized here are found in \cite{Hum} unless otherwise noticed.
For some omitted definitions, see also \cite{Hum} or the author's preceding paper \cite{Nui11}.

\subsection{Basic notions}
\label{sec:defofCox}

A pair $(W,S)$ of a group $W$ and its (possibly infinite) generating set $S$ is called a \emph{Coxeter system}, and $W$ is called a \emph{Coxeter group}, if $W$ admits the following presentation
\begin{displaymath}
W=\langle S \mid (st)^{m(s,t)}=1 \mbox{ for all } s,t \in S \mbox{ with } m(s,t)<\infty \rangle \enspace,
\end{displaymath}
where $m \colon (s,t) \mapsto m(s,t) \in \{1,2,\dots\} \cup \{\infty\}$ is a symmetric mapping in $s,t \in S$ with the property that we have $m(s,t)=1$ if and only if $s=t$.
Let $\Gamma$ denote the \emph{Coxeter graph} of $(W,S)$, which is a simple undirected graph with vertex set $S$ in which two vertices $s,t \in S$ are joined by an edge with label $m(s,t)$ if and only if $m(s,t) \geq 3$ (by usual convention, the label is omitted when $m(s,t)=3$; see Figure \ref{fig:finite_irreducible_Coxeter_groups} below for example).
If $\Gamma$ is connected, then $(W,S)$ is called \emph{irreducible}.
Let $\ell$ denote the length function of $(W,S)$.
For $w,u \in W$, we say that $u$ is a \emph{right divisor} of $w$ if $\ell(w) = \ell(wu^{-1}) + \ell(u)$.
For each subset $I \subseteq S$, the subgroup $W_I := \langle I \rangle$ of $W$ generated by $I$ is called a \emph{parabolic subgroup} of $W$.
Let $\Gamma_I$ denote the Coxeter graph of the Coxeter system $(W_I,I)$.

For two subsets $I,J \subseteq S$, we say that $I$ is \emph{adjacent to} $J$ if an element of $I$ is joined by an edge with an element of $J$ in the Coxeter graph $\Gamma$.
We say that $I$ is \emph{apart from} $J$ if $I \cap J = \emptyset$ and $I$ is not adjacent to $J$.
For the terminologies, we often abbreviate a set $\{s\}$ with a single element of $S$ to $s$ for simplicity.

\subsection{Root systems and reflection subgroups}
\label{sec:rootsystem}

Let $V$ denote the \emph{geometric representation space} of $(W,S)$, which is an $\mathbb{R}$-linear space equipped with a basis $\Pi = \{\alpha_s \mid s \in S\}$ and a $W$-invariant symmetric bilinear form $\langle \,,\, \rangle$ determined by
\begin{displaymath}
\langle \alpha_s, \alpha_t \rangle =
\begin{cases}
-\cos(\pi / m(s,t)) & \mbox{if } m(s,t) < \infty \enspace; \\
-1 & \mbox{if } m(s,t) = \infty \enspace,
\end{cases}
\end{displaymath}
where $W$ acts faithfully on $V$ by $s \cdot v=v-2\langle \alpha_s, v\rangle \alpha_s$ for $s \in S$ and $v \in V$.
Then the \emph{root system} $\Phi=W \cdot \Pi$ consists of unit vectors with respect to the bilinear form $\langle \,,\, \rangle$, and $\Phi$ is the disjoint union of $\Phi^+ := \Phi \cap \mathbb{R}_{\geq 0}\Pi$ and $\Phi^- := -\Phi^+$ where $\mathbb{R}_{\geq 0}\Pi$ signifies the set of nonnegative linear combinations of elements of $\Pi$.
Elements of $\Phi$, $\Phi^+$, and $\Phi^-$ are called \emph{roots}, \emph{positive roots}, and \emph{negative roots}, respectively.
For a subset $\Psi \subseteq \Phi$ and an element $w \in W$, define
\begin{displaymath}
\Psi^+ := \Psi \cap \Phi^+ \,,\, \Psi^- := \Psi \cap \Phi^- \,,\, 
\Psi[w] :=\{\gamma \in \Psi^+ \mid w \cdot \gamma \in \Phi^-\} \enspace.
\end{displaymath}
It is well known that the length $\ell(w)$ of $w$ is equal to $|\Phi[w]|$.

For an element $v=\sum_{s \in S}c_s\alpha_s$ of $V$, define the \emph{support} $\mathrm{Supp}\,v$ of $v$ to be the set of all $s \in S$ with $c_s \neq 0$.
For a subset $\Psi$ of $\Phi$, define the support $\mathrm{Supp}\,\Psi$ of $\Psi$ to be the union of $\mathrm{Supp}\,\gamma$ over all $\gamma \in \Psi$.
For each $I \subseteq S$, define
\begin{displaymath}
\Pi_I := \{\alpha_s \mid s \in I\} \subseteq \Pi \,,\, V_I := \mathrm{span}\,\Pi_I \subseteq V \,,\, \Phi_I := \Phi \cap V_I \enspace.
\end{displaymath}
It is well known that $\Phi_I$ coincides with the root system $W_I \cdot \Pi_I$ of $(W_I,I)$.
We notice the following well-known fact:
\begin{lem}
\label{lem:support_is_irreducible}
The support of any root $\gamma \in \Phi$ is irreducible.
\end{lem}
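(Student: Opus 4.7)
The plan is to argue by contradiction. Suppose that $\gamma \in \Phi$ has reducible support $I := \mathrm{Supp}\,\gamma$, so that we may decompose $I = I_1 \sqcup I_2$ into nonempty subsets with $I_1$ apart from $I_2$ (i.e.\ $I_1 \cap I_2 = \emptyset$ and no edge of $\Gamma$ joins $I_1$ to $I_2$).

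First I would exploit the apartness to get an orthogonal splitting of the ambient space. Since every pair $s \in I_1$, $t \in I_2$ satisfies $m(s,t) = 2$, the definition of the bilinear form gives $\langle \alpha_s, \alpha_t \rangle = -\cos(\pi/2) = 0$, so $V_I = V_{I_1} \oplus V_{I_2}$ is an orthogonal direct sum with respect to $\langle\,,\,\rangle$. From the formula $s \cdot v = v - 2\langle \alpha_s, v\rangle \alpha_s$ it then follows that each generator of $W_{I_2}$ fixes $V_{I_1}$ pointwise and vice versa, so $W_I = W_{I_1} \times W_{I_2}$ as subgroups of $W$, with the two factors acting on complementary orthogonal summands of $V_I$.

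Next I would locate $\gamma$ inside the parabolic root system. Because $\gamma$ has support contained in $I$, it lies in $V_I$, and being a root gives $\gamma \in \Phi \cap V_I = \Phi_I$. Using the recalled identity $\Phi_I = W_I \cdot \Pi_I$ together with the splitting $W_I = W_{I_1} \times W_{I_2}$ and $\Pi_I = \Pi_{I_1} \sqcup \Pi_{I_2}$, I can write $\gamma = w_1 w_2 \cdot \alpha$ for some $w_1 \in W_{I_1}$, $w_2 \in W_{I_2}$, and $\alpha \in \Pi_{I_1} \sqcup \Pi_{I_2}$. Without loss of generality assume $\alpha \in \Pi_{I_1}$; then $w_2$ fixes $\alpha$ (since $W_{I_2}$ fixes $V_{I_1}$) and $w_1$ preserves $V_{I_1}$, so $\gamma = w_1 \cdot \alpha \in V_{I_1}$, forcing the coefficient of each $\alpha_t$ with $t \in I_2$ in $\gamma$ to vanish. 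This contradicts $I_2 \subseteq \mathrm{Supp}\,\gamma$ and completes the proof.

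I do not expect a serious obstacle here: the argument is essentially a one-line contradiction, and its only nontrivial ingredients are the orthogonality of $V_{I_1}$ and $V_{I_2}$ (immediate from the apartness condition) and the already-cited equality $\Phi_I = W_I \cdot \Pi_I$. If I wanted to avoid invoking the latter, an equivalent route would be to induct on $\ell(w)$ in a minimal expression $\gamma = w \cdot \alpha_s$, with the inductive step reducing $w$ by a simple reflection $s'$ such that $w \cdot \alpha_s$ and $s' w \cdot \alpha_s$ differ by a multiple of $\alpha_{s'}$, but the direct argument above is cleaner.
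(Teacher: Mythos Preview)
Your proof is correct and follows essentially the same idea as the paper's: both use $\Phi_I = W_I \cdot \Pi_I$ and then argue that the $W_I$-orbit of any $\alpha_s$ stays inside $V_{I'}$ where $I'$ is the irreducible component of $I$ containing $s$. The paper establishes this last point by a short induction on $\ell(w)$ (exactly the alternative you mention at the end), whereas you obtain it from the direct product decomposition $W_I = W_{I_1} \times W_{I_2}$ acting on the orthogonal summands $V_{I_1} \oplus V_{I_2}$; the two arguments are interchangeable and of the same difficulty.
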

\begin{proof}
Note that $\gamma \in \Phi_I = W_I \cdot \Pi_I$, where $I= \mathrm{Supp}\,\gamma$.
On the other hand, it follows by induction on the length of $w$ that, for any $w \in W_I$ and $s \in I$, the support of $w \cdot \alpha_s$ is contained in the irreducible component of $I$ containing $s$.
Hence the claim follows.
\end{proof}

For a root $\gamma=w \cdot \alpha_s \in \Phi$, let $s_\gamma := wsw^{-1}$ be the \emph{reflection} along $\gamma$, which acts on $V$ by $s_\gamma \cdot v=v-2 \langle \gamma, v \rangle \gamma$ for $v \in V$.
For any subset $\Psi \subseteq \Phi$, let $W(\Psi)$ denote the \emph{reflection subgroup} of $W$ generated by $\{s_{\gamma} \mid \gamma \in \Psi\}$.
It was shown by Deodhar \cite{Deo_refsub} and by Dyer \cite{Dye} that $W(\Psi)$ is a Coxeter group.
To determine their generating set $S(\Psi)$ for $W(\Psi)$, let $\Pi(\Psi)$ denote the set of all \lq\lq simple roots'' $\gamma \in (W(\Psi) \cdot \Psi)^+$ in the \lq\lq root system'' $W(\Psi) \cdot \Psi$ of $W(\Psi)$, that is, all the $\gamma$ for which any expression $\gamma=\sum_{i=1}^{r}c_i\beta_i$ with $c_i>0$ and $\beta_i \in (W(\Psi) \cdot \Psi)^+$ satisfies that $\beta_i=\gamma$ for every index $i$.
Then the set $S(\Psi)$ is given by
\begin{displaymath}
S(\Psi) := \{s_\gamma \mid \gamma \in \Pi(\Psi)\} \enspace.
\end{displaymath}
We call $\Pi(\Psi)$ the \emph{simple system} of $(W(\Psi),S(\Psi))$.
Note that the \lq\lq root system'' $W(\Psi) \cdot \Psi$ and the simple system $\Pi(\Psi)$ for $(W(\Psi),S(\Psi))$ have several properties that are similar to the usual root systems $\Phi$ and simple systems $\Pi$ for $(W,S)$; see e.g., Theorem 2.3 of \cite{Nui11} for the detail.
In particular, we have the following result:
\begin{thm}
[{e.g., \cite[Theorem 2.3]{Nui11}}]
\label{thm:reflectionsubgroup_Deodhar}
Let $\Psi \subseteq \Phi$, and let $\ell_\Psi$ be the length function of $(W(\Psi),S(\Psi))$.
Then for $w \in W(\Psi)$ and $\gamma \in (W(\Psi) \cdot \Psi)^+$, we have $\ell_\Psi(ws_\gamma)<\ell_\Psi(w)$ if and only if $w \cdot \gamma \in \Phi^-$.
\end{thm}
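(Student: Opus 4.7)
The plan is to imitate the standard proof that $\ell(w)=|\Phi[w]|$ for $(W,S)$, transplanted to the reflection subgroup $(W(\Psi),S(\Psi))$. Define, for $w\in W(\Psi)$,
\[
N_{\Psi}(w):=\{\beta\in(W(\Psi)\cdot\Psi)^{+}\mid w\cdot\beta\in\Phi^{-}\},
\]
and note that $N_{\Psi}(w)\subseteq\Phi[w]$, so $N_{\Psi}(w)$ is always finite. The goal is to prove simultaneously, by induction on $\ell_{\Psi}(w)$, that $\ell_{\Psi}(w)=|N_{\Psi}(w)|$ and that for every $\gamma\in(W(\Psi)\cdot\Psi)^{+}$, $\ell_{\Psi}(ws_{\gamma})<\ell_{\Psi}(w)$ iff $w\cdot\gamma\in\Phi^{-}$.

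The first step, and the crucial one, is to show the ``simple reflections permute positive roots'' property in $W(\Psi)\cdot\Psi$: for $\beta\in\Pi(\Psi)$ and $\gamma\in(W(\Psi)\cdot\Psi)^{+}\setminus\{\beta\}$, we have $s_{\beta}\cdot\gamma\in(W(\Psi)\cdot\Psi)^{+}$. This is the main obstacle, and it follows directly from the extremality in the definition of $\Pi(\Psi)$: if instead $s_{\beta}\cdot\gamma=-\gamma'$ for some $\gamma'\in(W(\Psi)\cdot\Psi)^{+}$, then $\gamma+\gamma'=2\langle\beta,\gamma\rangle\beta$ with $\langle\beta,\gamma\rangle>0$, which yields $\beta$ as a positive linear combination of the two distinct positive roots $\gamma,\gamma'\in(W(\Psi)\cdot\Psi)^{+}$, contradicting $\beta\in\Pi(\Psi)$ (one uses here that $\gamma\neq\gamma'$ because $\gamma\neq\beta$).

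Having this, a short set-theoretic computation shows
\[
N_{\Psi}(ws_{\beta})=s_{\beta}\cdot\bigl(N_{\Psi}(w)\bigtriangleup\{\beta\}\bigr)\quad\text{for every }\beta\in\Pi(\Psi),
\]
so $|N_{\Psi}(ws_{\beta})|=|N_{\Psi}(w)|\pm1$, with the sign determined by whether $w\cdot\beta$ is positive or negative. Combined with the trivial inequality $|\ell_{\Psi}(ws_{\beta})-\ell_{\Psi}(w)|\leq1$ for $\beta\in\Pi(\Psi)$, induction on $\ell_{\Psi}(w)$ yields $\ell_{\Psi}(w)=|N_{\Psi}(w)|$ and the equivalence in the theorem for the special case $\gamma\in\Pi(\Psi)$.

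To remove the restriction $\gamma\in\Pi(\Psi)$, write $\gamma=u\cdot\beta$ with $u\in W(\Psi)$ of minimal $\ell_{\Psi}$-length and $\beta\in\Pi(\Psi)$, so that $s_{\gamma}=us_{\beta}u^{-1}$. A reduced $S(\Psi)$-expression of $w$, combined with the already-proven case for $\beta$, lets one compare $N_{\Psi}(w)$ with $N_{\Psi}(ws_{\gamma})$: the symmetric difference is exactly a single orbit segment under the reflection $s_{\gamma}$, so again $|N_{\Psi}(ws_{\gamma})|$ differs from $|N_{\Psi}(w)|$ by a quantity whose sign tracks $\pm 1$ according as $w\cdot\gamma\in\Phi^{\mp}$. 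Since $\ell_{\Psi}=|N_{\Psi}|$ by the previous step, the desired equivalence follows.
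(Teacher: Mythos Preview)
The paper does not prove this theorem; it is quoted as a known result from \cite{Nui11} (and ultimately from Deodhar and Dyer), so there is no ``paper's own proof'' to compare against. Your outline is essentially the standard argument and is sound in its main idea: the key step is indeed that each $s_\beta$ with $\beta\in\Pi(\Psi)$ permutes $(W(\Psi)\cdot\Psi)^+\setminus\{\beta\}$, and your derivation of that from the extremality clause in the definition of $\Pi(\Psi)$ is correct.

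Two points deserve tightening. First, the displayed identity $N_\Psi(ws_\beta)=s_\beta\bigl(N_\Psi(w)\bigtriangleup\{\beta\}\bigr)$ is not literally true when $\beta\notin N_\Psi(w)$, since then $s_\beta\cdot\beta=-\beta$ lands outside $(W(\Psi)\cdot\Psi)^+$; what you actually need (and what your argument uses) is only the cardinality statement $|N_\Psi(ws_\beta)|=|N_\Psi(w)|\pm 1$ with the sign determined by the sign of $w\cdot\beta$, and that does follow from the permutation property. Second, your final paragraph is too vague: the phrase ``the symmetric difference is exactly a single orbit segment under the reflection $s_\gamma$'' does not correspond to a precise statement. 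A clean way to finish is: once $\ell_\Psi=|N_\Psi|$ is established, a reduced expression $w=s_{\beta_1}\cdots s_{\beta_k}$ gives $N_\Psi(w)=\{\gamma_1,\dots,\gamma_k\}$ with $\gamma_j=s_{\beta_k}\cdots s_{\beta_{j+1}}\cdot\beta_j$; if $w\cdot\gamma\in\Phi^-$ then $\gamma=\gamma_j$ for some $j$, whence $ws_\gamma=s_{\beta_1}\cdots\widehat{s_{\beta_j}}\cdots s_{\beta_k}$ has smaller $\ell_\Psi$. The converse follows by applying this implication to $ws_\gamma$ in place of $w$ (using $(ws_\gamma)\cdot\gamma=-w\cdot\gamma$). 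This replaces your ``orbit segment'' sentence with an explicit use of the inversion-set description and avoids any appeal to an unproved exchange condition.
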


We say that a subset $\Psi \subseteq \Phi^+$ is a \emph{root basis} if for each pair $\beta,\gamma \in \Psi$, we have
\begin{displaymath}
\begin{cases}
\langle \beta,\gamma \rangle=-\cos(\pi/m) & \mbox{if } s_\beta s_\gamma \mbox{ has order } m<\infty \enspace;\\
\langle \beta,\gamma \rangle \leq -1 & \mbox{if } s_\beta s_\gamma \mbox{ has infinite order}.
\end{cases}
\end{displaymath}
For example, it follows from Theorem \ref{thm:conditionforrootbasis} below that the simple system $\Pi(\Psi)$ of $(W(\Psi),S(\Psi))$ is a root basis for any $\Psi \subseteq \Phi$.
For two root bases $\Psi_1,\Psi_2 \subseteq \Phi^+$, we say that a mapping from $\Psi_1 = \Pi(\Psi_1)$ to $\Psi_2 = \Pi(\Psi_2)$ is an isomorphism if it induces an isomorphism from $S(\Psi_1)$ to $S(\Psi_2)$.
We show some properties of root bases:
\begin{thm}
[{\cite[Theorem 4.4]{Dye}}]
\label{thm:conditionforrootbasis}
Let $\Psi \subseteq \Phi^+$.
Then we have $\Pi(\Psi)=\Psi$ if and only if $\Psi$ is a root basis.
\end{thm}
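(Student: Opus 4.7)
The plan is to handle the two implications separately, with the substantive work concentrated in the ``if'' direction.

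For the ``only if'' direction, assume $\Pi(\Psi) = \Psi$. By the general fact (used freely above) that $\Pi(\Psi)$ is the canonical simple system of the Coxeter subsystem $(W(\Psi), S(\Psi))$, the set $\Psi$ is a simple system of a Coxeter group. For distinct $\beta, \gamma \in \Psi$, the rank-$2$ parabolic $\langle s_\beta, s_\gamma \rangle$ of $(W(\Psi), S(\Psi))$ is dihedral of some Coxeter order $m = m(s_\beta, s_\gamma) \in \{2, 3, \dots\} \cup \{\infty\}$. Restrict $\langle\,,\,\rangle$ to $V' := \mathrm{span}\{\beta, \gamma\}$ and run the standard $2$-dimensional reflection analysis: if $m < \infty$, the form on $V'$ must be positive definite, the two unit vectors $\beta, \gamma$ bound the fundamental sector, and hence $\langle \beta, \gamma \rangle = -\cos(\pi/m)$; if $m = \infty$, then $s_\beta s_\gamma$ acts on $V'$ with infinite order, which rules out the elliptic case and, together with $|\beta| = |\gamma| = 1$, forces $\langle \beta, \gamma \rangle \leq -1$. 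This verifies the root-basis conditions.

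For the ``if'' direction, assume $\Psi$ is a root basis. The strategy is to reconstruct $W(\Psi)$ abstractly from the inner products on $\Psi$ and identify its positive roots inside the orbit $W(\Psi) \cdot \Psi$. Define a Coxeter matrix $m'$ on $\Psi$ by $m'(\beta, \gamma) = m$ when $\langle \beta, \gamma \rangle = -\cos(\pi/m)$ and $m'(\beta, \gamma) = \infty$ otherwise, and let $(W', S')$ be the associated abstract Coxeter system with $S' = \{t_\beta : \beta \in \Psi\}$, acting on its own geometric representation $V'$ with simple roots $\{\alpha_{t_\beta}\}$. The crucial step is to build a $W'$-equivariant linear map $\phi \colon V' \to V$ with $\phi(\alpha_{t_\beta}) = \beta$, intertwining the $t_\beta$-action on $V'$ with the $s_\beta$-action on $V$; combined with Tits' faithfulness of the geometric representation, this will force $W' \to W(\Psi)$, $t_\beta \mapsto s_\beta$, to be an isomorphism. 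Given such a $\phi$, every element of $W(\Psi) \cdot \Psi$ is the $\phi$-image of a $W'$-orbit point of some $\alpha_{t_\beta}$, and positivity transfers: $(W(\Psi) \cdot \Psi)^+ \subseteq \mathbb{R}_{\geq 0}\Psi$. Moreover, $\Psi$ inherits linear independence from $\{\alpha_{t_\beta}\}$. The indecomposability definition of $\Pi(\Psi)$ then gives $\Psi \subseteq \Pi(\Psi)$ (a decomposition $\gamma = \sum c_i \beta_i$ with $\beta_i \in \mathbb{R}_{\geq 0}\Psi$ and linear independence force all $\beta_i = \gamma$), while $\Pi(\Psi) \subseteq \Psi$ follows because any $\delta \in (W(\Psi)\cdot\Psi)^+ \setminus \Psi$ is a nontrivial $\mathbb{R}_{\geq 0}$-combination of $\Psi \subseteq (W(\Psi)\cdot\Psi)^+$ by the previous step, so $\delta$ is not indecomposable.

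The main obstacle is constructing $\phi$ when some $\langle \beta, \gamma \rangle$ is strictly less than $-1$. In that regime the abstract form on $V'$ (which by definition equals $-1$ on the pair $\alpha_{t_\beta}, \alpha_{t_\gamma}$) does not match the ambient form on $V$ restricted to $\mathrm{span}\{\beta, \gamma\}$, so $\phi$ cannot be chosen as an isometric embedding, and one must verify $W'$-equivariance generator-by-generator from the reflection formula $v \mapsto v - 2\langle \beta, v\rangle \beta$ together with the root-basis conditions, rather than via a clean bilinear-form identification. A secondary point is the linear independence of $\Psi$, which one extracts from the root-basis conditions by a standard separating-hyperplane argument (or equivalently from the faithfulness of $W'$ on $V'$, once $\phi$ is in hand). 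These technical verifications form the core of Dyer's argument.
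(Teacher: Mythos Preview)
The paper does not give its own proof of this theorem: it is stated as a citation of \cite[Theorem~4.4]{Dye} and used as a black box, so there is no in-paper argument to compare against. Your write-up is therefore being measured against Dyer's original argument rather than anything in this paper.

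That said, your ``if'' direction contains a genuine gap at exactly the point you flag as the main obstacle. With the standard geometric representation on $V'$, the linear map $\phi$ determined by $\phi(\alpha_{t_\beta}) = \beta$ is \emph{not} $W'$-equivariant when some pair has $\langle \beta,\gamma\rangle < -1$: on $V'$ the generator $t_\beta$ sends $\alpha_{t_\gamma}$ to $\alpha_{t_\gamma} + 2\,\alpha_{t_\beta}$ (since the $V'$-form gives $-1$), whereas on $V$ the reflection $s_\beta$ sends $\gamma$ to $\gamma - 2\langle\beta,\gamma\rangle\,\beta$, and these differ. So the proposed ``verify equivariance generator-by-generator from the reflection formula'' does not succeed; the very computation you would perform shows it fails. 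Consequently the transfer of positivity $(W(\Psi)\cdot\Psi)^+ \subseteq \mathbb{R}_{\geq 0}\Psi$ and the linear independence of $\Psi$ are not established by your argument.

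There are two standard repairs. One is to replace the standard geometric representation of $(W',S')$ by the \emph{generalized} one whose bilinear form assigns to the pair $(\alpha_{t_\beta},\alpha_{t_\gamma})$ the actual value $\langle\beta,\gamma\rangle$; one must then cite (or reprove) that such representations are faithful and satisfy the positive/negative root dichotomy, after which $\phi$ becomes an isometric, equivariant embedding and your outline goes through. The other is Dyer's own route, which avoids the auxiliary $(W',S')$ entirely and characterises $\Pi(\Psi)$ intrinsically via the sets $\Phi[s_\gamma]$, deducing the root-basis criterion from that characterisation. Either way, the step you currently have is not yet a proof.
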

\begin{prop}
[{\cite[Corollary 2.6]{Nui11}}]
\label{prop:fintyperootbasis}
Let $\Psi \subseteq \Phi^+$ be a root basis with $|W(\Psi)|<\infty$.
Then $\Psi$ is a basis of a positive definite subspace of $V$ with respect to the bilinear form $\langle \,,\, \rangle$.
\end{prop}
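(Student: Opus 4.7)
The plan is to reduce positive-definiteness on $\mathrm{span}\,\Psi$ to the classical fact that the canonical bilinear form on the geometric representation of a \emph{finite} Coxeter system is positive definite.

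First, I would observe that since $|W(\Psi)| < \infty$, every pairwise product $s_\beta s_\gamma$ for $\beta,\gamma \in \Psi$ has finite order $m(\beta,\gamma)$, so the second clause of the root-basis definition is never triggered and one has $\langle \beta,\gamma\rangle = -\cos(\pi/m(\beta,\gamma))$ for all pairs. By Theorem \ref{thm:conditionforrootbasis}, $\Pi(\Psi) = \Psi$, so $S(\Psi) = \{s_\gamma \mid \gamma \in \Psi\}$ and the Coxeter matrix of the Coxeter system $(W(\Psi), S(\Psi))$ is precisely the matrix $(m(\beta,\gamma))_{\beta,\gamma \in \Psi}$.

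Next, I would introduce the geometric representation space $V'$ of $(W(\Psi), S(\Psi))$, equipped with its standard basis $\{\alpha'_{s_\gamma}\}_{\gamma \in \Psi}$ and its invariant symmetric bilinear form $\langle \,,\, \rangle'$, defined exactly as in Section \ref{sec:rootsystem}. By the definition of $\langle \,,\, \rangle'$ together with the computation of the previous paragraph, the Gram matrix of $\{\alpha'_{s_\gamma}\}_{\gamma \in \Psi}$ with respect to $\langle \,,\, \rangle'$ equals the Gram matrix $(\langle \beta, \gamma \rangle)_{\beta,\gamma \in \Psi}$ of $\Psi \subseteq V$ with respect to $\langle \,,\, \rangle$. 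Since $W(\Psi)$ is a finite Coxeter group, the classical theorem (e.g., \cite{Hum}, Section 6.4) asserts that $\langle \,,\, \rangle'$ is positive definite on $V'$, so this common Gram matrix is positive definite.

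Finally, a positive-definite Gram matrix of a finite tuple of vectors in the ambient space $V$ forces those vectors to be linearly independent; hence $\Psi$ is a basis of $\mathrm{span}\,\Psi$, and the restriction of $\langle \,,\, \rangle$ to $\mathrm{span}\,\Psi$ is positive definite, as claimed. The main (and essentially only) thing to be careful about is the identification of the two Gram matrices, which rests on invoking Theorem \ref{thm:conditionforrootbasis} to ensure $\Psi$ is the simple system of $W(\Psi)$; once this is in place, the argument is just bookkeeping plus the classical finite case.
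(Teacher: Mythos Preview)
Your argument is correct. The paper does not actually supply a proof of this proposition; it simply quotes it from \cite[Corollary 2.6]{Nui11}, so there is no in-paper proof to compare against. Your approach is the standard one: identify the Gram matrix of $\Psi$ with that of the simple system in the abstract geometric representation of the finite Coxeter system $(W(\Psi),S(\Psi))$, and invoke the classical positive-definiteness result for finite Coxeter groups. The only point worth making explicit (which you handle implicitly) is that $\Psi$ is finite, since $S(\Psi)\subseteq W(\Psi)$ and $|W(\Psi)|<\infty$; and that the order of $s_\beta s_\gamma$ is the same whether computed in $W(\Psi)$ or in $W$, so the Coxeter matrix of $(W(\Psi),S(\Psi))$ really is $(m(\beta,\gamma))$. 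With those remarks, the proof is complete.
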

\begin{prop}
[{\cite[Proposition 2.7]{Nui11}}]
\label{prop:finitesubsystem}
Let $\Psi \subseteq \Phi^+$ be a root basis with $|W(\Psi)|<\infty$, and $U= \mathrm{span}\,\Psi$.
Then there exist an element $w \in W$ and a subset $I \subseteq S$ satisfying that $|W_I|<\infty$ and $w \cdot (U \cap \Phi^+)=\Phi_I^+$.
Moreover, the action of this $w$ maps $U \cap \Pi$ into $\Pi_I$.
\end{prop}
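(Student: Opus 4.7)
My strategy is to enlarge $\Psi$ to the canonical simple system $\Psi^{\ast}:=\Pi(U\cap\Phi)$ of the entire root subsystem $U\cap\Phi$ and then find $w\in W$ with $w\cdot\Psi^{\ast}\subseteq\Pi$; the conclusion is immediate from this. By Proposition \ref{prop:fintyperootbasis}, $U$ is positive definite, so every $s_\gamma$ with $\gamma\in U\cap\Phi$ preserves $U$ and acts there as a Euclidean reflection. The group $W(U\cap\Phi)$ is thus a finite Euclidean reflection group and $U\cap\Phi$ is finite. By Theorem \ref{thm:conditionforrootbasis}, $\Psi^{\ast}$ is a root basis spanning $U$ with $W(\Psi^{\ast})$ finite and $(W(\Psi^{\ast})\cdot\Psi^{\ast})^{+}=U\cap\Phi^{+}$; a short support argument also shows $U\cap\Pi\subseteq\Psi^{\ast}$, since if $\alpha_s\in U\cap\Pi$ is written as $\sum c_i\beta_i$ with $c_i>0$ and $\beta_i\in U\cap\Phi^{+}$, expanding each $\beta_i$ in the basis $\Pi$ and comparing coefficients forces every $\beta_i$ to equal $\alpha_s$. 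Once $w\cdot\Psi^{\ast}\subseteq\Pi$ is established, $I:=\{s:\alpha_s\in w\cdot\Psi^{\ast}\}$ gives $w\cdot\Psi^{\ast}=\Pi_I$, $W_I=wW(\Psi^{\ast})w^{-1}$ is finite, $w\cdot(U\cap\Phi^{+})=\Phi_I^{+}$ (positive systems being determined by simple systems in finite reflection groups), and $w\cdot(U\cap\Pi)\subseteq\Pi_I$ (the moreover clause).

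To produce $w$, I let $\rho^{\ast}\in U$ be the unique vector with $\langle\rho^{\ast},\gamma\rangle=1$ for all $\gamma\in\Psi^{\ast}$. The Gram matrix of $\Psi^{\ast}$ is positive definite with non-positive off-diagonal entries, so its inverse has non-negative entries, and hence $\rho^{\ast}\in\mathbb{R}_{\geq 0}\Psi^{\ast}\subseteq\mathbb{R}_{\geq 0}\Pi$. Writing $\rho^{\ast}=\sum_t h_t\alpha_t$, I descend on the $\Pi$-height $\sum_t h_t$. The base case $\Psi^{\ast}\subseteq\Pi$ is immediate: taking $I:=\{s:\alpha_s\in\Psi^{\ast}\}$ gives $U=V_I$, $W_I$ finite, $U\cap\Phi^{+}=\Phi_I^{+}$, so $w=1$ suffices. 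In the inductive step ($\Psi^{\ast}\not\subseteq\Pi$), I seek $s\in S$ with $\alpha_s\notin U$ (equivalently, $\alpha_s\notin\Psi^{\ast}$, ensuring $s\cdot\Psi^{\ast}\subseteq\Phi^{+}$) and $\langle\alpha_s,\rho^{\ast}\rangle>0$. Then $s\cdot\rho^{\ast}=\rho^{\ast}-2\langle\alpha_s,\rho^{\ast}\rangle\alpha_s$ has strictly smaller $\Pi$-height and is the analogous dual element for $s\cdot\Psi^{\ast}$ in $s\cdot U$; the inductive hypothesis applied to $s\cdot\Psi^{\ast}$ then yields $w'$ and $I$, and $w:=w's$ finishes.

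\textbf{Main obstacle.} The decisive step is the existence of such an $s$ in the inductive step. If $\alpha_s\in U$ then automatically $\alpha_s\in\Psi^{\ast}$ and $\langle\alpha_s,\rho^{\ast}\rangle=1>0$, but the condition $\alpha_s\notin U$ rules these out; so the real question is whether an appropriate $s$ outside $I:=\{s:\alpha_s\in U\}$ exists when $\Psi^{\ast}\not\subseteq\Pi$. I plan to argue by contradiction: assume $\langle\alpha_s,\rho^{\ast}\rangle\leq 0$ for every $s\notin I$, and decompose $\rho^{\ast}=\rho^{\ast}_{I}+\rho^{\ast}_{I^{c}}$ with $\rho^{\ast}_{I}\in V_I\subseteq U$ and $\rho^{\ast}_{I^{c}}\in V_{S\setminus I}\cap U$. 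If I can force $\rho^{\ast}_{I^{c}}=0$, then every $\gamma\in\Psi^{\ast}$ has support in $I$ (each $\gamma$ is a non-negative combination of $\Pi$), whence $U=V_I$ and $\Psi^{\ast}=\Pi_I\subseteq\Pi$, contradicting the inductive hypothesis. The technical core is thus deducing $\rho^{\ast}_{I^{c}}=0$: this will combine the sign-inequalities $\langle\alpha_s,\rho^{\ast}\rangle\leq 0$ for $s\notin I$, the non-positivity of $\langle\alpha_s,\alpha_t\rangle$ for $s\neq t$, and the positive definiteness of $\langle\,,\,\rangle$ on $U$, likely reducing to irreducible components of $\Psi^{\ast}$ via Lemma \ref{lem:support_is_irreducible} to localize the argument.
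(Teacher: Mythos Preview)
The paper does not give its own proof of this proposition; it is quoted verbatim from \cite{Nui11} and used as a black box. So there is no ``paper's proof'' to compare against, and your plan must be judged on its own.

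Your overall strategy is the right one: pass to the full simple system $\Psi^\ast=\Pi(U\cap\Phi)$ of $U\cap\Phi$, then conjugate $\Psi^\ast$ into $\Pi$. The preliminary observations ($U$ positive definite, $U\cap\Phi$ finite, $U\cap\Pi\subseteq\Psi^\ast$, $a_\gamma>0$ in $\rho^\ast=\sum_\gamma a_\gamma\gamma$) are all correct. But two genuine gaps remain.

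\emph{Termination.} Your descent is on the real number $\sum_t h_t$. In non-crystallographic types the coefficients $h_t$ are irrational, and the decrement $2\langle\alpha_s,\rho^\ast\rangle$ is an arbitrary positive real, so the descent is not well-founded as written. You do stay inside $W_T$ with $T=\mathrm{Supp}\,\Psi^\ast$ (for $s\notin T$ one has $\langle\alpha_s,\rho^\ast\rangle=\sum_{t\in T}h_t\langle\alpha_s,\alpha_t\rangle\le0$), but $W_T$ and $\Phi_T^+$ may be infinite, so this alone does not rescue the argument. You need an integer-valued quantity (e.g.\ a depth-based one) or else to recast the descent as a minimization and argue that the minimum is attained.

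\emph{The ``main obstacle''.} Your sketch for forcing $\rho^\ast_{I^c}=0$ does not close. From $\langle\alpha_s,\rho^\ast\rangle\le0$ for $s\notin I$ and $\langle\alpha_s,\alpha_t\rangle\le0$ for $s\ne t$ one gets $\langle\rho^\ast_{I^c},\rho^\ast\rangle\le0$ and $\langle\rho^\ast_{I^c},\rho^\ast_I\rangle\le0$, hence only
\[
0\le\langle\rho^\ast_{I^c},\rho^\ast_{I^c}\rangle\le-\langle\rho^\ast_{I^c},\rho^\ast_I\rangle,
\]
which is consistent with $\rho^\ast_{I^c}\ne0$. (When $I=\emptyset$ the argument \emph{does} work, since then $\langle\rho^\ast,\rho^\ast\rangle=\sum_s h_s\langle\alpha_s,\rho^\ast\rangle\le0$ forces $\rho^\ast=0$; the difficulty is genuinely the mixed case.) Passing to irreducible components of $\Psi^\ast$ does not obviously help, because the obstruction comes from interaction between $\Pi_I\subseteq\Psi^\ast$ and the non-simple elements of $\Psi^\ast$, which can live in the same irreducible component.

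A cleaner route avoids $\rho^\ast$ altogether. If $\Psi^\ast\not\subseteq\Pi$, pick $\gamma\in\Psi^\ast\setminus\Pi$; since $1=\langle\gamma,\gamma\rangle=\sum_t c_t\langle\alpha_t,\gamma\rangle$ with $c_t\ge0$, some $s$ has $\langle\alpha_s,\gamma\rangle>0$, and then $\alpha_s\notin\Psi^\ast$ (distinct elements of a root basis pair non-positively), so $s\cdot\Psi^\ast\subseteq\Phi^+$. This produces the reflection you need without any $\rho^\ast$ computation. For termination, the standard device is the theorem (Tits/Bourbaki) that every finite subgroup of $W$ lies in a conjugate of a finite parabolic: applying it to $W(\Psi^\ast)$ reduces to the case where the ambient $W$ is finite, where your height descent \emph{is} well-founded (the $W$-orbit of $\rho^\ast$ is finite) and the rest of your argument goes through unchanged.
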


\subsection{Finite parabolic subgroups}
\label{sec:longestelement}

We say that a subset $I \subseteq S$ is of \emph{finite type} if $|W_I|<\infty$.
The finite irreducible Coxeter groups have been classified as summarized in \cite[Chapter 2]{Hum}.
Here we determine a labelling $r_1,r_2,\dots,r_n$ (where $n = |I|$) of elements of an irreducible subset $I \subseteq S$ of each finite type in the following manner, where the values $m(r_i,r_j)$ not listed here are equal to $2$ (see Figure \ref{fig:finite_irreducible_Coxeter_groups}):
\begin{description}
\item[Type $A_n$ ($1 \leq n < \infty$):] $m(r_i,r_{i+1})=3$ ($1 \leq i \leq n-1$);
\item[Type $B_n$ ($2 \leq n < \infty$):] $m(r_i,r_{i+1})=3$ ($1 \leq i \leq n-2$) and $m(r_{n-1},r_n)=4$;
\item[Type $D_n$ ($4 \leq n < \infty$):] $m(r_i,r_{i+1})=m(r_{n-2},r_n)=3$ ($1 \leq i \leq n-2$);
\item[Type $E_n$ ($n=6,7,8$):] $m(r_1,r_3)=m(r_2,r_4)=m(r_i,r_{i+1})=3$ ($3 \leq i \leq n-1$);
\item[Type $F_4$:] $m(r_1,r_2)=m(r_3,r_4)=3$ and $m(r_2,r_3)=4$;
\item[Type $H_n$ ($n=3,4$):] $m(r_1,r_2)=5$ and $m(r_i,r_{i+1})=3$ ($2 \leq i \leq n-1$);
\item[Type $I_2(m)$ ($5 \leq m < \infty$):] $m(r_1,r_2)=m$.
\end{description}
We call the above labelling $r_1,\dots,r_n$ the \emph{standard labelling} of $I$.
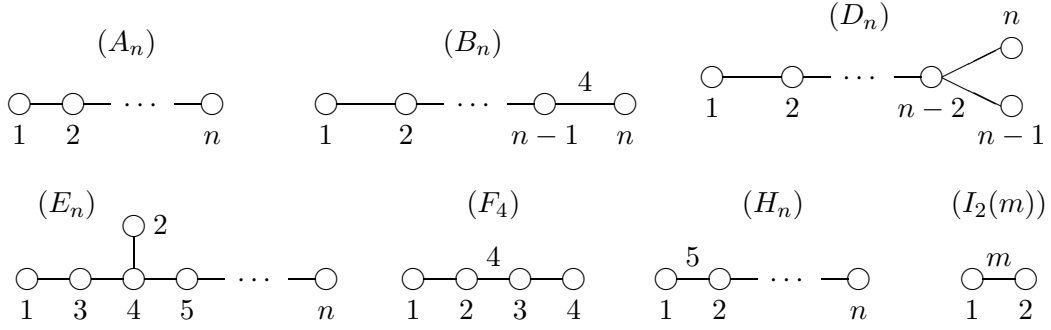
\begin{figure}[hbt]
\centering
\begin{picture}(100,45)(0,-45)
\put(50,-10){\hbox to0pt{\hss{($A_n$)}\hss}}
\multiput(10,-30)(20,0){2}{\circle{8}}
\put(14,-30){\line(1,0){12}}
\put(34,-30){\line(1,0){10}}
\put(56,-33){\hbox to0pt{\hss$\cdots$\hss}}
\put(68,-30){\line(1,0){10}}
\put(82,-30){\circle{8}}
\put(10,-45){\hbox to0pt{\hss$1$\hss}}
\put(30,-45){\hbox to0pt{\hss$2$\hss}}
\put(82,-45){\hbox to0pt{\hss$n$\hss}}
\end{picture}
\quad
\begin{picture}(130,45)(0,-45)
\put(65,-10){\hbox to0pt{\hss{($B_n$)}\hss}}
\multiput(10,-30)(30,0){2}{\circle{8}}
\put(14,-30){\line(1,0){22}}
\put(44,-30){\line(1,0){10}}
\put(66,-33){\hbox to0pt{\hss$\cdots$\hss}}
\put(78,-30){\line(1,0){10}}
\multiput(92,-30)(30,0){2}{\circle{8}}
\put(96,-30){\line(1,0){22}}
\put(107,-25){\hbox to0pt{\hss$4$\hss}}
\put(10,-45){\hbox to0pt{\hss$1$\hss}}
\put(40,-45){\hbox to0pt{\hss$2$\hss}}
\put(92,-45){\hbox to0pt{\hss$n-1$\hss}}
\put(122,-45){\hbox to0pt{\hss$n$\hss}}
\end{picture}
\quad
\begin{picture}(140,55)(0,-55)
\put(65,-10){\hbox to0pt{\hss{($D_n$)}\hss}}
\multiput(10,-30)(30,0){2}{\circle{8}}
\put(14,-30){\line(1,0){22}}
\put(44,-30){\line(1,0){10}}
\put(66,-33){\hbox to0pt{\hss$\cdots$\hss}}
\put(78,-30){\line(1,0){10}}
\put(92,-30){\circle{8}}
\multiput(122,-19)(0,-22){2}{\circle{8}}
\put(96,-30){\line(2,1){22}}
\put(96,-30){\line(2,-1){22}}
\put(10,-45){\hbox to0pt{\hss$1$\hss}}
\put(40,-45){\hbox to0pt{\hss$2$\hss}}
\put(92,-45){\hbox to0pt{\hss$n-2$\hss}}
\put(122,-55){\hbox to0pt{\hss$n-1$\hss}}
\put(122,-10){\hbox to0pt{\hss$n$\hss}}
\end{picture}
\\
\begin{picture}(130,65)(0,-65)
\put(25,-25){\hbox to0pt{\hss{($E_n$)}\hss}}
\multiput(10,-50)(20,0){4}{\circle{8}}
\multiput(14,-50)(20,0){3}{\line(1,0){12}}
\put(74,-50){\line(1,0){10}}
\put(96,-53){\hbox to0pt{\hss$\cdots$\hss}}
\put(108,-50){\line(1,0){10}}
\put(122,-50){\circle{8}}
\put(50,-46){\line(0,1){12}}
\put(50,-30){\circle{8}}
\put(10,-65){\hbox to0pt{\hss$1$\hss}}
\put(60,-33){\hbox to0pt{\hss$2$\hss}}
\put(30,-65){\hbox to0pt{\hss$3$\hss}}
\put(50,-65){\hbox to0pt{\hss$4$\hss}}
\put(70,-65){\hbox to0pt{\hss$5$\hss}}
\put(122,-65){\hbox to0pt{\hss$n$\hss}}
\end{picture}
\quad
\begin{picture}(80,65)(0,-65)
\put(40,-25){\hbox to0pt{\hss{($F_4$)}\hss}}
\multiput(10,-50)(20,0){4}{\circle{8}}
\multiput(14,-50)(20,0){3}{\line(1,0){12}}
\put(40,-45){\hbox to0pt{\hss$4$\hss}}
\put(10,-65){\hbox to0pt{\hss$1$\hss}}
\put(30,-65){\hbox to0pt{\hss$2$\hss}}
\put(50,-65){\hbox to0pt{\hss$3$\hss}}
\put(70,-65){\hbox to0pt{\hss$4$\hss}}
\end{picture}
\quad
\begin{picture}(100,55)(0,-55)
\put(50,-15){\hbox to0pt{\hss{($H_n$)}\hss}}
\multiput(10,-40)(20,0){2}{\circle{8}}
\put(14,-40){\line(1,0){12}}
\put(34,-40){\line(1,0){10}}
\put(56,-43){\hbox to0pt{\hss$\cdots$\hss}}
\put(68,-40){\line(1,0){10}}
\put(82,-40){\circle{8}}
\put(20,-35){\hbox to0pt{\hss$5$\hss}}
\put(10,-55){\hbox to0pt{\hss$1$\hss}}
\put(30,-55){\hbox to0pt{\hss$2$\hss}}
\put(82,-55){\hbox to0pt{\hss$n$\hss}}
\end{picture}
\quad
\begin{picture}(40,55)(0,-55)
\put(20,-15){\hbox to0pt{\hss{($I_2(m)$)}\hss}}
\multiput(10,-40)(20,0){2}{\circle{8}}
\put(14,-40){\line(1,0){12}}
\put(20,-35){\hbox to0pt{\hss$m$\hss}}
\put(10,-55){\hbox to0pt{\hss$1$\hss}}
\put(30,-55){\hbox to0pt{\hss$2$\hss}}
\end{picture}
\caption{Coxeter graphs of the finite irreducible Coxeter groups (here we write $i$ instead of $r_i$ for each vertex)}
\label{fig:finite_irreducible_Coxeter_groups}
\end{figure}

Let $w_0(I)$ denote the (unique) longest element of a finite parabolic subgroup $W_I$.
It is well known that $w_0(I)^2 = 1$ and $w_0(I) \cdot \Pi_I = -\Pi_I$.
Now let $I$ be irreducible of finite type.
If $I$ is of type $A_n$ ($n \geq 2$), $D_k$ ($k$ odd), $E_6$ or $I_2(m)$ ($m$ odd), then the automorphism of the Coxeter graph $\Gamma_I$ of $W_I$ induced by (the conjugation action of) $w_0(I)$ is the unique nontrivial automorphism of $\Gamma_I$.
Otherwise, $w_0(I)$ lies in the center $Z(W_I)$ of $W_I$ and the induced automorphism of $\Gamma_I$ is trivial, in which case we say that $I$ is of \emph{$(-1)$-type}.
Moreover, if $W_I$ is finite but not irreducible, then $w_0(I)=w_0(I_1) \dotsm w_0(I_k)$ where the $I_i$ are the irreducible components of $I$.

\section{Known properties of the centralizers}
\label{sec:properties_centralizer}

This section summarizes some known properties (mainly proven in \cite{Nui11}) of the centralizers $Z_W(W_I)$ of parabolic subgroups $W_I$ in Coxeter groups $W$, especially those relevant to the argument in this paper.

First, we fix an abstract index set $\Lambda$ with $|\Lambda| = |I|$, and define $S^{(\Lambda)}$ to be the set of all injective mappings $x \colon \Lambda \to S$.
For $x \in S^{(\Lambda)}$ and $\lambda \in \Lambda$, we put $x_\lambda = x(\lambda)$; thus $x$ may be regarded as a duplicate-free \lq\lq $\Lambda$-tuple'' $(x_\lambda)=(x_\lambda)_{\lambda \in \Lambda}$ of elements of $S$.
For each $x \in S^{(\Lambda)}$, let $[x]$ denote the image of the mapping $x$; $[x] = \{x_{\lambda} \mid \lambda \in \Lambda\}$.
In the following argument, we fix an element $x_I \in S^{(\Lambda)}$ with $[x_I] = I$.
We define
\begin{displaymath}
C_{x,y} := \{w \in W \mid \alpha_{x_\lambda}=w \cdot \alpha_{y_\lambda} \mbox{ for every } \lambda \in \Lambda\} \mbox{ for } x,y \in S^{(\Lambda)} \enspace.
\end{displaymath}
Note that $C_{x,y} \cdot C_{y,z} \subseteq C_{x,z}$ and $C_{x,y}{}^{-1} = C_{y,x}$ for $x,y,z \in S^{(\Lambda)}$.
Now we define
\begin{displaymath}
w \ast y_{\lambda} := x_{\lambda} \mbox{ for } x,y \in S^{(\Lambda)}, w \in C_{x,y} \mbox{ and } \lambda \in \Lambda \enspace,
\end{displaymath}
therefore we have $w \cdot \alpha_s = \alpha_{w \ast s}$ for any $w \in C_{x,y}$ and $s \in [y]$.
(This $\ast$ can be interpreted as the conjugation action of elements of $C_{x,y}$ to the elements of $[y]$.)
Moreover, we define
\begin{displaymath}
w \ast y := x \mbox{ for } x,y \in S^{(\Lambda)} \mbox{ and } w \in C_{x,y}
\end{displaymath}
(this $\ast$ can be interpreted as the diagonal action on the $\Lambda$-tuples).
We define $C_I = C_{x_I,x_I}$, therefore we have
\begin{displaymath}
C_I = \{w \in W \mid w \cdot \alpha_s=\alpha_s \mbox{ for every } s \in I\} \enspace,
\end{displaymath}
which is a normal subgroup of $Z_W(W_I)$.

To describe generators of $C_I$, we introduce some notations.
For subsets $J,K \subseteq S$, let $J_{\sim K}$ denote the set of elements of $J \cup K$ that belongs to the same connected component of $\Gamma_{J \cup K}$ as an element of $K$.
Now for $x \in S^{(\Lambda)}$ and $s \in S \smallsetminus [x]$ for which $[x]_{\sim s}$ is of finite type, there exists a unique $y \in S^{(\Lambda)}$ for which the element
\begin{displaymath}
w_x^s := w_0([x]_{\sim s})w_0([x]_{\sim s} \smallsetminus \{s\})
\end{displaymath}
belongs to $C_{y,x}$.
In this case, we define
\begin{displaymath}
\varphi(x,s) := y \enspace,
\end{displaymath}
therefore $\varphi(x,s) = w_x^s \ast x$ in the above notations.
We have the following result:
\begin{prop}
[{see \cite[Theorem 3.5(iii)]{Nui11}}]
\label{prop:generator_C}
Let $x,y \in S^{(\Lambda)}$ and $w \in C_{x,y}$.
Then there are a finite sequence $z_0 = y,z_1,\dots,z_{n-1},z_n = x$ of elements of $S^{(\Lambda)}$ and a finite sequence $s_0,s_1,\dots,s_{n-1}$ of elements of $S$ satisfying that $s_i \not\in [z_i]$, $[z_i]_{\sim s_i}$ is of finite type and $\varphi(z_i,s_i) = z_{i+1}$ for each index $0 \leq i \leq n-1$, and we have $w = w_{z_{n-1}}^{s_{n-1}} \cdots w_{z_1}^{s_1} w_{z_0}^{s_0}$.
\end{prop}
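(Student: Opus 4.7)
The plan is to proceed by induction on $\ell(w)$. If $\ell(w) = 0$ then $w = 1$, and $\alpha_{x_\mu} = w \cdot \alpha_{y_\mu} = \alpha_{y_\mu}$ for every $\mu \in \Lambda$ forces $x = y$; the empty sequence ($n = 0$) is the required decomposition.

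For the inductive step, assume $\ell(w) \geq 1$. Pick $s \in S$ with $w \cdot \alpha_s \in \Phi^-$, which exists since $\ell(w) = |\Phi[w]| \geq 1$. Because $w \cdot \alpha_{y_\mu} = \alpha_{x_\mu} \in \Phi^+$ for every $\mu$, we have $s \notin [y]$. The strategy is to split off $w_y^s$ as the right-most elementary factor, so the key claim to establish is: (K1) $I' := [y]_{\sim s}$ is of finite type, and (K2) $w_y^s = w_0(I') w_0(I' \setminus \{s\})$ is a right divisor of $w$. Granted this, set $w' := w (w_y^s)^{-1}$. By the definition of $\varphi$, $w_y^s \in C_{\varphi(y,s),\, y}$, hence $w' \in C_{x,\, \varphi(y,s)}$, and (K2) gives $\ell(w') < \ell(w)$. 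Applying the induction hypothesis to $w'$ yields a decomposition starting at $z_1 := \varphi(y,s)$, and prepending $(z_0, s_0) := (y, s)$ produces the claimed decomposition of $w$.

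The approach to (K1) is to analyze $\Phi[w] \cap \Phi_{I'}^+$. Setting $I'' := I' \setminus \{s\} \subseteq [y]$, the map $y_\mu \mapsto w \ast y_\mu$ is a Coxeter-graph isomorphism from $I''$ onto a subset of $[x]$ (inner products of simple roots being preserved), so $w$ sends $\Phi_{I''}^+$ bijectively onto $\Phi_{w \ast I''}^+ \subseteq \Phi^+$; consequently every $\gamma \in \Phi_{I'}^+$ whose simple-root expansion $\gamma = \sum_{t \in I'} c_t \alpha_t$ has $c_s = 0$ satisfies $w \cdot \gamma \in \Phi^+$. Combined with the further claim that every $\gamma$ with $c_s > 0$ satisfies $w \cdot \gamma \in \Phi^-$, this identifies $\Phi[w] \cap \Phi_{I'}^+$ with $\{\gamma \in \Phi_{I'}^+ : c_s > 0\}$. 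Since $\Phi[w]$ is finite, this set is finite, forcing $I'$ to be of finite type (for any irreducible infinite Coxeter system, infinitely many positive roots involve any fixed simple root). For (K2), once $I'$ is finite, one has $\Phi[w_y^s] = \Phi_{I'}^+ \setminus \Phi_{I''}^+ = \{\gamma \in \Phi_{I'}^+ : c_s > 0\}$ by standard facts on longest elements, and the inclusion $\Phi[w_y^s] \subseteq \Phi[w]$ implies $w_y^s$ is a right divisor of $w$ via the standard characterization of right divisors by inversion sets (peeling off one simple reflection at a time using Theorem~\ref{thm:reflectionsubgroup_Deodhar}).

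The main obstacle is the further claim within (K1): that every positive root of $\Phi_{I'}^+$ with strictly positive $\alpha_s$-coefficient is sent by $w$ into $\Phi^-$. The hypotheses yield direct information on $w$ only at simple roots, whereas the conclusion concerns all composite roots in $\Phi_{I'}^+$, so a propagation argument is required. The crucial leverage is that $w \cdot \alpha_{y_\mu}$ is itself simple — not merely positive — which, combined with $w \cdot \alpha_s \in \Phi^-$, sharply constrains the action of $w$ on $\Phi_{I'}$. I would carry this out by induction on the $W_{I'}$-depth of $\gamma$ (the minimum length of $u \in W_{I'}$ with $u \cdot \alpha = \gamma$ for some $\alpha \in \Pi_{I'}$), tracking how applications of reflections from $I''$ versus of $s$ flip the sign of $w \cdot \gamma$; this propagation is the technical core of the argument.
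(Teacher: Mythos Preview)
Your inductive strategy is correct and matches what the paper does: the proposition itself is not proved here (it is cited from \cite{Nui11}), but your claims (K1) and (K2) are exactly Lemma~\ref{lem:rightdivisor} applied with $J = [y]$ and $K = \{s\}$, and the paper uses precisely this peel-off-$w_y^s$-and-recurse pattern in the proof of Proposition~\ref{prop:standard_decomposition_existence}. So one line citing Lemma~\ref{lem:rightdivisor} replaces your entire discussion of (K1)--(K2).

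If you want to re-derive that lemma, the depth induction you propose for the ``further claim'' works but is heavier than needed. Write $w \cdot \alpha_t = \alpha_{\sigma(t)}$ for $t \in I''$ and $w \cdot \alpha_s = -\beta$ with $\beta \in \Phi^+$. One cannot have $\mathrm{Supp}\,\beta \subseteq \sigma(I'')$, since then $\beta = w \cdot \delta$ for some $\delta \in \Phi_{I''}^+$ and hence $\alpha_s = -\delta$, absurd. Pick $r_0 \in \mathrm{Supp}\,\beta \smallsetminus \sigma(I'')$; then for any $\gamma = \sum_{t \in I''} c_t \alpha_t + c_s \alpha_s \in \Phi_{I'}^+$ with $c_s > 0$, the $\alpha_{r_0}$-coefficient of
\[
w \cdot \gamma \;=\; \sum_{t \in I''} c_t\, \alpha_{\sigma(t)} \;-\; c_s\, \beta
\]
is strictly negative, forcing $w \cdot \gamma \in \Phi^-$. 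This gives (K2) immediately via $\Phi[w_y^s] \subseteq \Phi[w]$, and (K1) by your finiteness argument. The parenthetical fact you invoke---that in an infinite irreducible Coxeter system each simple root lies in the support of infinitely many positive roots---is true (it follows, e.g., from the infinite index of proper standard parabolics in an infinite irreducible $W$) but is worth a citation rather than a bare assertion.
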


For subsets $J,K \subseteq S$, define
\begin{displaymath}
\Phi_J^{\perp K} := \{\gamma \in \Phi_J \mid \langle \gamma,\alpha_s \rangle = 0 \mbox{ for every } s \in K\} \,,\, W_J^{\perp K} := W(\Phi_J^{\perp K})
\end{displaymath}
(see Section \ref{sec:rootsystem} for notations).
Then $(W_J^{\perp K},R^{J,K})$ is a Coxeter system with root system $\Phi_J^{\perp K}$ and simple system $\Pi^{J,K}$, where
\begin{displaymath}
R^{J,K} := S(\Phi_J^{\perp K}) \,,\, \Pi^{J,K} := \Pi(\Phi_J^{\perp K})
\end{displaymath}
(see \cite[Section 3.1]{Nui11}).
In the notations, the symbol $J$ will be omitted when $J=S$; hence we have
\begin{displaymath}
W^{\perp I}=W_S^{\perp I}=\langle \{s_\gamma \mid \gamma \in \Phi^{\perp I}\} \rangle \enspace.
\end{displaymath}
On the other hand, we define
\begin{displaymath}
Y_{x,y} := \{w \in C_{x,y} \mid w \cdot (\Phi^{\perp [y]})^+ \subseteq \Phi^+\} \mbox{ for } x,y \in S^{(\Lambda)} \enspace.
\end{displaymath}
Note that $Y_{x,y} = \{w \in C_{x,y} \mid (\Phi^{\perp [x]})^+=w \cdot (\Phi^{\perp [y]})^+\}$ (see \cite[Section 3.1]{Nui11}).
Note also that $Y_{x,y} \cdot Y_{y,z} \subseteq Y_{x,z}$ and $Y_{x,y}{}^{-1} = Y_{y,x}$ for $x,y,z \in S^{(\Lambda)}$.
Now we define $Y_I = Y_{x_I,x_I}$, therefore we have
\begin{displaymath}
Y_I = \{w \in C_I \mid (\Phi^{\perp I})^+ = w \cdot (\Phi^{\perp I})^+\} \enspace.
\end{displaymath}
We have the following results:
\begin{prop}
[{see \cite[Lemma 4.1]{Nui11}}]
\label{prop:charofBphi}
For $x \in S^{(\Lambda)}$ and $s \in S \smallsetminus [x]$, the three conditions are equivalent:
\begin{enumerate}
\item $[x]_{\sim s}$ is of finite type, and $\varphi(x,s) = x$;
\item $[x]_{\sim s}$ is of finite type, and $\Phi^{\perp [x]}[w_x^s] \neq \emptyset$;
\item $\Phi_{[x] \cup \{s\}}^{\perp [x]} \neq \emptyset$.
\end{enumerate}
If these three conditions are satisfied, then we have $\Phi^{\perp [x]}[w_x^s]=(\Phi_{[x] \cup \{s\}}^{\perp [x]})^+=\{\gamma(x,s)\}$ for a unique positive root $\gamma(x,s)$ satisfying $s_{\gamma(x,s)}=w_x^s$.
\end{prop}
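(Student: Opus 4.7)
The plan is to establish the cycle $(1) \Rightarrow (3) \Rightarrow (2) \Rightarrow (1)$, identifying the root $\gamma(x,s)$ en route. Throughout, set $I = [x]$ and $J = [x]_{\sim s}$; note $J \setminus \{s\} \subseteq I$, and $\alpha_t \perp V_J$ for every $t \in I \setminus J$ (since such a $t$ is apart from $J$ in $\Gamma$).

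For $(1) \Rightarrow (3)$, I would analyze the action of $w_x^s$ on $V$. Since $J$ is of finite type, $V_J$ is positive definite (Proposition~\ref{prop:fintyperootbasis}), so $V$ decomposes orthogonally as $V = V_{J \setminus \{s\}} \oplus \mathbb{R} v_0 \oplus \Pi_J^{\perp}$, where $v_0$ spans the one-dimensional orthogonal complement of $V_{J \setminus \{s\}}$ inside $V_J$. Any element of $W_J$ fixes $\Pi_J^{\perp}$ pointwise, and by $w_x^s \in C_I$ together with $J \setminus \{s\} \subseteq I$, the element $w_x^s$ also fixes $V_{J \setminus \{s\}}$ pointwise. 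Since $w_0(J \setminus \{s\})$ is a proper right divisor of $w_0(J)$, we have $w_x^s \neq 1$, so it must act as $-\mathrm{id}$ on $\mathbb{R} v_0$. Hence $w_x^s$ acts as a reflection on $V$, and the standard fact that any such element of $W$ has the form $s_\gamma$ for a root $\gamma$ gives $w_x^s = s_\gamma$ with $\gamma \in \mathbb{R} v_0 \cap \Phi^+$. This $\gamma$ lies in $V_J \subseteq V_{[x] \cup \{s\}}$ and is orthogonal to $\Pi_I$, so $\gamma \in (\Phi_{[x] \cup \{s\}}^{\perp [x]})^+$, and uniqueness of $\gamma(x,s)$ follows from $\mathbb{R} v_0 \cap \Phi^+$ being a singleton.

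For $(3) \Rightarrow (2)$, take positive $\gamma \in \Phi_{[x] \cup \{s\}}^{\perp [x]}$. If $\gamma$ were in $\Phi_I$, then $\gamma \perp \Pi_I$ would force $\langle \gamma, \gamma \rangle = 0$, contradicting $\gamma \in \Phi$; hence $s \in \mathrm{Supp}\,\gamma$, and by Lemma~\ref{lem:support_is_irreducible} together with a connectedness argument (propagating positive coefficients along edges of $\Gamma_J$ using $\langle \alpha_t, \alpha_u \rangle \leq 0$), we get $\mathrm{Supp}\,\gamma = J$. The crucial step is to deduce that $J$ is of finite type: $\Pi_{J \setminus \{s\}} \cup \{\gamma\}$ is a linearly independent root basis spanning $V_J$, giving the orthogonal decomposition $V_J = V_{J \setminus \{s\}} \oplus \mathbb{R}\gamma$ with $\langle \gamma, \gamma \rangle = 1 > 0$; ruling out the affine alternative (in which the orthogonal complement of $\Pi_{J \setminus \{s\}}$ inside $V_J$ coincides with the isotropic radical line, containing no unit vector) and the indefinite case (via a signature argument) forces $V_J$ positive definite. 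Once $J$ is of finite type, the invariance $w_0(J \setminus \{s\}) \cdot \gamma = \gamma$ (from $\gamma \perp \Pi_{J \setminus \{s\}}$) gives $w_x^s \cdot \gamma = w_0(J) \cdot \gamma \in \Phi_J^{-}$, so $\gamma \in \Phi^{\perp [x]}[w_x^s]$ and (2) holds.

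For $(2) \Rightarrow (1)$, take $\gamma \in \Phi^{\perp [x]}[w_x^s]$. Since $w_x^s \in W_J$ preserves $\Phi^+ \setminus \Phi_J^+$, we have $\gamma \in \Phi_J^+$; the support analysis plus $W_{J \setminus \{s\}}$-invariance of $\gamma$ place $\gamma$ in the one-dimensional fixed subspace $\mathbb{R} v_0 \cap \Phi^+$. I then argue $w_x^s = s_\gamma$ by comparing their actions on $V = V_{J \setminus \{s\}} \oplus \mathbb{R}\gamma \oplus \Pi_J^{\perp}$: both fix $\Pi_J^{\perp}$ pointwise, and a compatibility argument on the diagram automorphisms induced by $w_0(J)$ and $w_0(J \setminus \{s\})$ --- forced by the existence of the invariant $\gamma$ and by $w_x^s \cdot \gamma \in \Phi^-$ --- shows both fix $V_{J \setminus \{s\}}$ pointwise and act as $-\mathrm{id}$ on $\mathbb{R}\gamma$. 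Faithfulness of the geometric representation gives $w_x^s = s_\gamma \in C_I$. The \emph{moreover}-part then follows since any element of $\Phi^{\perp [x]}[w_x^s]$ must lie in $\mathbb{R} v_0 \cap \Phi^+ = \{\gamma\}$. The main obstacles I foresee are (i) the finite-type deduction in $(3) \Rightarrow (2)$, requiring classification-type input to rule out affine and indefinite Coxeter forms, and (ii) in $(2) \Rightarrow (1)$, the compatibility of the two diagram automorphisms (equivalently, that the involution of $J$ induced by $w_0(J)$ fixes $s$), which must be extracted from the mere existence of the positive root $\gamma$.
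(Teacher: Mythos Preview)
The paper does not supply its own proof of this proposition; it is quoted from \cite[Lemma~4.1]{Nui11}. So there is no in-paper argument to compare against, and I assess your outline on its own merits.

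Your cycle $(1)\Rightarrow(3)\Rightarrow(2)\Rightarrow(1)$ is sound, and your $(1)\Rightarrow(3)$ is essentially correct: the ``standard fact'' you invoke (an element of a \emph{finite} Coxeter group acting on its geometric representation as a Euclidean reflection is a reflection $s_\gamma$) is genuinely standard, and you only need it inside $W_J$ acting on $V_J$, not on all of $V$. Both obstacles you flag, however, are real gaps as stated, and your proposed fixes are heavier than necessary.

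For obstacle~(i), the finite-type deduction in $(3)\Rightarrow(2)$: no signature or classification input is needed. Once $\mathrm{Supp}\,\gamma = J$ and $\gamma\perp\Pi_{J\setminus\{s\}}$, observe that $s_\gamma$ fixes $\Pi_{J\setminus\{s\}}$ pointwise, while $s_\gamma\cdot\alpha_s\in\Phi^-$: writing $c_s>0$ for the $\alpha_s$-coefficient of $\gamma$, one has $\langle\gamma,\alpha_s\rangle = 1/c_s$ (from $\langle\gamma,\gamma\rangle=1$), so the $\alpha_s$-coefficient of $s_\gamma\cdot\alpha_s = \alpha_s - (2/c_s)\gamma$ is $-1$. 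Lemma~\ref{lem:rightdivisor} applied to $w=s_\gamma$ with the pair $(J\setminus\{s\},\{s\})$ then gives at once that $(J\setminus\{s\})_{\sim s}=J$ is of finite type, and moreover that $w_x^s$ is a right divisor of $s_\gamma$.

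For obstacle~(ii), the diagram-automorphism compatibility in $(2)\Rightarrow(1)$: this can be bypassed entirely. With $J$ of finite type and $\gamma$ in hand, compare inversion sets: both $\Phi[w_x^s]$ and $\Phi[s_\gamma]$ equal $\{\beta\in\Phi_J^+ : s\in\mathrm{Supp}\,\beta\}$. For $w_x^s=w_0(J)w_0(J\setminus\{s\})$ this is immediate. For $s_\gamma$, write $\beta=b_s\alpha_s+\beta'$ with $\beta'\in V_{J\setminus\{s\}}$; then $\langle\gamma,\beta\rangle=b_s/c_s$, so the $\alpha_s$-coefficient of $s_\gamma\cdot\beta$ is $-b_s$, forcing $s_\gamma\cdot\beta\in\Phi^-$ exactly when $b_s>0$. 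Since $w_x^s$ is a right divisor of $s_\gamma$ of the same length, $s_\gamma=w_x^s$. Then $w_x^s=s_\gamma$ visibly fixes each $\alpha_t$ with $t\in I$, giving $\varphi(x,s)=x$; and $(2)\Rightarrow(3)$ is trivial since $w_x^s\in W_J$ forces any $\gamma\in\Phi^{\perp[x]}[w_x^s]$ into $\Phi_J\subseteq\Phi_{[x]\cup\{s\}}$. The ``moreover'' clause follows because $(\Pi_{J\setminus\{s\}})^\perp\cap\Phi_J^+=\{\gamma\}$.
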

\begin{prop}
\label{prop:factorization_C}
Let $x,y \in S^{(\Lambda)}$.
\begin{enumerate}
\item \label{item:prop_factorization_C_decomp}
(See \cite[Theorem 4.6(i)(iv)]{Nui11}.)
The group $C_{x,x}$ admits a semidirect product decomposition $C_{x,x} = W^{\perp [x]} \rtimes Y_{x,x}$.
Moreover, if $w \in Y_{x,y}$, then the conjugation action by $w$ defines an isomorphism $u \mapsto wuw^{-1}$ of Coxeter systems from $(W^{\perp [y]},R^{[y]})$ to $(W^{\perp [x]},R^{[x]})$.
\item \label{item:prop_factorization_C_generator_Y}
(See \cite[Theorem 4.6(ii)]{Nui11}.)
Let $w \in Y_{x,y}$.
Then there are a finite sequence $z_0 = y,z_1,\dots,z_{n-1},z_n = x$ of elements of $S^{(\Lambda)}$ and a finite sequence $s_0,s_1,\dots,s_{n-1}$ of elements of $S$ satisfying that $z_{i+1} \neq z_i$, $s_i \not\in [z_i]$, $[z_i]_{\sim s_i}$ is of finite type and $w_{z_i}^{s_i} \in Y_{z_{i+1},z_i}$ for each index $0 \leq i \leq n-1$, and we have $w = w_{z_{n-1}}^{s_{n-1}} \cdots w_{z_1}^{s_1} w_{z_0}^{s_0}$.
\item \label{item:prop_factorization_C_generator_perp}
(See \cite[Theorem 4.13]{Nui11}.)
The generating set $R^{[x]}$ of $W^{\perp [x]}$ consists of elements of the form $w s_{\gamma(y,s)} w^{-1}$ satisfying that $y \in S^{(\Lambda)}$, $w \in Y_{x,y}$ and $\gamma(y,s)$ is a positive root as in the statement of Proposition \ref{prop:charofBphi} (hence $[y]_{\sim s}$ is of finite type and $\varphi(y,s) = y$).
\end{enumerate}
\end{prop}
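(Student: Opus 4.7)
Since each of the three parts of the proposition is quoted directly from a theorem of \cite{Nui11}, the cleanest route is simply to cite those results; below I sketch the internal arguments one would give if deriving them from the material already recalled in the paper.

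For part (i), the identities $Y_{x,y}Y_{y,z}\subseteq Y_{x,z}$ and $Y_{x,y}{}^{-1}=Y_{y,x}$ stated immediately before the proposition make $Y_{x,x}$ a subgroup of $C_{x,x}$. Trivial intersection $W^{\perp[x]}\cap Y_{x,x}=\{1\}$ follows by applying Theorem \ref{thm:reflectionsubgroup_Deodhar} with $\Psi=\Phi^{\perp[x]}$: any element of $W^{\perp[x]}$ preserving $(\Phi^{\perp[x]})^+$ must have length zero in $(W^{\perp[x]},R^{[x]})$. That $Y_{x,x}$ normalises $W^{\perp[x]}$ is immediate from $ws_\gamma w^{-1}=s_{w\cdot\gamma}$ together with the invariance of $\Phi^{\perp[x]}$ under the $Y_{x,x}$-action. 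To obtain $C_{x,x}=W^{\perp[x]}\cdot Y_{x,x}$ I would take $w\in C_{x,x}$ with $\Phi^{\perp[x]}[w]\neq\emptyset$, use Theorem \ref{thm:reflectionsubgroup_Deodhar} to pick $\gamma\in\Pi^{[x]}$ with $w\cdot\gamma\in\Phi^-$, and replace $w$ by $ws_\gamma$; a short check shows $|\Phi^{\perp[x]}[ws_\gamma]|<|\Phi^{\perp[x]}[w]|$, and the reduction must terminate at an element of $Y_{x,x}$. The Coxeter-system isomorphism induced by conjugation with $w\in Y_{x,y}$ is then immediate, since such $w$ bijects $(\Phi^{\perp[y]})^+$ with $(\Phi^{\perp[x]})^+$ and hence $\Pi^{[y]}$ with $\Pi^{[x]}$.

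For part (ii), I would argue by induction on $\ell(w)$. Given $w\in Y_{x,y}$ with $w\neq 1$, the task is to locate $s\notin[y]$ such that $[y]_{\sim s}$ is of finite type, $w_y^s\in Y_{z_1,y}$ for $z_1=\varphi(y,s)$, and $w(w_y^s)^{-1}\in Y_{x,z_1}$ has length strictly smaller than $\ell(w)$; one would then apply the inductive hypothesis to $w(w_y^s)^{-1}$. Establishing the existence of such an $s$ is the substantive point: it requires analysing the positive roots in $\Phi[w]$ whose support meets $[y]$ and extracting one that lies in a finite-type \lq\lq layer'' $[y]_{\sim s}$, so that the resulting $w_y^s$ both belongs to the appropriate $Y$-set and is a right divisor of $w$. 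This is the step I expect to be the principal obstacle, since it is where the combinatorics of reduced expressions interacts with the finite-type restriction imposed on $[y]_{\sim s}$.

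For part (iii), since $R^{[x]}=\{s_\gamma\mid\gamma\in\Pi^{[x]}\}$ by definition, it is enough to show that every simple root $\beta\in\Pi^{[x]}$ has the form $w\cdot\gamma(y,s)$ for some $y\in S^{(\Lambda)}$, $w\in Y_{x,y}$, and $s\notin[y]$ with $\varphi(y,s)=y$. Starting from $\beta$, one can use part (ii) together with the conjugation description in part (i) to transport $\beta$ by a sequence of $Y$-moves $w_{z_i}^{s_i}$ until the transported root has support contained in $[y]\cup\{s\}$ for some $y$ and some $s\notin[y]$; at this point the characterisation furnished by Proposition \ref{prop:charofBphi} identifies the transported root with the unique $\gamma(y,s)$, and undoing the $Y$-moves expresses $\beta$ in the required conjugated form.
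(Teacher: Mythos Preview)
The paper gives no proof of this proposition at all: each part is simply attributed to a specific result of \cite{Nui11}, and the proposition functions purely as a summary of cited facts. Your proposal correctly recognises this and matches the paper's approach exactly in its opening sentence.

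Your sketches go beyond what the paper does, so there is nothing in the paper to compare them against. They are essentially sound. One remark on part~(ii): the existence of a suitable $s$ is less of an obstacle than you suggest. Taking any $t\in S$ with $w\cdot\alpha_t\in\Phi^-$, the condition $w\in Y_{x,y}$ forces $t\notin[y]$ and $t$ adjacent to $[y]$ (else $\alpha_t\in\Phi^{\perp[y]}[w]$). Lemma~\ref{lem:rightdivisor} then gives that $[y]_{\sim t}$ is of finite type and $w_y^t$ is a right divisor of $w$, whence $\Phi^{\perp[y]}[w_y^t]\subseteq\Phi^{\perp[y]}[w]=\emptyset$; combined with Proposition~\ref{prop:charofBphi} this simultaneously yields $w_y^t\in Y_{\varphi(y,t),y}$ and $\varphi(y,t)\neq y$. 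This is exactly the mechanism used in the paper's proof of Proposition~\ref{prop:standard_decomposition_existence}, and it closes the inductive step cleanly.
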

\begin{prop}
[{see \cite[Proposition 4.8]{Nui11}}]
\label{prop:Yistorsionfree}
For any $x \in S^{(\Lambda)}$, the group $Y_{x,x}$ is torsion-free.
\end{prop}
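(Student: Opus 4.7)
The plan is to take $w \in Y_{x,x}$ with $w^n = 1$ and deduce $w = 1$. It combines two ingredients: the strong structural constraints imposed on $w$ by membership in $Y_{x,x}$, and the classical theorem that every finite subgroup of a Coxeter group is contained in a conjugate of a finite-type parabolic subgroup.

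First I would extract the structural information. Write $I = [x]$. From $w \in C_{x,x}$ we have $w \cdot \alpha_s = \alpha_s$ for every $s \in I$, and the $W$-invariance of $\langle\,,\,\rangle$ then yields $w s w^{-1} = s$ for each $s \in I$; thus $w$ fixes $V_I$ and every root of $\Phi_I$ pointwise and commutes with $W_I$. From $w \in Y_{x,x}$ we have $w \cdot (\Phi^{\perp I})^+ = (\Phi^{\perp I})^+$ setwise; combined with Proposition~\ref{prop:factorization_C}(\ref{item:prop_factorization_C_decomp}), which says that conjugation by $w$ is a Coxeter system isomorphism of $(W^{\perp I}, R^I)$, this forces $w$ to permute $\Pi^I$ (positivity pins down the sign of each image: each $w\cdot\gamma$ with $\gamma \in \Pi^I$ must be of the form $\pm\gamma'$ with $\gamma' \in \Pi^I$, and must lie in $(\Phi^{\perp I})^+$). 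In particular the inversion set $\Phi[w]$ is disjoint from $\Phi_I \cup \Phi^{\perp I}$.

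Now suppose for contradiction $w \neq 1$; then $\Phi[w] \neq \emptyset$. Pick $\gamma \in \Phi[w]$, necessarily with $\gamma \in \Phi^+ \smallsetminus (\Phi_I \cup \Phi^{\perp I})$. Since $\langle w \rangle$ is finite, the orbit $\{w^i \cdot \gamma\}$ is finite and generates a finite reflection subgroup, and Propositions~\ref{prop:fintyperootbasis} and~\ref{prop:finitesubsystem} then furnish $v \in W$ together with a finite-type $K \subseteq S$ conjugating this subgroup to the standard parabolic $W_K$, so that $v w v^{-1} \in W_K$ is a finite-order orthogonal element on the positive-definite space $V_K$. The aim is to push the fixed-vector data for $w$ through the conjugation: $w$ fixes $V_I$ pointwise and, for each $\langle w \rangle$-orbit $O$ on $\Pi^I$, fixes the orbit sum $\sum_{\beta \in O}\beta$; transported through $v$, these fixed vectors should force $vwv^{-1}$ to be the identity on $V_K$, contradicting the fact that $vwv^{-1}$ sends the positive root $v \cdot \gamma \in \Phi_K^+$ to a negative root.

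The main obstacle will be this final transport step: the subspace spanned by the forced fixed vectors, namely $V_I + (\mathrm{span}\, \Pi^I)^{\langle w \rangle}$, need not equal $V$ and, once intersected with $v^{-1} \cdot V_K$, need not exhaust that subspace either. Closing this gap will likely require either a careful case analysis driven by Lemma~\ref{lem:support_is_irreducible} (the support of any root is irreducible, which severely restricts how a root outside $\Phi_I \cup \Phi^{\perp I}$ can sit relative to $I$ and $S \smallsetminus I$), or — as a cleaner fallback consistent with the abstract's mention of the $2$-cell complex $\mathcal{Y}$ with $\pi_1(\mathcal{Y}) \cong Y_{x,x}$ — a topological argument establishing asphericity of $\mathcal{Y}$ together with a dimension bound, from which torsion-freeness of $Y_{x,x}$ would follow directly.
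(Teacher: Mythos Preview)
This proposition is not proved in the present paper; it is quoted from \cite[Proposition~4.8]{Nui11}. So there is no in-paper argument to compare against, and the relevant comparison is with the approach in \cite{Nui11}, which proceeds through the identification of $Y_{x,x}$ with the fundamental group of the $2$-cell complex $\mathcal{Y}$ mentioned in the abstract --- essentially your stated fallback route.

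Your direct root-system attack, however, has a genuine error in addition to the gap you acknowledge. The assertion that the $\langle w\rangle$-orbit $\{w^i\cdot\gamma\}$ ``generates a finite reflection subgroup'' is false in general: take $W$ of type $\widetilde{A}_2$, $w=s_1s_2$ (order~$3$), and $\gamma=\alpha_0$; the orbit is $\{\alpha_0,\ \alpha_0+2\alpha_1+\alpha_2,\ \alpha_0+\alpha_1+2\alpha_2\}$, whose pairwise inner products are all $-\tfrac12$, so by Theorem~\ref{thm:conditionforrootbasis} these three roots form a root basis of type $\widetilde{A}_2$ and the reflection subgroup they generate is infinite. Propositions~\ref{prop:fintyperootbasis} and~\ref{prop:finitesubsystem} therefore do not apply. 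What one actually needs at this point is the separate (and here uncited) theorem that every finite subgroup of a Coxeter group lies in a conjugate of a finite-type parabolic, which gives $v^{-1}wv\in W_K$ directly without passing through the orbit of $\gamma$. But even granting that, your transport step remains open exactly as you say: nothing forces the fixed space $V_I+(\mathrm{span}\,\Pi^I)^{\langle w\rangle}$ to contain $v\cdot V_K$, and Lemma~\ref{lem:support_is_irreducible} alone does not visibly close this. The topological fallback is the sound strategy and matches the source.
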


For the structure of the entire centralizer $Z_W(W_I)$, a general result (Theorem 5.2 of \cite{Nui11}) implies the following proposition in a special case (a proof of the proposition from Theorem 5.2 of \cite{Nui11} is straightforward by noticing the fact that, under the hypothesis of the following proposition, the group $\mathcal{A}$ defined in the last paragraph before Theorem 5.2 of \cite{Nui11} is trivial and hence the group $B_I$ used in Theorem 5.2 of \cite{Nui11} coincides with $Y_I$):
\begin{prop}
[{see \cite[Theorem 5.2]{Nui11}}]
\label{prop:Z_for_special_case}
If every irreducible component of $I$ of finite type is of $(-1)$-type (see Section \ref{sec:longestelement} for the terminology), then we have $Z_W(W_I) = Z(W_I) \times (W^{\perp I} \rtimes Y_I)$.
\end{prop}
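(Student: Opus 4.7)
The plan is to deduce the proposition directly from Theorem 5.2 of \cite{Nui11}, which in full generality gives a decomposition of $Z_W(W_I)$ of the form $Z(W_I) \times (W^{\perp I} \rtimes B_I)$, where $B_I$ is a group containing $Y_I$ as a normal subgroup and the quotient $B_I / Y_I$ embeds into an auxiliary finite group $\mathcal{A}$ constructed in the paragraph preceding Theorem 5.2 of \cite{Nui11}. Roughly speaking, $\mathcal{A}$ encodes the graph automorphisms of $\Gamma_I$ that arise from the conjugation action of the longest elements $w_0(I_j)$ on the finite irreducible components $I_j$ of $I$, measuring the extent to which such elements contribute centralizing elements beyond those already encoded by $Y_I$.

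The decisive step is to check that $\mathcal{A}$ is trivial under the standing hypothesis. Recall from Section \ref{sec:longestelement} that an irreducible finite-type component $I_j$ is of $(-1)$-type precisely when $w_0(I_j)$ lies in $Z(W_{I_j})$ and induces the identity automorphism on $\Gamma_{I_j}$. Since every finite irreducible component of $I$ is assumed to be of $(-1)$-type, each such $w_0(I_j)$ acts as $-1$ on $V_{I_j}$ and as the trivial graph automorphism on $\Gamma_{I_j}$; hence every generator of $\mathcal{A}$ collapses to the identity, forcing $\mathcal{A} = 1$.

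Once $\mathcal{A}$ is trivial, inspection of the definition of $B_I$ in \cite{Nui11} yields $B_I = Y_I$, and substituting this into the general decomposition produces the claimed equality $Z_W(W_I) = Z(W_I) \times (W^{\perp I} \rtimes Y_I)$. The only obstacle is bookkeeping: one must carefully match the generators of $\mathcal{A}$ (as constructed in \cite{Nui11}) with the $(-1)$-type condition on finite components of $I$ to confirm that each generator becomes trivial. No deeper argument beyond this specialization of the general theorem is needed.
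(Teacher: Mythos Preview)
Your proposal is correct and follows essentially the same route as the paper: the paper likewise observes that under the $(-1)$-type hypothesis the auxiliary group $\mathcal{A}$ from \cite{Nui11} is trivial, whence $B_I = Y_I$, and then reads off the decomposition directly from Theorem~5.2 of \cite{Nui11}.
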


We also present an auxiliary result, which will be used later:
\begin{lem}
[{see \cite[Lemma 3.2]{Nui11}}]
\label{lem:rightdivisor}
Let $w \in W$ and $J,K \subseteq S$, and suppose that $w \cdot \Pi_J \subseteq \Pi$ and $w \cdot \Pi_K \subseteq \Phi^-$.
Then $J \cap K=\emptyset$, the set $J_{\sim K}$ is of finite type, and $w_0(J_{\sim K})w_0(J_{\sim K} \smallsetminus K)$ is a right divisor of $w$ (see Section \ref{sec:defofCox} for the terminology).
\end{lem}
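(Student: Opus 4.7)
First, $J \cap K = \emptyset$ is immediate: any $s \in J \cap K$ would force $w \cdot \alpha_s \in \Pi \cap \Phi^- = \emptyset$. Setting $L := J_{\sim K}$, we thus have $L \smallsetminus K \subseteq L \cap J$. Distinct connected components of $\Gamma_L$ are pairwise apart in $\Gamma$, so the longest elements of their associated parabolic subgroups commute, and both the finite-type assertion and the candidate right divisor $w_0(L) w_0(L \smallsetminus K)$ decompose as a commuting product over these components; right-divisibility respects such a product, so I may assume $\Gamma_L$ is connected and meets $K$. An initial biconvexity observation on $\Phi[w] = \{\gamma \in \Phi^+ : w \cdot \gamma \in \Phi^-\}$ (which is finite of size $\ell(w)$): $\Pi_K \subseteq \Phi[w]$ yields $\Phi_K^+ \subseteq \Phi[w]$, so $K$ is already of finite type, while $\Pi_{L \cap J} \subseteq \Phi^+ \smallsetminus \Phi[w]$ yields $w \cdot \Phi_{L \cap J}^+ \subseteq \Phi^+$.

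The crux is to prove that every $\gamma \in \Phi_L^+$ with $\mathrm{Supp}\,\gamma \cap K \neq \emptyset$ lies in $\Phi[w]$. I would argue this by induction on the depth $\sum_s c_s$ of $\gamma = \sum_s c_s \alpha_s$: the base case $\gamma \in \Pi_K$ is the hypothesis, and for the inductive step the identity $\langle \gamma, \gamma \rangle = 1 > 0$ (since roots are unit vectors) forces the existence of $s' \in \mathrm{Supp}\,\gamma$ with $\langle \gamma, \alpha_{s'} \rangle > 0$, so $\gamma' := s' \cdot \gamma \in \Phi_L^+$ has strictly smaller depth; from
\[
w \cdot \gamma \;=\; w \cdot \gamma' \;+\; 2 \langle \gamma, \alpha_{s'} \rangle \, w \cdot \alpha_{s'},
\]
one performs subcase analysis on whether $s' \in K$ or $s' \in L \cap J$, using the inductive hypothesis (or the preliminary observation when $\mathrm{Supp}\,\gamma'$ no longer meets $K$) together with the known sign of $w \cdot \alpha_{s'}$.

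The main obstacle is the subcase $s' \in K$ with $\mathrm{Supp}\,\gamma' \cap K = \emptyset$, in which the identity yields mixed-sign contributions that do not cancel trivially: I would resolve this by transplanting to a finite sub-root-system via Proposition~\ref{prop:finitesubsystem} applied to the irreducible $\mathrm{Supp}\,\gamma$ (available by Lemma~\ref{lem:support_is_irreducible}), where the classification of finite irreducible Coxeter groups (Section~\ref{sec:longestelement}) provides explicit control of $w \cdot \gamma$. Once the key claim is established, it together with the preliminary observation gives $\Phi_L^+ \cap \Phi[w] = \Phi_L^+ \smallsetminus \Phi_{L \cap J}^+$, and a further argument invoking Proposition~\ref{prop:fintyperootbasis} (applied to the trivial root basis $\Pi_L$ and the constraints just established) forces $\Phi_L^+$ to be finite, so $L$ is of finite type. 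Then $u := w_0(L) w_0(L \smallsetminus K)$ is well defined with $\Phi[u] = \Phi_L^+ \smallsetminus \Phi_{L \smallsetminus K}^+$ by standard theory, and the key claim is precisely $\Phi[u] \subseteq \Phi[w]$, which is equivalent to $u$ being a right divisor of $w$ by the inversion-set characterization of the weak right order.
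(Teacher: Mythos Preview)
The paper does not give its own proof of this lemma; it is quoted from \cite[Lemma~3.2]{Nui11} and used as a black box. So there is nothing in the present paper to compare against, and I assess your proposal on its own merits.

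Your outline has the right shape, but two steps do not go through as written. First, the resolution of the ``main obstacle'' subcase is circular: Proposition~\ref{prop:finitesubsystem} applies only to a root basis $\Psi$ with $|W(\Psi)|<\infty$, yet you propose to invoke it on $\Pi_{\mathrm{Supp}\,\gamma}$ before any finiteness of $W_{\mathrm{Supp}\,\gamma}$ or of $W_L$ has been established---and that finiteness is exactly part of what the lemma asserts. (In fact the induction and its obstacle can be bypassed entirely: writing $w\cdot\alpha_s=\alpha_{\tau(s)}$ for $s\in L\cap J$ and $J'':=\tau(L\cap J)$, one checks that for every $t\in S\smallsetminus J''$ the $\alpha_t$-coefficient of $w\cdot\gamma$ equals $\sum_{s\in L\cap K}c_s\,(w\cdot\alpha_s)_t\leq 0$; hence $w\cdot\gamma\in\Phi^+$ forces $w\cdot\gamma\in\Phi_{J''}^+$, so $\gamma=w^{-1}(w\cdot\gamma)\in\Phi_{L\cap J}^+$, contradicting $\mathrm{Supp}\,\gamma\cap K\neq\emptyset$.) Second, your appeal to Proposition~\ref{prop:fintyperootbasis} for the finiteness of $L$ points in the wrong direction: that proposition deduces positive-definiteness \emph{from} finiteness, not the converse. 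The key claim only yields that $\Phi_L^+\smallsetminus\Phi_{L\cap J}^+\subseteq\Phi[w]$ is finite; since $\Phi_{L\cap J}^+$ is not yet known to be finite, the finiteness of $\Phi_L^+$ (equivalently of $W_L$) needs a genuinely separate argument that your plan does not supply.
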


\section{Main results}
\label{sec:main_result}

In this section, we state the main results of this paper, and give some relevant remarks.
The proof will be given in the following sections.

The main results deal with the relations between the \lq\lq finite part'' of the reflection subgroup $W^{\perp I}$ and the subgroup $Y_I$ of the centralizer $Z_W(W_I)$.
In general, for any Coxeter group $W$, the product of the finite irreducible components of $W$ is called the \emph{finite part} of $W$; here we write it as $W_{\mathrm{fin}}$.
Then, since $W^{\perp I}$ is a Coxeter group (with generating set $R^I$ and simple system $\Pi^I$) as mentioned in Section \ref{sec:properties_centralizer}, $W^{\perp I}$ has its own finite part $W^{\perp I}{}_{\mathrm{fin}}$.

To state the main theorem, we introduce a terminology: We say that a subset $I$ of $S$ is \emph{$A_{>1}$-free} if $I$ has no irreducible components of type $A_n$ with $2 \leq n < \infty$.
Then the main theorem of this paper is stated as follows:
\begin{thm}
\label{thm:YfixesWperpIfin}
Let $I$ be an $A_{>1}$-free subset of $S$ (see above for the terminology).
Then for each $\gamma \in \Pi^I$ with $s_\gamma \in W^{\perp I}{}_{\mathrm{fin}}$, we have $w \cdot \gamma = \gamma$ for every $w \in Y_I$.
Hence each element of the subgroup $Y_I$ of $Z_W(W_I)$ commutes with every element of $W^{\perp I}{}_{\mathrm{fin}}$.
\end{thm}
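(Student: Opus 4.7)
The strategy is to leverage the groupoid-style factorization from Proposition \ref{prop:factorization_C}(ii) to reduce the claim to a statement about individual generators $w_z^s$, which can then be analyzed explicitly using the structure theory of finite parabolic subgroups. Concretely, I would first generalize the target statement to arbitrary $x, y \in S^{(\Lambda)}$ (with both $[x]$ and $[y]$ being $A_{>1}$-free): for every $w \in Y_{x, y}$, the Coxeter-system isomorphism $(W^{\perp[y]},R^{[y]}) \to (W^{\perp[x]},R^{[x]})$ induced by conjugation (Proposition \ref{prop:factorization_C}(i)) restricts on the finite parts to a canonical bijection of simple systems which, in the case $x=y=x_I$, collapses to the identity on roots. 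Positivity of $w \cdot \gamma$ (guaranteed by the defining condition of $Y_{x,y}$) then upgrades any equality of reflections to an equality of roots.

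By Proposition \ref{prop:factorization_C}(ii), every element of $Y_{x,y}$ is a product of generators $w_{z_i}^{s_i} \in Y_{z_{i+1}, z_i}$, so it suffices to establish the generalized claim for a single such generator $w = w_z^s \in Y_{\varphi(z,s), z}$. Here $w_z^s = w_0(J)\,w_0(J\smallsetminus\{s\})$ lies in $W_J$ for $J = [z]_{\sim s}$, and hence automatically fixes any $\gamma \in \Pi^{[z]}$ whose support is disjoint from $J$ (using Lemma \ref{lem:support_is_irreducible} to control supports). For $\gamma$ whose support meets $J$, I would analyze the action explicitly, using Proposition \ref{prop:factorization_C}(iii) to describe $\Pi^{[z]}$ via the roots $\gamma(y,s)$ of Proposition \ref{prop:charofBphi}, together with the list of positive roots of the finite irreducible types given in Section \ref{sec:Coxetergroups} and Figure \ref{fig:finite_irreducible_Coxeter_groups}. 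One also needs to check that $A_{>1}$-freeness propagates along the factorization, i.e., that each intermediate $[z_i]$ remains $A_{>1}$-free; this should follow directly from the definition of $\varphi$ (which only ever modifies an irreducible component of $[z_i]$ of finite type by swapping in a new vertex from $J$).

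The $A_{>1}$-free hypothesis enters precisely at the single-generator analysis: for finite irreducible Coxeter groups other than type $A_n$ ($n \geq 2$), the longest element is central (of $(-1)$-type, as recalled at the end of Section \ref{sec:longestelement}), so conjugation by $w_0(J)$ induces no nontrivial Coxeter graph automorphism, and composing with $w_0(J \smallsetminus \{s\})$ yields a very restricted action on the remaining simple roots — one that, under the hypothesis $s_\gamma \in W^{\perp[z]}{}_{\mathrm{fin}}$, turns out to send $\gamma$ to its canonical image in $\Pi^{[\varphi(z,s)]}$. I expect the main obstacle to be exactly this case-by-case computation: tracking, for each admissible configuration of $\gamma$ relative to $J$ and $s$, how $w_z^s$ moves $\gamma$ and verifying compatibility with the canonical identification of finite parts. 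The existence of a counterexample when the hypothesis is dropped (advertised for Section \ref{sec:counterexample}) confirms that the $A_{>1}$-free assumption is not cosmetic: in the presence of a type $A_n$ component with $n \geq 2$, the longest-element conjugation introduces the nontrivial diagram flip, and the induced twist on $\Pi^I$ can genuinely move a finite-part simple root.
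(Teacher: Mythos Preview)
Your reduction to single generators $w_z^s \in Y_{\varphi(z,s),z}$ has a structural gap: for such a generator, the source and target $[z]$ and $[\varphi(z,s)]$ are \emph{different}, so there is no a priori ``canonical bijection'' of finite parts to compare $w_z^s$ against. Your phrase ``restricts on the finite parts to a canonical bijection of simple systems which, in the case $x=y=x_I$, collapses to the identity'' is precisely the statement to be proven, not an input to the proof. A generator-by-generator check only becomes meaningful once you have either (a) an independently defined identification of the finite parts of the various $\Pi^{[z]}$, so that you can verify each $w_z^s$ respects it, or (b) an explicit presentation of $Y_I$ by generators and relations (as the fundamental group of the $2$-complex $\mathcal{Y}$ from \cite{Nui11}), so that you can check the action is trivial on each defining relation. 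You do neither; without one of them the argument is circular.

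There is also a factual slip in your use of the $(-1)$-type property. You assert that ``for finite irreducible Coxeter groups other than type $A_n$ ($n\geq 2$), the longest element is central''; this is false for $D_k$ with $k$ odd, for $E_6$, and for $I_2(m)$ with $m$ odd (see Section~\ref{sec:longestelement}). More importantly, you apply this to $J=[z]_{\sim s}$, but the $A_{>1}$-free hypothesis constrains the type of the components of $[z]$, not of $J$: even when every component of $[z]$ is $A_1$, the set $J$ can perfectly well be of type $A_n$ (with $s$ in the middle), $D_{2k+1}$, or $E_6$, so $w_0(J)$ need not be central. The paper's proof is accordingly quite different: it is a global argument that first conjugates the entire $\langle w\rangle$-orbit of $\gamma$ (together with the relevant part of $\Pi_I$) into a single finite standard parabolic $\Phi_J$ via Proposition~\ref{prop:finitesubsystem}, and then performs an extended case analysis on the type of $J$ and on certain ``semi-standard decompositions'' of $w$ tracking $J$; the $A_{>1}$-freeness of $I$ is used repeatedly inside that case analysis, not to make any longest element central.
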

Among the several cases for the subset $I$ of $S$ covered by Theorem \ref{thm:YfixesWperpIfin}, we emphasize the following important special case:
\begin{cor}
\label{cor:YfixesWperpIfin}
Let $I \subseteq S$.
If every irreducible component of $I$ of finite type is of $(-1)$-type (see Section \ref{sec:longestelement} for the terminology), then we have
\begin{displaymath}
Z_W(W_I) = Z(W_I) \times W^{\perp I}{}_{\mathrm{fin}} \times (W^{\perp I}{}_{\mathrm{inf}} \rtimes Y_I) \enspace,
\end{displaymath}
where $W^{\perp I}{}_{\mathrm{inf}}$ denotes the product of the infinite irreducible components of $W^{\perp I}$ (hence $W^{\perp I} = W^{\perp I}{}_{\mathrm{fin}} \times W^{\perp I}{}_{\mathrm{inf}}$).
\end{cor}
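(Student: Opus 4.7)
The plan is to derive the corollary by combining Proposition \ref{prop:Z_for_special_case} with Theorem \ref{thm:YfixesWperpIfin}; no delicate new argument is needed, only a careful check that the semidirect product $W^{\perp I} \rtimes Y_I$ splits off the finite part as a direct factor.

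First, I would observe that the hypothesis forces $I$ to be $A_{>1}$-free: inspecting the list of finite irreducible types in Section \ref{sec:longestelement}, the types whose longest element is central (i.e.\ the $(-1)$-types) are $A_1, B_n, D_{2k}, E_7, E_8, F_4, H_n$ and $I_2(m)$ with $m$ even, and in particular no $A_n$ with $n \geq 2$ appears. Hence Theorem \ref{thm:YfixesWperpIfin} applies, so every element of $Y_I$ commutes with every element of $W^{\perp I}{}_{\mathrm{fin}}$. Next, Proposition \ref{prop:Z_for_special_case} yields
\begin{displaymath}
Z_W(W_I) = Z(W_I) \times (W^{\perp I} \rtimes Y_I) \enspace,
\end{displaymath}
so the task reduces to decomposing the second factor.

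For that decomposition I would argue as follows. By Proposition \ref{prop:factorization_C}\,(\ref{item:prop_factorization_C_decomp}), conjugation by any $w \in Y_I$ is an isomorphism of the Coxeter system $(W^{\perp I}, R^I)$ to itself and therefore permutes the irreducible components of $(W^{\perp I}, R^I)$, preserving their types; in particular $Y_I$ stabilizes the subgroups $W^{\perp I}{}_{\mathrm{fin}}$ and $W^{\perp I}{}_{\mathrm{inf}}$ under conjugation. Combined with Theorem \ref{thm:YfixesWperpIfin} (which says that the $Y_I$-action on $W^{\perp I}{}_{\mathrm{fin}}$ is trivial), elements of $W^{\perp I}{}_{\mathrm{fin}}$ commute with every element of $Y_I$ and with every element of $W^{\perp I}{}_{\mathrm{inf}}$ (since distinct irreducible components of $W^{\perp I}$ commute). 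Thus $W^{\perp I}{}_{\mathrm{fin}}$ is central in $W^{\perp I} \rtimes Y_I$, while $W^{\perp I}{}_{\mathrm{inf}} \rtimes Y_I$ is a genuine subgroup of it. The intersection $W^{\perp I}{}_{\mathrm{fin}} \cap (W^{\perp I}{}_{\mathrm{inf}} \rtimes Y_I)$ lies in $W^{\perp I} \cap (W^{\perp I}{}_{\mathrm{inf}} \rtimes Y_I) = W^{\perp I}{}_{\mathrm{inf}}$, and $W^{\perp I}{}_{\mathrm{fin}} \cap W^{\perp I}{}_{\mathrm{inf}}$ is trivial. Together with $W^{\perp I} = W^{\perp I}{}_{\mathrm{fin}} \times W^{\perp I}{}_{\mathrm{inf}}$, this gives the internal direct product
\begin{displaymath}
W^{\perp I} \rtimes Y_I = W^{\perp I}{}_{\mathrm{fin}} \times (W^{\perp I}{}_{\mathrm{inf}} \rtimes Y_I) \enspace,
\end{displaymath}
which when substituted into the formula from Proposition \ref{prop:Z_for_special_case} yields the claim.

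There is no genuine obstacle beyond the bookkeeping above; the only conceptual point to watch is that one must invoke Proposition \ref{prop:factorization_C}\,(\ref{item:prop_factorization_C_decomp}) to ensure $Y_I$ respects the decomposition of $W^{\perp I}$ into its finite and infinite parts, so that $W^{\perp I}{}_{\mathrm{inf}} \rtimes Y_I$ makes sense as a subgroup rather than merely as an abstract semidirect product.
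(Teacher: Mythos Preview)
Your proposal is correct and follows essentially the same approach as the paper: observe that the $(-1)$-type hypothesis makes $I$ $A_{>1}$-free, apply Proposition~\ref{prop:Z_for_special_case} to get $Z_W(W_I) = Z(W_I) \times (W^{\perp I} \rtimes Y_I)$, and then use Theorem~\ref{thm:YfixesWperpIfin} together with the fact that $W^{\perp I}{}_{\mathrm{inf}}$ commutes with $W^{\perp I}{}_{\mathrm{fin}}$ to split off $W^{\perp I}{}_{\mathrm{fin}}$ as a direct factor. Your version is simply more explicit than the paper's---in particular, spelling out via Proposition~\ref{prop:factorization_C}(\ref{item:prop_factorization_C_decomp}) why $Y_I$ normalizes $W^{\perp I}{}_{\mathrm{inf}}$ and verifying the trivial intersection---but there is no difference in strategy.
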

\begin{proof}
Note that the assumption on $I$ in Theorem \ref{thm:YfixesWperpIfin} is now satisfied.
In this situation, Proposition \ref{prop:Z_for_special_case} implies that $Z_W(W_I) = Z(W_I) \times (W^{\perp I} \rtimes Y_I)$.
Now by Theorem \ref{thm:YfixesWperpIfin}, both $Y_I$ and $W^{\perp I}{}_{\mathrm{inf}}$ centralize $W^{\perp I}{}_{\mathrm{fin}}$, therefore the latter factor of $Z_W(W_I)$ decomposes further as $W^{\perp I}{}_{\mathrm{fin}} \times (W^{\perp I}{}_{\mathrm{inf}} \rtimes Y_I)$.
\end{proof}
We notice that the conclusion of Theorem \ref{thm:YfixesWperpIfin} will not generally hold when we remove the $A_{>1}$-freeness assumption on $I$.
A counterexample will be given in Section \ref{sec:counterexample}.

Here we give a remark on an application of the main results to a study of the isomorphism problem in Coxeter groups.
An important branch in the research on the isomorphism problem in Coxeter groups is to investigate, for two Coxeter systems $(W,S)$, $(W',S')$ and a group isomorphism $f \colon W \to W'$, the possibilities of \lq\lq shapes'' of the images $f(r) \in W'$ by $f$ of reflections $r \in W$ (with respect to the generating set $S$); for example, whether $f(r)$ is always a reflection in $W'$ (with respect to $S'$) or not.
Now if $r \in S$, then Corollary \ref{cor:YfixesWperpIfin} and Proposition \ref{prop:Yistorsionfree} imply that the unique maximal reflection subgroup of the centralizer of $r$ in $W$ is $\langle r \rangle \times W^{\perp \{r\}}$, which has finite part $\langle r \rangle \times W^{\perp \{r\}}{}_{\mathrm{fin}}$.
Moreover, the property of $W^{\perp \{r\}}{}_{\mathrm{fin}}$ shown in Theorem \ref{thm:YfixesWperpIfin} can imply that the factor $W^{\perp \{r\}}{}_{\mathrm{fin}}$ becomes \lq\lq frequently'' almost trivial.
In such a case, the finite part of the unique maximal reflection subgroup of the centralizer of $f(r)$ in $W'$ should be very small, which can be shown to be impossible if $f(r)$ is too far from being a reflection.
Thus the possibilities of the shape of $f(r)$ in $W'$ can be restricted by using Theorem \ref{thm:YfixesWperpIfin}.
See \cite{Nui_ref} for a detailed study along this direction.
The author hope that such an argument can be generalized to the case that $r$ is not a reflection but an involution of \lq\lq type'' which is $A_{>1}$-free (in a certain appropriate sense).

\section{Proof of Theorem \ref{thm:YfixesWperpIfin}: General properties}
\label{sec:proof_general}

In this and the next sections, we give a proof of Theorem \ref{thm:YfixesWperpIfin}.
First, this section gives some preliminary results that hold for an arbitrary $I \subseteq S$ (not necessarily $A_{>1}$-free; see Section \ref{sec:main_result} for the terminology).
Then the next section will focus on the case that $I$ is $A_{>1}$-free as in Theorem \ref{thm:YfixesWperpIfin} and complete the proof of Theorem \ref{thm:YfixesWperpIfin}.

\subsection{Decompositions of elements of $Y_{z,y}$}
\label{sec:finitepart_decomposition_Y}

It is mentioned in Proposition \ref{prop:factorization_C}(\ref{item:prop_factorization_C_generator_Y}) that each element $u \in Y_{z,y}$ with $y,z \in S^{(\Lambda)}$ admits a kind of decomposition into elements of some $Y$.
Here we introduce a generalization of such decompositions, which will play an important role below.
We give a definition:
\begin{defn}
\label{defn:standard_decomposition}
Let $u \in Y_{z,y}$ with $y,z \in S^{(\Lambda)}$.
We say that an expression $\mathcal{D} := \omega_{n-1} \cdots \omega_1\omega_0$ of $u$ is a \emph{semi-standard decomposition} of $u$ with respect to a subset $J$ of $S$ if there exist $y^{(i)} = y^{(i)}(\mathcal{D}) \in S^{(\Lambda)}$ for $0 \leq i \leq n$, $t^{(i)} = t^{(i)}(\mathcal{D}) \in S$ for $0 \leq i \leq n-1$ and $J^{(i)} = J^{(i)}(\mathcal{D}) \subseteq S$ for $0 \leq i \leq n$, with $y^{(0)} = y$, $y^{(n)} = z$ and $J^{(0)} = J$, satisfying the following conditions for each index $0 \leq i \leq n-1$:
\begin{itemize}
\item We have $t^{(i)} \not\in [y^{(i)}] \cup J^{(i)}$ and $t^{(i)}$ is adjacent to $[y^{(i)}]$.
\item The subset $K^{(i)} = K^{(i)}(\mathcal{D}) := ([y^{(i)}] \cup J^{(i)})_{\sim t^{(i)}}$ of $S$ is of finite type (see Section \ref{sec:properties_centralizer} for the notation).
\item We have $\omega_i = \omega_{y^{(i)},J^{(i)}}^{t^{(i)}} := w_0(K^{(i)})w_0(K^{(i)} \smallsetminus \{t^{(i)}\})$.
\item We have $\omega_i \in Y_{y^{(i+1)},y^{(i)}}$ and $\omega_i \cdot \Pi_{J^{(i)}} = \Pi_{J^{(i+1)}}$.
\end{itemize}
We call the above subset $K^{(i)}$ of $S$ the \emph{support} of $\omega_i$.
We call a component $\omega_i$ of $\mathcal{D}$ a \emph{wide transformation} if its support $K^{(i)}$ intersects with $J^{(i)} \smallsetminus [y^{(i)}]$; otherwise, we call $\omega_i$ a \emph{narrow transformation}, in which case we have $\omega_i = \omega_{y^{(i)},J^{(i)}}^{t^{(i)}} = w_{y^{(i)}}^{t^{(i)}}$.
Moreover, we say that $\mathcal{D} = \omega_{n-1} \cdots \omega_1\omega_0$ is a \emph{standard decomposition} of $u$ if $\mathcal{D}$ is a semi-standard decomposition of $u$ and $\ell(u) = \sum_{j=0}^{n-1} \ell(\omega_j)$.
The integer $n$ is called the \emph{length} of $\mathcal{D}$ and is denoted by $\ell(\mathcal{D})$.
\end{defn}
\begin{exmp}
\label{exmp:semi-standard_decomposition}
We give an example of a semi-standard decomposition.
Let $(W,S)$ be a Coxeter system of type $D_7$, with standard labelling $r_1,\dots,r_7$ of elements of $S$ given in Section \ref{sec:longestelement}.
We put $n := 4$, and define the objects $y^{(i)}$, $t^{(i)}$ and $J^{(i)}$ as in Table \ref{tab:example_semi-standard_decomposition}, where we abbreviate each $r_i$ to $i$ for simplicity.
In this case, the subsets $K^{(i)}$ of $S$ introduced in Definition \ref{defn:standard_decomposition} are determined as in the last row of Table \ref{tab:example_semi-standard_decomposition}.
We have
\begin{displaymath}
\begin{split}
\omega_0 &= w_0(\{r_1,r_2,r_3,r_4,r_5\})w_0(\{r_1,r_2,r_3,r_5\}) = r_2r_3r_4r_5r_1r_2r_3r_4 \enspace, \\
\omega_1 &= w_0(\{r_3,r_4,r_5,r_6\})w_0(\{r_3,r_4,r_5\}) = r_3r_4r_5r_6 \enspace, \\
\omega_2 &= w_0(\{r_4,r_5,r_6,r_7\})w_0(\{r_4,r_5,r_6\}) = r_7r_5r_4r_6r_5r_7 \enspace, \\
\omega_3 &= w_0(\{r_3,r_4,r_5,r_6\})w_0(\{r_4,r_5,r_6\}) = r_6r_5r_4r_3 \enspace.
\end{split}
\end{displaymath}
Let $u$ denote the element $\omega_3\omega_2\omega_1\omega_0$ of $W$.
Then it can be shown that $u \in Y_{z,y}$ where $y := y^{(0)} = (r_1,r_2,r_3)$ and $z := y^{(n)} = (r_5,r_4,r_3)$, and the expression $\mathcal{D} = \omega_3\omega_2\omega_1\omega_0$ is a semi-standard decomposition of $u$ of length $4$ with respect to $J := J^{(0)} = \{r_5\}$.
Moreover, $\mathcal{D}$ is in fact a standard decomposition of $u$ (which is the same as the one obtained by using Proposition \ref{prop:standard_decomposition_existence} below).
Among the four component $\omega_i$, the first one $\omega_0$ is a wide transformation and the other three $\omega_1,\omega_2,\omega_3$ are narrow transformations.
\end{exmp}
\begin{table}[hbt]
\centering
\caption{The data for the example of semi-standard decompositions}
\label{tab:example_semi-standard_decomposition}
\begin{tabular}{|c||c|c|c|c|c|} \hline
$i$ & $4$ & $3$ & $2$ & $1$ & $0$ \\ \hline\hline
$y^{(i)}$ & $(5,4,3)$ & $(6,5,4)$ & $(4,5,6)$ & $(3,4,5)$ & $(1,2,3)$ \\ \hline
$t^{(i)}$ & --- & $3$ & $7$ & $6$ & $4$ \\ \hline
$J^{(i)}$ & $\{1\}$ & $\{1\}$ & $\{1\}$ & $\{1\}$ & $\{5\}$ \\ \hline
$K^{(i)}$ & --- & $\{3,4,5,6\}$ & $\{4,5,6,7\}$ & $\{3,4,5,6\}$ & $\{1,2,3,4,5\}$ \\ \hline
\end{tabular}
\end{table}

The next proposition shows existence of standard decompositions:
\begin{prop}
\label{prop:standard_decomposition_existence}
Let $u \in Y_{z,y}$ with $y,z \in S^{(\Lambda)}$, and let $J \subseteq S$ satisfying that $u \cdot \Pi_J \subseteq \Pi$.
Then there exists a standard decomposition of $u$ with respect to $J$.
\end{prop}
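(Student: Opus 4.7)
The plan is to proceed by induction on $\ell(u)$. The base case $\ell(u)=0$ forces $u=e$ and $y=z$, and the empty expression serves as a standard decomposition with $J^{(0)}=J$. For $\ell(u)>0$, the strategy is to peel off a suitable first factor $\omega_0=\omega_{y,J}^{t}$ as a right divisor of $u$ (in the length-additive sense), reduce to $u':=u\omega_0^{-1}$, and apply the inductive hypothesis.

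The heart of the argument is the following \emph{adjacency claim}: there exists $t\in S$ with $u\cdot\alpha_t\in\Phi^-$ and $t$ adjacent to $[y]$ in $\Gamma$. To prove it, suppose no descent of $u$ is adjacent to $[y]$, and let $D:=\{s\in S\mid u\cdot\alpha_s\in\Phi^-\}$, which is nonempty since $\ell(u)>0$. Applying Lemma~\ref{lem:rightdivisor} to $u$ with the subsets $[y]$ (using $u\in C_{z,y}$, so $u\cdot\Pi_{[y]}\subseteq\Pi$) and $D$ (by definition $u\cdot\Pi_D\subseteq\Phi^-$), we see that $[y]_{\sim D}$ is of finite type and $w_0([y]_{\sim D})w_0([y]_{\sim D}\smallsetminus D)$ is a right divisor of $u$. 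By our assumption, $[y]$ and $D$ are apart in $\Gamma$, so $[y]_{\sim D}=D$ and the right divisor collapses to the involution $w_0(D)$, which commutes elementwise with $[y]$. The standard fact that a right divisor $\omega$ of $u$ in the length-additive sense satisfies $\Phi[\omega]\subseteq\Phi[u]$, combined with $\Phi[u]\cap(\Phi^{\perp[y]})^+=\emptyset$ (from $u\in Y_{z,y}$), yields $w_0(D)\in Y_{y,y}$, contradicting the torsion-freeness of $Y_{y,y}$ from Proposition~\ref{prop:Yistorsionfree}.

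Given such a $t$, we have $t\notin[y]\cup J$ automatically because $u\cdot\Pi_{[y]\cup J}\subseteq\Pi$. Applying Lemma~\ref{lem:rightdivisor} once more with $[y]\cup J$ and $\{t\}$, we deduce that $K:=([y]\cup J)_{\sim t}$ is of finite type and that $\omega_0:=\omega_{y,J}^t=w_0(K)w_0(K\smallsetminus\{t\})$ is a (nontrivial) right divisor of $u$. The classical description of $w_0(K)w_0(K\smallsetminus\{t\})$ as an isomorphism of Coxeter systems $K\smallsetminus\{t\}\to K\smallsetminus\{t'\}$ induced by a diagram automorphism, together with the fact that elements of $([y]\cup J)\smallsetminus K$ commute with everything in $K$, yields $\omega_0\cdot\Pi_{[y]}=\Pi_{[y^{(1)}]}$ and $\omega_0\cdot\Pi_J=\Pi_{J^{(1)}}$ for appropriate $y^{(1)}\in S^{(\Lambda)}$ and $J^{(1)}\subseteq S$; rerunning the $\Phi[\omega_0]\subseteq\Phi[u]$ argument then upgrades $\omega_0\in C_{y^{(1)},y}$ to $\omega_0\in Y_{y^{(1)},y}$.

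Setting $u':=u\omega_0^{-1}$, length additivity gives $\ell(u')<\ell(u)$, and direct computations show $u'\in Y_{z,y^{(1)}}$ with $u'\cdot\Pi_{J^{(1)}}=u\cdot\Pi_J\subseteq\Pi$. By the inductive hypothesis, $u'$ admits a standard decomposition with respect to $J^{(1)}$; concatenating $\omega_0$ on the right yields the desired standard decomposition of $u$ with respect to $J$. I expect the main obstacle to be the adjacency claim: without it, the first factor produced by Lemma~\ref{lem:rightdivisor} need not be of the required form $\omega_{y,J}^t$, and the forbidden case turns out, via the torsion-free argument above, to be precisely the one that would contradict the ambient structure of $Y_{y,y}$. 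The remainder is essentially bookkeeping about how the induced diagram automorphism transforms the indexing data $y$ and $J$.
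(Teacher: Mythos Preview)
Your proof is correct and follows the same inductive skeleton as the paper: induct on $\ell(u)$, find a descent $t$ adjacent to $[y]$, peel off $\omega_0=\omega_{y,J}^{t}$ as a length-additive right divisor via Lemma~\ref{lem:rightdivisor}, check $\omega_0\in Y_{y^{(1)},y}$ using $\Phi[\omega_0]\subseteq\Phi[u]$, and recurse on $u'=u\omega_0^{-1}$.

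The one place you diverge is the adjacency claim, which you treat as the heart of the argument and prove by contradiction through the torsion-freeness of $Y_{y,y}$. The paper handles this in a single line: any $t\in S$ with $u\cdot\alpha_t\in\Phi^-$ automatically satisfies $\alpha_t\notin\Phi^{\perp[y]}$ (because $u\in Y_{z,y}$ sends $(\Phi^{\perp[y]})^+$ into $\Phi^+$), and since also $t\notin[y]$, the non-orthogonality forces $t$ to be adjacent to $[y]$. Your detour via $w_0(D)\in Y_{y,y}$ is valid, but it is strictly more work and brings in Proposition~\ref{prop:Yistorsionfree} where a direct observation suffices.
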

\begin{proof}
We proceed the proof by induction on $\ell(u)$.
For the case $\ell(u) = 0$, i.e., $u = 1$, the empty expression satisfies the conditions for a standard decomposition of $u$.
From now, we consider the case $\ell(u) > 0$.
Then there is an element $t = t^{(0)} \in S$ satisfying that $u \cdot \alpha_t \in \Phi^-$.
Since $u \in Y_{z,y}$ and $u \cdot \Pi_J \subseteq \Pi \subseteq \Phi^+$, we have $t \not\in [y] \cup J$ and $\alpha_t \not\in \Phi^{\perp [y]}$, therefore $t$ is adjacent to $[y]$.
Now by Lemma \ref{lem:rightdivisor}, $K = K^{(0)} := ([y] \cup J)_{\sim t}$ is of finite type and $\omega_0 := \omega_{y,J}^{t}$ is a right divisor of $u$ (see Section \ref{sec:defofCox} for the terminology).
By the definition of $\omega_{y,J}^{t}$ in Definition \ref{defn:standard_decomposition}, there exist unique $y^{(1)} \in S^{(\Lambda)}$ and $J^{(1)} \subseteq S$ satisfying that $y^{(1)} = \omega_0 \ast y$ (see Section \ref{sec:properties_centralizer} for the notation) and $\omega_0 \cdot \Pi_J = \Pi_{J^{(1)}}$.
Moreover, since $\omega_0$ is a right divisor of $u$, it follows that $\Phi[\omega_0] \subseteq \Phi[u]$ (see e.g., Lemma 2.2 of \cite{Nui11}), therefore $\Phi^{\perp [y]}[\omega_0] \subseteq \Phi^{\perp [y]}[u] = \emptyset$ and $\omega_0 \in Y_{y^{(1)},y}$.
Put $u' = u\omega_0{}^{-1}$.
Then we have $u' \in Y_{z,y^{(1)}}$, $u' \cdot \Pi_{J^{(1)}} \subseteq \Pi$ and $\ell(u') = \ell(u) - \ell(\omega_0) < \ell(u)$ (note that $\omega_0 \neq 1$).
Hence the concatenation of $\omega_0$ to a standard decomposition of $u' \in Y_{z,y^{(1)}}$ with respect to $J^{(1)}$ obtained by the induction hypothesis gives a desired standard decomposition of $u$.
\end{proof}

We present some properties of (semi-)standard decompositions.
First, we have the following:
\begin{lem}
\label{lem:another_decomposition_Y_no_loop}
For any semi-standard decomposition $\omega_{n-1} \cdots \omega_1\omega_0$ of an element of $W$, for each $0 \leq i \leq n-1$, there exists an element of $\Pi_{K^{(i)} \smallsetminus \{t^{(i)}\}}$ which is not fixed by $\omega_i$.
\end{lem}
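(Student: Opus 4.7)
The plan is to argue by contradiction: assume $\omega_i$ fixes every $\alpha_s$ with $s \in K^{(i)} \smallsetminus \{t^{(i)}\}$ and derive a positive root in $\Phi^{\perp [y^{(i)}]}$ that $\omega_i$ sends to $\Phi^-$, contradicting $\omega_i \in Y_{y^{(i+1)},y^{(i)}}$. Write $K := K^{(i)}$ and $t := t^{(i)}$. Under this assumption, $\omega_i$ fixes the whole subspace $V_{K \smallsetminus \{t\}}$ pointwise. Since $K$ is of finite type, the restriction of $\langle\,,\,\rangle$ to $V_K$ is positive definite, $W_K$ acts faithfully on $V_K$, and each generator $s \in K$ acts trivially on $V_K^{\perp}$; hence the action of $\omega_i$ on the $1$-dimensional orthogonal complement of $V_{K \smallsetminus \{t\}}$ inside $V_K$ must be $\pm 1$. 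A length comparison gives $\omega_i = w_0(K) w_0(K \smallsetminus \{t\}) \neq 1$, because $w_0(K \smallsetminus \{t\})$ is a proper right divisor of $w_0(K)$, so this one-dimensional action must be $-1$. Consequently $\omega_i$ is a Coxeter-group reflection $s_\beta$ for a unique $\beta \in \Phi_K^+$ with $\beta \perp V_{K \smallsetminus \{t\}}$; in particular $t \in \mathrm{Supp}\,\beta \subseteq K$.

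Next I would verify $\beta \in \Phi^{\perp [y^{(i)}]}$. Take any $s \in [y^{(i)}]$. If $s \in K$, then since $t \not\in [y^{(i)}]$ by the semi-standard decomposition conditions, $s \in K \smallsetminus \{t\}$, and $\langle \beta, \alpha_s\rangle = 0$ automatically. If $s \not\in K$, I claim $s$ is non-adjacent in $\Gamma$ to every element of $\mathrm{Supp}\,\beta \subseteq K$: otherwise $s$ would be adjacent in $\Gamma$ to some $s' \in K$, and since both $s, s'$ lie in $[y^{(i)}] \cup J^{(i)} \cup \{t\}$ they would share a connected component of the induced subgraph $\Gamma_{[y^{(i)}] \cup J^{(i)} \cup \{t\}}$; combined with $s' \in K = ([y^{(i)}] \cup J^{(i)})_{\sim t}$, this forces $s \in K$, a contradiction. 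Therefore $\langle \alpha_s, \alpha_{s'}\rangle = 0$ for every $s' \in \mathrm{Supp}\,\beta$, so $\langle \beta, \alpha_s\rangle = 0$.

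Combining the two steps, $\beta$ is a positive root in $\Phi^{\perp [y^{(i)}]}$, yet $\omega_i \cdot \beta = s_\beta \cdot \beta = -\beta \in \Phi^-$, contradicting the defining property $\omega_i \cdot (\Phi^{\perp [y^{(i)}]})^+ \subseteq \Phi^+$ of $Y_{y^{(i+1)},y^{(i)}}$. The only delicate point is the connectivity argument in the case $s \not\in K$; it relies on the precise definition $K^{(i)} = ([y^{(i)}] \cup J^{(i)})_{\sim t^{(i)}}$ together with the observation that the induced subgraph on $[y^{(i)}] \cup J^{(i)} \cup \{t^{(i)}\}$ retains every edge of $\Gamma$ among these vertices, so adjacency in $\Gamma$ immediately promotes to a shared connected component of the induced subgraph. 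Everything else is a clean application of finite-type positive definiteness and the length inequality $\ell(w_0(K)) > \ell(w_0(K \smallsetminus \{t\}))$.
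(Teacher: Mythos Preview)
Your proof is correct and follows essentially the same contradiction strategy as the paper: assume $\omega_i$ fixes $\Pi_{K^{(i)}\smallsetminus\{t^{(i)}\}}$, produce a positive root $\beta\in\Phi_{K^{(i)}}^{\perp K^{(i)}\smallsetminus\{t^{(i)}\}}$ with $\omega_i\cdot\beta\in\Phi^-$, and then use that $K^{(i)}$ is apart from $[y^{(i)}]\smallsetminus K^{(i)}$ to conclude $\beta\in\Phi^{\perp[y^{(i)}]}$, contradicting $\omega_i\in Y_{y^{(i+1)},y^{(i)}}$. The only difference is cosmetic: the paper obtains $\beta$ by invoking Proposition~\ref{prop:charofBphi} (applied with $[y^{(i)}]\cup J^{(i)}$ in place of $[x]$ and $t^{(i)}$ in place of $s$), whereas you unpack that citation by appealing directly to positive definiteness of $\langle\,,\,\rangle$ on $V_{K^{(i)}}$ and the standard fact that an element of a finite Coxeter group acting as a reflection on its geometric representation is $s_\gamma$ for some root $\gamma$.
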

\begin{proof}
Assume contrary that $\omega_i$ fixes $\Pi_{K^{(i)} \smallsetminus \{t^{(i)}\}}$ pointwise.
Then by applying Proposition \ref{prop:charofBphi} to the pair of $[y^{(i)}] \cup J^{(i)}$ and $t^{(i)}$ instead of the pair of $[x]$ and $s$, it follows that there exists a root $\gamma \in (\Phi_{K^{(i)}}^{\perp K^{(i)} \smallsetminus \{t^{(i)}\}})^+$ with $\omega_i \cdot \gamma \in \Phi^-$ (note that, in this case, the element $w_x^s$ in Proposition \ref{prop:charofBphi} coincides with $\omega_i$).
By the definition of the support $K^{(i)}$ of $\omega_i$, $K^{(i)}$ is apart from $[y^{(i)}] \smallsetminus K^{(i)}$, therefore this root $\gamma$ also belongs to $(\Phi^{\perp [y^{(i)}]})^+$.
Hence we have $\Phi^{\perp [y^{(i)}]}[\omega_i] \neq \emptyset$, contradicting the property $\omega_i \in Y_{y^{(i+1)},y^{(i)}}$ in Definition \ref{defn:standard_decomposition}.
Hence Lemma \ref{lem:another_decomposition_Y_no_loop} holds.
\end{proof}

For a semi-standard decomposition $\mathcal{D} = \omega_n \cdots \omega_1\omega_0$ of $u \in Y_{z,y}$, let $0 \leq i_1 < i_2 < \cdots < i_k \leq n$ be the indices $i$ with the property that $[y^{(i+1)}(\mathcal{D})] = [y^{(i)}(\mathcal{D})]$ and $J^{(i+1)}(\mathcal{D}) = J^{(i)}(\mathcal{D})$.
Then we define the \emph{simplification} $\widehat{\mathcal{D}}$ of $\mathcal{D}$ to be the expression $\omega_n \cdots \hat{\omega_{i_k}} \cdots \hat{\omega_{i_1}} \cdots \hat{\omega_{i_0}} \cdots \omega_0$ obtained from $\mathcal{D} = \omega_n \cdots \omega_1\omega_0$ by removing all terms $\omega_{i_j}$ with $1 \leq j \leq k$.
Let $\widehat{u}$ denote the element of $W$ expressed by the product $\widehat{\mathcal{D}}$.
The following lemma is straightforward to prove:
\begin{lem}
\label{lem:another_decomposition_Y_reduce_redundancy}
In the above setting, let $\sigma$ denote the mapping from $\{0,1,\dots,n-k\}$ to $\{0,1,\dots,n\}$ satisfying that $\widehat{\mathcal{D}} = \omega_{\sigma(n-k)} \cdots \omega_{\sigma(1)}\omega_{\sigma(0)}$.
Then we have $\widehat{u} \in Y_{\widehat{z},y}$ for some $\widehat{z} \in S^{(\Lambda)}$ with $[\widehat{z}] = [z]$; $\widehat{\mathcal{D}}$ is a semi-standard decomposition of $\widehat{u}$ with respect to $J^{(0)}(\widehat{\mathcal{D}}) = J^{(0)}(\mathcal{D})$; we have $J^{(n-k+1)}(\widehat{\mathcal{D}}) = J^{(n+1)}(\mathcal{D})$; and for each $0 \leq j \leq n-k$, we have $[y^{(j)}(\widehat{\mathcal{D}})] = [y^{(\sigma(j))}(\mathcal{D})]$, $[y^{(j+1)}(\widehat{\mathcal{D}})] = [y^{(\sigma(j)+1)}(\mathcal{D})]$, $J^{(j)}(\widehat{\mathcal{D}}) = J^{(\sigma(j))}(\mathcal{D})$ and $J^{(j+1)}(\widehat{\mathcal{D}}) = J^{(\sigma(j)+1)}(\mathcal{D})$.
\end{lem}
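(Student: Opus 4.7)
The plan is to prove the lemma by induction on $k$, reducing to the case where we remove a single redundant index. The base case $k=0$ is immediate since $\widehat{\mathcal{D}}=\mathcal{D}$. For the inductive step it suffices to show that if $i$ is any index with $[y^{(i+1)}(\mathcal{D})] = [y^{(i)}(\mathcal{D})]$ and $J^{(i+1)}(\mathcal{D}) = J^{(i)}(\mathcal{D})$, then the expression obtained from $\mathcal{D}$ by deleting $\omega_i$ is itself a semi-standard decomposition (with one fewer redundant position), to which the induction hypothesis applies. The final bookkeeping identities $[y^{(j)}(\widehat{\mathcal{D}})] = [y^{(\sigma(j))}(\mathcal{D})]$, $J^{(j)}(\widehat{\mathcal{D}})=J^{(\sigma(j))}(\mathcal{D})$, etc., then follow by composing the relabelings introduced at each removal.

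The pivotal observation is that every component $\omega_j = w_0(K^{(j)})w_0(K^{(j)} \smallsetminus \{t^{(j)}\})$ is determined, as an element of $W$, by the set $K^{(j)} = ([y^{(j)}] \cup J^{(j)})_{\sim t^{(j)}}$ and by $t^{(j)}$; this depends only on the \emph{image} $[y^{(j)}]$, not on how that image is indexed by $\Lambda$. Consequently, if $[y^{(i+1)}] = [y^{(i)}]$ and $J^{(i+1)} = J^{(i)}$, then the set $K^{(i+1)} = ([y^{(i+1)}]\cup J^{(i+1)})_{\sim t^{(i+1)}}$ coincides with what one would compute starting from $(y^{(i)},J^{(i)})$ using the same $t^{(i+1)}$, and the element $\omega_{i+1}$ is unchanged if we let it follow $\omega_{i-1}$ directly.

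To justify this bookkeeping I use the fact that $\omega_i \in C_{y^{(i+1)},y^{(i)}}$ together with $[y^{(i+1)}]=[y^{(i)}]$ determines a unique permutation $\pi$ of $\Lambda$ satisfying $y^{(i+1)} = y^{(i)} \circ \pi$. For every $j > i$ I then define the relabeled tuples $y'^{(j-1)} := y^{(j)} \circ \pi$ and copy $J'^{(j-1)} := J^{(j)}$, $t'^{(j-1)} := t^{(j)}$, leaving the data before index $i$ unchanged. Since membership in $C_{x,y}$, in $Y_{x,y}$, and in the $\ast$-relation is invariant under applying the same permutation of $\Lambda$ to both coordinates, the conditions of Definition \ref{defn:standard_decomposition} (finiteness of $K^{(j)}$, adjacency of $t^{(j)}$, $\omega_j \in Y_{y^{(j+1)},y^{(j)}}$, and $\omega_j \cdot \Pi_{J^{(j)}} = \Pi_{J^{(j+1)}}$) transfer verbatim from $\mathcal{D}$ to the shortened expression. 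The conclusion $\widehat{u} \in Y_{\widehat{z},y}$ with $[\widehat{z}]=[z]$ is then automatic, since after the relabeling the final tuple $\widehat{z}$ is $z \circ \pi$.

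The main obstacle, modest as it is, is ensuring that iterated removals remain compatible: one must check that removing a single $\omega_i$ neither destroys nor creates redundancies at the other indices of $\mathcal{D}$. This is clean because the relabeling preserves every image $[y^{(j)}]$ and every set $J^{(j)}$ for the retained indices $j$, so the set of ``redundant'' positions of the shortened decomposition is precisely the set of the remaining originally redundant positions, and the induction closes. Composing the permutations $\pi$ introduced at each step yields the stated identifications of $y^{(j)}(\widehat{\mathcal{D}})$, $J^{(j)}(\widehat{\mathcal{D}})$ with their counterparts in $\mathcal{D}$ via the shift map $\sigma$.
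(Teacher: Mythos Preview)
The paper does not give a proof of this lemma; it simply declares it ``straightforward to prove'' and moves on. Your inductive removal-of-one-redundant-term argument is exactly the kind of bookkeeping the author has in mind, and it is correct in substance: the key point---that $\omega_j = w_0(K^{(j)})w_0(K^{(j)}\smallsetminus\{t^{(j)}\})$ depends only on the set $[y^{(j)}]$, $J^{(j)}$, and $t^{(j)}$, and that membership in $Y_{x,y}$ is invariant under simultaneous relabeling of $\Lambda$---is precisely what makes the deletion step work.

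One small slip: with your convention $y^{(i+1)} = y^{(i)}\circ\pi$, the relabeled tuples should be $y'^{(j-1)} := y^{(j)}\circ\pi^{-1}$ for $j>i$, not $y^{(j)}\circ\pi$; otherwise the two definitions of $y'^{(i)}$ (from ``data before $i$ unchanged'' giving $y^{(i)}$, and from $j=i+1$ giving $y^{(i+1)}\circ\pi = y^{(i)}\circ\pi^2$) do not agree. Fixing the direction of $\pi$ is purely cosmetic and the rest of the argument goes through unchanged.
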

\begin{exmp}
\label{exmp:simplification}
For the case of Example \ref{exmp:semi-standard_decomposition}, the simplification $\widehat{\mathcal{D}}$ of the standard decomposition $\mathcal{D} = \omega_3\omega_2\omega_1\omega_0$ of $u$ is obtained by removing the third component $\omega_2$, therefore $\widehat{\mathcal{D}} = \omega_3\omega_1\omega_0$.
We have
\begin{displaymath}
\begin{split}
&y^{(0)}(\widehat{\mathcal{D}}) = y^{(0)}(\mathcal{D}) = (r_1,r_2,r_3) \,,\, y^{(1)}(\widehat{\mathcal{D}}) = y^{(1)}(\mathcal{D}) = (r_3,r_4,r_5) \enspace, \\
&y^{(2)}(\widehat{\mathcal{D}}) = y^{(2)}(\mathcal{D}) = (r_4,r_5,r_6) \,,\, y^{(3)}(\widehat{\mathcal{D}}) = (r_3,r_4,r_5) = \widehat{z} \enspace.
\end{split}
\end{displaymath}
Now since $\omega_3$ is the inverse of $\omega_1$, the semi-standard decomposition $\widehat{\mathcal{D}}$ of $\widehat{u}$ is not a standard decomposition of $\widehat{u}$.
\end{exmp}

Moreover, we have the following result:
\begin{lem}
\label{lem:another_decomposition_Y_shift_x}
Let $\mathcal{D} = \omega_n \cdots \omega_1\omega_0$ be a semi-standard decomposition of an element $u \in W$.
Let $r \in [y^{(0)}]$, and suppose that the support of each $\omega_i$ is apart from $r$.
Moreover, let $s \in J^{(0)}$, $s' \in J^{(n+1)}$ and suppose that $u \ast s = s'$.
Then we have $r \in [y^{(n+1)}]$, $u \ast r = r$ and $u \in Y_{z',z}$, where $z$ and $z'$ are elements of $S^{(\Lambda)}$ obtained from $y^{(0)}$ and $y^{(n+1)}$ by replacing $r$ with $s$ and with $s'$, respectively.
\end{lem}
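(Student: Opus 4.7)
The plan is to first establish the two entry-wise assertions $r \in [y^{(n+1)}]$ and $u \ast r = r$, then build intermediate tuples $z^{(i)}$ interpolating between $z$ and $z'$, and finally reduce the assertion $u \in Y_{z',z}$ to the per-step claim $\omega_i \in Y_{z^{(i+1)},z^{(i)}}$, which can be chained together using the inclusion $Y_{a,b} \cdot Y_{b,c} \subseteq Y_{a,c}$ recorded in Section \ref{sec:properties_centralizer}.

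For the first step I would use that every support $K^{(i)}$ of $\omega_i$ is apart from $r$, whence each simple reflection $s_t$ with $t \in K^{(i)}$ commutes with $s_r$; consequently $\omega_i \cdot \alpha_r = \alpha_r$. If $\lambda_r \in \Lambda$ denotes the index with $y^{(0)}_{\lambda_r} = r$, then by an immediate induction on $i$ using $y^{(i+1)} = \omega_i \ast y^{(i)}$ we obtain $y^{(i)}_{\lambda_r} = r$ for every $i$; in particular $r \in [y^{(n+1)}]$ and $u \cdot \alpha_r = \alpha_r$. Next, I would set $s_0 := s$ and define $s_{i+1} := \omega_i \ast s_i$ inductively; the defining clause $\omega_i \cdot \Pi_{J^{(i)}} = \Pi_{J^{(i+1)}}$ in Definition \ref{defn:standard_decomposition} guarantees $s_i \in J^{(i)}$ at each stage, and the hypothesis $u \ast s = s'$ forces $s_{n+1} = s'$. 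Let $z^{(i)} \in S^{(\Lambda)}$ denote the tuple obtained from $y^{(i)}$ by replacing its $\lambda_r$-th entry by $s_i$, so that $z^{(0)} = z$ and $z^{(n+1)} = z'$.

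The core of the proof is the verification $\omega_i \in Y_{z^{(i+1)}, z^{(i)}}$ for each $i$. Membership in $C_{z^{(i+1)}, z^{(i)}}$ is immediate from $\omega_i \cdot \alpha_r = \alpha_r$, $\omega_i \cdot \alpha_{s_i} = \alpha_{s_{i+1}}$, and the fact that $\omega_i \in C_{y^{(i+1)}, y^{(i)}}$ handles the remaining coordinates. To verify $\omega_i \cdot (\Phi^{\perp [z^{(i)}]})^+ \subseteq \Phi^+$, let $\gamma$ lie in the left-hand set. Since $\omega_i \in W_{K^{(i)}}$ and any $w \in W_I$ sends to $\Phi^-$ only positive roots lying in $\Phi_I^+$ (a standard property of parabolic subgroups), we may assume $\gamma \in \Phi_{K^{(i)}}^+$. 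Because $K^{(i)}$ is a connected component of $\Gamma_{[y^{(i)}] \cup J^{(i)} \cup \{t^{(i)}\}}$, every $t' \in [y^{(i)}] \setminus K^{(i)}$ is apart from $K^{(i)}$ in $\Gamma$; combined with the hypothesis that $r$ itself is apart from $K^{(i)}$, this forces $\alpha_{t'} \perp V_{K^{(i)}} \ni \gamma$ for every $t' \in [y^{(i)}] \setminus K^{(i)}$. Meanwhile, the assumption $\gamma \perp \alpha_{t'}$ for $t' \in [z^{(i)}]$ already covers $[y^{(i)}] \cap K^{(i)} = ([y^{(i)}] \setminus \{r\}) \cap K^{(i)}$, so in total $\gamma \in (\Phi^{\perp [y^{(i)}]})^+$. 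The property $\omega_i \in Y_{y^{(i+1)}, y^{(i)}}$ built into the semi-standard decomposition then yields $\omega_i \cdot \gamma \in \Phi^+$.

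The main subtlety is the final case analysis: the additional perpendicularity condition $\gamma \perp \alpha_{s_i}$, which is the \emph{only} difference between $\Phi^{\perp [z^{(i)}]}$ and $\Phi^{\perp [y^{(i)}]}$, might at first seem to break the reduction to $\omega_i \in Y_{y^{(i+1)}, y^{(i)}}$. The argument above resolves this by observing that one only needs the weaker condition $\gamma \perp [y^{(i)}]$, and this is produced automatically for $\gamma \in \Phi_{K^{(i)}}^+$ thanks to the apartness of $K^{(i)}$ from $r$ and from all of $[y^{(i)}] \setminus K^{(i)}$. Checking that each $z^{(i)}$ is genuinely injective—hence lies in $S^{(\Lambda)}$—is a routine bookkeeping point inherited from the injectivity of $z$ and $z'$ in the statement together with the bijectivity of the $\omega_i$-action on the relevant simple roots.
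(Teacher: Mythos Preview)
Your proof is correct and follows essentially the same approach as the paper's: both arguments reduce the claim $u\in Y_{z',z}$ to the per-step inclusion $\Phi^{\perp[z^{(i)}]}[\omega_i]\subseteq\Phi^{\perp[y^{(i)}]}[\omega_i]=\emptyset$, obtained from the observation that $[y^{(i)}]\cap K^{(i)}\subseteq[z^{(i)}]\cap K^{(i)}$ (since $r\notin K^{(i)}$) together with the apartness of $K^{(i)}$ from $[y^{(i)}]\smallsetminus K^{(i)}$. The only cosmetic difference is that the paper organizes this as an induction on $n$, peeling off $\omega_n$ at the end, whereas you verify $\omega_i\in Y_{z^{(i+1)},z^{(i)}}$ for every $i$ directly and then chain via $Y_{a,b}\cdot Y_{b,c}\subseteq Y_{a,c}$.
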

\begin{proof}
We use induction on $n \geq 0$.
Put $\mathcal{D}' = \omega_{n-1} \cdots \omega_1\omega_0$, and let $u' \in Y_{y^{(n)},y^{(0)}}$ be the element expressed by the product $\mathcal{D}'$.
Let $s'' := u' \ast s \in J^{(n)}$.
By the induction hypothesis, we have $r \in [y^{(n)}]$, $u' \ast r = r$ and $u' \in Y_{z'',z}$, where $z''$ is the element of $S^{(\Lambda)}$ obtained from $y^{(n)}$ by replacing $r$ with $s''$.
Now, since the support $K^{(n)}$ of $\omega_n$ is apart from $r \in [y^{(n)}]$, it follows that $r \in [y^{(n+1)}]$ and $\omega_n \ast r = r$, therefore $u \ast r = \omega_n u' \ast r = r$.
On the other hand, we have $z' = \omega_n \ast z''$ by the construction of $z'$ and $z''$.
Moreover, by the definition of $\omega_n$, the set $K^{(n)}$ is apart from $([y^{(n)}] \cup J^{(n)}) \smallsetminus K^{(n)}$, therefore $K^{(n)}$ is also apart from the subset $([z''] \cup J^{(n)}) \smallsetminus K^{(n)}$ of $([y^{(n)}] \cup J^{(n)}) \smallsetminus K^{(n)}$.
Since $[y^{(n)}] \cap K^{(n)} \subseteq [z''] \cap K^{(n)}$, it follows that $\Phi^{\perp [z'']}[\omega_n] = \Phi_{K^{(n)}}^{\perp [z''] \cap K^{(n)}}[\omega_n] \subseteq \Phi_{K^{(n)}}^{\perp [y^{(n)}] \cap K^{(n)}}[\omega_n] = \Phi^{\perp [y^{(n)}]}[\omega_n] = \emptyset$ (note that $\omega_n \in Y_{y^{(n+1)},y^{(n)}}$), therefore we have $\omega_n \in Y_{z',z''}$.
Hence we have $u = \omega_n u' \in Y_{z',z}$, concluding the proof.
\end{proof}

\subsection{Reduction to a special case}
\label{sec:proof_reduction}

Here we give a reduction of our proof of Theorem \ref{thm:YfixesWperpIfin} to a special case where the possibility of the subset $I \subseteq S$ is restricted in a certain manner.

First, for $J \subseteq S$, let $\iota(J)$ denote temporarily the union of the irreducible components of $J$ that are not of finite type, and let $\overline{\iota}(J)$ denote temporarily the set of elements of $S$ that are not apart from $\iota(J)$ (hence $J \cap \overline{\iota}(J) = \iota(J)$).
For example, when $(W,S)$ is given by the Coxeter graph in Figure \ref{fig:example_notation_iota} (where we abbreviate each $r_i \in S$ to $i$) and $J = \{r_1,r_3,r_4,r_5,r_6\}$ (indicated in Figure \ref{fig:example_notation_iota} by the black vertices), we have $\iota(J) = \{r_1,r_5,r_6\}$ and $\overline{\iota}(J) = \{r_1,r_2,r_5,r_6,r_7\}$, therefore $J \cap \overline{\iota}(J) = \{r_1,r_5,r_6\} = \iota(J)$ as mentioned above.
Now we have the following:
\begin{figure}[hbt]
\centering
\begin{picture}(80,55)(0,-55)
\multiput(10,-20)(20,0){4}{\circle{8}}
\multiput(10,-40)(20,0){4}{\circle{8}}
\put(10,-20){\circle*{8}}
\put(50,-20){\circle*{8}}
\put(70,-20){\circle*{8}}
\put(10,-40){\circle*{8}}
\put(30,-40){\circle*{8}}
\multiput(14,-20)(20,0){3}{\line(1,0){12}}
\multiput(14,-40)(20,0){3}{\line(1,0){12}}
\put(10,-24){\line(0,-1){12}}
\put(13,-23){\line(1,-1){14}}
\put(67,-23){\line(-1,-1){14}}
\put(70,-24){\line(0,-1){12}}
\put(10,-12){\hbox to0pt{\hss$1$\hss}}
\put(30,-12){\hbox to0pt{\hss$2$\hss}}
\put(50,-12){\hbox to0pt{\hss$3$\hss}}
\put(70,-12){\hbox to0pt{\hss$4$\hss}}
\put(10,-54){\hbox to0pt{\hss$5$\hss}}
\put(30,-54){\hbox to0pt{\hss$6$\hss}}
\put(50,-54){\hbox to0pt{\hss$7$\hss}}
\put(70,-54){\hbox to0pt{\hss$8$\hss}}
\end{picture}
\caption{An example for the notations $\iota(J)$ and $\overline{\iota}(J)$; here $J = \{1,3,4,5,6\}$}
\label{fig:example_notation_iota}
\end{figure}
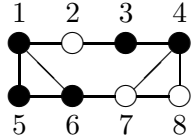
\begin{lem}
\label{lem:proof_reduction_root_perp}
Let $I$ be an arbitrary subset of $S$.
Then we have $w \in W_{S \smallsetminus \overline{\iota}(I)}$ for any $w \in Y_{y,x_I}$ with $y \in S^{(\Lambda)}$, and we have $\Phi^{\perp I} = \Phi_{S \smallsetminus \overline{\iota}(I)}^{\perp I \smallsetminus \overline{\iota}(I)}$.
\end{lem}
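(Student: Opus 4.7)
The plan is to isolate an auxiliary support claim about the generators $w_z^s$ and bootstrap from it to obtain both assertions.

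\textbf{Auxiliary claim.} Suppose $z \in S^{(\Lambda)}$ satisfies $[z] \cap \overline{\iota}(I) = \iota(I)$; then $\iota(I) \subseteq [z]$ is a union of infinite-type irreducible components of $\Gamma_{[z]}$, with no element of $[z] \smallsetminus \iota(I)$ adjacent to $\iota(I)$. I claim that for every $s \in S \smallsetminus [z]$ with $[z]_{\sim s}$ of finite type, we have $\{s\} \cup [z]_{\sim s} \subseteq S \smallsetminus \overline{\iota}(I)$. Indeed, if some $r \in \{s\} \cup [z]_{\sim s}$ lay in $\overline{\iota}(I)$, then either $r \in \iota(I) \subseteq [z]$, in which case the entire infinite-type irreducible component of $\Gamma_{[z]}$ containing $r$ would sit inside $[z]_{\sim s}$, contradicting finite-typeness; or $r$ would be adjacent to some $r' \in \iota(I) \subseteq [z]$, again dragging $r'$ and its infinite-type component into $[z]_{\sim s}$. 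In particular $w_z^s \in W_{S \smallsetminus \overline{\iota}(I)}$, and since every element of the support of $w_z^s$ lies in $S \smallsetminus \overline{\iota}(I)$ and is therefore apart from every $r \in \iota(I) \subseteq \overline{\iota}(I)$, the element $w_z^s$ fixes $\alpha_r$ for each $r \in \iota(I)$.

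\textbf{Part 1.} For $w \in Y_{y,x_I}$, Proposition \ref{prop:factorization_C}(\ref{item:prop_factorization_C_generator_Y}) furnishes a factorization $w = w_{z_{n-1}}^{s_{n-1}} \cdots w_{z_0}^{s_0}$ with $z_0 = x_I$, $z_n = y$, and each $[z_i]_{\sim s_i}$ of finite type. I would prove by induction on $i$ the invariant $[z_i] \cap \overline{\iota}(I) = \iota(I)$, the case $i=0$ being trivial. Granting it at stage $i$, the auxiliary claim applies, so $w_{z_i}^{s_i} \in W_{S \smallsetminus \overline{\iota}(I)}$ and fixes $\alpha_r$ for every $r \in \iota(I)$; hence for every index $\lambda$ with $z_{i,\lambda} \in \iota(I)$ we have $z_{i+1,\lambda} = z_{i,\lambda}$, while for other $\lambda$ the entry $z_{i+1,\lambda}$ is forced to lie in the support $\{s_i\} \cup [z_i]_{\sim s_i} \subseteq S \smallsetminus \overline{\iota}(I)$. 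Thus $[z_{i+1}]$ differs from $[z_i]$ only within $S \smallsetminus \overline{\iota}(I)$, giving $[z_{i+1}] \cap \overline{\iota}(I) = [z_i] \cap \overline{\iota}(I) = \iota(I)$ and closing the induction. Each factor lies in $W_{S \smallsetminus \overline{\iota}(I)}$, so $w$ does too.

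\textbf{Part 2.} The inclusion $\supseteq$ is immediate: any $\gamma \in \Phi_{S \smallsetminus \overline{\iota}(I)}^{\perp I \smallsetminus \overline{\iota}(I)}$ has support apart from $\iota(I) = I \cap \overline{\iota}(I)$, whence $\langle \gamma, \alpha_s \rangle = 0$ also for $s \in \iota(I)$, yielding $\gamma \in \Phi^{\perp I}$. For $\subseteq$, by Proposition \ref{prop:factorization_C}(\ref{item:prop_factorization_C_generator_perp}) every element of $\Pi^I$ has the form $w \cdot \gamma(y,s)$ with $w \in Y_{x_I,y}$ and $\gamma(y,s) \in (\Phi_{[y] \cup \{s\}}^{\perp [y]})^+$. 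Lemma \ref{lem:support_is_irreducible} shows the support of $\gamma(y,s)$ is irreducible in $[y] \cup \{s\}$ and necessarily contains $s$ (otherwise $\gamma(y,s) \in V_{[y]} \cap \Phi^{\perp [y]} = \{0\}$), so it lies in $[y]_{\sim s}$, which is contained in $S \smallsetminus \overline{\iota}(I)$ by the auxiliary claim applied to $y$ (whose invariant $[y] \cap \overline{\iota}(I) = \iota(I)$ was established in part 1). Combined with $w \in W_{S \smallsetminus \overline{\iota}(I)}$, this yields $w \cdot \gamma(y,s) \in \Phi_{S \smallsetminus \overline{\iota}(I)}$, so $\Pi^I \subseteq \Phi_{S \smallsetminus \overline{\iota}(I)}^{\perp I \smallsetminus \overline{\iota}(I)}$; consequently $W^{\perp I} \subseteq W_{S \smallsetminus \overline{\iota}(I)}$ and $\Phi^{\perp I} = W^{\perp I} \cdot \Pi^I \subseteq \Phi_{S \smallsetminus \overline{\iota}(I)}^{\perp I \smallsetminus \overline{\iota}(I)}$.

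The main obstacle lies in the inductive step of part 1: one must track $[z_i]$ as a literal subset of $S$ (not merely up to diagram isomorphism) so that $\iota(I)$ stays pointwise inside each $[z_i]$ and is never touched by the $w_{z_i}^{s_i}$. The auxiliary claim is precisely engineered to provide the apartness needed to maintain this invariant, after which part 2 falls out of the generator description of $W^{\perp I}$.
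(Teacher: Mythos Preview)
Your proof is correct and follows essentially the same route as the paper: decompose $w$ via Proposition~\ref{prop:factorization_C}(\ref{item:prop_factorization_C_generator_Y}), show inductively that each support $[z_i]_{\sim s_i}$ lies in $S \smallsetminus \overline{\iota}(I)$ (the paper carries the invariant $\iota([z_i]) = \iota(I)$ and $\overline{\iota}([z_i]) = \overline{\iota}(I)$, while you carry the weaker but sufficient condition $[z_i] \cap \overline{\iota}(I) = \iota(I)$), and then obtain the second claim from Proposition~\ref{prop:factorization_C}(\ref{item:prop_factorization_C_generator_perp}). One small slip worth fixing: in the inductive step, for $\lambda$ with $z_{i,\lambda} \notin \iota(I)$ but also $z_{i,\lambda} \notin [z_i]_{\sim s_i}$, you have $z_{i+1,\lambda} = z_{i,\lambda}$, which need not lie in $\{s_i\} \cup [z_i]_{\sim s_i}$; however it still lies in $[z_i] \smallsetminus \overline{\iota}(I) \subseteq S \smallsetminus \overline{\iota}(I)$ by your invariant, so the conclusion $[z_{i+1}] \cap \overline{\iota}(I) = \iota(I)$ is unaffected.
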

\begin{proof}
First, let $w \in Y_{y,x_I}$ with $y \in S^{(\Lambda)}$.
Then by Proposition \ref{prop:factorization_C}(\ref{item:prop_factorization_C_generator_Y}), there are a finite sequence $z_0 = x_I,z_1,\dots,z_{n-1},z_n = y$ of elements of $S^{(\Lambda)}$ and a finite sequence $s_0,s_1,\dots,s_{n-1}$ of elements of $S$ satisfying that $z_{i+1} \neq z_i$, $s_i \not\in [z_i]$, $[z_i]_{\sim s_i}$ is of finite type and $w_{z_i}^{s_i} \in Y_{z_{i+1},z_i}$ for each index $0 \leq i \leq n-1$, and we have $w = w_{z_{n-1}}^{s_{n-1}} \cdots w_{z_1}^{s_1} w_{z_0}^{s_0}$.
We show, by induction on $0 \leq i \leq n-1$, that $\iota([z_{i+1}]) = \iota(I)$, $\overline{\iota}([z_{i+1}]) = \overline{\iota}(I)$, and $w_{z_i}^{s_i} \in W_{S \smallsetminus \overline{\iota}(I)}$.
It follows from the induction hypothesis when $i > 0$, and is trivial when $i = 0$, that $\iota([z_i]) = \iota(I)$ and $\overline{\iota}([z_i]) = \overline{\iota}(I)$.
Since $s_i \not\in [z_i]$ and $[z_i]_{\sim s_i}$ is of finite type, it follows from the definition of $\overline{\iota}$ that $[z_i]_{\sim s_i} \subseteq S \smallsetminus \overline{\iota}([z_i])$, therefore we have $w_{z_i}^{s_i} \in W_{S \smallsetminus \overline{\iota}([z_i])} = W_{S \smallsetminus \overline{\iota}(I)}$, $\iota([z_{i+1}]) = \iota([z_i]) = \iota(I)$, and $\overline{\iota}([z_{i+1}]) = \overline{\iota}([z_i]) = \overline{\iota}(I)$, as desired.
This implies that $w = w_{z_{n-1}}^{s_{n-1}} \cdots w_{z_1}^{s_1} w_{z_0}^{s_0} \in W_{S \smallsetminus \overline{\iota}(I)}$, therefore the first part of the claim holds.

For the second part of the claim, the inclusion $\supseteq$ is obvious by the definitions of $\iota(I)$ and $\overline{\iota}(I)$.
For the other inclusion, it suffices to show that $\Phi^{\perp I} \subseteq \Phi_{S \smallsetminus \overline{\iota}(I)}$, or equivalently $\Pi^I \subseteq \Phi_{S \smallsetminus \overline{\iota}(I)}$.
Let $\gamma \in \Pi^I$.
By Proposition \ref{prop:factorization_C}(\ref{item:prop_factorization_C_generator_perp}), we have $\gamma = w \cdot \gamma(y,s)$ for some $y \in S^{(\Lambda)}$, $w \in Y_{x_I,y}$ and a root $\gamma(y,s)$ introduced in the statement of Proposition \ref{prop:charofBphi}.
Now by applying the result of the previous paragraph to $w^{-1} \in Y_{y,x_I}$, it follows that $\iota([y]) = \iota(I)$, $\overline{\iota}([y]) = \overline{\iota}(I)$, and $w \in W_{S \smallsetminus \overline{\iota}(I)}$.
Moreover, since $[y]_{\sim s}$ is of finite type (see Proposition \ref{prop:charofBphi}), a similar argument implies that $[y]_{\sim s} \subseteq S \smallsetminus \overline{\iota}([y]) = S \smallsetminus \overline{\iota}(I)$ and $w_y^s \in W_{S \smallsetminus \overline{\iota}(I)}$, therefore $\gamma(y,s) \in \Phi_{S \smallsetminus \overline{\iota}(I)}$.
Hence we have $\gamma = w \cdot \gamma(y,s) \in \Phi_{S \smallsetminus \overline{\iota}(I)}$, concluding the proof of Lemma \ref{lem:proof_reduction_root_perp}.
\end{proof}

For an arbitrary subset $I$ of $S$, suppose that $\gamma \in \Pi^I$, $s_\gamma \in W^{\perp I}{}_{\mathrm{fin}}$, and $w \in Y_I$.
Then by the second part of Lemma \ref{lem:proof_reduction_root_perp}, we have $\gamma \in \Pi^I = \Pi^{S \smallsetminus \overline{\iota}(I),I \smallsetminus \overline{\iota}(I)}$ and $s_\gamma$ also belongs to the finite part of $W_{S \smallsetminus \overline{\iota}(I)}^{\perp I \smallsetminus \overline{\iota}(I)}$.
Moreover, we have $w \in W_{S \smallsetminus \overline{\iota}(I)}$ by the first part of Lemma \ref{lem:proof_reduction_root_perp}, therefore $w$ also belongs to the group $Y_{I \smallsetminus \overline{\iota}(I)}$ constructed from the pair $S \smallsetminus \overline{\iota}(I)$, $I \smallsetminus \overline{\iota}(I)$ instead of the pair $S$, $I$.
Hence we have the following result: If the conclusion of Theorem \ref{thm:YfixesWperpIfin} holds for the pair $S \smallsetminus \overline{\iota}(I)$, $I \smallsetminus \overline{\iota}(I)$ instead of the pair $S$, $I$, then the conclusion of Theorem \ref{thm:YfixesWperpIfin} also holds for the pair $S$, $I$.
Note that $I \smallsetminus \overline{\iota}(I) = I \smallsetminus \iota(I)$ is the union of the irreducible components of $I$ of finite type.
As a consequence, we may assume without loss of generality that every irreducible component of $I$ is of finite type (note that the $A_{>1}$-freeness in the hypothesis of Theorem \ref{thm:YfixesWperpIfin} is preserved by considering $I \smallsetminus \iota(I)$ instead of $I$).

From now on, we assume that every irreducible component of $I$ is of finite type, as mentioned in the last paragraph.
For any $J \subseteq S$, we say that a subset $\Psi$ of the simple system $\Pi^J$ of $W^{\perp J}$ is an \emph{irreducible component} of $\Pi^J$ if $S(\Psi) = \{s_\beta \mid \beta \in \Psi\}$ is an irreducible component of the generating set $R^J$ of $W^{\perp J}$.
Now, as in the statement of Theorem \ref{thm:YfixesWperpIfin}, let $w \in Y_I$ and $\gamma \in \Pi^I$, and suppose that $s_{\gamma} \in W^{\perp I}{}_{\mathrm{fin}}$.
Let $\Psi$ denote the union of the irreducible components of $\Pi^I$ containing some $w^k \cdot \gamma$ with $k \in \mathbb{Z}$.
Then we have the following:
\begin{lem}
\label{lem:proof_reduction_Psi_finite}
In this setting, $\Psi$ is of finite type; in particular, $|\Psi| < \infty$.
Moreover, the two subsets $I \smallsetminus \mathrm{Supp}\,\Psi$ and $\mathrm{Supp}\,\Psi$ of $S$ are not adjacent.
\end{lem}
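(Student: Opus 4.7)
The plan is to treat the two assertions of the lemma separately: the finiteness claim relies on a dimension-counting argument, while the non-adjacency follows from a short inner-product computation.

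For the finiteness of $\Psi$, let $C \subseteq \Pi^I$ denote the irreducible component containing $\gamma$. Since $s_\gamma \in W^{\perp I}{}_{\mathrm{fin}}$, the component $C$ is of finite type. By Proposition~\ref{prop:factorization_C}(\ref{item:prop_factorization_C_decomp}), conjugation by $w \in Y_I$ gives a Coxeter system isomorphism of $(W^{\perp I},R^I)$, so it permutes the irreducible components of $\Pi^I$ and preserves their Coxeter types. In particular each $w^k \cdot C$ is an irreducible component of $\Pi^I$ of the same finite type as $C$, and $\Psi = \bigcup_{k \in \mathbb{Z}} w^k \cdot C$ is a union of finite-type components; it remains to bound the number of \emph{distinct} ones.

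For this I would argue by dimension counting. Let $T_w \subseteq S$ be the finite set of generators appearing in a reduced expression of $w$, so that $w^k \in W_{T_w}$ for every $k \in \mathbb{Z}$. Since $W_{T_w}$ preserves each subspace of the form $V_{T_w \cup T}$ (for any $T \subseteq S$), and since the support $\mathrm{Supp}\,C$ is a finite subset of $S$, we have $w^k \cdot C \subseteq V_{T_w \cup \mathrm{Supp}\,C}$ for every $k$. Hence the $w$-invariant subspace $U := \sum_{k} \mathrm{span}(w^k \cdot C)$ lies inside the finite-dimensional subspace $V_{T_w \cup \mathrm{Supp}\,C}$. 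On the other hand, simple reflections in distinct irreducible components of a Coxeter system commute, so the corresponding simple roots are orthogonal with respect to $\langle\,,\,\rangle$; and each $\mathrm{span}(w^k \cdot C)$ is positive definite by Proposition~\ref{prop:fintyperootbasis}. The distinct members of $\{\mathrm{span}(w^k \cdot C)\}_{k}$ therefore form a collection of pairwise orthogonal positive definite subspaces of the finite-dimensional space $U$, and there can only be finitely many such.

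For the non-adjacency assertion, I would argue by contradiction: suppose that some $s \in I \smallsetminus \mathrm{Supp}\,\Psi$ is adjacent to some $t \in \mathrm{Supp}\,\Psi$, and pick $\beta \in \Psi$ with $t \in \mathrm{Supp}\,\beta$. Writing $\beta = \sum_{u \in \mathrm{Supp}\,\beta} c_u \alpha_u$ with all $c_u > 0$, the conditions $\beta \in \Phi^{\perp I}$ and $s \in I$ give $0 = \langle \beta, \alpha_s \rangle = \sum_{u} c_u \langle \alpha_u, \alpha_s \rangle$. Since $s \notin \mathrm{Supp}\,\Psi$ but every $u \in \mathrm{Supp}\,\beta$ lies in $\mathrm{Supp}\,\Psi$, each such $u$ is distinct from $s$, whence $\langle \alpha_u, \alpha_s \rangle \leq 0$. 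A sum of non-positive terms equalling zero forces each term to vanish, contradicting $\langle \alpha_t, \alpha_s \rangle < 0$ arising from the adjacency of $s$ and $t$.

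The hard part is the finiteness of the orbit $\{w^k \cdot C : k \in \mathbb{Z}\}$, which is not evident from the permutation action of $w$ on components of $\Pi^I$ alone; the essential inputs are positive definiteness of finite-type root bases (Proposition~\ref{prop:fintyperootbasis}) together with the finiteness of the support of $w$ as an element of $W$, which confines the orbit to a finite-dimensional ambient subspace in which only finitely many pairwise orthogonal positive definite subspaces can coexist.
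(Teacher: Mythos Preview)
Your proof is correct and follows essentially the same strategy as the paper. For the finiteness part, the paper confines the orbit to a finite-dimensional $V_K$ (with $w \in W_K$ and $\gamma \in \Phi_K$) and then observes that the roots $w^k \cdot \gamma$ lying in distinct irreducible components are mutually orthogonal unit vectors, hence linearly independent and bounded in number by $\dim V_K$; you do the same but at the level of the full spans $\mathrm{span}(w^k \cdot C)$, invoking Proposition~\ref{prop:fintyperootbasis} for positive definiteness where the paper simply uses $\langle \gamma,\gamma\rangle = 1$. For the non-adjacency part your argument is identical to the paper's.
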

\begin{proof}
First, there exists a finite subset $K$ of $S$ for which $w \in W_K$ and $\gamma \in \Phi_K$.
Then, the number of mutually orthogonal roots of the form $w^k \cdot \gamma$ is at most $|K| < \infty$, since those roots are linearly independent and contained in the $|K|$-dimensional space $V_K$.
This implies that the number of irreducible components of $\Pi^I$ containing some $w^k \cdot \gamma$, which are of finite type by the property $s_\gamma \in W^{\perp I}{}_{\mathrm{fin}}$ and Proposition \ref{prop:factorization_C}(\ref{item:prop_factorization_C_decomp}), is finite.
Therefore, the union $\Psi$ of those irreducible components is also of finite type.
Hence the first part of the claim holds.

For the second part of the claim, assume contrary that some $s \in I \smallsetminus \mathrm{Supp}\,\Psi$ and $t \in \mathrm{Supp}\,\Psi$ are adjacent.
By the definition of $\mathrm{Supp}\,\Psi$, we have $t \in \mathrm{Supp}\,\beta \subseteq \mathrm{Supp}\,\Psi$ for some $\beta \in \Psi$.
Now we have $s \not\in \mathrm{Supp}\,\beta$.
Let $c > 0$ be the coefficient of $\alpha_t$ in $\beta$.
Then the property $s \not\in \mathrm{Supp}\,\beta$ implies that $\langle \alpha_s,\beta \rangle \leq c \langle \alpha_s,\alpha_t \rangle < 0$, contradicting the property $\beta \in \Phi^{\perp I}$.
Hence the claim holds, concluding the proof of Lemma \ref{lem:proof_reduction_Psi_finite}.
\end{proof}
We temporarily write $L = I \cap \mathrm{Supp}\,\Psi$, and put $\Psi' = \Psi \cup \Pi_L$.
Then we have $\mathrm{Supp}\,\Psi' = \mathrm{Supp}\,\Psi$, therefore by Lemma \ref{lem:proof_reduction_Psi_finite}, $I \smallsetminus \mathrm{Supp}\,\Psi'$ and $\mathrm{Supp}\,\Psi'$ are not adjacent.
On the other hand, we have $|\Psi| < \infty$ by Lemma \ref{lem:proof_reduction_Psi_finite}, therefore $\mathrm{Supp}\,\Psi' = \mathrm{Supp}\,\Psi$ is a finite set.
By these properties and the above-mentioned assumption that every irreducible component of $I$ is of finite type, it follows that $\Pi_L$ is of finite type as well as $\Psi$.
Note that $\Psi \subseteq \Pi^I \subseteq \Phi^{\perp L}$.
Hence the two root bases $\Psi$ and $\Pi_L$ are orthogonal, therefore their union $\Psi'$ is also a root basis by Theorem \ref{thm:conditionforrootbasis}, and we have $|W(\Psi')|<\infty$.
By Proposition \ref{prop:fintyperootbasis}, $\Psi'$ is a basis of a subspace $U := \mathrm{span}\,\Psi'$ of $V_{\mathrm{Supp}\,\Psi'}$.
By applying Proposition \ref{prop:finitesubsystem} to $W_{\mathrm{Supp}\,\Psi'}$ instead of $W$, it follows that there exist $u \in W_{\mathrm{Supp}\,\Psi'}$ and $J \subseteq \mathrm{Supp}\,\Psi'$ satisfying that $W_J$ is finite, $u \cdot (U \cap \Phi^+) = \Phi_J^+$ and $u \cdot (U \cap \Pi) \subseteq \Pi_J$.
Now we have the following:
\begin{lem}
\label{lem:proof_reduction_conjugate_by_Y}
In this setting, if we choose such an element $u$ of minimal length, then there exists an element $y \in S^{(\Lambda)}$ satisfying that $u \in Y_{y,x_I}$, the sets $[y] \smallsetminus J$ and $J$ are not adjacent, and $(u \cdot \Psi) \cup \Pi_{[y] \cap J}$ is a basis of $V_J$.
\end{lem}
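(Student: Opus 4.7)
The plan is to define $y \in S^{(\Lambda)}$ by $y_\lambda := u \ast x_I(\lambda)$ and then verify the three assertions in turn. Writing $L := I \cap \mathrm{Supp}\,\Psi'$, I split indices $\lambda \in \Lambda$ by whether $x_I(\lambda) \in L$ or not. For $x_I(\lambda) \in L$, one has $\alpha_{x_I(\lambda)} \in \Pi_L \subseteq U \cap \Pi$, so the hypothesis $u \cdot (U \cap \Pi) \subseteq \Pi_J$ makes $y_\lambda \in J$ well-defined. For $x_I(\lambda) \in I \smallsetminus L$, Lemma~\ref{lem:proof_reduction_Psi_finite} places $x_I(\lambda)$ in $I \smallsetminus \mathrm{Supp}\,\Psi'$ and non-adjacent to $\mathrm{Supp}\,\Psi'$, so $u \in W_{\mathrm{Supp}\,\Psi'}$ fixes $\alpha_{x_I(\lambda)}$ and $y_\lambda = x_I(\lambda)$. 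Since $J \subseteq \mathrm{Supp}\,\Psi'$ is disjoint from $I \smallsetminus \mathrm{Supp}\,\Psi'$ and each partial assignment is injective, the resulting $y$ is injective, $u \in C_{y, x_I}$, $[y] \cap J$ coincides with $\{u \ast s \mid s \in L\}$, and $[y] \smallsetminus J = I \smallsetminus L$.

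The second and third conclusions are then immediate. Non-adjacency: $J \subseteq \mathrm{Supp}\,\Psi'$ and $[y] \smallsetminus J \subseteq I \smallsetminus \mathrm{Supp}\,\Psi'$ are non-adjacent by Lemma~\ref{lem:proof_reduction_Psi_finite}. Basis: $\Psi' = \Psi \cup \Pi_L$ is a basis of $U$ (noted just above the lemma in the paper), so applying the linear isomorphism $u$ yields the basis $u \cdot \Psi' = (u \cdot \Psi) \cup \Pi_{[y] \cap J}$ of $u \cdot U = V_J$.

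For the remaining assertion $u \in Y_{y, x_I}$, since $u \in C_{y, x_I}$ is already in hand I need only show $u \cdot (\Phi^{\perp I})^+ \subseteq \Phi^+$, and this is where minimality of $\ell(u)$ is invoked. Assume for contradiction that some $\beta \in (\Phi^{\perp I})^+$ has $u \cdot \beta \in \Phi^-$, so $\beta \in \Phi[u]$. Because $u \in W_{\mathrm{Supp}\,\Psi'}$ preserves each coordinate outside $\mathrm{Supp}\,\Psi'$, $\Phi[u] \subseteq \Phi_{\mathrm{Supp}\,\Psi'}^+$; combined with $\beta \in \Phi^{\perp I}$ and the orthogonality of $V_{\mathrm{Supp}\,\Psi'}$ to $\Pi_{I \smallsetminus L}$ (again from Lemma~\ref{lem:proof_reduction_Psi_finite}), this forces $\beta \in \Phi_{\mathrm{Supp}\,\Psi'}^{\perp L}$. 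The strategy is to produce a strictly shorter $u'$ that still fulfills the hypotheses of Proposition~\ref{prop:finitesubsystem} with some $J'$, contradicting minimality. The natural candidate $u' := u s_\beta$ lies in $W_{\mathrm{Supp}\,\Psi'}$ with $\ell(u') < \ell(u)$, and since $\beta \perp \Pi_L$ the reflection $s_\beta$ fixes $\Pi_L$ pointwise, giving $u' \cdot (U \cap \Pi) \subseteq \Pi_J$ at once. The subcase $\beta \in U$ yields an immediate contradiction since then $u \cdot \beta \in u \cdot (U \cap \Phi^+) = \Phi_J^+$ while $u \cdot \beta \in \Phi^-$; the subcase $\beta \perp U$ makes $s_\beta$ fix $U$ pointwise, whence $u' \cdot (U \cap \Phi^+) = \Phi_J^+$ and minimality is violated with $J' = J$.

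The main obstacle will be the mixed case $\beta \notin U$ and $\beta \not\perp U$, in which $u s_\beta$ need not send $U$ to any $V_{J'}$. Exploiting the positive-definiteness of the form on $V_{\mathrm{Supp}\,\Psi'}$ (since $\mathrm{Supp}\,\Psi'$ is of finite type), one may orthogonally decompose $\beta = \beta_1 + \beta_2$ with $\beta_1 \in U$ and $\beta_2 \perp U$, and a dimension count identifying $U \cap (\mathrm{span}\,\Pi_L)^{\perp}$ with $\mathrm{span}\,\Psi$ places $\beta_1 \in \mathrm{span}\,\Psi$. To resolve this case, one replaces $u s_\beta$ by a more carefully chosen shorter element of the same left coset of the setwise stabilizer of $U$ in $W_{\mathrm{Supp}\,\Psi'}$, exploiting the finite reflection structure of the subsystem $U \cap \Phi$ (with root basis $\Psi'$) and the hypothesis $u \cdot (U \cap \Phi^+) = \Phi_J^+$. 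This careful bookkeeping is the technical heart of the proof.
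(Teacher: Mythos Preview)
Your construction of $y$ and the verification of the non-adjacency and basis assertions match the paper's argument closely and are correct.

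The gap is in the final assertion $u \in Y_{y,x_I}$. Your ``mixed case'' $\beta \notin U$, $\beta \not\perp U$ is not actually handled: the paragraph you label ``the technical heart of the proof'' is only a plan, not an argument, and the suggested replacement of $u s_\beta$ by an unspecified shorter coset representative is not carried out. As stated, the proof is incomplete.

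The paper sidesteps this case entirely, and the key is a structural fact about $\Psi$ you did not invoke. First, it suffices to check $u \cdot \Pi^I \subseteq \Phi^+$, since every element of $(\Phi^{\perp I})^+$ is a nonnegative combination of $\Pi^I$ and hence is sent into $\Phi^+$ once the simple roots are. Second, recall from the setup in Section~\ref{sec:proof_reduction} that $\Psi$ was defined as the union of the irreducible components of $\Pi^I$ containing some $w^k \cdot \gamma$. So for $\beta \in \Pi^I$ there are only two possibilities: either $\beta \in \Psi \subseteq U$, which is your first subcase; or $\beta \in \Pi^I \smallsetminus \Psi$, in which case $\beta$ lies in a different irreducible component of $\Pi^I$ and is therefore orthogonal to every element of $\Psi$, while $\beta \perp \Pi_L$ holds automatically since $\beta \in \Phi^{\perp I}$. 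Thus $\beta \perp \Psi' = \Psi \cup \Pi_L$, i.e.\ $\beta \perp U$, which is your second subcase. The mixed case simply does not occur, and the minimality contradiction via $u s_\beta$ goes through with $J' = J$ in every case.
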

\begin{proof}
Since $\Psi'$ is a basis of $U$, the property $u \cdot (U \cap \Phi^+) = \Phi_J^+$ implies that $u \cdot \Psi'$ is a basis of $V_J$.
Now we have $u \cdot \Pi_L \subseteq \Pi_J$ since $\Pi_L \subseteq U \cap \Pi$, while $u$ fixes $\Pi_{I \smallsetminus L}$ pointwise since the sets $I \smallsetminus \mathrm{Supp}\,\Psi' = I \smallsetminus L$ and $\mathrm{Supp}\,\Psi'$ are not adjacent.
By these properties, there exists an element $y \in S^{(\Lambda)}$ satisfying that $y = u \ast x_I$, $[y] \cap \mathrm{Supp}\,\Psi' \subseteq J$ and $[y] \smallsetminus \mathrm{Supp}\,\Psi' = I \smallsetminus \mathrm{Supp}\,\Psi'$.
Since $J \subseteq \mathrm{Supp}\,\Psi'$, it follows that $[y] \smallsetminus J$ and $J$ are not adjacent.
On the other hand, since $u \cdot \Pi_{I \smallsetminus L} = \Pi_{I \smallsetminus L}$, $u \cdot (U \cap \Phi^+) = \Phi_J^+$ and $\Pi_{I \smallsetminus L} \cap U = \emptyset$, it follows that $\Pi_{I \smallsetminus L} \cap \Phi_J^+ = \emptyset$, therefore we have $u \cdot \Pi_L = \Pi_{[y] \cap J}$.
Hence $u \cdot \Psi' = (u \cdot \Psi) \cup \Pi_{[y] \cap J}$ is a basis of $V_J$.

Finally, we show that such an element $u$ of minimal length satisfies that $u \cdot \Pi^I \subseteq \Phi^+$, hence $u \cdot (\Phi^{\perp I})^+ \subseteq \Phi^+$ and $u \in Y_{y,x_I}$.
We have $u \cdot \Psi \subseteq u \cdot (U \cap \Phi^+) = \Phi_J^+$.
Secondly, for any $\beta \in \Pi^I \smallsetminus \Psi$, assume contrary that $u \cdot \beta \in \Phi^-$.
Then we have $\beta \in \Phi_{\mathrm{Supp}\,\Psi'}$ since $u \in W_{\mathrm{Supp}\,\Psi'}$, therefore $s_{\beta} \in W_{\mathrm{Supp}\,\Psi'}$.
On the other hand, since $\Psi$ is the union of some irreducible components of $\Pi^I$, it follows that $\beta$ is orthogonal to $\Psi$, hence orthogonal to $\Psi'$.
By these properties, the element $u s_{\beta}$ also satisfies the above characteristics of the element $u$.
However, now the property $u \cdot \beta \in \Phi^-$ implies that $\ell(u s_{\beta}) < \ell(u)$ (see Theorem \ref{thm:reflectionsubgroup_Deodhar}), contradicting the choice of $u$.
Hence we have $u \cdot \beta \in \Phi^+$ for every $\beta \in \Pi^I \smallsetminus \Psi$, therefore $u \cdot \Pi^I \subseteq \Phi^+$, concluding the proof of Lemma \ref{lem:proof_reduction_conjugate_by_Y}.
\end{proof}
For an element $u \in Y_{y,x_I}$ as in Lemma \ref{lem:proof_reduction_conjugate_by_Y}, Proposition \ref{prop:factorization_C}(\ref{item:prop_factorization_C_decomp}) implies that $u \cdot \gamma \in \Pi^{[y]}$ and $s_{u \cdot \gamma} = u s_\gamma u^{-1} \in W^{\perp [y]}{}_{\mathrm{fin}}$.
Now $w$ fixes the root $\gamma$ if and only if the element $uwu^{-1} \in Y_{y,y}$ fixes the root $u \cdot \gamma$.
Moreover, the conjugation by $u$ defines an isomorphism of Coxeter systems $(W_I,I) \to (W_{[y]},[y])$.
Hence, by considering $[y] \subseteq S$, $uwu^{-1} \in Y_{[y]}$, $u \cdot \gamma \in \Pi^{[y]}$ and $u \cdot \Psi \subseteq \Pi^{[y]}$ instead of $I$, $w$, $\gamma$ and $\Psi$ if necessary, we may assume without loss of generality the following conditions:
\begin{description}
\item[(A1)] Every irreducible component of $I$ is of finite type.
\item[(A2)] There exists a subset $J \subseteq S$ of finite type satisfying that $I \smallsetminus J$ and $J$ are not adjacent and $\Psi \cup \Pi_{I \cap J}$ is a basis of $V_J$.
\end{description}
Moreover, if an irreducible component $J'$ of $J$ is contained in $I$, then a smaller subset $J \smallsetminus J'$ instead of $J$ also satisfies the assumption (A2); indeed, now $\Pi_{J'} \subseteq \Pi_{I \cap J}$ spans $V_{J'}$, and since $\Psi \cup \Pi_{I \cap J}$ is a basis of $V_J$ and the support of any root is irreducible (see Lemma \ref{lem:support_is_irreducible}), it follows that the support of any element of $\Psi \cup \Pi_{I \cap (J \smallsetminus J')}$ does not intersect with $J'$.
Hence, by choosing a subset $J \subseteq S$ in (A2) as small as possible, we may also assume without loss of generality the following condition:
\begin{description}
\item[(A3)] Any irreducible component of $J$ is not contained in $I$.
\end{description}
We also notice the following properties:
\begin{lem}
\label{lem:Psi_is_full}
In this setting, we have $\Psi = \Pi^{J,I \cap J}$, hence $\Pi^{J,I \cap J} \cup \Pi_{I \cap J}$ is a basis of $V_J$.
\end{lem}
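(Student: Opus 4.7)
The plan is to prove the inclusion $\Pi^{J, I \cap J} \subseteq \Psi$ and then deduce equality by a dimension count; the \emph{hence} clause is then immediate from assumption (A2).

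The first step is to set up the orthogonal decomposition $V_J = V_{I \cap J} \oplus V_J^{\perp I \cap J}$, where $V_J^{\perp I \cap J} := \{v \in V_J \mid \langle v, \alpha_s \rangle = 0 \mbox{ for all } s \in I \cap J\}$. This decomposition is legitimate because $I \cap J$ is a union of irreducible components of $I$ (the non-adjacency of $I \smallsetminus J$ and $J$ guaranteed by (A2) prevents a component of $I$ from crossing between $I \smallsetminus J$ and $I \cap J$), and hence $W_{I \cap J}$ is finite by (A1); Proposition \ref{prop:fintyperootbasis} then makes the form positive definite on $V_{I \cap J}$. Since $\Psi \subseteq \Pi^I \subseteq \Phi^{\perp I}$ is orthogonal to $\Pi_{I \cap J}$ and $\Psi \cup \Pi_{I \cap J}$ is a basis of $V_J$, it follows that $\Psi$ is a basis of $V_J^{\perp I \cap J}$.

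Next I would verify $\Pi^{J, I \cap J} \subseteq \Pi^I$. Fix $\beta \in \Pi^{J, I \cap J}$: then $\beta \in \Phi_J$, $\beta \perp \Pi_{I \cap J}$, and $\beta \perp \Pi_{I \smallsetminus J}$ automatically by the non-adjacency in (A2), so $\beta \in (\Phi^{\perp I})^+$. If $\beta = \sum_i c_i \beta_i$ with $c_i > 0$ and $\beta_i \in (\Phi^{\perp I})^+$, then nonnegativity of coefficients in $\Phi^+$ together with $\mathrm{Supp}\,\beta \subseteq J$ forces $\mathrm{Supp}\,\beta_i \subseteq J$, so $\beta_i \in \Phi_J \cap \Phi^{\perp I} = \Phi_J^{\perp I \cap J}$ (the last equality uses the non-adjacency once more). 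Since $\beta \in \Pi(\Phi_J^{\perp I \cap J})$, every $\beta_i$ equals $\beta$, so $\beta \in \Pi^I$.

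The crux is then showing $\beta \in \Psi$. Suppose instead $\beta \in \Pi^I \smallsetminus \Psi$. Because $\Psi$ is a union of irreducible components of $\Pi^I$, every $\delta \in \Psi$ lies in a different component of $(W^{\perp I}, R^I)$ than $\beta$, whence $s_\beta s_\delta = s_\delta s_\beta$ in $W$ and therefore $\langle \beta, \delta \rangle = 0$. Thus $\beta$ is orthogonal to $\mathrm{span}\,\Psi = V_J^{\perp I \cap J}$, yet $\beta \in \Phi_J^{\perp I \cap J} \subseteq V_J^{\perp I \cap J}$ itself, giving $\langle \beta, \beta \rangle = 0 \neq 1$, a contradiction. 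Converting the graph-components-disjoint condition into orthogonality of the corresponding roots and placing the resulting vector into the correct ambient subspace is the main obstacle; once the orthogonal decomposition of $V_J$ is in hand, it is short. To finish, $\Pi^{J, I \cap J}$ is linearly independent by Proposition \ref{prop:fintyperootbasis} applied to the finite group $W_J^{\perp I \cap J}$, and $\mathrm{span}\,\Pi^{J, I \cap J} = \mathrm{span}\,\Phi_J^{\perp I \cap J}$ contains the basis $\Psi$ of $V_J^{\perp I \cap J}$, so $|\Pi^{J, I \cap J}| = \dim V_J^{\perp I \cap J} = |\Psi|$, yielding $\Psi = \Pi^{J, I \cap J}$; the \emph{hence} statement is then (A2).
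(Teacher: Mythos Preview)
Your proof is correct and follows essentially the same core idea as the paper's: an element $\beta \in \Pi^{J,I\cap J} \smallsetminus \Psi$ would be orthogonal to both $\Psi$ and $\Pi_{I\cap J}$, hence orthogonal to a spanning set of $V_J$, forcing $\langle\beta,\beta\rangle=0$. The paper phrases this as ``$\beta$ lies in the radical of $V_J$, which is trivial since $J$ is of finite type,'' while you set up the orthogonal decomposition $V_J = V_{I\cap J}\oplus V_J^{\perp I\cap J}$ explicitly and argue within the second summand; these are the same argument. The main cosmetic difference is that the paper handles the reverse inclusion $\Psi \subseteq \Pi^{J,I\cap J}$ directly (any $\psi\in\Psi\subseteq\Pi^I$ lying in $\Phi_J^{\perp I\cap J}$ is automatically simple there), whereas you recover equality via a dimension count---correct but slightly more work than needed.
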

\begin{proof}
The inclusion $\Psi \subseteq \Pi^{J,I \cap J}$ follows from the definition of $\Psi$ and the condition (A2).
Now assume contrary that $\beta \in \Pi^{J,I \cap J} \smallsetminus \Psi$.
Then we have $\beta \in \Pi^I$ by (A2).
Since $\Psi$ is the union of some irreducible components of $\Pi^I$, it follows that $\beta$ is orthogonal to $\Psi$ as well as to $\Pi_{I \cap J}$.
This implies that $\beta$ belongs to the radical of $V_J$, which should be trivial by Proposition \ref{prop:fintyperootbasis}.
This is a contradiction.
Hence the claim holds.
\end{proof}
\begin{lem}
\label{lem:property_w}
In this setting, the element $w \in Y_I$ satisfies that $w \cdot \Phi_J = \Phi_J$, and the subgroup $\langle w \rangle$ generated by $w$ acts transitively on the set of the irreducible components of $\Pi^{J,I \cap J}$.
\end{lem}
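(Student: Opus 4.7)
The plan is to deduce both assertions from the construction of $\Psi$ together with the fact that, as an element of $Y_I$, the element $w$ is an automorphism of the Coxeter system $(W^{\perp I},R^I)$ fixing $\Pi_I$ pointwise.

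First I would prove the invariance $w\cdot\Phi_J=\Phi_J$. Since $w\in Y_I\subseteq C_I$, it fixes $\Pi_I$ pointwise, and in particular it fixes $\Pi_{I\cap J}$. On the other hand, Proposition~\ref{prop:factorization_C}(\ref{item:prop_factorization_C_decomp}) (applied with $x=y=x_I$) says that conjugation by $w$ is an isomorphism of the Coxeter system $(W^{\perp I},R^I)$, so $w$ permutes the irreducible components of the simple system $\Pi^I$. By the very definition of $\Psi$ as the union of all irreducible components of $\Pi^I$ meeting the $\langle w\rangle$-orbit of $\gamma$, this set $\Psi$ is $\langle w\rangle$-stable; indeed, any $\beta\in\Psi$ lies in the same component as some $w^k\cdot\gamma$, so $w\cdot\beta$ lies in the same component as $w^{k+1}\cdot\gamma$, which is again one of the chosen components. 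Hence $w\cdot\Psi=\Psi$, and combined with $w\cdot\Pi_{I\cap J}=\Pi_{I\cap J}$ we get that $w$ preserves $V_J=\mathrm{span}(\Psi\cup\Pi_{I\cap J})$ by assumption~(A2) (or by Lemma~\ref{lem:Psi_is_full}). Since $w\cdot\Phi=\Phi$ and $w\cdot V_J=V_J$, it follows that $w\cdot\Phi_J=w\cdot(\Phi\cap V_J)=\Phi_J$.

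Next I would prove transitivity of $\langle w\rangle$ on the irreducible components of $\Pi^{J,I\cap J}=\Psi$. The Coxeter graph of $\Psi$, regarded as the simple system of the reflection subgroup $W_J^{\perp I\cap J}$, is the full subgraph of the Coxeter graph of $\Pi^I$ on the vertex set $\Psi$; consequently, since $\Psi$ is a union of irreducible components of $\Pi^I$, the irreducible components of $\Pi^{J,I\cap J}$ are exactly those irreducible components of $\Pi^I$ that are contained in $\Psi$. By construction each such component contains some element of the form $w^k\cdot\gamma$. Letting $\Psi_0$ be the irreducible component of $\Pi^I$ containing $\gamma$, and using once again that $w$ permutes these components, the component containing $w^k\cdot\gamma$ is precisely $w^k\cdot\Psi_0$. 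Thus every irreducible component of $\Psi$ lies in the $\langle w\rangle$-orbit of $\Psi_0$, giving the desired transitivity.

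The two steps together complete the lemma. I do not expect a serious obstacle: the key conceptual input is the $\langle w\rangle$-stability of $\Psi$, which is essentially tautological from its definition, and everything else is bookkeeping with the fact that $w$ acts as a Coxeter-system automorphism on $\Pi^I$ and fixes $\Pi_I$ pointwise. The only point requiring a little care is the identification of the irreducible components of $\Pi^{J,I\cap J}$ with certain irreducible components of $\Pi^I$, which I would justify via Lemma~\ref{lem:Psi_is_full} and the observation that the Coxeter graph of $\Psi$ inherits its edges directly from that of $\Pi^I$.
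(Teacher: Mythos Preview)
Your proof is correct and follows essentially the same approach as the paper's: both rely on the $\langle w\rangle$-stability of $\Psi$ (built into its definition), the pointwise fixation of $\Pi_{I\cap J}$ by $w\in Y_I$, and the fact that $\Psi\cup\Pi_{I\cap J}$ spans $V_J$ (Lemma~\ref{lem:Psi_is_full}). The only cosmetic difference is the order: the paper first states the transitivity of $\langle w\rangle$ on the irreducible components of $\Pi^{J,I\cap J}$ (as an immediate consequence of the definition of $\Psi$ and Lemma~\ref{lem:Psi_is_full}) and then uses $w\cdot\Pi^{J,I\cap J}=\Pi^{J,I\cap J}$ together with $w\cdot\Pi_{I\cap J}=\Pi_{I\cap J}$ to get $w\cdot V_J=V_J$ and hence $w\cdot\Phi_J=\Phi_J$; you establish $w\cdot\Psi=\Psi$ first and derive both conclusions from it.
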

\begin{proof}
The second part of the claim follows immediately from the definition of $\Psi$ and Lemma \ref{lem:Psi_is_full}.
It also implies that $w \cdot \Pi^{J,I \cap J} = \Pi^{J,I \cap J}$, while $w \cdot \Pi_{I \cap J} = \Pi_{I \cap J}$ since $w \in Y_I$.
Moreover, $\Pi^{J,I \cap J} \cup \Pi_{I \cap J}$ is a basis of $V_J$ by Lemma \ref{lem:Psi_is_full}.
This implies that $w \cdot V_J = V_J$, therefore we have $w \cdot \Phi_J = \Phi_J$.
Hence the claim holds.
\end{proof}

\subsection{A key lemma}
\label{sec:proof_key_lemma}

Let $I^{\perp}$ denote the set of all elements of $S$ that are apart from $I$.
Then there are two possibilities: $\Pi^{J,I \cap J} \not\subseteq \Phi_{I^{\perp}}$, or $\Pi^{J,I \cap J} \subseteq \Phi_{I^{\perp}}$.
Here we present a key lemma regarding the former possibility (recall the three conditions (A1)--(A3) specified above):
\begin{lem}
\label{lem:finitepart_first_case_irreducible}
If $\Pi^{J,I \cap J} \not\subseteq \Phi_{I^{\perp}}$, then we have $I \cap J \neq \emptyset$ and $J$ is irreducible.
\end{lem}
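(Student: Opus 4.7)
The plan is to address the two conclusions separately: first derive $I \cap J \neq \emptyset$ directly from the hypothesis, then use this together with Lemma \ref{lem:property_w} to force $J$ to be irreducible. For the first claim, I argue by contradiction: if $I \cap J = \emptyset$ then (A2) gives $I = I \smallsetminus J$ not adjacent to $J$, so every element of $J$ lies in $I^{\perp}$, whence $\Pi_J \subseteq \Phi_{I^{\perp}}$. But $\Phi_J^{\perp I \cap J} = \Phi_J$ has simple system $\Pi(\Phi_J) = \Pi_J$ (the usual simple system of $W_J$), so $\Pi^{J,I\cap J} = \Pi_J \subseteq \Phi_{I^{\perp}}$, contradicting the hypothesis.

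For the second claim, suppose for contradiction that $J$ has at least two irreducible components $J_1,\dots,J_m$. Distinct $V_{J_k}$'s are orthogonal and supports of roots are irreducible (Lemma \ref{lem:support_is_irreducible}), yielding the orthogonal decompositions $\Phi_J^{\perp I \cap J} = \bigsqcup_k \Phi_{J_k}^{\perp I \cap J_k}$ and $\Pi^{J, I \cap J} = \bigsqcup_k \Pi^{J_k, I \cap J_k}$. Restricting the basis $\Pi^{J, I \cap J} \cup \Pi_{I \cap J}$ of $V_J$ (Lemma \ref{lem:Psi_is_full}) to each $V_{J_k}$ gives a basis of $V_{J_k}$, so $|\Pi^{J_k, I \cap J_k}| = |J_k \smallsetminus I|$, which is positive by (A3); hence every piece $\Pi^{J_k, I \cap J_k}$ is nonempty.

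Next, using $w \cdot \Phi_J = \Phi_J$ from Lemma \ref{lem:property_w}, I want to establish a permutation $\sigma$ of the index set with $w \cdot \Phi_{J_k} = \Phi_{J_{\sigma(k)}}$. For $\beta \in \Phi_{J_k}$, the image $w \cdot \beta$ has irreducible support (Lemma \ref{lem:support_is_irreducible}) and so lies in some $\Phi_{J_l}$; for two non-orthogonal roots in $\Phi_{J_k}$, their images are non-orthogonal and hence lie in a common $\Phi_{J_l}$ since distinct $V_{J_l}$'s are orthogonal. The target $\sigma(k)$ is well-defined once one observes that $\Phi_{J_k}$ is connected in the non-orthogonality graph: any $\beta \in \Phi_{J_k}$ satisfies $\langle \beta, \alpha_s \rangle \neq 0$ for some $s \in \mathrm{Supp}\,\beta \subseteq J_k$ (because $\langle \beta, \beta \rangle = 1$), and $\Pi_{J_k}$ is itself connected by irreducibility of $J_k$. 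That $\sigma$ is a permutation follows from $|\Phi_{J_k}| = |\Phi_{J_{\sigma(k)}}|$.

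By Part 1 there exists $k_0$ with $I \cap J_{k_0} \neq \emptyset$; for any $r \in I \cap J_{k_0}$, $w \cdot \alpha_r = \alpha_r$ lies in both $V_{J_{k_0}}$ and $V_{J_{\sigma(k_0)}}$, forcing $\sigma(k_0) = k_0$ and hence $w \cdot V_{J_{k_0}} = V_{J_{k_0}}$. Combined with $w \cdot \Pi^{J,I\cap J} = \Pi^{J,I\cap J}$ (Lemma \ref{lem:Psi_is_full} and Lemma \ref{lem:property_w}), this yields $w \cdot \Pi^{J_{k_0}, I \cap J_{k_0}} = \Pi^{J_{k_0}, I \cap J_{k_0}}$, so $\langle w \rangle$ preserves setwise the irreducible components of $\Pi^{J,I\cap J}$ lying inside $\Pi^{J_{k_0}, I \cap J_{k_0}}$. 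Since there are further nonempty irreducible components inside $\Pi^{J_{k_1}, I \cap J_{k_1}}$ for any $k_1 \neq k_0$, this violates the transitivity of the $\langle w \rangle$-action asserted by Lemma \ref{lem:property_w}, producing the desired contradiction. The most delicate ingredient will be the construction of $\sigma$, which requires combining the orthogonality of distinct $V_{J_l}$'s with the non-orthogonality connectedness of $\Phi_{J_k}$; once $\sigma$ is in hand the rest follows mechanically from Part 1, the pointwise fixation of $\Pi_I$ by $w$, and Lemma \ref{lem:property_w}.
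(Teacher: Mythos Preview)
Your proof is correct and follows essentially the same strategy as the paper: identify an irreducible component of $J$ meeting $I$, show it is preserved by $w$, and invoke the transitivity from Lemma~\ref{lem:property_w} together with (A3). The organization differs slightly---you argue $I\cap J\neq\emptyset$ by contradiction (the paper extracts a specific $s'\in I\cap J$ from a root $\beta\in\Pi^{J,I\cap J}\smallsetminus\Phi_{I^\perp}$), and you decompose $J$ into all components and build a permutation $\sigma$ (the paper fixes one component $K\ni s'$ and argues directly that $w\cdot\Phi_K=\Phi_K$ by propagating along adjacent simple roots from $K\cap I$)---but the substance is the same.

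One small point: your sentence ``That $\sigma$ is a permutation follows from $|\Phi_{J_k}| = |\Phi_{J_{\sigma(k)}}|$'' has the logic reversed; the cardinality equality is a \emph{consequence} of $\sigma$ being bijective, not a proof of it. The clean argument is that $\bigsqcup_k w\cdot\Phi_{J_k}=\Phi_J$ (since $w$ is a bijection on $\Phi_J$) and each $w\cdot\Phi_{J_k}\subseteq\Phi_{J_{\sigma(k)}}$, so every $\Phi_{J_l}$ receives at least one piece, making $\sigma$ surjective and hence bijective. This is a minor expository fix, not a real gap.
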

\begin{proof}
First, take an element $\beta \in \Pi^{J,I \cap J} \smallsetminus \Phi_{I^{\perp}}$.
Then we have $\beta \not\in \Phi_I$ since $\Pi^{J,I \cap J} \subseteq \Phi^{\perp I}$.
Moreover, since the support $\mathrm{Supp}\,\beta$ of $\beta$ is irreducible (see Lemma \ref{lem:support_is_irreducible}), there exists an element $s \in \mathrm{Supp}\,\beta \smallsetminus I$ which is adjacent to an element of $I$, say $s' \in I$.
Now the property $\beta \in \Phi^{\perp I}$ implies that $s' \in \mathrm{Supp}\,\beta$, since otherwise we have $\langle \beta,\alpha_{s'} \rangle \leq c \langle \alpha_s,\alpha_{s'} \rangle < 0$ where $c > 0$ is the coefficient of $\alpha_s$ in $\beta$.
Hence we have $s' \in \mathrm{Supp}\,\Pi^{J,I \cap J} \subseteq J$.

Let $K$ denote the irreducible component of $J$ containing $s'$.
Put $\Psi' = \Pi^{J,I \cap J} \cap \Phi_K$.
Then, since $\Pi^{J,I \cap J} \cup \Pi_{I \cap J}$ is a basis of $V_J$ by Lemma \ref{lem:Psi_is_full} and the support of any root is irreducible (see Lemma \ref{lem:support_is_irreducible}), it follows that $\beta \in \Psi'$, $\Psi'$ is orthogonal to $\Pi^{J,I \cap J} \smallsetminus \Psi'$ and $\Psi' \cup \Pi_{I \cap K}$ is a basis of $V_K$.
Now $\Psi'$ is the union of some irreducible components of $\Pi^{J,I \cap J}$.
We show that $J$ is irreducible if we have $w \cdot \Phi_K = \Phi_K$.
In this case, we have $w \cdot \Psi' = \Psi'$, therefore $\Pi^{J,I \cap J} = \Psi' \subseteq \Phi_K$ by the second part of Lemma \ref{lem:property_w}.
Now by the condition (A3), $J$ has no irreducible components other than $K$ (indeed, if such an irreducible component $J'$ of $J$ exists, then the property $\Pi^{J,I \cap J} \subseteq \Phi_K$ implies that the space $V_{J'}$ should be spanned by a subset of $\Pi_{I \cap J}$, therefore $J' \subseteq I$).
Hence $J = K$ is irreducible.

Thus it suffices to show that $w \cdot \Phi_K = \Phi_K$.
For the purpose, it also suffices to show that $w \cdot \Phi_K \subseteq \Phi_K$ (since $K$ is of finite type as well as $J$), or equivalently $w \cdot \Pi_K \subseteq \Phi_K$.
Moreover, by the three properties that $K$ is irreducible, $K \cap I \neq \emptyset$ and $w \cdot \Pi_{K \cap I} = \Pi_{K \cap I}$, it suffices to show that $w \cdot \alpha_{t'} \in \Phi_K$ provided $t' \in K$ is adjacent to some $t \in K$ with $w \cdot \alpha_t \in \Phi_K$.
Now note that $w \cdot \Phi_J = \Phi_J$ by Lemma \ref{lem:property_w}.
Assume contrary that $w \cdot \alpha_{t'} \not\in \Phi_K$.
Then we have $w \cdot \alpha_{t'} \in \Phi_J \smallsetminus \Phi_K = \Phi_{J \smallsetminus K}$ since $K$ is an irreducible component of $J$, therefore $w \cdot \alpha_{t'}$ is orthogonal to $w \cdot \alpha_t \in \Phi_K$.
This contradicts the property that $t'$ is adjacent to $t$, since $w$ leaves the bilinear form $\langle\,,\,\rangle$ invariant.
Hence we have $w \cdot \alpha_{t'} \in \Phi_K$, as desired.
\end{proof}

\section{Proof of Theorem \ref{thm:YfixesWperpIfin}: On the special case}
\label{sec:proof_special}

In this section, we introduce the assumption in Theorem \ref{thm:YfixesWperpIfin} that $I$ is $A_{>1}$-free, and continue the argument in Section \ref{sec:proof_general}.
Recall the properties (A1), (A2) and (A3) of $I$, $J$ and $\Psi = \Pi^{J,I \cap J}$ (see Lemma \ref{lem:Psi_is_full}) given in Section \ref{sec:proof_reduction}.
Our aim here is to prove that $w$ fixes $\Pi^{J,I \cap J}$ pointwise, which implies our goal $w \cdot \gamma = \gamma$ since $\gamma \in \Psi = \Pi^{J,I \cap J}$ by the definition of $\Psi$.
We divide the following argument into two cases: $\Pi^{J,I \cap J} \not\subseteq \Phi_{I^{\perp}}$, or $\Pi^{J,I \cap J} \subseteq \Phi_{I^{\perp}}$ (see Section \ref{sec:proof_key_lemma} for the definition of $I^{\perp}$).

\subsection{The first case $\Pi^{J,I \cap J} \not\subseteq \Phi_{I^{\perp}}$}
\label{sec:proof_special_first_case}

Here we consider the case that $\Pi^{J,I \cap J} \not\subseteq \Phi_{I^{\perp}}$.
In this case, the subset $J \subseteq S$ of finite type is irreducible by Lemma \ref{lem:finitepart_first_case_irreducible}, therefore we can apply the classification of finite irreducible Coxeter groups.
Let $J = \{r_1,r_2,\dots,r_N\}$, where $N = |J|$, be the standard labelling of $J$ (see Section \ref{sec:longestelement}).
We write $\alpha_i = \alpha_{r_i}$ for simplicity.

We introduce some temporal terminology.
We say that an element $y \in S^{(\Lambda)}$ satisfies \emph{Property P} if $[y] \smallsetminus J = I \smallsetminus J$ (hence $[y] \smallsetminus J$ is apart from $J$ by the condition (A2)) and $\Pi^{J,[y] \cap J} \cup \Pi_{[y] \cap J}$ is a basis of $V_J$.
For example, $x_I$ itself satisfies Property P.
For any $y \in S^{(\Lambda)}$ satisfying Property P and any element $s \in J \smallsetminus [y]$ with $\varphi(y,s) \neq y$, we say that the isomorphism $t \mapsto w_y^s \ast t$ from $[y] \cap J$ to $[\varphi(y,s)] \cap J$ is a \emph{local transformation} (note that now $[y]_{\sim s} \subseteq J$ and $w_y^s \in W_J$ by the above-mentioned property that $[y] \smallsetminus J$ is apart from $J$).
By abusing the terminology, in such a case we also call the correspondence $y \mapsto \varphi(y,s)$ a local transformation.
Note that, in this case, $\varphi(y,s)$ also satisfies Property P, we have $w_y^s \in Y_{\varphi(y,s),y}$ and $w_y^s \ast t = t$ for any $t \in [y] \smallsetminus J$, and the action of $w_y^s$ induces an isomorphism from $\Pi^{J,[y] \cap J}$ to $\Pi^{J,[\varphi(y,s)] \cap J}$.

Since $w \cdot \Pi^{J,I \cap J} = \Pi^{J,I \cap J}$, the claim is trivial if $|\Pi^{J,I \cap J}| = 1$.
From now, we consider the case that $|\Pi^{J,I \cap J}| \geq 2$, therefore we have $N = |J| \geq |I \cap J| + 2 \geq 3$ (note that $I \cap J \neq \emptyset$ by Lemma \ref{lem:finitepart_first_case_irreducible}).
In particular, $J$ is not of type $I_2(m)$.
On the other hand, we have the following results:
\begin{lem}
\label{lem:J_not_A_N}
In this setting, $J$ is not of type $A_N$.
\end{lem}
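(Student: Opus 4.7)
The plan is to derive a contradiction by a simple dimension count on the set $\Pi^{J, I \cap J} \cup \Pi_{I \cap J}$. Suppose for contradiction that $J$ is of type $A_N$ with standard labelling $J = \{r_1, \dots, r_N\}$. I would first pin down $I \cap J$ using the $A_{>1}$-freeness of $I$: since $I \smallsetminus J$ is apart from $J$ by condition (A2), each connected component of $\Gamma_{I \cap J}$ is also a connected component of $\Gamma_I$. Together with (A1), the $A_{>1}$-freeness forces each such component to be of type $A_1$, so $I \cap J = \{r_{i_1}, \dots, r_{i_m}\}$ with $i_{s+1} \geq i_s + 2$ for all $s$. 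By Lemma \ref{lem:finitepart_first_case_irreducible} we also have $m \geq 1$.

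The main computational step is then to determine $|\Pi^{J, I \cap J}|$ explicitly. Using the standard realization of $\Phi_J$ as the type $A_N$ root system in the hyperplane of $\mathbb{R}^{N+1}$, with $\alpha_{r_i}$ a scalar multiple of $e_i - e_{i+1}$, a root $\pm(e_j - e_k)$ is orthogonal to $\Pi_{I \cap J}$ precisely when $\{j, k\} \cap \{i_s, i_s+1\} = \emptyset$ for every $s$. Since the pairs $\{i_s, i_s+1\}$ are pairwise disjoint (by the non-adjacency of the $r_{i_s}$), this is equivalent to $\{j, k\} \subseteq T$, where $T := \{1, \dots, N+1\} \smallsetminus \bigcup_s \{i_s, i_s+1\}$. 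The resulting root subsystem is irreducible of type $A_{|T|-1} = A_{N - 2m}$ on the ordered index set $T$, and hence its simple system $\Pi^{J, I \cap J}$ has cardinality $|T| - 1 = N - 2m$.

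Finally, I would contradict Lemma \ref{lem:Psi_is_full}, which asserts that $\Pi^{J, I \cap J} \cup \Pi_{I \cap J}$ is a basis of $V_J$. The two sets lie in mutually orthogonal subspaces and are disjoint, so the cardinality of their union is $(N - 2m) + m = N - m$. But any basis of $V_J$ must have $\dim V_J = N$ elements, and $N - m < N$ since $m \geq 1$, a contradiction. The crux of the argument is the cardinality computation in the middle step, which is straightforward given the type $A$ root structure; the $A_{>1}$-freeness of $I$ is precisely what prevents the blocks $\{i_s, i_s+1\}$ from overlapping and thus keeps the orthogonal root subsystem of the simple shape $A_{N-2m}$. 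Notably, no information about the specific element $w \in Y_I$ enters the argument — the obstruction is purely combinatorial, a mismatch between the forced dimension of $V_J$ and the natural spanning set built from $\Pi^{J, I \cap J}$ and $\Pi_{I \cap J}$.
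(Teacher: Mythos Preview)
Your proof is correct and follows the same overall strategy as the paper's: both show that $\Pi^{J,I\cap J}\cup\Pi_{I\cap J}$ cannot be a basis of $V_J$, contradicting Lemma~\ref{lem:Psi_is_full}. The execution differs slightly. The paper first applies successive local transformations to arrange $r_1\in I$, then observes that any root of type $A_N$ orthogonal to $\alpha_1$ lies in $\Phi_{J\smallsetminus\{r_1,r_2\}}$, so the whole set $\Pi^{J,I\cap J}\cup\Pi_{I\cap J}$ is contained in the proper subspace $V_{J\smallsetminus\{r_2\}}$. You instead carry out a direct global cardinality count $|\Pi^{J,I\cap J}|=N-2m$ via the standard $e_i-e_j$ model, avoiding the normalization step altogether. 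Your route is a bit more self-contained for this lemma; the paper's local-transformation reduction, however, is the template reused in the subsequent Lemmas~\ref{lem:possibility_J_is_E_6} and~\ref{lem:possibility_J_is_D_N}, so it is worth being aware of.
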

\begin{proof}
We show that $\Pi^{J,I \cap J} \cup \Pi_{I \cap J}$ cannot span $V_J$ if $J$ is of type $A_N$, which deduces a contradiction and hence concludes the proof.
By the $A_{>1}$-freeness of $I$, each irreducible component of $I \cap J$ (which is also an irreducible component of $I$) is of type $A_1$.
Now by applying successive local transformations, we may assume without loss of generality that $r_1 \in I$ (indeed, if the minimal index $i$ with $r_i \in I$ satisfies $i \geq 2$, then we have $\varphi(x_I,r_{i-1}) \ast r_i = r_{i-1}$).
In this case, we have $r_2 \not\in I$, while we have $\Phi_J^{\perp I} \subseteq \Phi_{J \smallsetminus \{r_1,r_2\}}$ by the fact that any positive root in the root system $\Phi_J$ of type $A_N$ is of the form $\alpha_i + \alpha_{i+1} + \cdots + \alpha_{i'}$ with $1 \leq i \leq i' \leq N$.
This implies that the subset $\Pi^{J,I \cap J} \cup \Pi_{I \cap J}$ of $\Phi_J^{\perp I} \cup \Pi_{I \cap J}$ cannot span $V_J$, as desired.
\end{proof}
To prove the next lemma (and some other results below), we give a list of all positive roots of the Coxeter group of type $E_8$.
The list is divided into six parts (Tables \ref{tab:positive_roots_E_8_1}--\ref{tab:positive_roots_E_8_6}).
In the lists, we use the standard labelling $r_1,\dots,r_8$ of generators.
The coefficients of each root are placed at the same relative positions as the corresponding vertices of the Coxeter graph of type $E_8$ in Figure \ref{fig:finite_irreducible_Coxeter_groups}; for example, the last root $\gamma_{120}$ in Table \ref{tab:positive_roots_E_8_6} is $2\alpha_1 + 3\alpha_2 + 4\alpha_3 + 6\alpha_4 + 5\alpha_5 + 4\alpha_6 + 3\alpha_7 + 2\alpha_8$ (which is the highest root of type $E_8$).
For the columns for actions of generators (4th to 11th columns), a blank cell means that the generator $r_j$ fixes the root $\gamma_i$ (or equivalently, $\langle \alpha_j,\gamma_i \rangle = 0$); while a cell filled by \lq\lq ---'' means that $\gamma_i = \alpha_j$.
Moreover, the positive roots of the parabolic subgroup of type $E_6$ (respectively, $E_7$) generated by $\{r_1,\dots,r_6\}$ (respectively, $\{r_1,\dots,r_7\}$) correspond to the rows indicated by \lq\lq $E_6$'' (respectively, \lq\lq $E_7$'').
By the data for actions of generators, it can be verified that the list indeed exhausts all the positive roots.
\begin{table}[p]
\centering
\caption{List of positive roots for Coxeter group of type $E_8$ (part $1$)}
\label{tab:positive_roots_E_8_1}
\begin{tabular}{|c||c|c|c|c|c|c|c|c|c|c|cc} \cline{1-11}
height & $i$ & root $\gamma_i$ & \multicolumn{8}{|c|}{index $k$ with $r_j \cdot \gamma_i = \gamma_k$} \\ \cline{4-11}
& & & $r_1$ & $r_2$ & $r_3$ & $r_4$ & $r_5$ & $r_6$ & $r_7$ & $r_8$ \\ \cline{1-11}\cline{1-11}
$1$ & $1$ & \Eroot{1}{0}{0}{0}{0}{0}{0}{0} & --- & & $9$ & & & & & & $E_6$ & $E_7$ \\ \cline{2-11}
& $2$ & \Eroot{0}{1}{0}{0}{0}{0}{0}{0} & & --- & & $10$ & & & & & $E_6$ & $E_7$ \\ \cline{2-11}
& $3$ & \Eroot{0}{0}{1}{0}{0}{0}{0}{0} & $9$ & & --- & $11$ & & & & & $E_6$ & $E_7$ \\ \cline{2-11}
& $4$ & \Eroot{0}{0}{0}{1}{0}{0}{0}{0} & & $10$ & $11$ & --- & $12$ & & & & $E_6$ & $E_7$ \\ \cline{2-11}
& $5$ & \Eroot{0}{0}{0}{0}{1}{0}{0}{0} & & & & $12$ & --- & $13$ & & & $E_6$ & $E_7$ \\ \cline{2-11}
& $6$ & \Eroot{0}{0}{0}{0}{0}{1}{0}{0} & & & & & $13$ & --- & $14$ & & $E_6$ & $E_7$ \\ \cline{2-11}
& $7$ & \Eroot{0}{0}{0}{0}{0}{0}{1}{0} & & & & & & $14$ & --- & $15$ & & $E_7$ \\ \cline{2-11}
& $8$ & \Eroot{0}{0}{0}{0}{0}{0}{0}{1} & & & & & & & $15$ & --- \\ \cline{1-11}
$2$ & $9$ & \Eroot{1}{0}{1}{0}{0}{0}{0}{0} & $3$ & & $1$ & $16$ & & & & & $E_6$ & $E_7$ \\ \cline{2-11}
& $10$ & \Eroot{0}{1}{0}{1}{0}{0}{0}{0} & & $4$ & $17$ & $2$ & $18$ & & & & $E_6$ & $E_7$ \\ \cline{2-11}
& $11$ & \Eroot{0}{0}{1}{1}{0}{0}{0}{0} & $16$ & $17$ & $4$ & $3$ & $19$ & & & & $E_6$ & $E_7$ \\ \cline{2-11}
& $12$ & \Eroot{0}{0}{0}{1}{1}{0}{0}{0} & & $18$ & $19$ & $5$ & $4$ & $20$ & & & $E_6$ & $E_7$ \\ \cline{2-11}
& $13$ & \Eroot{0}{0}{0}{0}{1}{1}{0}{0} & & & & $20$ & $6$ & $5$ & $21$ & & $E_6$ & $E_7$ \\ \cline{2-11}
& $14$ & \Eroot{0}{0}{0}{0}{0}{1}{1}{0} & & & & & $21$ & $7$ & $6$ & $22$ & & $E_7$ \\ \cline{2-11}
& $15$ & \Eroot{0}{0}{0}{0}{0}{0}{1}{1} & & & & & & $22$ & $8$ & $7$ \\ \cline{1-11}
$3$ & $16$ & \Eroot{1}{0}{1}{1}{0}{0}{0}{0} & $11$ & $23$ & & $9$ & $24$ & & & & $E_6$ & $E_7$ \\ \cline{2-11}
& $17$ & \Eroot{0}{1}{1}{1}{0}{0}{0}{0} & $23$ & $11$ & $10$ & & $25$ & & & & $E_6$ & $E_7$ \\ \cline{2-11}
& $18$ & \Eroot{0}{1}{0}{1}{1}{0}{0}{0} & & $12$ & $25$ & & $10$ & $26$ & & & $E_6$ & $E_7$ \\ \cline{2-11}
& $19$ & \Eroot{0}{0}{1}{1}{1}{0}{0}{0} & $24$ & $25$ & $12$ & & $11$ & $27$ & & & $E_6$ & $E_7$ \\ \cline{2-11}
& $20$ & \Eroot{0}{0}{0}{1}{1}{1}{0}{0} & & $26$ & $27$ & $13$ & & $12$ & $28$ & & $E_6$ & $E_7$ \\ \cline{2-11}
& $21$ & \Eroot{0}{0}{0}{0}{1}{1}{1}{0} & & & & $28$ & $14$ & & $13$ & $29$ & & $E_7$ \\ \cline{2-11}
& $22$ & \Eroot{0}{0}{0}{0}{0}{1}{1}{1} & & & & & $29$ & $15$ & & $14$ \\ \cline{1-11}
\end{tabular}
\end{table}
\begin{table}[p]
\centering
\caption{List of positive roots for Coxeter group of type $E_8$ (part $2$)}
\label{tab:positive_roots_E_8_2}
\begin{tabular}{|c||c|c|c|c|c|c|c|c|c|c|cc} \cline{1-11}
height & $i$ & root $\gamma_i$ & \multicolumn{8}{|c|}{index $k$ with $r_j \cdot \gamma_i = \gamma_k$} \\ \cline{4-11}
& & & $r_1$ & $r_2$ & $r_3$ & $r_4$ & $r_5$ & $r_6$ & $r_7$ & $r_8$ \\ \cline{1-11}
$4$ & $23$ & \Eroot{1}{1}{1}{1}{0}{0}{0}{0} & $17$ & $16$ & & & $30$ & & & & $E_6$ & $E_7$ \\ \cline{2-11}
& $24$ & \Eroot{1}{0}{1}{1}{1}{0}{0}{0} & $19$ & $30$ & & & $16$ & $31$ & & & $E_6$ & $E_7$ \\ \cline{2-11}
& $25$ & \Eroot{0}{1}{1}{1}{1}{0}{0}{0} & $30$ & $19$ & $18$ & $32$ & $17$ & $33$ & & & $E_6$ & $E_7$ \\ \cline{2-11}
& $26$ & \Eroot{0}{1}{0}{1}{1}{1}{0}{0} & & $20$ & $33$ & & & $18$ & $34$ & & $E_6$ & $E_7$ \\ \cline{2-11}
& $27$ & \Eroot{0}{0}{1}{1}{1}{1}{0}{0} & $31$ & $33$ & $20$ & & & $19$ & $35$ & & $E_6$ & $E_7$ \\ \cline{2-11}
& $28$ & \Eroot{0}{0}{0}{1}{1}{1}{1}{0} & & $34$ & $35$ & $21$ & & & $20$ & $36$ & & $E_7$ \\ \cline{2-11}
& $29$ & \Eroot{0}{0}{0}{0}{1}{1}{1}{1} & & & & $36$ & $22$ & & & $21$ \\ \cline{1-11}
$5$ & $30$ & \Eroot{1}{1}{1}{1}{1}{0}{0}{0} & $25$ & $24$ & & $37$ & $23$ & $38$ & & & $E_6$ & $E_7$ \\ \cline{2-11}
& $31$ & \Eroot{1}{0}{1}{1}{1}{1}{0}{0} & $27$ & $38$ & & & & $24$ & $39$ & & $E_6$ & $E_7$ \\ \cline{2-11}
& $32$ & \Eroot{0}{1}{1}{2}{1}{0}{0}{0} & $37$ & & & $25$ & & $40$ & & & $E_6$ & $E_7$ \\ \cline{2-11}
& $33$ & \Eroot{0}{1}{1}{1}{1}{1}{0}{0} & $38$ & $27$ & $26$ & $40$ & & $25$ & $41$ & & $E_6$ & $E_7$ \\ \cline{2-11}
& $34$ & \Eroot{0}{1}{0}{1}{1}{1}{1}{0} & & $28$ & $41$ & & & & $26$ & $42$ & & $E_7$ \\ \cline{2-11}
& $35$ & \Eroot{0}{0}{1}{1}{1}{1}{1}{0} & $39$ & $41$ & $28$ & & & & $27$ & $43$ & & $E_7$ \\ \cline{2-11}
& $36$ & \Eroot{0}{0}{0}{1}{1}{1}{1}{1} & & $42$ & $43$ & $29$ & & & & $28$ \\ \cline{1-11}
$6$ & $37$ & \Eroot{1}{1}{1}{2}{1}{0}{0}{0} & $32$ & & $44$ & $30$ & & $45$ & & & $E_6$ & $E_7$ \\ \cline{2-11}
& $38$ & \Eroot{1}{1}{1}{1}{1}{1}{0}{0} & $33$ & $31$ & & $45$ & & $30$ & $46$ & & $E_6$ & $E_7$ \\ \cline{2-11}
& $39$ & \Eroot{1}{0}{1}{1}{1}{1}{1}{0} & $35$ & $46$ & & & & & $31$ & $47$ & & $E_7$ \\ \cline{2-11}
& $40$ & \Eroot{0}{1}{1}{2}{1}{1}{0}{0} & $45$ & & & $33$ & $48$ & $32$ & $49$ & & $E_6$ & $E_7$ \\ \cline{2-11}
& $41$ & \Eroot{0}{1}{1}{1}{1}{1}{1}{0} & $46$ & $35$ & $34$ & $49$ & & & $33$ & $50$ & & $E_7$ \\ \cline{2-11}
& $42$ & \Eroot{0}{1}{0}{1}{1}{1}{1}{1} & & $36$ & $50$ & & & & & $34$ \\ \cline{2-11}
& $43$ & \Eroot{0}{0}{1}{1}{1}{1}{1}{1} & $47$ & $50$ & $36$ & & & & & $35$ \\ \cline{1-11}
\end{tabular}
\end{table}
\begin{table}[p]
\centering
\caption{List of positive roots for Coxeter group of type $E_8$ (part $3$)}
\label{tab:positive_roots_E_8_3}
\begin{tabular}{|c||c|c|c|c|c|c|c|c|c|c|cc} \cline{1-11}
height & $i$ & root $\gamma_i$ & \multicolumn{8}{|c|}{index $k$ with $r_j \cdot \gamma_i = \gamma_k$} \\ \cline{4-11}
& & & $r_1$ & $r_2$ & $r_3$ & $r_4$ & $r_5$ & $r_6$ & $r_7$ & $r_8$ \\ \cline{1-11}
$7$ & $44$ & \Eroot{1}{1}{2}{2}{1}{0}{0}{0} & & & $37$ & & & $51$ & & & $E_6$ & $E_7$ \\ \cline{2-11}
& $45$ & \Eroot{1}{1}{1}{2}{1}{1}{0}{0} & $40$ & & $51$ & $38$ & $52$ & $37$ & $53$ & & $E_6$ & $E_7$ \\ \cline{2-11}
& $46$ & \Eroot{1}{1}{1}{1}{1}{1}{1}{0} & $41$ & $39$ & & $53$ & & & $38$ & $54$ & & $E_7$ \\ \cline{2-11}
& $47$ & \Eroot{1}{0}{1}{1}{1}{1}{1}{1} & $43$ & $54$ & & & & & & $39$ \\ \cline{2-11}
& $48$ & \Eroot{0}{1}{1}{2}{2}{1}{0}{0} & $52$ & & & & $40$ & & $55$ & & $E_6$ & $E_7$ \\ \cline{2-11}
& $49$ & \Eroot{0}{1}{1}{2}{1}{1}{1}{0} & $53$ & & & $41$ & $55$ & & $40$ & $56$ & & $E_7$ \\ \cline{2-11}
& $50$ & \Eroot{0}{1}{1}{1}{1}{1}{1}{1} & $54$ & $43$ & $42$ & $56$ & & & & $41$ \\ \cline{1-11}
$8$ & $51$ & \Eroot{1}{1}{2}{2}{1}{1}{0}{0} & & & $45$ & & $57$ & $44$ & $58$ & & $E_6$ & $E_7$ \\ \cline{2-11}
& $52$ & \Eroot{1}{1}{1}{2}{2}{1}{0}{0} & $48$ & & $57$ & & $45$ & & $59$ & & $E_6$ & $E_7$ \\ \cline{2-11}
& $53$ & \Eroot{1}{1}{1}{2}{1}{1}{1}{0} & $49$ & & $58$ & $46$ & $59$ & & $45$ & $60$ & & $E_7$ \\ \cline{2-11}
& $54$ & \Eroot{1}{1}{1}{1}{1}{1}{1}{1} & $50$ & $47$ & & $60$ & & & & $46$ \\ \cline{2-11}
& $55$ & \Eroot{0}{1}{1}{2}{2}{1}{1}{0} & $59$ & & & & $49$ & $61$ & $48$ & $62$ & & $E_7$ \\ \cline{2-11}
& $56$ & \Eroot{0}{1}{1}{2}{1}{1}{1}{1} & $60$ & & & $50$ & $62$ & & & $49$ \\ \cline{1-11}
$9$ & $57$ & \Eroot{1}{1}{2}{2}{2}{1}{0}{0} & & & $52$ & $63$ & $51$ & & $64$ & & $E_6$ & $E_7$ \\ \cline{2-11}
& $58$ & \Eroot{1}{1}{2}{2}{1}{1}{1}{0} & & & $53$ & & $64$ & & $51$ & $65$ & & $E_7$ \\ \cline{2-11}
& $59$ & \Eroot{1}{1}{1}{2}{2}{1}{1}{0} & $55$ & & $64$ & & $53$ & $66$ & $52$ & $67$ & & $E_7$ \\ \cline{2-11}
& $60$ & \Eroot{1}{1}{1}{2}{1}{1}{1}{1} & $56$ & & $65$ & $54$ & $67$ & & & $53$ \\ \cline{2-11}
& $61$ & \Eroot{0}{1}{1}{2}{2}{2}{1}{0} & $66$ & & & & & $55$ & & $68$ & & $E_7$ \\ \cline{2-11}
& $62$ & \Eroot{0}{1}{1}{2}{2}{1}{1}{1} & $67$ & & & & $56$ & $68$ & & $55$ \\ \cline{1-11}
\end{tabular}
\end{table}
\begin{table}[p]
\centering
\caption{List of positive roots for Coxeter group of type $E_8$ (part $4$)}
\label{tab:positive_roots_E_8_4}
\begin{tabular}{|c||c|c|c|c|c|c|c|c|c|c|cc} \cline{1-11}
height & $i$ & root $\gamma_i$ & \multicolumn{8}{|c|}{index $k$ with $r_j \cdot \gamma_i = \gamma_k$} \\ \cline{4-11}
& & & $r_1$ & $r_2$ & $r_3$ & $r_4$ & $r_5$ & $r_6$ & $r_7$ & $r_8$ \\ \cline{1-11}
$10$ & $63$ & \Eroot{1}{1}{2}{3}{2}{1}{0}{0} & & $69$ & & $57$ & & & $70$ & & $E_6$ & $E_7$ \\ \cline{2-11}
& $64$ & \Eroot{1}{1}{2}{2}{2}{1}{1}{0} & & & $59$ & $70$ & $58$ & $71$ & $57$ & $72$ & & $E_7$ \\ \cline{2-11}
& $65$ & \Eroot{1}{1}{2}{2}{1}{1}{1}{1} & & & $60$ & & $72$ & & & $58$ \\ \cline{2-11}
& $66$ & \Eroot{1}{1}{1}{2}{2}{2}{1}{0} & $61$ & & $71$ & & & $59$ & & $73$ & & $E_7$ \\ \cline{2-11}
& $67$ & \Eroot{1}{1}{1}{2}{2}{1}{1}{1} & $62$ & & $72$ & & $60$ & $73$ & & $59$ \\ \cline{2-11}
& $68$ & \Eroot{0}{1}{1}{2}{2}{2}{1}{1} & $73$ & & & & & $62$ & $74$ & $61$ \\ \cline{1-11}
$11$ & $69$ & \Eroot{1}{2}{2}{3}{2}{1}{0}{0} & & $63$ & & & & & $75$ & & $E_6$ & $E_7$ \\ \cline{2-11}
& $70$ & \Eroot{1}{1}{2}{3}{2}{1}{1}{0} & & $75$ & & $64$ & & $76$ & $63$ & $77$ & & $E_7$ \\ \cline{2-11}
& $71$ & \Eroot{1}{1}{2}{2}{2}{2}{1}{0} & & & $66$ & $76$ & & $64$ & & $78$ & & $E_7$ \\ \cline{2-11}
& $72$ & \Eroot{1}{1}{2}{2}{2}{1}{1}{1} & & & $67$ & $77$ & $65$ & $78$ & & $64$ \\ \cline{2-11}
& $73$ & \Eroot{1}{1}{1}{2}{2}{2}{1}{1} & $68$ & & $78$ & & & $67$ & $79$ & $66$ \\ \cline{2-11}
& $74$ & \Eroot{0}{1}{1}{2}{2}{2}{2}{1} & $79$ & & & & & & $68$ & \\ \cline{1-11}
$12$ & $75$ & \Eroot{1}{2}{2}{3}{2}{1}{1}{0} & & $70$ & & & & $80$ & $69$ & $81$ & & $E_7$ \\ \cline{2-11}
& $76$ & \Eroot{1}{1}{2}{3}{2}{2}{1}{0} & & $80$ & & $71$ & $82$ & $70$ & & $83$ & & $E_7$ \\ \cline{2-11}
& $77$ & \Eroot{1}{1}{2}{3}{2}{1}{1}{1} & & $81$ & & $72$ & & $83$ & & $70$ \\ \cline{2-11}
& $78$ & \Eroot{1}{1}{2}{2}{2}{2}{1}{1} & & & $73$ & $83$ & & $72$ & $84$ & $71$ \\ \cline{2-11}
& $79$ & \Eroot{1}{1}{1}{2}{2}{2}{2}{1} & $74$ & & $84$ & & & & $73$ & \\ \cline{1-11}
\end{tabular}
\end{table}
\begin{table}[p]
\centering
\caption{List of positive roots for Coxeter group of type $E_8$ (part $5$)}
\label{tab:positive_roots_E_8_5}
\begin{tabular}{|c||c|c|c|c|c|c|c|c|c|c|cc} \cline{1-11}
height & $i$ & root $\gamma_i$ & \multicolumn{8}{|c|}{index $k$ with $r_j \cdot \gamma_i = \gamma_k$} \\ \cline{4-11}
& & & $r_1$ & $r_2$ & $r_3$ & $r_4$ & $r_5$ & $r_6$ & $r_7$ & $r_8$ \\ \cline{1-11}
$13$ & $80$ & \Eroot{1}{2}{2}{3}{2}{2}{1}{0} & & $76$ & & & $85$ & $75$ & & $86$ & & $E_7$ \\ \cline{2-11}
& $81$ & \Eroot{1}{2}{2}{3}{2}{1}{1}{1} & & $77$ & & & & $86$ & & $75$ \\ \cline{2-11}
& $82$ & \Eroot{1}{1}{2}{3}{3}{2}{1}{0} & & $85$ & & & $76$ & & & $87$ & & $E_7$ \\ \cline{2-11}
& $83$ & \Eroot{1}{1}{2}{3}{2}{2}{1}{1} & & $86$ & & $78$ & $87$ & $77$ & $88$ & $76$ \\ \cline{2-11}
& $84$ & \Eroot{1}{1}{2}{2}{2}{2}{2}{1} & & & $79$ & $88$ & & & $78$ & \\ \cline{1-11}
$14$ & $85$ & \Eroot{1}{2}{2}{3}{3}{2}{1}{0} & & $82$ & & $89$ & $80$ & & & $90$ & & $E_7$ \\ \cline{2-11}
& $86$ & \Eroot{1}{2}{2}{3}{2}{2}{1}{1} & & $83$ & & & $90$ & $81$ & $91$ & $80$ \\ \cline{2-11}
& $87$ & \Eroot{1}{1}{2}{3}{3}{2}{1}{1} & & $90$ & & & $83$ & & $92$ & $82$ \\ \cline{2-11}
& $88$ & \Eroot{1}{1}{2}{3}{2}{2}{2}{1} & & $91$ & & $84$ & $92$ & & $83$ & \\ \cline{1-11}
$15$ & $89$ & \Eroot{1}{2}{2}{4}{3}{2}{1}{0} & & & $93$ & $85$ & & & & $94$ & & $E_7$ \\ \cline{2-11}
& $90$ & \Eroot{1}{2}{2}{3}{3}{2}{1}{1} & & $87$ & & $94$ & $86$ & & $95$ & $85$ \\ \cline{2-11}
& $91$ & \Eroot{1}{2}{2}{3}{2}{2}{2}{1} & & $88$ & & & $95$ & & $86$ & \\ \cline{2-11}
& $92$ & \Eroot{1}{1}{2}{3}{3}{2}{2}{1} & & $95$ & & & $88$ & $96$ & $87$ & \\ \cline{1-11}
$16$ & $93$ & \Eroot{1}{2}{3}{4}{3}{2}{1}{0} & $97$ & & $89$ & & & & & $98$ & & $E_7$ \\ \cline{2-11}
& $94$ & \Eroot{1}{2}{2}{4}{3}{2}{1}{1} & & & $98$ & $90$ & & & $99$ & $89$ \\ \cline{2-11}
& $95$ & \Eroot{1}{2}{2}{3}{3}{2}{2}{1} & & $92$ & & $99$ & $91$ & $100$ & $90$ & \\ \cline{2-11}
& $96$ & \Eroot{1}{1}{2}{3}{3}{3}{2}{1} & & $100$ & & & & $92$ & & \\ \cline{1-11}
$17$ & $97$ & \Eroot{2}{2}{3}{4}{3}{2}{1}{0} & $93$ & & & & & & & $101$ & & $E_7$ \\ \cline{2-11}
& $98$ & \Eroot{1}{2}{3}{4}{3}{2}{1}{1} & $101$ & & $94$ & & & & $102$ & $93$ \\ \cline{2-11}
& $99$ & \Eroot{1}{2}{2}{4}{3}{2}{2}{1} & & & $102$ & $95$ & & $103$ & $94$ & \\ \cline{2-11}
& $100$ & \Eroot{1}{2}{2}{3}{3}{3}{2}{1} & & $96$ & & $103$ & & $95$ & & \\ \cline{1-11}
\end{tabular}
\end{table}
\begin{table}[p]
\centering
\caption{List of positive roots for Coxeter group of type $E_8$ (part $6$)}
\label{tab:positive_roots_E_8_6}
\begin{tabular}{|c||c|c|c|c|c|c|c|c|c|c|cc} \cline{1-11}
height & $i$ & root $\gamma_i$ & \multicolumn{8}{|c|}{index $k$ with $r_j \cdot \gamma_i = \gamma_k$} \\ \cline{4-11}
& & & $r_1$ & $r_2$ & $r_3$ & $r_4$ & $r_5$ & $r_6$ & $r_7$ & $r_8$ \\ \cline{1-11}
$18$ & $101$ & \Eroot{2}{2}{3}{4}{3}{2}{1}{1} & $98$ & & & & & & $104$ & $97$ \\ \cline{2-11}
& $102$ & \Eroot{1}{2}{3}{4}{3}{2}{2}{1} & $104$ & & $99$ & & & $105$ & $98$ & \\ \cline{2-11}
& $103$ & \Eroot{1}{2}{2}{4}{3}{3}{2}{1} & & & $105$ & $100$ & $106$ & $99$ & & \\ \cline{1-11}
$19$ & $104$ & \Eroot{2}{2}{3}{4}{3}{2}{2}{1} & $102$ & & & & & $107$ & $101$ & \\ \cline{2-11}
& $105$ & \Eroot{1}{2}{3}{4}{3}{3}{2}{1} & $107$ & & $103$ & & $108$ & $102$ & & \\ \cline{2-11}
& $106$ & \Eroot{1}{2}{2}{4}{4}{3}{2}{1} & & & $108$ & & $103$ & & & \\ \cline{1-11}
$20$ & $107$ & \Eroot{2}{2}{3}{4}{3}{3}{2}{1} & $105$ & & & & $109$ & $104$ & & \\ \cline{2-11}
& $108$ & \Eroot{1}{2}{3}{4}{4}{3}{2}{1} & $109$ & & $106$ & $110$ & $105$ & & & \\ \cline{1-11}
$21$ & $109$ & \Eroot{2}{2}{3}{4}{4}{3}{2}{1} & $108$ & & & $111$ & $107$ & & & \\ \cline{2-11}
& $110$ & \Eroot{1}{2}{3}{5}{4}{3}{2}{1} & $111$ & $112$ & & $108$ & & & & \\ \cline{1-11}
$22$ & $111$ & \Eroot{2}{2}{3}{5}{4}{3}{2}{1} & $110$ & $113$ & $114$ & $109$ & & & & \\ \cline{2-11}
& $112$ & \Eroot{1}{3}{3}{5}{4}{3}{2}{1} & $113$ & $110$ & & & & & & \\ \cline{1-11}
$23$ & $113$ & \Eroot{2}{3}{3}{5}{4}{3}{2}{1} & $112$ & $111$ & $115$ & & & & & \\ \cline{2-11}
& $114$ & \Eroot{2}{2}{4}{5}{4}{3}{2}{1} & & $115$ & $111$ & & & & & \\ \cline{1-11}
$24$ & $115$ & \Eroot{2}{3}{4}{5}{4}{3}{2}{1} & & $114$ & $113$ & $116$ & & & & \\ \cline{1-11}
$25$ & $116$ & \Eroot{2}{3}{4}{6}{4}{3}{2}{1} & & & & $115$ & $117$ & & & \\ \cline{1-11}
$26$ & $117$ & \Eroot{2}{3}{4}{6}{5}{3}{2}{1} & & & & & $116$ & $118$ & & \\ \cline{1-11}
$27$ & $118$ & \Eroot{2}{3}{4}{6}{5}{4}{2}{1} & & & & & & $117$ & $119$ & \\ \cline{1-11}
$28$ & $119$ & \Eroot{2}{3}{4}{6}{5}{4}{3}{1} & & & & & & & $118$ & $120$ \\ \cline{1-11}
$29$ & $120$ & \Eroot{2}{3}{4}{6}{5}{4}{3}{2} & & & & & & & & $119$ \\ \cline{1-11}
\end{tabular}
\end{table}

Then we have the following:
\begin{lem}
\label{lem:possibility_J_is_E_6}
In this setting, if $J$ is of type $E_6$, then $|I \cap J| = 1$.
\end{lem}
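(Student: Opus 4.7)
The plan is to prove the contrapositive: if $|I \cap J| \geq 2$, then $J$ cannot be of type $E_6$. Since $\Pi^{J,I \cap J} \cup \Pi_{I \cap J}$ is a basis of $V_J$ by Lemma \ref{lem:Psi_is_full} and we have assumed $|\Pi^{J,I \cap J}| \geq 2$ (the complementary case $|\Pi^{J,I \cap J}|=1$ having been disposed of earlier), we get $|I \cap J| \leq 4$. The $A_{>1}$-freeness of $I$ forces each irreducible component of the subdiagram $I \cap J$ to be of type $A_1$ or of a non-$A$ finite type. As $J$ of type $E_6$ is simply laced with only six vertices and its Coxeter graph has maximum independent set of cardinality $3$, the only surviving possibilities for $I \cap J$ are (i) $A_1 \times A_1$, (ii) $A_1^{3}$, and (iii) $D_4$.

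I would next normalize $I \cap J$ by applying local transformations as introduced in Section \ref{sec:proof_special_first_case}, which conjugate $I \cap J$ inside $J$ while preserving all the standing hypotheses. Under these moves together with the (at most one) nontrivial diagram automorphism of $E_6$, each of (i)--(iii) reduces to a single canonical representative inside the $E_6$ diagram; for instance one may take $\{r_1,r_5\}$, $\{r_1,r_2,r_6\}$, and $\{r_2,r_3,r_4,r_5\}$ respectively.

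For each such representative I would read off $\Pi^{J,I \cap J}$ directly from the $E_6$-subset of the positive-root tables (the rows marked $E_6$ in Tables \ref{tab:positive_roots_E_8_1}--\ref{tab:positive_roots_E_8_3}): the positive roots of $\Phi_{E_6}$ orthogonal to $\Pi_{I \cap J}$ are precisely those whose rows are blank in every column $r_j$ with $r_j \in I \cap J$, and among these the simple roots of the reflection subgroup $W_J^{\perp I \cap J}$ are the minimal ones under the usual order. This pins down the type of $W_J^{\perp I \cap J}$ and, crucially, the number and sizes of its irreducible components.

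The decisive input comes from Lemma \ref{lem:property_w}: $\langle w \rangle$ must permute the irreducible components of $\Pi^{J,I \cap J}$ transitively, while $w$ fixes $\Pi_{I \cap J}$ pointwise, stabilizes $\Phi_J$, and preserves the global positive system $\Phi^+$ (since $w \in Y_I$). Because $\Pi^{J,I \cap J} \cup \Pi_{I \cap J}$ is a basis of $V_J$, the linear map $w|_{V_J}$ is determined by its action on $\Pi^{J,I \cap J}$; combining the transitivity requirement with the orthogonality and positivity constraints would be shown to admit no solution in cases (i)--(iii), yielding the required contradiction. The main obstacle is precisely this final step in the $D_4$ case, where the complement $\Pi^{J,I \cap J}$ has only two elements and an apparent $A_2$-like (triality) symmetry is available; ruling it out will rely on the fact that $w \in Y_I$ must preserve positivity with respect to the full root system $\Phi^+$ of $W$, not merely with respect to $\Phi_J^+$, and on the induced rigidity of the action of $w$ as a signed permutation of the $E_6$ root system.
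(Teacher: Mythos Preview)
Your proposal contains a genuine gap: the $w$-transitivity argument you plan as the ``decisive input'' does not actually produce a contradiction, and the real obstruction lies elsewhere.

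Concretely, take your case (i), say $I\cap J=\{r_1,r_5\}$. If you carry out the computation you describe, you will find that $\Phi_J^{\perp I\cap J}$ consists of exactly six positive roots forming a root system of type $A_3$ (with simple system $\{\alpha_2,\ \alpha_4+\alpha_5+\alpha_6,\ \gamma_{44}\}$). Thus $\Pi^{J,I\cap J}$ is \emph{irreducible}, so the transitivity of $\langle w\rangle$ on its components is vacuous and yields nothing. What does give a contradiction is that $|\Pi^{J,I\cap J}|=3$, so $\Pi^{J,I\cap J}\cup\Pi_{I\cap J}$ has only $5$ elements and cannot span the $6$-dimensional $V_J$, violating Lemma~\ref{lem:Psi_is_full}. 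The same phenomenon occurs in case (ii). In case (iii) the situation is even starker: $\Phi_J^{\perp\{r_2,r_3,r_4,r_5\}}=\emptyset$, so your claim that ``$\Pi^{J,I\cap J}$ has only two elements'' is wrong, and this case is the easiest, not the hardest.

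Two further points: first, $w\in Y_I$ means $w$ preserves $(\Phi^{\perp I})^+$, not the global $\Phi^+$, so your positivity constraint is weaker than you state. Second, the paper's proof bypasses $w$ entirely. It observes that $\Pi^{J,\{r_1\}}$ is of type $A_5$, so once $r_1\in I$ (arranged by a local transformation), the remaining $A_1$-components of $I\cap J$ sit inside this $A_5$; then the dimension argument of Lemma~\ref{lem:J_not_A_N} applies verbatim to show the span is too small. This is both shorter and avoids any analysis of $w$.
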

\begin{proof}
By the property $N \geq |I \cap J| + 2$ and the $A_{>1}$-freeness of $I$, it follows that $I \cap J$ is either $\{r_2,r_3,r_4,r_5\}$ (of type $D_4$) or the union of irreducible components of type $A_1$.
In the former case, we have $\Phi_J^{\perp I} = \emptyset$ (see Tables \ref{tab:positive_roots_E_8_1}--\ref{tab:positive_roots_E_8_6}), a contradiction.
Therefore, $I \cap J$ consists of irreducible components of type $A_1$.

Now assume contrary that $I \cap J$ is not irreducible.
Then, by applying successive local transformations and by using symmetry, we may assume without loss of generality that $r_1 \in I$ (cf., the proof of Lemma \ref{lem:J_not_A_N}).
Now we have $\Pi^{J,\{r_1\}} = \{\alpha_2,\alpha_4,\alpha_5,\alpha_6,\alpha'\}$ which is the standard labelling of type $A_5$, where $\alpha'$ is the root $\gamma_{44}$ in Table \ref{tab:positive_roots_E_8_3}.
Note that $\Pi_{(I \cap J) \smallsetminus \{r_1\}} \subseteq \Pi^{J,\{r_1\}}$.
Now the same argument as Lemma \ref{lem:J_not_A_N} implies that the subspace $V'$ spanned by $\Pi^{J,I \cap J} \cup \Pi_{(I \cap J) \smallsetminus \{r_1\}}$ is a proper subspace of the space spanned by $\Pi^{J,\{r_1\}}$, therefore $\dim V' < 5$.
This implies that the subspace spanned by $\Pi^{J,I \cap J} \cup \Pi_{I \cap J}$, which is the sum of $V'$ and $\mathbb{R}\alpha_1$, has dimension less than $6 = \dim V_J$, contradicting the fact that $\Pi^{J,I \cap J} \cup \Pi_{I \cap J}$ spans $V_J$ (see Lemma \ref{lem:Psi_is_full}).
Hence $I \cap J$ is irreducible, therefore the claim holds.
\end{proof}

We also give a list of all positive roots of the Coxeter group of type $D_n$ (Table \ref{tab:positive_roots_D_n}) in order to prove the next lemma (and some other results below).
Some notations are similar to the above case of type $E_8$.
For the data for actions of generators on the roots, if the action $r_k \cdot \gamma$ does not appear in the list, then it means either $r_k$ fixes $\gamma$ (or equivalently, $\gamma$ is orthogonal to $\alpha_k$), or $\gamma = \alpha_k$.
Again, these data imply that the list indeed exhausts all the positive roots.
\begin{table}[hbt]
\centering
\caption{List of positive roots for Coxeter group of type $D_n$}
\label{tab:positive_roots_D_n}
\begin{tabular}{|c|c|} \hline
roots & actions of generators \\ \hline
$\gamma^{(1)}_{i,j} := \sum_{h=i}^{j} \alpha_h$ & $r_{i-1} \cdot \gamma^{(1)}_{i,j} = \gamma^{(1)}_{i-1,j}$ ($i \geq 2$) \\
($1 \leq i \leq j \leq n-2$) & $r_i \cdot \gamma^{(1)}_{i,j} = \gamma^{(1)}_{i+1,j}$ ($i \leq j-1$) \\
($\gamma^{(1)}_{i,i} = \alpha_i$) & $r_j \cdot \gamma^{(1)}_{i,j} = \gamma^{(1)}_{i,j-1}$ ($i \leq j-1$) \\
& $r_{j+1} \cdot \gamma^{(1)}_{i,j} = \gamma^{(1)}_{i,j+1}$ ($j \leq n-3$) \\
& $r_{n-1} \cdot \gamma^{(1)}_{i,n-2} = \gamma^{(2)}_i$ \\
& $r_n \cdot \gamma^{(1)}_{i,n-2} = \gamma^{(3)}_i$ \\ \hline
$\gamma^{(2)}_i := \sum_{h=i}^{n-1} \alpha_h$ & $r_{i-1} \cdot \gamma^{(2)}_i = \gamma^{(2)}_{i-1}$ ($i \geq 2$) \\
($1 \leq i \leq n-1$) & $r_i \cdot \gamma^{(2)}_i = \gamma^{(2)}_{i+1}$ ($i \leq n-2$) \\
($\gamma^{(2)}_{n-1} = \alpha_{n-1}$) & $r_{n-1} \cdot \gamma^{(2)}_i = \gamma^{(1)}_{i,n-2}$ ($i \leq n-2$) \\
& $r_n \cdot \gamma^{(2)}_i = \gamma^{(4)}_{i,n-1}$ ($i \leq n-2$) \\ \hline
$\gamma^{(3)}_i := \sum_{h=i}^{n-2} \alpha_h + \alpha_n$ & $r_{i-1} \cdot \gamma^{(3)}_i = \gamma^{(3)}_{i-1}$ ($i \geq 2$) \\
($1 \leq i \leq n-1$) & $r_i \cdot \gamma^{(3)}_i = \gamma^{(3)}_{i+1}$ ($i \leq n-2$) \\
($\gamma^{(3)}_{n-1} = \alpha_n$) & $r_n \cdot \gamma^{(3)}_i = \gamma^{(1)}_{i,n-2}$ ($i \leq n-2$) \\
& $r_{n-1} \cdot \gamma^{(3)}_i = \gamma^{(4)}_{i,n-1}$ ($i \leq n-2$) \\ \hline
$\gamma^{(4)}_{i,j} := \sum_{h=i}^{j-1} \alpha_h + \sum_{h=j}^{n-2} 2\alpha_h + \alpha_{n-1} + \alpha_n$ & $r_{i-1} \cdot \gamma^{(4)}_{i,j} = \gamma^{(4)}_{i-1,j}$ ($i \geq 2$) \\
($1 \leq i < j \leq n-1$) & $r_i \cdot \gamma^{(4)}_{i,j} = \gamma^{(4)}_{i+1,j}$ ($i \leq j-2$) \\
& $r_{j-1} \cdot \gamma^{(4)}_{i,j} = \gamma^{(4)}_{i,j-1}$ ($i \leq j-2$) \\
& $r_j \cdot \gamma^{(4)}_{i,j} = \gamma^{(4)}_{i,j+1}$ ($j \leq n-2$) \\
& $r_{n-1} \cdot \gamma^{(4)}_{i,n-1} = \gamma^{(3)}_i$ \\
& $r_n \cdot \gamma^{(4)}_{i,n-1} = \gamma^{(2)}_i$ \\ \hline
\end{tabular}
\end{table}

Then we have the following:
\begin{lem}
\label{lem:possibility_J_is_D_N}
In this setting, suppose that $J$ is of type $D_N$.
\begin{enumerate}
\item \label{item:lem_possibility_J_is_D_N_case_1}
If $I \cap J$ has an irreducible component of type $D_k$ with $k \geq 4$ and $N - k$ is odd, then we have $|I \cap J| \leq k + (N-k-3)/2$.
\item \label{item:lem_possibility_J_is_D_N_case_2}
If $N$ is odd, $I \cap J$ does not have an irreducible component of type $D_k$ with $k \geq 4$ and $\{r_{N-1},r_N\} \not\subseteq I$, then we have $|I \cap J| \leq (N-3)/2$.
\item \label{item:lem_possibility_J_is_D_N_case_3}
If $N$ is odd, $I \cap J$ does not have an irreducible component of type $D_k$ with $k \geq 4$ and $\{r_{N-1},r_N\} \subseteq I$, then we have $|I \cap J| \leq (N-1)/2$.
\end{enumerate}
\end{lem}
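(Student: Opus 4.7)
The strategy is to use the explicit list of positive roots for $D_N$ in Table \ref{tab:positive_roots_D_n} to compute $|\Pi^{J, I \cap J}|$ as a function of the combinatorial shape of $I \cap J$, and then impose the basis condition from Lemma \ref{lem:Psi_is_full} --- which forces $|\Pi^{J, I \cap J}| = N - |I \cap J|$ --- to derive the stated numerical bounds. Structurally, since the only trivalent vertex of the Coxeter graph of $J = D_N$ is $r_{N-2}$, any $D_k$-component of $I \cap J$ with $k \geq 4$ must occupy the end-block $\{r_{N-k+1}, \ldots, r_N\}$, and by the $A_{>1}$-freeness of $I$ every other component is an isolated $A_1$-vertex; parts (2) and (3) treat the two regimes of the $A_1$-only case distinguished by whether $\{r_{N-1}, r_N\} \subseteq I$ or not, which is precisely where the fork of $D_N$ influences the bookkeeping.

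For the rank computation I would verify, by inspecting the four families $\gamma^{(1)}, \gamma^{(2)}, \gamma^{(3)}, \gamma^{(4)}$ in Table \ref{tab:positive_roots_D_n} against the orthogonality conditions coming from $\Pi_{I \cap J}$, that orthogonality to a whole $D_k$-block forces the coefficients of $\gamma$ on $\alpha_{r_{N-k+1}}, \ldots, \alpha_{r_N}$ to all vanish (using positive-definiteness of the $D_k$-subsystem); that orthogonality to an $A_1$-component at $r_{i_j}$ with $i_j \leq N - 2$ forces the coefficients of $\alpha_{r_{i_j}}$ and $\alpha_{r_{i_j+1}}$ to coincide; and that orthogonality to $r_{N-1}$ or $r_N$ gives an analogous coupling at the fork, singling out specific roots of $\gamma^{(2)}$- or $\gamma^{(3)}$-shape. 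This shows that $\Phi_J^{\perp I \cap J}$ decomposes orthogonally as $m$ rank-$1$ pieces --- one per $A_1$-component --- together with a sub-root system of type $D_\ell$ on the $\ell$ simple roots whose indices are neither in the $D_k$-block nor paired off by any $A_1$-component. Hence $|\Pi^{J, I \cap J}| = m + \ell$ when $\ell \geq 2$, and $|\Pi^{J, I \cap J}| = m$ when $\ell \in \{0, 1\}$.

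The arithmetic then finishes the proof. Equating $|\Pi^{J,I \cap J}|$ with $N - |I \cap J|$ and substituting $\ell = N - k - 2m$ in (1) with $m = |I \cap J| - k$, $\ell = N - 2m$ in (2) with $m = |I \cap J|$, and $\ell = N - 2m + 2$ in (3) with $m = |I \cap J|$, one sees that the parity hypotheses ($N - k$ odd in (1), $N$ odd in (2) and (3)) rule out $\ell = 0$, while $\ell = 1$ produces a rank one short of $N - |I \cap J|$; forcing $\ell \geq 2$ gives precisely the stated bound in each case. The main obstacle I anticipate is the careful case-splitting at the fork $r_{N-1}, r_N$ --- neither, exactly one, or both of these vertices lying in $I$ alters which simple roots are paired off versus leftover, and this is exactly what separates parts (2) and (3) and produces the distinct arithmetic bounds.
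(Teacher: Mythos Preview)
Your overall strategy---compute $|\Pi^{J,I\cap J}|$ combinatorially and compare with the forced value $N-|I\cap J|$---is different from the paper's and in principle workable, but your execution contains a genuine error. The claim that each $A_1$-component contributes exactly one rank-$1$ factor to $\Phi_J^{\perp I\cap J}$ fails at the fork. When both $r_{N-1}$ and $r_N$ lie in $I$ (case~(3)), orthogonality to $\alpha_{N-1}=e_{N-1}-e_N$ and $\alpha_N=e_{N-1}+e_N$ simultaneously forces a root to involve \emph{neither} $e_{N-1}$ nor $e_N$; there is no surviving rank-$1$ piece from either of these two components. Thus in case~(3) the correct count is $|\Pi^{J,I\cap J}|=(m-2)+\ell$, not $m+\ell$. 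With your stated formula $m+\ell$ and $\ell=N-2m+2$, equating to $N-m$ yields $N-m+2=N-m$, which is vacuous, so your arithmetic does not close. (Separately, the assertion that orthogonality to $\alpha_{i_j}$ forces the $\alpha$-coefficients at $i_j$ and $i_j+1$ to coincide is false; the actual condition is $2c_{i_j}=c_{i_j-1}+c_{i_j+1}$. What is true is that the \emph{$e$-coordinates} $i_j$ and $i_j+1$ get paired.)

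The paper avoids all of this bookkeeping. It argues by contradiction: if the bound fails then the $A_1$-components are maximally packed, and after applying the local transformations already set up in this section (the maps $y\mapsto\varphi(y,s)$ with $s\in J$) one may assume $I\cap J$ sits at the specific positions $r_2,r_4,\dots$ (plus the $D_k$-block in case~(1)). In that normalized configuration one checks directly from Table~\ref{tab:positive_roots_D_n} that no root of $\Phi_J^{\perp I}$ has $r_1$ in its support, so $\Pi^{J,I\cap J}\cup\Pi_{I\cap J}\subseteq V_{J\smallsetminus\{r_1\}}$ cannot span $V_J$. This is considerably shorter than a full rank computation and sidesteps the fork subtleties entirely; you might consider adopting the normalization step even if you prefer a counting argument afterward.
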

\begin{proof}
Assume contrary that the hypothesis of one of the three cases in the statement is satisfied but the inequality in the conclusion does not hold.
We show that $\Pi^{J,I \cap J} \cup \Pi_{I \cap J}$ cannot span $V_J$, which is a contradiction and therefore concludes the proof.
First, recall the property $N \geq |I \cap J| + 2$ and the $A_{>1}$-freeness of $I$.
Then, in the case \ref{item:lem_possibility_J_is_D_N_case_1}, by applying successive local transformations, we may assume without loss of generality that $I \cap J$ consists of elements $r_{2j}$ with $1 \leq j \leq (N-k-1)/2$ and $r_j$ with $N-k+1 \leq j \leq N$.
Similarly, in the case \ref{item:lem_possibility_J_is_D_N_case_2} (respectively, the case \ref{item:lem_possibility_J_is_D_N_case_3}), by applying successive local transformations and using symmetry, we may assume without loss of generality that $I \cap J$ consists of elements $r_{2j}$ with $1 \leq j \leq (N-1)/2$ (respectively, $r_{2j}$ with $1 \leq j \leq (N-1)/2$ and $r_N$).
In any case, we have $\Phi_J^{\perp I} \subseteq \Phi_{J \smallsetminus \{r_1\}}$ (see Table \ref{tab:positive_roots_D_n}), therefore the subspace spanned by $\Pi^{J,I \cap J} \cup \Pi_{I \cap J}$ is contained in $V_{J \smallsetminus \{r_1\}}$.
Hence $\Pi^{J,I \cap J} \cup \Pi_{I \cap J}$ cannot span $V_J$, concluding the proof.
\end{proof}

We divide the following argument into two cases.

\subsubsection{Case $w \cdot \Pi_J \not\subseteq \Phi^+$}
\label{sec:proof_special_first_case_subcase_1}

In order to prove that $w \cdot \Pi_J \subseteq \Phi^+$, here we assume contrary that $w \cdot \Pi_J \not\subseteq \Phi^+$ and deduce a contradiction.

In this setting, we construct a decomposition of $w$ in the following manner.
Take an element $s \in J$ with $w \cdot \alpha_s \in \Phi^-$.
By Lemma \ref{lem:rightdivisor}, the element $w_{x_I}^s$ is a right divisor of $w$.
This implies that $\Phi^{\perp I}[w_{x_I}^s] \subseteq \Phi^{\perp I}[w] = \emptyset$ (see Lemma 2.2 of \cite{Nui11} for the first inclusion), therefore we have $w_{x_I}^s \in Y_{y,x_I}$ where we put $y := \varphi(x_I,s) \in S^{(\Lambda)}$.
By Proposition \ref{prop:charofBphi}, we have $y \neq x_I$.
This element $w_{x_I}^s$ induces a local transformation $x_I \mapsto y$.
Now if $w(w_{x_I}^s)^{-1} \cdot \Pi_J \not\subseteq \Phi^+$, then we can similarly factor out from $w(w_{x_I}^s)^{-1}$ a right divisor of the form $w_y^t \in Y_{\varphi(y,t),y}$ with $t \in J$.
Iterating this process, we finally obtain a decomposition of $w$ of the form $w = u w_{y_{n-1}}^{s_{n-1}} \cdots w_{y_1}^{s_1} w_{y_0}^{s_0}$ satisfying that $n \geq 1$, $u \in Y_{x_I,z}$ with $z \in S^{(\Lambda)}$, $w_{y_i}^{s_i} \in Y_{y_{i+1},y_i} \cap W_J$ for every $0 \leq i \leq n-1$ where we put $y_0 = x_I$ and $y_n = z$, and $u \cdot \Pi_J \subseteq \Phi^+$.

Put $u' := w_{y_{n-1}}^{s_{n-1}} \cdots w_{y_1}^{s_1} w_{y_0}^{s_0} \neq 1$.
By the construction, the action of $u' \in Y_{z,x_I} \cap W_J$ induces (as the composition of successive local transformations) an isomorphism $\sigma \colon I \cap J \to [z] \cap J$, $t \mapsto u' \ast t$, while $u'$ fixes every element of $\Pi_{I \smallsetminus J}$.
Now $\sigma$ is not an identity mapping; otherwise, we have $z = x_I$ and $1 \neq u' \in Y_{x_I,x_I}$, while $u'$ has finite order since $|W_J| < \infty$, contradicting Proposition \ref{prop:Yistorsionfree}.
On the other hand, we have $u \cdot \Phi_J = wu'{}^{-1} \cdot \Phi_J = w \cdot \Phi_J = \Phi_J$, therefore $u \cdot \Phi_J^+ = \Phi_J^+$ since $u \cdot \Pi_J \subseteq \Phi^+$.
This implies that $u \cdot \Pi_J = \Pi_J$, therefore the action of $u$ defines an automorphism $\tau$ of $J$.
Since $w = u u' \in Y_I$, the composite mapping $\tau \circ \sigma$ is the identity mapping on $I \cap J$, while $\sigma$ is not identity as above.
As a consequence, we have $\tau^{-1}|_{I \cap J} = \sigma$ and hence $\tau^{-1}$ is a nontrivial automorphism of $J$, therefore the possibilities of the type of $J$ are $D_N$, $E_6$ and $F_4$ (recall that $J$ is neither of type $A_N$ nor of type $I_2(m)$).
\begin{lem}
\label{lem:proof_special_first_case_subcase_1_not_F_4}
In this setting, $J$ is not of type $F_4$.
\end{lem}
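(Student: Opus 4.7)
The plan is to exploit the $A_{>1}$-freeness of $I$ to show that the local transformations available within $F_4$ are too restricted to realize the nontrivial diagram automorphism $\tau$. Since $F_4$ has a unique nontrivial diagram automorphism, $\tau$ must swap $r_1 \leftrightarrow r_4$ and $r_2 \leftrightarrow r_3$; equivalently, $\tau$ interchanges the two sets $P_1 := \{r_1, r_2\}$ and $P_2 := \{r_3, r_4\}$. The bound $|I \cap J| \leq N - 2 = 2$ together with $A_{>1}$-freeness reduces the possibilities for $I \cap J$ to a singleton or one of the pairs $\{r_1, r_3\}, \{r_1, r_4\}, \{r_2, r_3\}, \{r_2, r_4\}$ (the pairs $\{r_1, r_2\}$ and $\{r_3, r_4\}$ being excluded as type $A_2$).

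The central technical step is the following dichotomy: under the standing assumption that $[y] \cap J$ is $A_{>1}$-free (a property preserved by local transformations, since these define isomorphisms of Coxeter systems on $[y]$), any nontrivial local transformation $y \mapsto \varphi(y,s)$ necessarily has $[y]_{\sim s} \in \{\{r_1, r_2\}, \{r_3, r_4\}\}$. To establish this I will enumerate the possible values of $[y]_{\sim s}$; whenever it contains the edge labelled $4$ (i.e., equals one of $\{r_2, r_3\}$, $\{r_1, r_2, r_3\}$, $\{r_2, r_3, r_4\}$, or $F_4$), the subset is of $(-1)$-type, and a short case check shows that either $[y]_{\sim s} \setminus \{s\}$ is also of $(-1)$-type (so $w_y^s$ fixes $\Pi_{[y]_{\sim s} \setminus \{s\}}$ pointwise and $\varphi(y,s) = y$ by Proposition~\ref{prop:charofBphi}), or else $[y]_{\sim s} \setminus \{s\}$ contains an $A_2$ component, contradicting the $A_{>1}$-freeness of $[y] \cap J$. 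Singleton $[y]_{\sim s}$ yields no nontrivial action, while the remaining options $[y]_{\sim s} = \{r_1, r_2\}$ and $\{r_3, r_4\}$ indeed produce nontrivial transformations, and they act on $[y] \cap J$ by a single swap within $P_1$ or within $P_2$ respectively, fixing all other elements.

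From the dichotomy it follows that the composed isomorphism $\sigma \colon I \cap J \to [z] \cap J$ preserves the partition $\{P_1, P_2\}$, i.e., $\sigma(P_i \cap I \cap J) \subseteq P_i$ for $i = 1, 2$. On the other hand, the construction $w = uu'$ with $u$ inducing $\tau$ forces $\sigma = \tau^{-1}|_{I \cap J}$, and since $\tau$ exchanges $P_1$ and $P_2$, any nonempty $I \cap J$ has some element sent across the partition by $\tau^{-1}$ but kept inside the partition by $\sigma$. Because $I \cap J$ is nonempty in every admissible configuration, this yields the desired contradiction and shows that $J$ cannot be of type $F_4$.

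The main obstacle is the careful verification of the dichotomy, since each possible combination of $[y]_{\sim s}$ containing both $r_2$ and $r_3$ and the choice of $s$ within it must be checked separately; the combined use of $(-1)$-type behaviour and $A_{>1}$-freeness does most of the work, but the bookkeeping of which removed subsets produce $A_2$ components is what prevents the argument from collapsing to a single line.
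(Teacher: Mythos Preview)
Your argument is correct, and it reaches the same contradiction as the paper: the map $\sigma$ preserves the partition $\{P_1,P_2\}=\{\{r_1,r_2\},\{r_3,r_4\}\}$, whereas $\tau^{-1}$ interchanges the two blocks. Your case analysis of local transformations is complete (connected subsets of the $F_4$ diagram of size $\leq 3$ are exactly the ones you list, since $|[y]\cap J|\leq 2$), and the dichotomy holds in each case as you describe.

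The paper, however, obtains the same partition-preservation in one line by a different mechanism: since $\sigma$ is the restriction to $I\cap J$ of conjugation by $u'\in W_J$, it can only move a simple reflection within its $W_J$-conjugacy class; and in $F_4$ the simple reflections $r_1,r_2$ are not conjugate to $r_3,r_4$ (the conjugacy classes of simple reflections being the connected components of the graph obtained by deleting even-labelled edges). So the paper never needs to enumerate the local transformations or invoke the $(-1)$-type/$A_{>1}$-free dichotomy. Your route is more self-contained (it does not appeal to the conjugacy-class fact) but considerably longer; the paper's route is a single conceptual observation that would replace your entire middle paragraph.
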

\begin{proof}
Assume contrary that $J = \{r_1,r_2,r_3,r_4\}$ is of type $F_4$.
In this case, each of $r_1$ and $r_2$ is not conjugate in $W_J$ to one of $r_3$ and $r_4$ by the well-known fact that the conjugacy classes for the simple reflections $r_i$ are determined by the connected components of the graph obtained from the Coxeter graph by removing all edges having non-odd labels.
Therefore, the mapping $\tau^{-1}|_{I \cap J} = \sigma$ induced by the action of $u' \in W_J$ cannot map an element $r_i$ ($1 \leq i \leq 4$) to $r_{5-i}$.
This contradicts the fact that $\tau^{-1}$ is a nontrivial automorphism of $J$.
Hence the claim holds.
\end{proof}
From now, we consider the remaining case that $J$ is either of type $D_N$ with $4 \leq N < \infty$ or of type $E_6$.
Take a standard decomposition $\mathcal{D} = \omega_{\ell(\mathcal{D})-1} \cdots \omega_1\omega_0$ of $u \in Y_{x_I,z}$ with respect to $J$ (see Proposition \ref{prop:standard_decomposition_existence}).
Note that $J$ is irreducible and $J \not\subseteq [z]$.
This implies that, if $0 \leq i \leq \ell(\mathcal{D})-1$ and $\omega_j$ is a narrow transformation for every $0 \leq j \leq i$, then it follows by induction on $0 \leq j \leq i$ that the support of $\omega_j$ is apart from $J$, the product $\omega_j \cdots \omega_1\omega_0$ fixes $\Pi_J$ pointwise, $[y^{(j+1)}] \cap J = [z] \cap J$, and $[y^{(j+1)}] \smallsetminus J$ is not adjacent to $J$ (note that $[z] \smallsetminus J = I \smallsetminus J$ is not adjacent to $J$).
By these properties, since $u$ does not fix $\Pi_J$ pointwise, $\mathcal{D}$ contains at least one wide transformation.
Let $\omega := \omega_i$ be the first (from right) wide transformation in $\mathcal{D}$, and write $y = y^{(i)}(\mathcal{D})$, $t = t^{(i)}(\mathcal{D})$ and $K = K^{(i)}(\mathcal{D})$ for simplicity.
Note that $J^{(i)}(\mathcal{D}) = J$ by the above argument.
Note also that $\Pi^{K,[y] \cap K} \subseteq \Pi^{[y]}$, since $[y] \smallsetminus K$ is not adjacent to $K$ by the definition of $K$.
Now the action of $\omega_{i-1} \cdots \omega_1\omega_0 u' \in Y_{y,x_I}$ induces an isomorphism $\Pi^I \to \Pi^{[y]}$ which maps $\Pi^{J,I \cap J}$ onto $\Pi^{J,[y] \cap J} = \Pi^{J,[z] \cap J}$.
Hence we have the following (recall that $\Pi^{J,I \cap J}$ is the union of some irreducible components of $\Pi^I$):
\begin{lem}
\label{lem:proof_special_first_case_subcase_1_local_irreducible_components}
In this setting, $\Pi^{J,[y] \cap J}$ is isomorphic to $\Pi^{J,I \cap J}$ and is the union of some irreducible components of $\Pi^{[y]}$.
In particular, each element of $\Pi^{J,[y] \cap J}$ is orthogonal to any element of $\Pi^{K,[y] \cap K} \smallsetminus \Phi_J$.
\end{lem}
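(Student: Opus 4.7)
The plan is to read off all three assertions from material already available. Let $u^{\ast} := \omega_{i-1}\cdots\omega_1\omega_0 u' \in Y_{y,x_I}$; by Proposition~\ref{prop:factorization_C}(\ref{item:prop_factorization_C_decomp}), conjugation by $u^{\ast}$ defines an isomorphism of Coxeter systems $(W^{\perp I},R^I) \to (W^{\perp [y]},R^{[y]})$, and hence carries the simple system $\Pi^I$ bijectively onto $\Pi^{[y]}$ while preserving the decomposition into irreducible components. The point is to check that, restricted to the sub-root-basis $\Pi^{J,I\cap J} \subseteq \Pi^I$, this bijection lands exactly in $\Pi^{J,[y]\cap J}$.

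First I would verify that $u^{\ast}$ preserves $V_J$: by the construction in the paragraph preceding the lemma, $u' \in W_J$ (each factor $w_{y_j}^{s_j}$ lies in $W_J$), and for $j<i$ the component $\omega_j$ is a narrow transformation whose support is apart from $J$, so $\omega_j$ acts trivially on $V_J$. Hence $u^{\ast}\cdot V_J = V_J$ and $u^{\ast}\cdot \Phi_J = \Phi_J$. Combined with $u^{\ast}\cdot(\Phi^{\perp I})^+ = (\Phi^{\perp [y]})^+$ (the defining property of $Y_{y,x_I}$) and the observation that $I\smallsetminus J$ is apart from $J$ (so $\Phi_J^{\perp I} = \Phi_J^{\perp I\cap J}$, and analogously on the $[y]$-side since $[y]\smallsetminus J$ is not adjacent to $J$), this yields $u^{\ast}\cdot(\Phi_J^{\perp I\cap J})^+ = (\Phi_J^{\perp[y]\cap J})^+$, which forces $u^{\ast}\cdot \Pi^{J,I\cap J} = \Pi^{J,[y]\cap J}$. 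This establishes the first assertion.

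For the second assertion, recall that by Lemma~\ref{lem:Psi_is_full} we have $\Pi^{J,I\cap J} = \Psi$, and $\Psi$ was defined to be a union of irreducible components of $\Pi^I$. Because $u^{\ast}$ realises an isomorphism of Coxeter systems $R^I \to R^{[y]}$, it sends irreducible components of $\Pi^I$ to irreducible components of $\Pi^{[y]}$, so the image $\Pi^{J,[y]\cap J}$ is again a union of irreducible components of $\Pi^{[y]}$. The third (``in particular'') assertion then follows formally: any $\beta \in \Pi^{K,[y]\cap K}\smallsetminus \Phi_J$ lies in $\Pi^{[y]}$ (using the inclusion $\Pi^{K,[y]\cap K}\subseteq \Pi^{[y]}$ already noted in the paragraph before the lemma) but not in $\Pi^{J,[y]\cap J}\subseteq \Phi_J$, so by the union-of-components property $\beta$ lies in an irreducible component of $\Pi^{[y]}$ disjoint from the components forming $\Pi^{J,[y]\cap J}$, and is therefore orthogonal to every element of $\Pi^{J,[y]\cap J}$ (simple roots in distinct irreducible components of a Coxeter system are pairwise orthogonal).

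The argument is essentially bookkeeping; there is no serious obstacle beyond checking that $u^{\ast}$ restricts cleanly to the $V_J$-part. The only place where one must be careful is the verification in the first paragraph that $u^{\ast}$ both preserves $V_J$ and identifies the two \emph{relative} root systems $\Phi_J^{\perp I\cap J}$ and $\Phi_J^{\perp [y]\cap J}$; once this is in hand, the rest is an immediate transport-of-structure argument using Lemma~\ref{lem:Psi_is_full} and the standard fact about orthogonality between irreducible components.
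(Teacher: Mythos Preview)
Your proof is correct and follows essentially the same approach as the paper: the paper states the lemma as an immediate consequence of the sentence preceding it (that $\omega_{i-1}\cdots\omega_0 u' \in Y_{y,x_I}$ induces an isomorphism $\Pi^I \to \Pi^{[y]}$ mapping $\Pi^{J,I\cap J}$ onto $\Pi^{J,[y]\cap J}$), and you have simply filled in the verification that this element preserves $V_J$ and hence restricts correctly to the $J$-part. The orthogonality consequence is derived identically in both.
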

Now note that $K = ([y] \cup J)_{\sim t}$ is irreducible and of finite type, and $t$ is adjacent to $[y]$.
Moreover, by Lemma \ref{lem:another_decomposition_Y_no_loop}, the element $\omega = \omega_{y,J}^{t}$ does not fix $\Pi_{K \smallsetminus \{t\}}$ pointwise.
By these properties and symmetry, we may assume without loss of generality that the possibilities of $K$ are as follows:
\begin{enumerate}
\item \label{item:proof_special_first_case_subcase_1_J_E_6}
$J$ is of type $E_6$, and;
\begin{enumerate}
\item \label{item:proof_special_first_case_subcase_1_J_E_6_K_E_8}
$K = J \cup \{t,t'\}$ is of type $E_8$ where $t$ is adjacent to $r_6$ and $t'$, and $t' \in [y]$,
\item \label{item:proof_special_first_case_subcase_1_J_E_6_K_E_7}
$K = J \cup \{t\}$ is of type $E_7$ where $t$ is adjacent to $r_6$, and $r_6 \in [y]$,
\end{enumerate}
\item \label{item:proof_special_first_case_subcase_1_J_D_7}
$J$ is of type $D_7$, $K = J \cup \{t\}$ is of type $E_8$ where $t$ is adjacent to $r_7$, and $r_7 \in [y]$,
\item \label{item:proof_special_first_case_subcase_1_J_D_5}
$J$ is of type $D_5$, and;
\begin{enumerate}
\item \label{item:proof_special_first_case_subcase_1_J_D_5_K_E_7}
$K = J \cup \{t,t'\}$ is of type $E_7$ where $t$ is adjacent to $r_5$ and $t'$, and $t' \in [y]$,
\item \label{item:proof_special_first_case_subcase_1_J_D_5_K_E_6}
$K = J \cup \{t\}$ is of type $E_6$ where $t$ is adjacent to $r_5$, and $r_5 \in [y]$,
\end{enumerate}
\item \label{item:proof_special_first_case_subcase_1_J_D_N}
$J$ is of type $D_N$, $K = J \cup \{t\}$ is of type $D_{N+1}$ where $t$ is adjacent to $r_1$, and $r_1 \in [y]$.
\end{enumerate}

We consider Case \ref{item:proof_special_first_case_subcase_1_J_E_6_K_E_8}.
We have $|[y] \cap J| = |I \cap J| = 1$ by Lemma \ref{lem:possibility_J_is_E_6}.
Now by Tables \ref{tab:positive_roots_E_8_1}--\ref{tab:positive_roots_E_8_6} (where $r_7 = t$ and $r_8 = t'$), we have $\langle \beta,\beta' \rangle \neq 0$ for some $\beta \in \Pi^{J,[y] \cap J}$ and $\beta' \in \Pi^{K,[y] \cap K} \smallsetminus \Phi_J$ (namely, $(\beta,\beta') = (\alpha_4,\gamma_{84})$ when $[y] \cap J = \{r_1\}$; $(\beta,\beta') = (\gamma_{16},\gamma_{74})$ when $[y] \cap J = \{r_3\}$; and $(\beta,\beta') = (\alpha_1,\gamma_{74})$ when $[y] \cap J = \{r_j\}$ with $j \in \{2,4,5,6\}$, where the roots $\gamma_k$ are as in Tables \ref{tab:positive_roots_E_8_1}--\ref{tab:positive_roots_E_8_6}).
This contradicts Lemma \ref{lem:proof_special_first_case_subcase_1_local_irreducible_components}.

We consider Case \ref{item:proof_special_first_case_subcase_1_J_E_6_K_E_7}.
We have $|[y] \cap J| = |I \cap J| = 1$ by Lemma \ref{lem:possibility_J_is_E_6}, hence $[y] \cap J = \{r_6\}$.
Now we have $\alpha_5 + \alpha_6 + \alpha_t \in \Pi^{K,[y] \cap K} \smallsetminus \Phi_J$, $\alpha_4 \in \Pi^{J,[y] \cap J}$, and these two roots are not orthogonal, contradicting Lemma \ref{lem:proof_special_first_case_subcase_1_local_irreducible_components}.

We consider Case \ref{item:proof_special_first_case_subcase_1_J_D_7}.
Note that $N = 7 \geq |I \cap J| + 2 = |[y] \cap J| + 2$, therefore $|[y] \cap J| \leq 5$.
By Lemma \ref{lem:possibility_J_is_D_N} and $A_{>1}$-freeness of $I$, it follows that the possibilities of $[y] \cap J$ are as listed in Table \ref{tab:proof_special_first_case_subcase_1_J_D_7}, where we put $(r'_1,r'_2,r'_3,r'_4,r'_5,r'_6,r'_7,r'_8) = (t,r_6,r_7,r_5,r_4,r_3,r_2,r_1)$ (hence $K = \{r'_1,\dots,r'_8\}$ is the standard labelling of type $E_8$).
Now by Tables \ref{tab:positive_roots_E_8_1}--\ref{tab:positive_roots_E_8_6}, we have $\langle \beta,\beta' \rangle \neq 0$ for some $\beta \in \Pi^{J,[y] \cap J}$ and $\beta' \in \Pi^{K,[y] \cap K} \smallsetminus \Phi_J$ as listed in Table \ref{tab:proof_special_first_case_subcase_1_J_D_7}, where we write $\alpha'_j = \alpha_{r'_j}$ and the roots $\gamma_k$ are as in Tables \ref{tab:positive_roots_E_8_1}--\ref{tab:positive_roots_E_8_6}.
This contradicts Lemma \ref{lem:proof_special_first_case_subcase_1_local_irreducible_components}.

\begin{table}[hbt]
\centering
\caption{List of roots for Case \ref{item:proof_special_first_case_subcase_1_J_D_7}}
\label{tab:proof_special_first_case_subcase_1_J_D_7}
\begin{tabular}{|c|c|c|} \hline
$[y] \cap J$ & $\beta$ & $\beta'$ \\ \hline
$r'_3 \in [y] \cap J \subseteq \{r'_3,r'_6,r'_7,r'_8\}$ & $\alpha'_2$ & $\gamma_{16}$ \\ \hline
$\{r'_3,r'_5\}$ & $\alpha'_2$ & $\gamma_{31}$ \\ \hline
$\{r'_2,r'_3\} \subseteq [y] \cap J \subseteq \{r'_2,r'_3,r'_4,r'_5,r'_6\}$ & $\alpha'_8$ & $\gamma_{97}$ \\ \cline{1-2}
$\{r'_2,r'_3,r'_7\}$ & $\gamma_{22}$ & \\ \hline
$\{r'_2,r'_3,r'_8\}$ & $\alpha'_6$ & $\gamma_{104}$ \\ \hline
\end{tabular}
\end{table}

We consider Case \ref{item:proof_special_first_case_subcase_1_J_D_5_K_E_7}.
Note that $N = 5 \geq |I \cap J| + 2 = |[y] \cap J| + 2$, therefore $|[y] \cap J| \leq 3$.
By $A_{>1}$-freeness of $I$, every irreducible component of $[y] \cap J$ is of type $A_1$.
Now by Lemma \ref{lem:possibility_J_is_D_N}, the possibilities of $[y] \cap J$ are as listed in Table \ref{tab:proof_special_first_case_subcase_1_J_D_5_K_E_7}, where we put $(r'_1,r'_2,r'_3,r'_4,r'_5,r'_6,r'_7) = (r_1,r_4,r_2,r_3,r_5,t,t')$ (hence $K = \{r'_1,\dots,r'_7\}$ is the standard labelling of type $E_7$).
Now by Tables \ref{tab:positive_roots_E_8_1}--\ref{tab:positive_roots_E_8_6}, we have $\langle \beta,\beta' \rangle \neq 0$ for some $\beta \in \Pi^{J,[y] \cap J}$ and $\beta' \in \Pi^{K,[y] \cap K} \smallsetminus \Phi_J$ as listed in Table \ref{tab:proof_special_first_case_subcase_1_J_D_5_K_E_7}, where we write $\alpha'_j = \alpha_{r'_j}$ and the roots $\gamma_k$ are as in Tables \ref{tab:positive_roots_E_8_1}--\ref{tab:positive_roots_E_8_6}.
This contradicts Lemma \ref{lem:proof_special_first_case_subcase_1_local_irreducible_components}.

\begin{table}[hbt]
\centering
\caption{List of roots for Case \ref{item:proof_special_first_case_subcase_1_J_D_5_K_E_7}}
\label{tab:proof_special_first_case_subcase_1_J_D_5_K_E_7}
\begin{tabular}{|c|c|c|} \hline
$[y] \cap J$ & $\beta$ & $\beta'$ \\ \hline
$[y] \cap J \subseteq \{r'_2,r'_4,r'_5\}$ & $\alpha'_1$ & $\gamma_{61}$ \\ \cline{1-2}
$\{r'_3\}$ & $\gamma_{16}$ & \\ \hline
$\{r'_1\}$ & $\alpha'_4$ & $\gamma_{71}$ \\ \hline
\end{tabular}
\end{table}

We consider Case \ref{item:proof_special_first_case_subcase_1_J_D_5_K_E_6}.
By the same reason as Case \ref{item:proof_special_first_case_subcase_1_J_D_5_K_E_7}, every irreducible component of $[y] \cap J$ is of type $A_1$.
Now by Lemma \ref{lem:possibility_J_is_D_N}, we have only two possibilities of $[y] \cap J$; $\{r_5\}$ and $\{r_4,r_5\}$.
In the first case $[y] \cap J = \{r_5\}$, we have $\alpha_2 \in \Pi^{J,[y] \cap J}$, $\alpha_3 + \alpha_5 + \alpha_t \in \Pi^{K,[y] \cap K} \smallsetminus \Phi_J$, and these two roots are not orthogonal, contradicting Lemma \ref{lem:proof_special_first_case_subcase_1_local_irreducible_components}.
Hence we consider the second case $[y] \cap J = \{r_4,r_5\}$.
In this case, the action of the first wide transformation $\omega$ in $\mathcal{D}$ maps the elements $r_1$, $r_2$, $r_3$, $r_4$ and $r_5$ to $t$, $r_5$, $r_3$, $r_2$ and $r_4$, respectively (note that $\{t,r_5,r_3,r_2,r_4\}$ is the standard labelling of type $D_5$).
Now, by a similar argument as above, the possibility of the second wide transformation $\omega_{i'}$ in $\mathcal{D}$ (if exists) is as in Case \ref{item:proof_special_first_case_subcase_1_J_D_5_K_E_6}, where $t'' := t^{(i')}(\mathcal{D})$ is adjacent to either $r_2$ or $r_4$ (note that Case \ref{item:proof_special_first_case_subcase_1_J_D_5_K_E_7} cannot occur as discussed above, while Case \ref{item:proof_special_first_case_subcase_1_J_D_N} cannot occur by the shape of $J$ and the property $r_1 \not\in [y] \cap J$).
This implies that the action of $\omega_{i'}$ either maps the elements $t$, $r_5$, $r_3$, $r_4$ and $r_2$ to $t''$, $r_2$, $r_3$, $r_5$ and $r_4$, respectively (forming a subset of type $D_5$ with the ordering being the standard labelling), or maps the elements $t$, $r_5$, $r_3$, $r_2$ and $r_4$ to $t''$, $r_4$, $r_3$, $r_5$ and $r_2$, respectively (forming a subset of type $D_5$ with the ordering being the standard labelling).
By iterating the same argument, it follows that the sequence of elements $(r_2,r_3,r_4,r_5)$ is mapped by successive wide transformations in $\mathcal{D}$ to one of the following three sequences; $(r_2,r_3,r_4,r_5)$, $(r_5,r_3,r_2,r_4)$ and $(r_4,r_3,r_5,r_2)$.
Hence $u$ itself should map $(r_2,r_3,r_4,r_5)$ to one of the above three sequences; while the action of $u$ induces the nontrivial automorphism $\tau$ of $J$, which maps $(r_1,r_2,r_3,r_4,r_5)$ to $(r_1,r_2,r_3,r_5,r_4)$.
This is a contradiction.

Finally, we consider the case \ref{item:proof_special_first_case_subcase_1_J_D_N}.
First we have the following lemma:
\begin{lem}
\label{lem:proof_special_first_case_subcase_1_J_D_N}
In this setting, suppose further that there exists an integer $k \geq 1$ satisfying that $2k \leq N - 3$, $r_{2j - 1} \in [y]$ and $r_{2j} \not\in [y]$ for every $1 \leq j \leq k$, and $r_{2k + 1} \not\in [y]$.
Then there exist a root $\beta \in \Pi^{J,[y] \cap J}$ and a root $\beta' \in \Pi^{K,[y] \cap K} \smallsetminus \Phi_J$ with $\langle \beta,\beta' \rangle \neq 0$.
\end{lem}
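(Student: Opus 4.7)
The plan is to construct the roots $\beta \in \Pi^{J, [y] \cap J}$ and $\beta' \in \Pi^{K, [y] \cap K} \setminus \Phi_J$ explicitly in a coordinate realization of $K = D_{N+1}$. First, I would introduce the relabeling $r'_1 := t$ and $r'_{i+1} := r_i$ for $1 \leq i \leq N$, so that $K = \{r'_1, \ldots, r'_{N+1}\}$ receives the standard labeling of $D_{N+1}$. Realize $K$ in $\mathbb{R}^{N+1}$ with orthonormal basis $e'_1, \ldots, e'_{N+1}$ such that $\alpha'_i = e'_i - e'_{i+1}$ for $1 \leq i \leq N$ and $\alpha'_{N+1} = e'_N + e'_{N+1}$; then the realization of $J$ embeds via $e_i = e'_{i+1}$, and $\Phi_J$ consists of the roots supported on $\{e'_2, \ldots, e'_{N+1}\}$.

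I will take $\beta' := e'_1 - e'_{2k+2}$, which by Table \ref{tab:positive_roots_D_n} applied to $K$ equals $\alpha_t + \alpha_{r_1} + \alpha_{r_2} + \cdots + \alpha_{r_{2k}}$. Since $\alpha_t$ has coefficient $1$ in this expansion, $\beta' \notin \Phi_J$. The alternating prefix hypothesis yields $\langle \beta', \alpha_{r_{2j-1}} \rangle = \langle e'_1 - e'_{2k+2}, e'_{2j} - e'_{2j+1} \rangle = 0$ for each $1 \leq j \leq k$, by a direct orthonormal-basis computation (both indices $2j$ and $2j+1$ strictly between $1$ and $2k+2$); the gap $r_{2k+1} \notin [y]$ together with the $A_{>1}$-free structure of $[y] \cap J$ will then extend the orthogonality to the remaining elements of $[y] \cap K$, showing $\beta' \in \Phi^{K, [y] \cap K}$. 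Simplicity of $\beta'$ in this subsystem is verified by examining the only candidate decompositions $\beta' = (e'_1 - e'_m) + (e'_m - e'_{2k+2})$ with $1 < m < 2k+2$ into positive roots of $K$ and ruling them out using the orthogonality constraints imposed by the prefix. For $\beta$, I would take $\alpha_{r_{2k+1}}$, which is orthogonal to every $\alpha_{r_{2j-1}}$ with $j \leq k$ by the non-adjacency of $r_{2k+1}$ and $r_{2j-1}$ in the Coxeter graph of $J$; the $A_{>1}$-freeness of $[y] \cap J$ together with $r_{2k+1} \notin [y]$ will then ensure that $\alpha_{r_{2k+1}}$ survives as a simple root of $\Pi^{J, [y] \cap J}$. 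The inner product computation is then immediate: $\langle \beta, \beta' \rangle = \langle e'_{2k+2} - e'_{2k+3}, e'_1 - e'_{2k+2} \rangle = -\langle e'_{2k+2}, e'_{2k+2} \rangle = -1 \neq 0$, using $2k+3 \leq N+1$ from the hypothesis $2k \leq N-3$.

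The main obstacle will be the verification in the $\beta$-step: showing that $\alpha_{r_{2k+1}}$ (or a suitable replacement) genuinely belongs to the simple system $\Pi^{J, [y] \cap J}$ when $[y] \cap J$ contains elements beyond the specified prefix. The $A_{>1}$-freeness of $[y] \cap J$ is the key structural input --- it constrains the components of $[y] \cap J$ to be isolated $A_1$'s together with at most one $D_k$-component containing the branching vertex $r_{N-2}$ --- and the detailed case analysis governs how to choose or modify $\beta$. The boundary case $2k = N - 3$, where $r_{2k+3}$ coincides with a branch leaf and the linear chain structure gives way to the branching geometry, is particularly delicate; in that regime one may need to replace $\beta$ by a longer sum such as $\alpha_{r_{2k+1}} + \alpha_{r_{2k+2}} + \alpha_{r_{2k+3}}$, whose non-orthogonality to $\beta'$ persists and whose simplicity in $\Pi^{J, [y] \cap J}$ can again be secured using the $A_{>1}$-free constraint.
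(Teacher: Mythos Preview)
Your choice of $\beta' = \alpha_t + \alpha_{r_1} + \cdots + \alpha_{r_{2k}}$ coincides with the paper's, and your verification that $\beta' \in \Pi^{K,[y]\cap K} \smallsetminus \Phi_J$ is essentially fine. The problem is the $\beta$-step.

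You declare $A_{>1}$-freeness of $[y]\cap J$ to be ``the key structural input'' and plan to construct $\beta$ explicitly (first $\alpha_{r_{2k+1}}$, then longer sums) by case analysis on the components of $[y]\cap J$. This does not go through. Consider, for instance, $N=10$, $k=1$, and $[y]\cap J = \{r_1\} \cup \{r_4,\dots,r_{10}\}$: the prefix hypothesis holds, the set is $A_{>1}$-free (one $A_1$ and one $D_7$ component), yet $\Pi^{J,[y]\cap J}$ reduces to the single root $\gamma^{(4)}_{1,2} = e_1+e_2$, which is orthogonal to every element of $\Pi^{K,[y]\cap K} \smallsetminus \Phi_J$. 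So the lemma's conclusion \emph{fails} for this $[y]$. No amount of case analysis will produce a valid $\beta$ here, because none exists; the case must instead be excluded, and what excludes it is not $A_{>1}$-freeness but the basis property
\[
\Pi^{J,[y]\cap J}\ \cup\ \Pi_{[y]\cap J} \quad \text{is a basis of } V_J,
\]
which is part of ``this setting'' (it is Property~P, inherited from $x_I$ via local transformations). You never invoke it.

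The paper's proof uses this basis property directly, bypassing all case analysis. Setting $J' = \{r_{2k+1},\dots,r_N\}$, one checks that $(\Pi^{J,[y]\cap J}\cup\Pi_{[y]\cap J})\smallsetminus\Phi_{J'}$ consists of exactly the $2k$ roots $\gamma^{(4)}_{2j-1,2j}$ and $\alpha_{2j-1}$ ($1\le j\le k$); hence the remaining $N-2k$ basis elements lie in $\Phi_{J'}$ and must span $V_{J'}$. Since $\alpha_{r_{2k+1}}\notin\Pi_{[y]\cap J}$, some $\beta\in\Pi^{J,[y]\cap J}\cap\Phi_{J'}$ has nonzero $\alpha_{r_{2k+1}}$-coefficient, and such a $\beta$ automatically satisfies $\langle\beta,\beta'\rangle\neq 0$. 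This dimension count is both the missing ingredient in your plan and a much cleaner argument than any explicit construction.
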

\begin{proof}
Put $J' := \{r_j \mid 2k+1 \leq j \leq N\}$.
First, we have $\beta' := \alpha_t + \sum_{j=1}^{2k} \alpha_j \in \Pi^{K,[y] \cap K} \smallsetminus \Phi_J$ in this case.
On the other hand, $\Pi^{J,[y] \cap J} \smallsetminus \Phi_{J'}$ consists of $k$ roots $\gamma^{(4)}_{2j-1,2j}$ with $1 \leq j \leq k$ (see Table \ref{tab:positive_roots_D_n} for the notation), while $\Pi_{[y] \cap J} \smallsetminus \Phi_{J'}$ consists of $k$ roots $\alpha_{2j-1}$ with $1 \leq j \leq k$.
Hence $|(\Pi^{J,[y] \cap J} \cup \Pi_{[y] \cap J}) \smallsetminus \Phi_{J'}| = 2k$.
Since $\Pi^{J,[y] \cap J} \cup \Pi_{[y] \cap J}$ is a basis of the space $V_J$ of dimension $N$, it follows that the subset $(\Pi^{J,[y] \cap J} \cup \Pi_{[y] \cap J}) \cap \Phi_{J'}$ spans a subspace of dimension $N - 2k = |J'|$.
This implies that $(\Pi^{J,[y] \cap J} \cup \Pi_{[y] \cap J}) \cap \Phi_{J'} \not\subseteq \Phi_{J' \smallsetminus \{r_{2k+1}\}}$, therefore (since $\alpha_{2k+1} \not\in \Pi_{[y] \cap J}$) we have $\Pi^{J,[y] \cap J} \cap \Phi_{J'} \not\subseteq \Phi_{J' \smallsetminus \{r_{2k+1}\}}$, namely there exists a root $\beta \in \Pi^{J,[y] \cap J} \cap \Phi_{J'}$ which has non-zero coefficient of $\alpha_{2k+1}$.
These $\beta$ and $\beta'$ satisfy $\langle \beta,\beta' \rangle \neq 0$ by the construction, concluding the proof.
\end{proof}
By Lemma \ref{lem:proof_special_first_case_subcase_1_J_D_N} and Lemma \ref{lem:proof_special_first_case_subcase_1_local_irreducible_components}, the hypothesis of Lemma \ref{lem:proof_special_first_case_subcase_1_J_D_N} should not hold.
By this fact, $A_{>1}$-freeness of $I$ and the property $N \geq |I \cap J| + 2 = |[y] \cap J| + 2$, it follows that the possibilities of $[y] \cap J$ are as follows (up to the symmetry $r_{N-1} \leftrightarrow r_N$); (I) $[y] \cap J = J \smallsetminus \{r_{2j} \mid 1 \leq j \leq k\}$ for an integer $k$ with $2 \leq k \leq (N-2)/2$ and $2k \neq N-3$; (II) $N$ is odd and $[y] \cap J = \{r_{2j-1} \mid 1 \leq j \leq (N-1)/2\}$; (III) $N$ is even and $[y] \cap J = \{r_{2j-1} \mid 1 \leq j \leq (N-2)/2\}$; (IV) $N$ is even and $[y] \cap J = \{r_{2j-1} \mid 1 \leq j \leq N/2\}$.
For Case (I), by the shape of $J$ and $[y] \cap J$, it follows that $I \cap J = [y] \cap J$, and each local transformation can permute the irreducible components of $I \cap J$ containing neither $r_{N-1}$ nor $r_N$ but it fixes pointwise the irreducible component(s) of $I \cap J$ containing $r_{N-1}$ or $r_N$.
This contradicts the fact that $\sigma = \tau^{-1}|_{I \cap J}$ for a nontrivial automorphism $\tau^{-1}$ of $J$ (note that $\tau^{-1}$ exchanges $r_{N-1}$ and $r_N$).
Case (II) contradicts Lemma \ref{lem:possibility_J_is_D_N}(\ref{item:lem_possibility_J_is_D_N_case_2}).
For Case (III), the roots $\alpha_{N-1} \in \Pi^{J,[y] \cap J}$ and $\alpha_t + \sum_{j=1}^{N-2} \alpha_j \in \Pi^{K,[y] \cap K} \smallsetminus \Phi_J$ are not orthogonal, contradicting Lemma \ref{lem:proof_special_first_case_subcase_1_local_irreducible_components}.

Finally, for the remaining case, i.e., Case (IV), by the shape of $J$ and $[y] \cap J$, it follows that $I \cap J = [y] \cap J$ and each local transformation leaves the set $I \cap J$ invariant.
By this result and the property that $\sigma = \tau^{-1}|_{I \cap J}$ for a nontrivial automorphism $\tau^{-1}$ of $J$, only the possibility of $[y] \cap J$ is that $N = 4$ and $[y] \cap J = I \cap J = \{r_1,r_3\}$, and $\sigma$ exchanges $r_1$ and $r_3$.
Now we arrange the standard decomposition $\mathcal{D}$ of $u$ as $u = \omega''_{\ell} \omega'_{\ell-1} \omega''_{\ell-1} \cdots \omega'_2 \omega''_2 \omega'_1 \omega''_1$, where each $\omega'_j$ is a wide transformation and each $\omega''_j$ is a (possibly empty) product of narrow transformations.
Let each wide transformation $\omega'_j$ belong to $Y_{z'_j,z_j}$ with $z_j,z'_j \in S^{(\Lambda)}$.
In particular, we have $\omega'_1 = \omega$ and $z_1 = y$.
Now we give the following lemma:
\begin{lem}
\label{lem:proof_special_first_case_subcase_1_J_D_N_case_IV}
In this setting, the following properties hold for every $1 \leq j \leq \ell - 1$: The action of the element $u_j := \omega''_j \omega'_{j-1} \omega''_{j-1} \cdots \omega'_1 \omega''_1$ maps $(r_1,r_2,r_3,r_4)$ to $(r_1,r_2,r_3,r_4)$ when $j$ is odd and to $(r_1,r_2,r_4,r_3)$ when $j$ is even; the subsets $J$ and $[z_j] \smallsetminus J$ are not adjacent; the support of $\omega'_j$ is as in Case \ref{item:proof_special_first_case_subcase_1_J_D_N} above, with $t$ replaced by some element $t_j \in S$; and $\omega'_j$ maps $(r_1,r_2,r_3,r_4)$ to $(r_1,r_2,r_4,r_3)$.
\end{lem}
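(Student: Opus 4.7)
The plan is to proceed by induction on $j$ from $1$ to $\ell - 1$, establishing all four assertions simultaneously. For the base case $j = 1$, we have $u_1 = \omega''_1$, the initial block of narrow transformations preceding $\omega'_1 = \omega$; by the argument given in the paragraph immediately before the definition of $\omega$, each such narrow transformation has support apart from $J$, so $u_1$ fixes $\Pi_J$ pointwise, yielding the first assertion (as $j = 1$ is odd), and the second then follows because $[z_1] \smallsetminus J = [z] \smallsetminus J = I \smallsetminus J$ is apart from $J$ by (A2). The third assertion is the very content of Case \ref{item:proof_special_first_case_subcase_1_J_D_N} specialized to the present setting, with $t_1 = t$ and $K_1 = J \cup \{t\}$ of type $D_5$. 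For the fourth I compute $\omega = w_0(K_1)\, w_0(J)$ directly: with the standard labeling $(r'_1,\ldots,r'_5) = (t, r_1, r_2, r_3, r_4)$ of $K_1$ as $D_5$, the longest element $w_0(D_5)$ induces the unique nontrivial graph automorphism of $D_5$ exchanging the two leaves $r_3$ and $r_4$, while $w_0(D_4)$ is central and acts as $-\mathrm{id}$ on $V_J$; hence $\omega$ exchanges $\alpha_{r_3}$ with $\alpha_{r_4}$ and fixes $\alpha_{r_1}, \alpha_{r_2}$.

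For the inductive step I decompose $u_{j+1} = \omega''_{j+1}\,\omega'_j\,u_j$. Combining the inductive hypotheses on $u_j$ and $\omega'_j$ (the first and fourth assertions at step $j$), the product $\omega'_j\,u_j$ acts on $(r_1,r_2,r_3,r_4)$ in the parity-dependent manner required for $j+1$. To complete the first assertion for $j+1$ it suffices to show that $\omega''_{j+1}$ fixes $\Pi_J$ pointwise. Since $\omega'_j$ has support $J \cup \{t_j\}$ and does not touch $[z_j] \smallsetminus J = I \smallsetminus J$, we still have $[z'_j] \smallsetminus J = I \smallsetminus J$ apart from $J$; then, replaying the argument used for the initial narrow block, each narrow transformation in $\omega''_{j+1}$ must have support apart from $J$, for otherwise the accent element $t^{(\cdot)}$ would be adjacent to $J$ and, since $J$ is irreducible with $J \not\subseteq [y^{(\cdot)}]$, the set $([y^{(\cdot)}] \cup J)_{\sim t^{(\cdot)}}$ would contain all of $J$, contradicting narrowness. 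The second assertion for $j+1$ follows immediately from $[z_{j+1}] \smallsetminus J = [z'_j] \smallsetminus J = I \smallsetminus J$.

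For the third and fourth assertions at $j+1$, I reapply the case analysis of wide transformations from the state $z_{j+1}$, which by the already proven first two assertions satisfies $[z_{j+1}] \cap J \in \{\{r_1,r_3\},\{r_1,r_4\}\}$ with $[z_{j+1}] \smallsetminus J$ apart from $J$. Because $J$ is of type $D_4$, Cases \ref{item:proof_special_first_case_subcase_1_J_E_6}, \ref{item:proof_special_first_case_subcase_1_J_D_7}, and \ref{item:proof_special_first_case_subcase_1_J_D_5} are immediately ruled out, leaving only Case \ref{item:proof_special_first_case_subcase_1_J_D_N}; thus $K_{j+1} = J \cup \{t_{j+1}\}$ is of type $D_5$, and since $K_{j+1}$ has exactly five elements and $t_{j+1}$ must be adjacent to $[z_{j+1}]$ while $[z_{j+1}] \smallsetminus J$ is apart from $J$, the vertex $t_{j+1}$ must be attached to an outer vertex of $J$ lying in $[z_{j+1}]$, so the candidates are $r_1$ together with whichever of $r_3, r_4$ belongs to $[z_{j+1}]$. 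The hard step will be to rule out the latter candidate and force attachment at $r_1$, since attachment at $r_3$ or $r_4$ would instead induce the swap $(r_1\,r_4)$ or $(r_1\,r_3)$ on $\Pi_J$. I would attempt this by invoking Lemma \ref{lem:proof_special_first_case_subcase_1_local_irreducible_components} at the present stage, where $\Pi^{J,[z_{j+1}] \cap J}$ is a union of irreducible components of $\Pi^{[z_{j+1}]}$ (via the isomorphism induced by $u_{j+1}u'$), combined with a direct check via Table \ref{tab:positive_roots_D_n} for $K_{j+1}$ of type $D_5$ in the same spirit as the orthogonality arguments used to eliminate Cases \ref{item:proof_special_first_case_subcase_1_J_D_5_K_E_7} and \ref{item:proof_special_first_case_subcase_1_J_D_5_K_E_6}. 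Once attachment at $r_1$ is pinned down, the base-case computation of $\omega'_{j+1} = w_0(D_5)\,w_0(D_4)$ applies verbatim and yields the fourth assertion.
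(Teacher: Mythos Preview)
Your inductive skeleton is correct and matches the paper's: the first two assertions follow from narrow transformations having support apart from $J$, and the case analysis reduces the third assertion (for $\omega'_{j+1}$) to ruling out attachment of $t_{j+1}$ at $r_h$, the element of $\{r_3,r_4\}$ lying in $[z_{j+1}]$. However, your proposed method for this hard step---a direct orthogonality check inside $K_{j+1}$ in the spirit of Cases~\ref{item:proof_special_first_case_subcase_1_J_D_5_K_E_7} and~\ref{item:proof_special_first_case_subcase_1_J_D_5_K_E_6}---does not go through. With the $D_5$ labelling $(r'_1,\dots,r'_5)=(t_{j+1},r_h,r_2,r_1,r_{h'})$ of $K_{j+1}$, one computes from Table~\ref{tab:positive_roots_D_n} that
\[
(\Phi_{K_{j+1}}^{\perp\{r_1,r_h\}})^+=\{\alpha_{r_{h'}},\ \alpha_1+2\alpha_2+\alpha_3+\alpha_4\},
\]
both lying in $\Phi_J$; hence $\Pi^{K_{j+1},[z_{j+1}]\cap K_{j+1}}\smallsetminus\Phi_J=\emptyset$, and no witness $\beta'$ inside $K_{j+1}$ is available to contradict Lemma~\ref{lem:proof_special_first_case_subcase_1_local_irreducible_components} at the state $z_{j+1}$.

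The paper's argument instead retreats to the previous state $z_j$ and manufactures the witness there. If $t_{j+1}$ were adjacent to $r_h$, then (as $K_{j+1}=J\cup\{t_{j+1}\}$) it would be apart from $[z_{j+1}]\smallsetminus\{r_h\}$; replacing $r_h$ by $r_{h'}$ in $z_{j+1}$ gives $z''$ with $\alpha_{t_{j+1}}\in\Pi^{[z'']}$. Since each narrow factor of $\omega''_{j+1}$ has support apart from $J$---in particular not containing $t_{j+1}$, which is adjacent to $r_h\in J$---and since $\omega'_j$ swaps $r_h$ with $r_{h'}$ while fixing $[z_j]\smallsetminus J$, one obtains $\omega''_{j+1}\in Y_{z'',z_j}$ (up to the $\Lambda$-tuple bookkeeping), so that $\beta':=(\omega''_{j+1})^{-1}\cdot\alpha_{t_{j+1}}\in\Pi^{[z_j]}$ lies in $\Phi_{S\smallsetminus J}$ with its $\alpha_{t_{j+1}}$-coefficient unchanged. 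As $\omega''_{j+1}$ fixes $\Pi_J$, one gets $\langle\beta',\alpha_1+2\alpha_2+\alpha_3+\alpha_4\rangle=\langle\alpha_{t_{j+1}},\alpha_{r_h}\rangle\neq 0$, contradicting that $\Pi^{J,[z_j]\cap J}$ is a union of irreducible components of $\Pi^{[z_j]}$. The essential point you are missing is that the contradiction must be produced at $z_j$, not $z_{j+1}$, by transporting $\alpha_{t_{j+1}}$ back through the narrow block.
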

\begin{proof}
We use induction on $j$.
By the definition of narrow transformations, the first and the second parts of the claim hold obviously when $j = 1$ and follow from the induction hypothesis when $j > 1$.
In particular, we have $u_j \cdot \Pi_J = \Pi_J$.
Put $(h,h') := (3,4)$ when $j$ is odd and $(h,h') := (4,3)$ when $j$ is even.
Then we have $[z_j] \cap J = \{r_1,r_h\}$.
Now, by using the above argument, it follows that the support of $\omega'_j$ is of the form $\{r_1,r_2,r_3,r_4,t_j\}$ which is the standard labelling of type $D_5$, where $t_j$ is adjacent to one of the two elements of $[z_j] \cap J$.
We show that $t_j$ is adjacent to $r_1$, which already holds when $j = 1$ (note that $t_j = t$ when $j = 1$).
Suppose $j > 1$ and assume contrary that $t_j$ is adjacent to $r_h$.
In this case, $t_j$ is apart from $[z_j] \smallsetminus \{r_h\}$.
On the other hand, we have $[z'_{j-1}] \cap J = \{r_1,r_h\}$, the subsets $[z'_{j-1}] \smallsetminus J$ and $J$ are not adjacent, and the support of each narrow transformation in $\omega''_j$ is apart from to $J$.
Moreover, by the induction hypothesis, we have $[z_{j-1}] \cap J = \{r_1,r_{h'}\}$ and the action of $\omega'_{j-1}$ maps $(r_1,r_2,r_h,r_{h'})$ to $(r_1,r_2,r_{h'},r_h)$ while it fixes every element of $[z_{j-1}] \smallsetminus J$.
This implies that $\omega''_j \in Y_{z'',z_{j-1}}$ for the element $z'' \in S^{(\Lambda)}$ obtained from $z_j$ by replacing $r_h$ with $r_{h'}$.
Now we have $\alpha_{t_j} \in \Pi^{[z'']}$ since $t_j$ is not adjacent to $[z''] = ([z_j] \smallsetminus \{r_h\}) \cup \{r_{h'}\}$, therefore $\beta' := (\omega''_j)^{-1} \cdot \alpha_{t_j} \in \Pi^{[z_{j-1}]}$.
This root belongs to $\Phi_{S \smallsetminus J}$ and has non-zero coefficient of $\alpha_{t_j}$, since the support of each narrow transformation in $\omega''_j$ is not adjacent to $J$ and hence does not contain $t_j$.
Therefore, the roots $\beta' \in \Pi^{[z_{j-1}]} \smallsetminus \Pi^{J,[z_{j-1}] \cap J}$ and $\alpha_1 + 2\alpha_2 + \alpha_3 + \alpha_4 \in \Pi^{J,[z_{j-1}] \cap J}$ are not orthogonal.
This contradicts the fact that $\Pi^{J,[y] \cap J}$ is the union of some irreducible components of $\Pi^{[y]}$ (see Lemma \ref{lem:proof_special_first_case_subcase_1_local_irreducible_components}) and the isomorphism $\Pi^{[y]} \to \Pi^{[z_{j-1}]}$ induced by the action of $\omega''_{j-1}\omega'_{j-2}\omega''_{j-2} \cdots \omega''_2\omega'_1$ maps $\Pi^{J,[y] \cap J}$ to $\Pi^{J,[z_{j-1}] \cap J}$ (since the action of this element leaves the set $\Pi_J$ invariant).
This contradiction proves that $t_j$ is adjacent to $r_1$, therefore the third part of the claim holds.
Finally, the fourth part of the claim follows immediately from the third part.
Hence the proof of Lemma \ref{lem:proof_special_first_case_subcase_1_J_D_N_case_IV} is concluded.
\end{proof}
By Lemma \ref{lem:proof_special_first_case_subcase_1_J_D_N_case_IV}, the action of the element $\omega'_{\ell-1} \omega''_{\ell-1} \cdots \omega'_2 \omega''_2 \omega'_1 \omega''_1$, hence of $u = \omega''_{\ell}\omega'_{\ell-1}u_{\ell-1}$, maps the elements $(r_1,r_2,r_3,r_4)$ to either $(r_1,r_2,r_3,r_4)$ or $(r_1,r_2,r_4,r_3)$.
This contradicts the above-mentioned fact that $\sigma$ exchanges $r_1$ and $r_3$.

Summarizing, we have derived a contradiction in each of the six possible cases, Cases \ref{item:proof_special_first_case_subcase_1_J_E_6_K_E_8}--\ref{item:proof_special_first_case_subcase_1_J_D_N}.
Hence we have proven that the assumption $w \cdot \Pi_J \not\subseteq \Phi^+$ implies a contradiction, as desired.

\subsubsection{Case $w \cdot \Pi_J \subseteq \Phi^+$}
\label{sec:proof_special_first_case_subcase_2}

By the result of Section \ref{sec:proof_special_first_case_subcase_1}, we have $w \cdot \Pi_J \subseteq \Phi^+$.
Since $w \cdot \Phi_J = \Phi_J$ by Lemma \ref{lem:property_w}, it follows that $w \cdot \Phi_J^+ \subseteq \Phi_J^+$, therefore $w \cdot \Phi_J^+ = \Phi_J^+$ (note that $|\Phi_J| < \infty$).
Hence the action of $w$ defines an automorphism $\tau$ of $J$ (in particular, $w \cdot \Pi_J = \Pi_J$).
To show that $\tau$ is the identity mapping (which implies the claim that $w$ fixes $\Pi^{J,I \cap J}$ pointwise), assume contrary that $\tau$ is a nontrivial automorphism of $J$.
Then the possibilities of the type of $J$ are as follows: $D_N$, $E_6$ and $F_4$ (recall that $J$ is neither of type $A_N$ nor of type $I_2(m)$).
Moreover, since the action of $w \in Y_I$ fixes every element of $I \cap J$, the subset $I \cap J$ of $J$ is contained in the fixed point set of $\tau$.
This implies that $J$ is not of type $F_4$, since the nontrivial automorphism of a Coxeter graph of type $F_4$ has no fixed points.

Suppose that $J$ is of type $E_6$.
Then, by the above argument on the fixed points of $\tau$ and Lemma \ref{lem:possibility_J_is_E_6}, we have $I \cap J = \{r_2\}$ or $I \cap J = \{r_4\}$.
Now take a standard decomposition of $w$ with respect to $J$ (see Proposition \ref{prop:standard_decomposition_existence}).
Then no wide transformation can appear due to the shape of $J$ and the position of $I \cap J$ in $J$ (indeed, we cannot obtain a subset of finite type by adding to $J$ an element of $S$ adjacent to $I \cap J$).
This implies that the decomposition of $w$ consists of narrow transformations only, therefore $w$ fixes $\Pi_J$ pointwise, contradicting the fact that $\tau$ is a nontrivial automorphism.

Secondly, suppose that $J$ is of type $D_N$ with $N \geq 5$.
Then, by the above argument on the fixed points of $\tau$, we have $I \cap J \subseteq J \smallsetminus \{r_{N-1},r_N\}$, therefore every irreducible component of $I \cap J$ is of type $A_1$ (by $A_{>1}$-freeness of $I$).
Now take a standard decomposition $\mathcal{D}$ of $w$ with respect to $J$ (see Proposition \ref{prop:standard_decomposition_existence}).
Note that $\mathcal{D}$ involves at least one wide transformation, since $\tau$ is not the identity mapping.
By the shape of $J$ and the position of $I \cap J$ in $J$, only the possibility of the first (from right) wide transformation $\omega = \omega_i$ in $\mathcal{D}$ is as follows: $K = J \cup \{t\}$ is of type $D_{N+1}$, $t$ is adjacent to $r_1$, and $r_1 \in [y]$, where we put $y = y^{(i)}(\mathcal{D})$, $t = t^{(i)}(\mathcal{D})$, and $K = K^{(i)}(\mathcal{D})$.
Now the claim of Lemma \ref{lem:proof_special_first_case_subcase_1_J_D_N} in Section \ref{sec:proof_special_first_case_subcase_1} also holds in this case, while $\Pi^{J,[y] \cap J}$ is the union of some irreducible components of $\Pi^{[y]}$ by the same reason as in Section \ref{sec:proof_special_first_case_subcase_1}.
Hence the hypothesis of Lemma \ref{lem:proof_special_first_case_subcase_1_J_D_N} should not hold.
This argument and the properties that $N \geq |I \cap J| + 2 = |[y] \cap J| + 2$ and $I \cap J \subseteq J \smallsetminus \{r_{N-1},r_N\}$ imply that the possibilities of $[y] \cap J$ are the followings: $N$ is odd and $[y] \cap J$ consists of elements $r_{2j-1}$ with $1 \leq j \leq (N-1)/2$; or, $N$ is even and $[y] \cap J$ consists of elements $r_{2j-1}$ with $1 \leq j \leq (N-2)/2$.
The former possibility contradicts Lemma \ref{lem:possibility_J_is_D_N}(\ref{item:lem_possibility_J_is_D_N_case_2}).
On the other hand, for the latter possibility, the roots $\alpha_{N-1} \in \Pi^{J,[y] \cap J}$ and $\alpha_t + \sum_{j=1}^{N-2} \alpha_j \in \Pi^{[y]} \smallsetminus \Pi^{J,[y] \cap J}$ are not orthogonal, contradicting the above-mentioned fact that $\Pi^{J,[y] \cap J}$ is the union of some irreducible components of $\Pi^{[y]}$.
Hence we have a contradiction for any of the two possibilities.

Finally, we consider the remaining case that $J$ is of type $D_4$.
By the property $N = 4 \geq |I \cap J| + 2$ and $A_{>1}$-freeness of $I$, it follows that $I \cap J$ consists of at most two irreducible components of type $A_1$.
On the other hand, by the shape of $J$, the fixed point set of the nontrivial automorphism $\tau$ of $J$ is of type $A_1$ or $A_2$.
Since $I \cap J$ is contained in the fixed point set of $\tau$ as mentioned above, it follows that $|I \cap J| = 1$.
If $I \cap J = \{r_1\}$, then we have $\Pi^{J,I \cap J} = \{\alpha_3,\alpha_4,\beta\}$ where $\beta = \alpha_1 + 2\alpha_2 + \alpha_3 + \alpha_4$ (see Table \ref{tab:positive_roots_D_n}), and every element of $\Pi^{J,I \cap J}$ forms an irreducible component of $\Pi^{J,I \cap J}$.
However, now the property $w \cdot \Pi_J = \Pi_J$ implies that $w$ fixes $\alpha_2$ and permutes the three simple roots $\alpha_1$, $\alpha_3$ and $\alpha_4$, therefore $w \cdot \beta = \beta$, contradicting the fact that $\langle w \rangle$ acts transitively on the set of the irreducible components of $\Pi^{J,I \cap J}$ (see Lemma \ref{lem:property_w}).
By symmetry, the same result holds when $I \cap J = \{r_3\}$ or $\{r_4\}$.
Hence we have $I \cap J = \{r_2\}$.
Take a standard decomposition of $w$ with respect to $J$ (see Proposition \ref{prop:standard_decomposition_existence}).
Then no wide transformation can appear due to the shape of $J$ and the position of $I \cap J$ in $J$ (indeed, we cannot obtain a subset of finite type by adding to $J$ an element of $S$ adjacent to $I \cap J$).
This implies that the decomposition of $w$ consists of narrow transformations only, therefore $w$ fixes $\Pi_J$ pointwise, contradicting the fact that $\tau$ is a nontrivial automorphism.

Summarizing, we have derived in any case a contradiction from the assumption that $\tau$ is a nontrivial automorphism.
Hence it follows that $\tau$ is the identity mapping, therefore our claim has been proven in the case $\Pi^{J,I \cap J} \not\subseteq \Phi_{I^{\perp}}$.

\subsection{The second case $\Pi^{J,I \cap J} \subseteq \Phi_{I^{\perp}}$}
\label{sec:proof_special_second_case}

In this subsection, we consider the remaining case that $\Pi^{J,I \cap J} \subseteq \Phi_{I^{\perp}}$.
In this case, we have $\Pi_{I^{\perp}} \subseteq \Pi^I$, therefore $\Pi^{J,I \cap J} = \Pi_{J \smallsetminus I}$.
Let $L$ be an irreducible component of $J \smallsetminus I$.
Then $L$ is of finite type.
The aim of the following argument is to show that $w$ fixes $\Pi_L$ pointwise; indeed, if this is satisfied, then we have $\Pi^{J,I \cap J} = \Pi_{J \smallsetminus I} = \Pi_L$ since $\langle w \rangle$ acts transitively on the set of irreducible components of $\Pi^{J,I \cap J}$ (see Lemma \ref{lem:property_w}), therefore $w$ fixes $\Pi^{J,I \cap J}$ pointwise, as desired.
Note that $w \cdot \Pi_L \subseteq \Pi_{J \smallsetminus I}$, since now $w$ leaves the set $\Pi^{J,I \cap J} = \Pi_{J \smallsetminus I}$ invariant.

\subsubsection{Possibilities of semi-standard decompositions}
\label{sec:proof_special_second_case_transformations}

Here we investigate the possibilities of narrow and wide transformations in a semi-standard decomposition of the element $w$, in a somewhat wider context.
Let $\mathcal{D} = \omega_{\ell(\mathcal{D})-1} \cdots \omega_1\omega_0$ be a semi-standard decomposition of an element $u$ of $W$, with the property that $[y^{(0)}]$ is isomorphic to $I$, $J^{(0)}$ is irreducible and of finite type, and $J^{(0)}$ is apart from $[y^{(0)}]$.
Note that any semi-standard decomposition of the element $w \in Y_I$ with respect to the set $L$ defined above satisfies the condition.
Note also that $\mathcal{D}^{-1} := (\omega_0)^{-1}(\omega_1)^{-1} \cdots (\omega_{\ell(\mathcal{D})-1})^{-1}$ is also a semi-standard decomposition of $u^{-1}$, and $(\omega_i)^{-1}$ is a narrow (respectively, wide) transformation if and only if $\omega_i$ is a narrow (respectively, wide) transformation.

The proof of the next lemma uses a concrete description of root systems of all finite irreducible Coxeter groups except types $A$ and $I_2(m)$.
Table \ref{tab:positive_roots_B_n} shows the list for type $B_n$, where the notational conventions are similar to the case of type $D_n$ (Table \ref{tab:positive_roots_D_n}).
For the list for type $F_4$ (Table \ref{tab:positive_roots_F_4}), the list includes only one of the two conjugacy classes of positive roots (denoted by $\gamma_i^{(1)}$), and the other positive roots (denoted by $\gamma_i^{(2)}$) are obtained by using the symmetry $r_1 \leftrightarrow r_4$, $r_2 \leftrightarrow r_3$.
In the list, $[c_1,c_2,c_3,c_4]$ signifies a positive root $c_1 \alpha_1 + c_2 \alpha_2 + c_3\alpha_3 + c_4\alpha_4$, and the description in the columns for actions of generators is similar to the case of type $E_8$ (Tables \ref{tab:positive_roots_E_8_1}--\ref{tab:positive_roots_E_8_6}).
The list for type $H_4$ is divided into two parts (Tables \ref{tab:positive_roots_H_4_1} and \ref{tab:positive_roots_H_4_2}).
In the list, $[c_1,c_2,c_3,c_4]$ signifies a positive root $c_1 \alpha_1 + c_2 \alpha_2 + c_3\alpha_3 + c_4\alpha_4$, where we put $c = 2\cos(\pi/5)$ for simplicity and therefore $c^2 = c+1$.
The other description is in a similar manner as the case of type $E_8$, and the marks \lq\lq $H_3$'' indicate the positive roots of the parabolic subgroup of type $H_3$ generated by $\{r_1,r_2,r_3\}$.
\begin{table}[hbt]
\centering
\caption{List of positive roots for Coxeter group of type $B_n$}
\label{tab:positive_roots_B_n}
\begin{tabular}{|c|c|} \hline
roots & actions of generators \\ \hline
$\gamma^{(1)}_{i,j} := \sum_{h=i}^{j} \alpha_h$ & $r_{i-1} \cdot \gamma^{(1)}_{i,j} = \gamma^{(1)}_{i-1,j}$ ($i \geq 2$) \\
($1 \leq i \leq j \leq n-1$) & $r_i \cdot \gamma^{(1)}_{i,j} = \gamma^{(1)}_{i+1,j}$ ($i \leq j-1$) \\
($\gamma^{(1)}_{i,i} = \alpha_i$) & $r_j \cdot \gamma^{(1)}_{i,j} = \gamma^{(1)}_{i,j-1}$ ($i \leq j-1$) \\
& $r_{j+1} \cdot \gamma^{(1)}_{i,j} = \gamma^{(1)}_{i,j+1}$ ($j \leq n-2$) \\
& $r_n \cdot \gamma^{(1)}_{i,n-1} = \gamma^{(2)}_{i,n}$ \\ \hline
$\gamma^{(2)}_{i,j} := \sum_{h=i}^{j-1} \alpha_h + \sum_{h=j}^{n-1} 2\alpha_h + \sqrt{2}\alpha_n$ & $r_{i-1} \cdot \gamma^{(2)}_{i,j} = \gamma^{(2)}_{i-1,j}$ ($i \geq 2$) \\
($1 \leq i < j \leq n$) & $r_i \cdot \gamma^{(2)}_{i,j} = \gamma^{(2)}_{i+1,j}$ ($i \leq j-2$) \\
& $r_{j-1} \cdot \gamma^{(2)}_{i,j} = \gamma^{(2)}_{i,j-1}$ ($i \leq j-2$) \\
& $r_j \cdot \gamma^{(2)}_{i,j} = \gamma^{(2)}_{i,j+1}$ ($j \leq n-1$) \\
& $r_n \cdot \gamma^{(2)}_{i,n} = \gamma^{(1)}_{i,n-1}$ \\ \hline
$\gamma^{(3)}_i := \sum_{h=i}^{n-1} \sqrt{2}\alpha_h + \alpha_n$ & $r_{i-1} \cdot \gamma^{(3)}_i = \gamma^{(3)}_{i-1}$ ($i \geq 2$) \\
($1 \leq i \leq n$) & $r_i \cdot \gamma^{(3)}_i = \gamma^{(3)}_{i+1}$ ($i \leq n-1$) \\
($\gamma^{(3)}_n = \alpha_n$) & \\ \hline
\end{tabular}
\end{table}
\begin{table}[hbt]
\centering
\caption{List of positive roots for Coxeter group of type $F_4$}
\label{tab:positive_roots_F_4}
The data of the remaining positive roots $\gamma^{(2)}_i$ are obtained by replacing $[c_1,c_2,c_3,c_4]$ with $[c_4,c_3,c_2,c_1]$ and replacing each $r_j$ with $r_{5-j}$.\\
\begin{tabular}{|c||c|c|c|c|c|c|} \cline{1-7}
height & $i$ & root $\gamma^{(1)}_i$ & \multicolumn{4}{|c|}{$k$; $r_j \cdot \gamma^{(1)}_i = \gamma^{(1)}_k$} \\ \cline{4-7}
& & & $r_1$ & $r_2$ & $r_3$ & $r_4$ \\ \cline{1-7}
$1$ & $1$ & $[1,0,0,0]$ & --- & $3$ & & \\ \cline{2-7}
& $2$ & $[0,1,0,0]$ & $3$ & --- & $4$ & \\ \cline{1-7}
$2$ & $3$ & $[1,1,0,0]$ & $2$ & $1$ & $5$ & \\ \cline{2-7}
& $4$ & $[0,1,\sqrt{2},0]$ & $5$ & & $2$ & $6$ \\ \cline{1-7}
$3$ & $5$ & $[1,1,\sqrt{2},0]$ & $4$ & $7$ & $3$ & $8$ \\ \cline{2-7}
& $6$ & $[0,1,\sqrt{2},\sqrt{2}]$ & $8$ & & & $4$ \\ \cline{1-7}
$4$ & $7$ & $[1,2,\sqrt{2},0]$ & & $5$ & & $9$ \\ \cline{2-7}
& $8$ & $[1,1,\sqrt{2},\sqrt{2}]$ & $6$ & $9$ & & $5$ \\ \cline{1-7}
$5$ & $9$ & $[1,2,\sqrt{2},\sqrt{2}]$ & & $8$ & $10$ & $7$ \\ \cline{1-7}
$6$ & $10$ & $[1,2,2\sqrt{2},\sqrt{2}]$ & & $11$ & $9$ & \\ \cline{1-7}
$7$ & $11$ & $[1,3,2\sqrt{2},\sqrt{2}]$ & $12$ & $10$ & & \\ \cline{1-7}
$8$ & $12$ & $[2,3,2\sqrt{2},\sqrt{2}]$ & $11$ & & & \\ \cline{1-7}
\end{tabular}
\end{table}
\begin{table}[hbt]
\centering
\caption{List of positive roots for Coxeter group of type $H_4$ (part $1$), where $c = 2\cos(\pi/5)$, $c^2 = c + 1$}
\label{tab:positive_roots_H_4_1}
\begin{tabular}{|c||c|c|c|c|c|c|c} \cline{1-7}
height & $i$ & root $\gamma_i$ & \multicolumn{4}{|c|}{$k$; $r_j \cdot \gamma_i = \gamma_k$} \\ \cline{4-7}
& & & $r_1$ & $r_2$ & $r_3$ & $r_4$ \\ \cline{1-7}
$1$ & $1$ & $[1,0,0,0]$ & --- & $5$ & & & $H_3$ \\ \cline{2-7}
& $2$ & $[0,1,0,0]$ & $6$ & --- & $7$ & & $H_3$ \\ \cline{2-7}
& $3$ & $[0,0,1,0]$ & & $7$ & --- & $8$ & $H_3$ \\ \cline{2-7}
& $4$ & $[0,0,0,1]$ & & & $8$ & --- \\ \cline{1-7}
$2$ & $5$ & $[1,c,0,0]$ & $9$ & $1$ & $10$ & & $H_3$ \\ \cline{2-7}
& $6$ & $[c,1,0,0]$ & $2$ & $9$ & $11$ & & $H_3$ \\ \cline{2-7}
& $7$ & $[0,1,1,0]$ & $11$ & $3$ & $2$ & $12$ & $H_3$ \\ \cline{2-7}
& $8$ & $[0,0,1,1]$ & & $12$ & $4$ & $3$ \\ \cline{1-7}
$3$ & $9$ & $[c,c,0,0]$ & $5$ & $6$ & $13$ & & $H_3$ \\ \cline{2-7}
& $10$ & $[1,c,c,0]$ & $13$ & & $5$ & $14$ & $H_3$ \\ \cline{2-7}
& $11$ & $[c,1,1,0]$ & $7$ & $15$ & $6$ & $16$ & $H_3$ \\ \cline{2-7}
& $12$ & $[0,1,1,1]$ & $16$ & $12$ & & $7$ \\ \cline{1-7}
$4$ & $13$ & $[c,c,c,0]$ & $10$ & $17$ & $9$ & $18$ & $H_3$ \\ \cline{2-7}
& $14$ & $[1,c,c,c]$ & $18$ & & & $10$ \\ \cline{2-7}
& $15$ & $[c,c+1,1,0]$ & $19$ & $11$ & $17$ & $20$ & $H_3$ \\ \cline{2-7}
& $16$ & $[c,1,1,1]$ & $12$ & $20$ & & $11$ \\ \cline{1-7}
$5$ & $17$ & $[c,c+1,c,0]$ & $21$ & $13$ & $15$ & $22$ & $H_3$ \\ \cline{2-7}
& $18$ & $[c,c,c,c]$ & $14$ & $22$ & & $13$ \\ \cline{2-7}
& $19$ & $[c+1,c+1,1,0]$ & $15$ & & $21$ & $23$ & $H_3$ \\ \cline{2-7}
& $20$ & $[c,c+1,1,1]$ & $23$ & $16$ & $24$ & $15$ \\ \cline{1-7}
$6$ & $21$ & $[c+1,c+1,c,0]$ & $17$ & $25$ & $19$ & $26$ & $H_3$ \\ \cline{2-7}
& $22$ & $[c,c+1,c,c]$ & $26$ & $18$ & $27$ & $17$ \\ \cline{2-7}
& $23$ & $[c+1,c+1,1,1]$ & $20$ & & $28$ & $19$ \\ \cline{2-7}
& $24$ & $[c,c+1,c+1,1]$ & $28$ & & $20$ & $27$ \\ \cline{1-7}
\end{tabular}
\end{table}
\begin{table}[p]
\centering
\caption{List of positive roots for Coxeter group of type $H_4$ (part $2$), where $c = 2\cos(\pi/5)$, $c^2 = c + 1$}
\label{tab:positive_roots_H_4_2}
\begin{tabular}{|c||c|c|c|c|c|c|c} \cline{1-7}
height & $i$ & root $\gamma_i$ & \multicolumn{4}{|c|}{$k$; $r_j \cdot \gamma_i = \gamma_k$} \\ \cline{4-7}
& & & $r_1$ & $r_2$ & $r_3$ & $r_4$ \\ \cline{1-7}
$7$ & $25$ & $[c+1,2c,c,0]$ & & $21$ & & $29$ & $H_3$ \\ \cline{2-7}
& $26$ & $[c+1,c+1,c,c]$ & $22$ & $29$ & $30$ & $21$ \\ \cline{2-7}
& $27$ & $[c,c+1,c+1,c]$ & $30$ & & $22$ & $24$ \\ \cline{2-7}
& $28$ & $[c+1,c+1,c+1,1]$ & $24$ & $31$ & $23$ & $30$ \\ \cline{1-7}
$8$ & $29$ & $[c+1,2c,c,c]$ & & $26$ & $32$ & $25$ \\ \cline{2-7}
& $30$ & $[c+1,c+1,c+1,c]$ & $27$ & $33$ & $26$ & $28$ \\ \cline{2-7}
& $31$ & $[c+1,2c+1,c+1,1]$ & $34$ & $28$ & & $33$ \\ \cline{1-7}
$9$ & $32$ & $[c+1,2c,2c,c]$ & & $35$ & $29$ & \\ \cline{2-7}
& $33$ & $[c+1,2c+1,c+1,c]$ & $36$ & $30$ & $35$ & $31$ \\ \cline{2-7}
& $34$ & $[2c+1,2c+1,c+1,1]$ & $31$ & $37$ & & $36$ \\ \cline{1-7}
$10$ & $35$ & $[c+1,2c+1,2c,c]$ & $38$ & $32$ & $33$ & \\ \cline{2-7}
& $36$ & $[2c+1,2c+1,c+1,c]$ & $33$ & $39$ & $38$ & $34$ \\ \cline{2-7}
& $37$ & $[2c+1,2c+2,c+1,1]$ & & $34$ & $40$ & $39$ \\ \cline{1-7}
$11$ & $38$ & $[2c+1,2c+1,2c,c]$ & $35$ & $41$ & $36$ & \\ \cline{2-7}
& $39$ & $[2c+1,2c+2,c+1,c]$ & & $36$ & $42$ & $37$ \\ \cline{2-7}
& $40$ & $[2c+1,2c+2,c+2,1]$ & & & $37$ & $43$ \\ \cline{1-7}
$12$ & $41$ & $[2c+1,3c+1,2c,c]$ & $44$ & $38$ & $45$ & \\ \cline{2-7}
& $42$ & $[2c+1,2c+2,2c+1,c]$ & & $45$ & $39$ & $46$ \\ \cline{2-7}
& $43$ & $[2c+1,2c+2,c+2,c+1]$ & & & $46$ & $40$ \\ \cline{1-7}
$13$ & $44$ & $[2c+2,3c+1,2c,c]$ & $41$ & & $47$ & \\ \cline{2-7}
& $45$ & $[2c+1,3c+1,2c+1,c]$ & $47$ & $42$ & $41$ & $48$ \\ \cline{2-7}
& $46$ & $[2c+1,2c+2,2c+1,c+1]$ & & $48$ & $43$ & $42$ \\ \cline{1-7}
$14$ & $47$ & $[2c+2,3c+1,2c+1,c]$ & $45$ & $49$ & $44$ & $50$ \\ \cline{2-7}
& $48$ & $[2c+1,3c+1,2c+1,c+1]$ & $50$ & $46$ & & $45$ \\ \cline{1-7}
$15$ & $49$ & $[2c+2,3c+2,2c+1,c]$ & $51$ & $47$ & & $52$ \\ \cline{2-7}
& $50$ & $[2c+2,3c+1,2c+1,c+1]$ & $48$ & $52$ & & $47$ \\ \cline{1-7}
$16$ & $51$ & $[3c+1,3c+2,2c+1,c]$ & $49$ & & & $53$ \\ \cline{2-7}
& $52$ & $[2c+2,3c+2,2c+1,c+1]$ & $53$ & $50$ & $54$ & $49$ \\ \cline{1-7}
$17$ & $53$ & $[3c+1,3c+2,2c+1,c+1]$ & $52$ & & $55$ & $51$ \\ \cline{2-7}
& $54$ & $[2c+2,3c+2,2c+2,c+1]$ & $55$ & & $52$ & \\ \cline{1-7}
$18$ & $55$ & $[3c+1,3c+2,2c+2,c+1]$ & $54$ & $56$ & $53$ & \\ \cline{1-7}
$19$ & $56$ & $[3c+1,3c+3,2c+2,c+1]$ & $57$ & $55$ & & \\ \cline{1-7}
$20$ & $57$ & $[3c+2,3c+3,2c+2,c+1]$ & $56$ & $58$ & & \\ \cline{1-7}
$21$ & $58$ & $[3c+2,4c+2,2c+2,c+1]$ & & $57$ & $59$ & \\ \cline{1-7}
$22$ & $59$ & $[3c+2,4c+2,3c+1,c+1]$ & & & $58$ & $60$ \\ \cline{1-7}
$23$ & $60$ & $[3c+2,4c+2,3c+1,2c]$ & & & & $59$ \\ \cline{1-7}
\end{tabular}
\end{table}

Then, for the wide transformations in $\mathcal{D}$, we have the following:
\begin{lem}
\label{lem:proof_special_second_case_transformations_wide}
In this setting, if $\omega_i$ is a wide transformation, then there exist only the following two possibilities, where $K^{(i)} = \{r_1,r_2,\dots,r_N\}$ is the standard labelling of $K^{(i)}$ given in Section \ref{sec:longestelement}:
\begin{enumerate}
\item $K^{(i)}$ is of type $A_N$ with $N \geq 3$, $t^{(i)} = r_2$, $[y^{(i)}] \cap K^{(i)} = \{r_1\}$ and $J^{(i)} = \{r_3,\dots,r_N\}$; now the action of $\omega_i$ maps $r_1$ to $r_N$ and $(r_3,r_4,\dots,r_N)$ to $(r_1,r_2,\dots,r_{N-2})$;
\item $K^{(i)}$ is of type $E_7$, $t^{(i)} = r_6$, $[y^{(i)}] \cap K^{(i)} = \{r_1,r_2,r_3,r_4,r_5\}$ and $J^{(i)} = \{r_7\}$; now the action of $\omega_i$ maps $(r_1,r_2,r_3,r_4,r_5)$ to $(r_1,r_5,r_3,r_4,r_2)$ and $r_7$ to $r_7$.
\end{enumerate}
Hence, if $\mathcal{D}$ involves a wide transformation, then $J^{(0)}$ is of type $A_{N'}$ with $1 \leq N' < \infty$.
\end{lem}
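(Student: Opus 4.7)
The plan is to reduce the classification to a purely combinatorial setup inside $K^{(i)}$ and then check candidates against the lists of positive roots in Section~\ref{sec:Coxetergroups}. First, by induction on $i$ along $\mathcal{D}$, I would verify that the semi-standard decomposition preserves the structural features of step~$0$: the Coxeter graph on $[y^{(i)}]$ stays isomorphic to that on $[y^{(0)}] \cong I$ (so $[y^{(i)}]$ remains $A_{>1}$-free), the Coxeter graph on $J^{(i)}$ stays isomorphic to that on $J^{(0)}$ (so $J^{(i)}$ remains irreducible of finite type), and $[y^{(i)}]$ stays apart from $J^{(i)}$. The narrow case is easy because $K^{(j)}$ is then apart from $J^{(j)}$ and $\omega_j$ fixes $\Pi_{J^{(j)}}$ pointwise; for a wide $\omega_j$, irreducibility of $J^{(j)}$ combined with the fact that $K^{(j)}$ is a component of $[y^{(j)}] \cup J^{(j)} \cup \{t^{(j)}\}$ meeting $J^{(j)} \smallsetminus [y^{(j)}]$ forces $J^{(j)} \subseteq K^{(j)}$, and then $J^{(j+1)} = \omega_j \ast J^{(j)}$ again sits inside $K^{(j)}$.

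For the wide $\omega_i$ we therefore obtain a disjoint partition $K^{(i)} = Y \sqcup J^{(i)} \sqcup \{t^{(i)}\}$, where $Y := [y^{(i)}] \cap K^{(i)}$ is $A_{>1}$-free, $J^{(i)}$ is irreducible, and $Y$ is apart from $J^{(i)}$; connectedness of $K^{(i)}$ together with adjacency of $t^{(i)}$ to $[y^{(i)}]$ then forces $t^{(i)}$ to be adjacent to both $Y$ and $J^{(i)}$. Next I would convert the condition $\omega_i \in Y_{y^{(i+1)},y^{(i)}}$ into a root-theoretic test: since $\ell(\omega_i) = |\Phi_{K^{(i)}}^+| - |\Phi_{K^{(i)} \smallsetminus \{t^{(i)}\}}^+|$ and no positive root with support meeting $S \smallsetminus K^{(i)}$ can be sent to a negative root by $\omega_i \in W_{K^{(i)}}$ (its coefficients outside $K^{(i)}$ are preserved under the $W_{K^{(i)}}$-action), one obtains $\Phi[\omega_i] = \{\gamma \in \Phi_{K^{(i)}}^+ \mid t^{(i)} \in \mathrm{Supp}\,\gamma\}$, and $\omega_i \in Y_{y^{(i+1)},y^{(i)}}$ is equivalent to the assertion that no positive root of $\Phi_{K^{(i)}}$ whose support contains $t^{(i)}$ is orthogonal to every element of $\Pi_Y$.

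Equipped with this criterion I would run through the classification of finite irreducible Coxeter groups as candidates for $K^{(i)}$ and, for each, enumerate the combinatorially admissible partitions $(Y,J^{(i)},t^{(i)})$, using the root tables of Section~\ref{sec:Coxetergroups} to decide each candidate. For $K^{(i)} = A_N$ the combinatorial conditions force, up to the $A_N$-symmetry $r_k \leftrightarrow r_{N+1-k}$, exactly the configuration of case~(i), and the non-orthogonality test is immediate from the description $\gamma = \alpha_i + \cdots + \alpha_j$ of $\Phi_{A_N}^+$. For $K^{(i)} = E_7$ the enumeration, together with $A_{>1}$-freeness of $Y$ and irreducibility of $J^{(i)}$ (where one has to be careful that the Coxeter-graph type of a subset depends on exactly which vertices are chosen, so that for example $\{r_1,r_2,r_3,r_4\}$ in $E_7$ is of type $A_4$ while $\{r_2,r_3,r_4,r_5\}$ is of type $D_4$), leaves only $Y = \{r_1,\ldots,r_5\}$, $t^{(i)} = r_6$, $J^{(i)} = \{r_7\}$, and the $E_7$-marked entries of Tables~\ref{tab:positive_roots_E_8_1}--\ref{tab:positive_roots_E_8_6} (equivalently, the linear system $\langle \gamma,\alpha_i\rangle = 0$ for $i = 1,\ldots,5$) show that $\alpha_{r_7}$ is the only $E_7$-positive root orthogonal to $\Pi_{\{r_1,\ldots,r_5\}}$, and its support omits $r_6$; this yields case~(ii). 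All other candidates I would eliminate by exhibiting an explicit obstructing root: for instance, the $B_N$-analogue of case~(i) is killed by $\gamma^{(2)}_{1,2}$ from Table~\ref{tab:positive_roots_B_n}, which has $r_2$ in its support but satisfies $\langle \gamma^{(2)}_{1,2},\alpha_1 \rangle = 0$, and analogous explicit roots in the tables dispose of the remaining sub-cases for $D_N$, $E_6$, $E_8$, $F_4$, $H_3$, $H_4$ and $I_2(m)$.

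Finally, once only cases~(i) and~(ii) remain, the claimed action of $\omega_i$ on $\Pi_{K^{(i)} \smallsetminus \{t^{(i)}\}}$ follows from the standard formula that $\omega_i$ sends $\alpha_s$ to $\alpha_{(s^\star)^\ast}$, where $\star$ and $\ast$ are the graph involutions induced by $w_0(K^{(i)} \smallsetminus \{t^{(i)}\})$ and $w_0(K^{(i)})$ respectively: in case~(i) this composes into the rotation $r_1 \mapsto r_N$ and $r_k \mapsto r_{k-2}$ for $k \geq 3$, while in case~(ii) it yields the $D_5$-involution that swaps $r_2,r_5$ and fixes $r_1,r_3,r_4,r_7$. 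The final sentence of the lemma is then immediate since $J^{(i)}$ is of type $A$ in either case and $J^{(0)} \cong J^{(i)}$. I expect the main obstacle to be the bookkeeping in the type-by-type case analysis: each non-$A_N$/$E_7$ candidate requires a concrete bad root to be located in the tables, and the admissible configurations $(Y,J^{(i)},t^{(i)})$ are most intricate when $K^{(i)}$ has rank at least seven or contains a branching vertex.
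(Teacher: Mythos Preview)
Your approach is correct and essentially identical to the paper's: both reduce to enumerating admissible triples $(Y,J^{(i)},t^{(i)})$ inside each finite irreducible type and eliminating all but the two listed configurations by exhibiting a positive root of $\Phi_{K^{(i)}}$ with $t^{(i)}$ in its support and orthogonal to $\Pi_Y$ (the paper records these in Table~\ref{tab:lem_proof_special_second_case_transformations_wide}, and your characterization $\Phi[\omega_i]=\{\gamma\in\Phi_{K^{(i)}}^+\mid t^{(i)}\in\mathrm{Supp}\,\gamma\}$ is exactly what makes this test work). One caveat: the combinatorial constraints alone do not cut the $E_7$ list down to a single candidate as you suggest---the configurations $Y=\{r_1\}$, $t^{(i)}=r_3$ and $Y=\{r_7\}$, $t^{(i)}=r_6$ also satisfy all your adjacency and $A_{>1}$-freeness conditions, and must be disposed of by the root test (via $\gamma_{44}$ and $\gamma_{61}$ respectively).
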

\begin{proof}
The latter part of the claim follows from the former part and the fact that the sets $J^{(i)}$ for $0 \leq i \leq \ell(\mathcal{D})$ are all isomorphic to each other.
For the former part, note that $J^{(i)}$ is an irreducible subset of $K^{(i)}$ which is not adjacent to $[y^{(i)}]$ (by the above condition that $J^{(0)}$ is apart from $[y^{(0)}]$), $t^{(i)}$ is adjacent to $[y^{(i)}] \cap K^{(i)}$, and $\omega_i$ cannot fix the set $\Pi_{K^{(i)} \smallsetminus \{t\}}$ pointwise (see Lemma \ref{lem:another_decomposition_Y_no_loop}).
Moreover, since $I$ is $A_{>1}$-free, $[y^{(i)}]$ is also $A_{>1}$-free.
By these properties, a case-by-case argument shows that the possibilities of $K^{(i)}$, $[y^{(i)}]$ and $t^{(i)}$ are as enumerated in Table \ref{tab:lem_proof_special_second_case_transformations_wide} up to symmetry (note that $J^{(i)} = K^{(i)} \smallsetminus ([y^{(i)}] \cup \{t^{(i)}\})$).
Now, for each case in Table \ref{tab:lem_proof_special_second_case_transformations_wide} except the two cases specified in the statement, it follows by using the tables for the root systems of finite irreducible Coxeter groups that there exists a root $\beta \in (\Phi_{K^{(i)}}^{\perp [y^{(i)}] \cap K^{(i)}})^+$ that has non-zero coefficient of $\alpha_{t^{(i)}}$, as listed in Table \ref{tab:lem_proof_special_second_case_transformations_wide} (where the notations for the roots $\beta$ are as in the tables).
This implies that $\omega_i \cdot \beta \in \Phi^-$.
Moreover, the definition of $K^{(i)}$ implies that the set $[y^{(i)}] \smallsetminus K^{(i)}$ is apart from $K^{(i)}$, therefore $\beta \in \Phi^{\perp [y^{(i)}]}$ and $\Phi^{\perp [y^{(i)}]}[\omega_i] \neq \emptyset$.
However, this contradicts the property $\omega_i \in Y_{y^{(i+1)},y^{(i)}}$.
Hence one of the two conditions specified in the statement should be satisfied, concluding the proof of Lemma \ref{lem:proof_special_second_case_transformations_wide}.
\end{proof}
\begin{table}[hbt]
\centering
\caption{List for the proof of Lemma \ref{lem:proof_special_second_case_transformations_wide}}
\label{tab:lem_proof_special_second_case_transformations_wide}
\begin{tabular}{|c|c|c|c|} \hline
type of $K^{(i)}$ & $[y^{(i)}] \cap K^{(i)}$ & $t^{(i)}$ & $\beta$ \\ \hline
$A_N$ ($N \geq 3$) & $\{r_1\}$ & $r_2$ & --- \\ \hline
$B_N$ ($N \geq 4$) & $\{r_{k+1},\dots,r_N\}$ ($3 \leq k \leq N-1$) & $r_k$ & $\gamma^{(3)}_{k}$ \\ \hline
$D_N$ & $\{r_{N-1},r_N\}$ & $r_{N-2}$ & $\gamma^{(4)}_{1,2}$ \\ \cline{2-3}
& $\{r_{k+1},\dots,r_{N-1},r_N\}$ ($2 \leq k \leq N-4$) & $r_k$ & \\ \hline
$E_N$ ($6 \leq N \leq 8$) & $\{r_1\}$ & $r_3$ & $\gamma_{44}$ \\ \hline
$E_7$ & $\{r_1,r_2,r_3,r_4,r_5\}$ & $r_6$ & --- \\ \cline{2-4}
& $\{r_7\}$ & $r_6$ & $\gamma_{61}$ \\ \hline
$E_8$ & $\{r_1,r_2,r_3,r_4,r_5\}$ & $r_6$ & $\gamma_{119}$ \\ \cline{2-3}
& $\{r_1,r_2,r_3,r_4,r_5,r_6\}$ & $r_7$ & \\ \cline{2-4}
& $\{r_8\}$ & $r_7$ & $\gamma_{74}$ \\ \hline
$F_4$ & $\{r_1\}$ & $r_2$ & $\gamma^{(1)}_7$ \\ \hline
$H_4$ & $\{r_1\}$ & $r_2$ & $\gamma_{40}$ \\ \cline{2-3}
& $\{r_1,r_2\}$ & $r_3$ & \\ \cline{2-4}
& $\{r_4\}$ & $r_3$ & $\gamma_{32}$ \\ \hline
\end{tabular}
\end{table}

On the other hand, for the narrow transformations in $\mathcal{D}$, we have the following:
\begin{lem}
\label{lem:proof_special_second_case_transformations_narrow}
In this setting, suppose that $\omega_i$ is a narrow transformation, $[y^{(i+1)}] \neq [y^{(i)}]$, and $K^{(i)} \cap [y^{(i)}] = K^{(i)} \smallsetminus \{t^{(i)}\}$ has an irreducible component of type $A_1$.
Then $K^{(i)}$ is of type $A_2$ or of type $I_2(m)$ with $m$ an odd number.
\end{lem}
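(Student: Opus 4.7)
The plan is to distill three combinatorial constraints on the pair $(K, t) := (K^{(i)}, t^{(i)})$ from the hypotheses, and then to run a case check on the finite irreducible types to see that only $K = A_2$ and $K = I_2(m)$ with $m$ odd survive. Throughout I write $y := y^{(i)}$. Since $\omega_i$ is narrow we have $K \cap [y] = K \smallsetminus \{t\}$ and $K \subseteq [y] \cup \{t\}$; in particular $\omega_i = w_{y}^{t}$. Moreover, each $\omega_j$ induces an isomorphism of Coxeter systems between $[y^{(j)}]$ and $[y^{(j+1)}]$ by Proposition \ref{prop:factorization_C}(\ref{item:prop_factorization_C_decomp}), so $[y]$ is isomorphic to $[y^{(0)}] \cong I$ and is therefore $A_{>1}$-free.

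I would first convert the hypothesis $[y^{(i+1)}] \neq [y^{(i)}]$ into a combinatorial condition on $(K, t)$. Let $\sigma_K$ and $\sigma_{K \smallsetminus \{t\}}$ denote the graph automorphisms induced by $w_0(K)$ and $w_0(K \smallsetminus \{t\})$ respectively, as described in Section \ref{sec:longestelement}. Using the identity $w_0(K) \alpha_r = -\alpha_{\sigma_K(r)}$ for $r \in K$ and its counterpart for $K \smallsetminus \{t\}$, a direct computation yields $\omega_i \cdot \alpha_r = \alpha_{\sigma_K(\sigma_{K \smallsetminus \{t\}}(r))}$ for every $r \in K \smallsetminus \{t\}$. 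Consequently $\omega_i \ast$ maps $K \smallsetminus \{t\}$ bijectively into itself when $\sigma_K(t) = t$; otherwise the element $r := \sigma_{K \smallsetminus \{t\}}(\sigma_K(t)) \in K \smallsetminus \{t\}$ satisfies $\omega_i \ast r = t$, so $t \in [y^{(i+1)}]$ while $t \notin [y]$. Combining with the fact that $\omega_i$ fixes $\Pi_{[y] \smallsetminus K}$ pointwise, one sees that $[y^{(i+1)}] \neq [y^{(i)}]$ is equivalent to $\sigma_K(t) \neq t$, our condition (a). Together with the hypothesis (b) that $K \smallsetminus \{t\}$ has an isolated vertex, I would add a third condition (c): no connected component of $K \smallsetminus \{t\}$ in $\Gamma_K$ is of type $A_n$ with $n \geq 2$. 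This follows from the $A_{>1}$-freeness of $[y]$, since $K = [y]_{\sim t}$ is the connected component of $t$ in $\Gamma_{[y] \cup \{t\}}$, so no vertex of $K \smallsetminus \{t\}$ is adjacent to any vertex in $[y] \smallsetminus K$, and thus each connected component of $K \smallsetminus \{t\}$ in $\Gamma_K$ is in fact a connected component of $[y]$ in $\Gamma_{[y]}$.

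The case check on the finite irreducible types of $K$ then concludes the proof. Types $B_n$, $D_{2k}$, $E_7$, $E_8$, $F_4$, $H_3$, $H_4$ and $I_2(2k)$ are of $(-1)$-type, so $\sigma_K$ is the identity and (a) fails. Among the remaining types: for $K = A_n$, condition (c) combined with the presence of an $A_1$ component forces $n \leq 3$, and when $n = 3$ the only removable vertex yielding two $A_1$ components is the centre $r_2$, which is fixed by $\sigma_K$, contradicting (a); leaving only $A_2$. For $K = D_n$ with $n$ odd, the only vertices not fixed by $\sigma_K$ are the forked leaves $r_{n-1}$ and $r_n$, but removing either yields a complement isomorphic to $A_{n-1}$, which violates (c) since $n \geq 5$. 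For $K = E_6$, the four non-fixed vertices $r_1, r_3, r_5, r_6$ of $\sigma_K$ produce complements isomorphic to $D_5$ (for $r_1$ or $r_6$; violating (b) since $D_5$ is irreducible) or to $A_1 \sqcup A_4$ (for $r_3$ or $r_5$; violating (c)). For $I_2(m)$ with $m$ odd, $\sigma_K$ swaps $r_1$ and $r_2$, so (a) and (b) hold for either choice of $t$. The most delicate step is the $E_6$ case, where one must individually verify the four sub-cases against the Coxeter diagram to identify the isomorphism type of the complement.
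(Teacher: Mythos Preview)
Your proof is correct and follows essentially the same approach as the paper: extract the condition that the longest-element automorphism $\sigma_K$ does not fix $t$, combine it with the $A_{>1}$-freeness of $[y]$ and the existence of an $A_1$ component in $K\smallsetminus\{t\}$, and then run the case check on finite irreducible types. Your derivation of condition (a) via the explicit formula $\omega_i\cdot\alpha_r=\alpha_{\sigma_K(\sigma_{K\smallsetminus\{t\}}(r))}$ is more detailed than the paper's one-line assertion, and your case analysis spells out each elimination, but the logical skeleton is identical; one minor remark is that the isomorphism $[y^{(j)}]\cong[y^{(j+1)}]$ follows directly from $\omega_j\in C_{y^{(j+1)},y^{(j)}}$ rather than from Proposition~\ref{prop:factorization_C}(\ref{item:prop_factorization_C_decomp}), which concerns $W^{\perp[y]}$.
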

\begin{proof}
First, by the condition $[y^{(i+1)}] \neq [y^{(i)}]$ and the definition of $\omega_i$, the action of the longest element of $W_{K^{(i)}}$ induces a nontrivial automorphism of $K^{(i)}$ which does not fix the element $t^{(i)}$.
This property restricts the possibilities of $K^{(i)}$ to one of the followings (where we use the standard labelling of $K^{(i)}$): $K^{(i)} = \{r_1,\dots,r_N\}$ is of type $A_N$ and $t^{(i)} \neq r_{(N+1)/2}$; $K^{(i)} = \{r_1,\dots,r_N\}$ is of type $D_N$ with $N$ odd and $t^{(i)} \in \{r_{N-1},r_N\}$; $K^{(i)} = \{r_1,\dots,r_6\}$ is of type $E_6$ and $t^{(i)} \not\in \{r_2,r_4\}$; or $K^{(i)}$ is of type $I_2(m)$ with $m$ odd.
Secondly, by considering the $A_{>1}$-freeness of $I$ (hence of $[y^{(i)}]$), the possibilities are further restricted to the followings: $K^{(i)}$ is of type $A_2$; $K^{(i)}$ is of type $E_6$ and $t^{(i)} \in \{r_1,r_6\}$; and $K^{(i)}$ is of type $I_2(m)$ with $m$ odd.
Moreover, by the hypothesis that $K^{(i)} \cap [y^{(i)}]$ has an irreducible component of type $A_1$, the above possibility of type $E_6$ is denied.
Hence the claim holds.
\end{proof}

\subsubsection{Proof of the claim}
\label{sec:finitepart_secondcase_N_2}

From now, we prove our claim that $w$ fixes the set $\Pi_L$ pointwise.
First, we have $w \cdot \Pi_L \subseteq \Pi_{J \smallsetminus I}$ as mentioned above, therefore Proposition \ref{prop:standard_decomposition_existence} implies that there exists a standard decomposition of $w$ with respect to $L$.
Moreover, $L$ is apart from $I = [x_I]$, since $\Pi_L$ is an irreducible component of $\Pi^I$.
Now if $L$ is not of type $A_N$ with $1 \leq N < \infty$, then Lemma \ref{lem:proof_special_second_case_transformations_wide} implies that the standard decomposition of $w$ involves no wide transformations, therefore $w$ fixes $\Pi_L$ pointwise, as desired (note that any narrow transformation $\omega_i$ fixes $\Pi_{J^{(i)}}$ pointwise by the definition).
Hence, from now, we consider the case that $L$ is of type $A_N$ with $1 \leq N < \infty$.

First, we present some definitions:
\begin{defn}
\label{defn:admissible_for_N_large}
Suppose that $2 \leq N < \infty$.
Let $\mathcal{D} = \omega_{\ell(\mathcal{D})-1} \cdots \omega_1\omega_0$ be a semi-standard decomposition of an element of $W$.
We say that a sequence $s_1,s_2,\dots,s_{\mu}$ of distinct elements of $S$ is \emph{admissible of type $A_N$} with respect to $\mathcal{D}$, if $J^{(0)}$ is of type $A_N$, $\mu \equiv N \pmod{2}$, and the following conditions are satisfied, where we put $M := \{s_1,s_2\dots,s_{\mu}\}$ (see Figure \ref{fig:finitepart_secondcase_N_2_definition}):
\begin{enumerate}
\item \label{item:admissible_N_large_J_irreducible_component}
$\Pi_{J^{(0)}}$ is an irreducible component of $\Pi^{[y^{(0)}]}$.
\item \label{item:admissible_N_large_M_line}
$m(s_j,s_{j+1}) = 3$ for every $1 \leq j \leq \mu-1$.
\item \label{item:admissible_N_large_J_in_M}
For each $0 \leq h \leq \ell(\mathcal{D})$, there exists an odd number $\lambda(h)$ with $1 \leq \lambda(h) \leq \mu - N + 1$ satisfying the following conditions, where we put $\rho(h) := \lambda(h) + N - 1$:
\begin{displaymath}
J^{(h)} = \{s_j \mid \lambda(h) \leq j \leq \rho(h)\} \enspace,
\end{displaymath}
\begin{displaymath}
\begin{split}
[y^{(h)}] \cap M &= \{s_j \mid 1 \leq j \leq \lambda(h) - 2 \mbox{ and } j \equiv 1 \pmod{2}\} \\
&\cup \{s_j \mid \rho(h) + 2 \leq j \leq \mu \mbox{ and } j \equiv \mu \pmod{2}\} \enspace.
\end{split}
\end{displaymath}
\item \label{item:admissible_N_large_y_isolated}
For each $0 \leq h \leq \ell(\mathcal{D})$, every element of $[y^{(h)}] \cap M$ forms an irreducible component of $[y^{(h)}]$ of type $A_1$.
\item \label{item:admissible_N_large_narrow_transformation}
For each $0 \leq h \leq \ell(\mathcal{D})-1$, if $\omega_h$ is a narrow transformation, then one of the following two conditions is satisfied:
\begin{itemize}
\item $K^{(h)}$ intersects with $[y^{(h)}] \cap M$, and $[y^{(h+1)}] = [y^{(h)}]$;
\item $K^{(h)}$ is apart from $[y^{(h)}] \cap M$ (hence $[y^{(h+1)}] \cap M = [y^{(h)}] \cap M$).
\end{itemize}
\item \label{item:admissible_N_large_wide_transformation}
For each $0 \leq h \leq \ell(\mathcal{D})-1$, if $\omega_h$ is a wide transformation, then one of the following two conditions is satisfied:
\begin{itemize}
\item $\lambda(h+1) = \lambda(h) - 2$, $K^{(h)} = J^{(h)} \cup \{s_{\lambda(h)-2},s_{\lambda(h)-1}\}$ is of type $A_{N+2}$, $t^{(h)} = s_{\lambda(h)-1}$, and the action of $\omega_h$ maps $s_{\lambda(h)+j} \in J^{(h)}$ ($0 \leq j \leq N-1$) to $s_{\lambda(h+1)+j}$ and maps $s_{\lambda(h)-2} \in [y^{(h)}]$ to $s_{\rho(h)+2}$;
\item $\lambda(h+1) = \lambda(h) + 2$, $K^{(h)} = J^{(h)} \cup \{s_{\rho(h)+1},s_{\rho(h)+2}\}$ is of type $A_{N+2}$, $t^{(h)} = s_{\rho(h)+1}$, and the action of $\omega_h$ maps $s_{\lambda(h)+j} \in J^{(h)}$ ($0 \leq j \leq N-1$) to $s_{\lambda(h+1)+j}$ and maps $s_{\rho(h)+2} \in [y^{(h)}]$ to $s_{\lambda(h)-2}$.
\end{itemize}
\end{enumerate}
Moreover, we say that such a sequence $s_1,s_2,\dots,s_{\mu}$ is \emph{tight} if $M = \bigcup_{h=0}^{\ell(\mathcal{D})} J^{(h)}$.
\end{defn}
\begin{figure}[hbt]
\centering
\begin{picture}(350,115)(-0,-115)
\multiput(10,-30)(30,0){5}{\circle*{8}}
\multiput(25,-30)(30,0){5}{\circle{8}}
\multiput(160,-30)(15,0){7}{\circle{8}}
\multiput(265,-30)(30,0){3}{\circle{8}}
\multiput(280,-30)(30,0){3}{\circle*{8}}
\multiput(14,-30)(15,0){22}{\line(1,0){7}}
\put(155,-35){\framebox(100,10){}}
\put(10,-10){\hbox to0pt{\hss$s_1$\hss}}
\put(10,-15){\vector(0,-1){8}}
\put(25,-10){\hbox to0pt{\hss$s_2$\hss}}
\put(25,-15){\vector(0,-1){8}}
\put(40,-10){\hbox to0pt{\hss$\cdots$\hss}}
\put(205,-20){\hbox to0pt{\hss$J^{(h)}$\hss}}
\put(130,-10){\hbox to0pt{\hss$s_{\lambda(h)-2}$\hss}}
\put(130,-15){\vector(0,-1){8}}
\put(160,-10){\hbox to0pt{\hss$s_{\lambda(h)}$\hss}}
\put(160,-15){\vector(0,-1){8}}
\put(250,-10){\hbox to0pt{\hss$s_{\rho(h)}$\hss}}
\put(250,-15){\vector(0,-1){8}}
\put(325,-10){\hbox to0pt{\hss$\cdots$\hss}}
\put(340,-10){\hbox to0pt{\hss$s_{\mu}$\hss}}
\put(340,-15){\vector(0,-1){8}}
\put(125,-40){$\underbrace{\hspace*{130pt}}$}
\put(260,-50){$K^{(h)}$}
\put(125,-80){$\overbrace{\hspace*{130pt}}$}
\put(135,-33){\vector(2,-1){110}}
\multiput(160,-38)(15,0){7}{\vector(-2,-3){30}}
\multiput(10,-90)(30,0){4}{\circle*{8}}
\multiput(25,-90)(30,0){4}{\circle{8}}
\multiput(130,-90)(15,0){7}{\circle{8}}
\multiput(235,-90)(30,0){4}{\circle{8}}
\multiput(250,-90)(30,0){4}{\circle*{8}}
\multiput(14,-90)(15,0){22}{\line(1,0){7}}
\put(125,-95){\framebox(100,10){}}
\put(10,-115){\hbox to0pt{\hss$s_1$\hss}}
\put(10,-105){\vector(0,1){8}}
\put(25,-115){\hbox to0pt{\hss$s_2$\hss}}
\put(25,-105){\vector(0,1){8}}
\put(40,-115){\hbox to0pt{\hss$\cdots$\hss}}
\put(175,-110){\hbox to0pt{\hss$J^{(h+1)}$\hss}}
\put(130,-115){\hbox to0pt{\hss$s_{\lambda(h+1)}$\hss}}
\put(130,-105){\vector(0,1){8}}
\put(220,-115){\hbox to0pt{\hss$s_{\rho(h+1)}$\hss}}
\put(220,-105){\vector(0,1){8}}
\put(325,-115){\hbox to0pt{\hss$\cdots$\hss}}
\put(340,-115){\hbox to0pt{\hss$s_{\mu}$\hss}}
\put(340,-105){\vector(0,1){8}}
\end{picture}
\caption{Admissible sequence when $N \geq 2$; here $N = 7$, black circles in the top and the bottom rows indicate elements of $[y^{(h)}] \cap M$ and $[y^{(h+1)}] \cap M$, respectively, and $\omega_h$ is a wide transformation with $t^{(h)} = s_{\lambda(h)-1}$}
\label{fig:finitepart_secondcase_N_2_definition}
\end{figure}
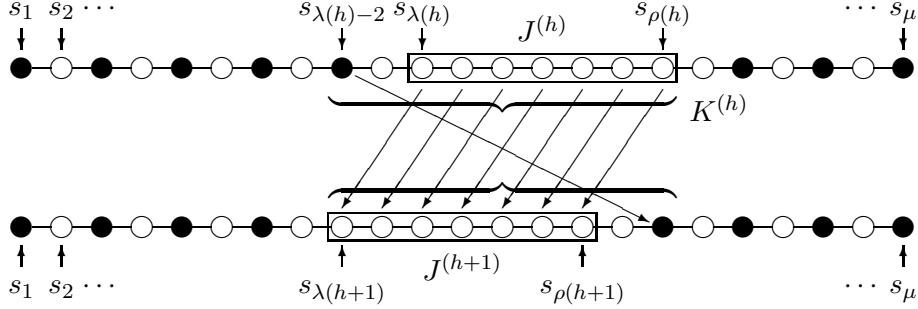
\begin{defn}
\label{defn:admissible_for_N_1}
Suppose that $N = 1$.
Let $\mathcal{D} = \omega_{\ell(\mathcal{D})-1} \cdots \omega_1\omega_0$ be a semi-standard decomposition of an element of $W$.
We say that a sequence $s_1,s_2,\dots,s_{\mu}$ of distinct elements of $S$ is \emph{admissible of type $A_1$} with respect to $\mathcal{D}$, if $J^{(0)}$ is of type $A_1$ and the following conditions are satisfied, where we put $M = \{s_1,s_2\dots,s_{\mu}\}$ (see Figure \ref{fig:finitepart_secondcase_N_1_definition}):
\begin{enumerate}
\item \label{item:admissible_N_1_J_irreducible_component}
$\Pi_{J^{(0)}}$ is an irreducible component of $\Pi^{[y^{(0)}]}$.
\item \label{item:admissible_N_1_J_in_M}
For each $0 \leq h \leq \ell(\mathcal{D})$, we have $J^{(h)} \subseteq M$ and $M \smallsetminus J^{(h)} \subseteq [y^{(h)}]$.
\item \label{item:admissible_N_1_y_isolated}
For each $0 \leq h \leq \ell(\mathcal{D})$, every element of $[y^{(h)}] \cap M$ forms an irreducible component of $[y^{(h)}]$ of type $A_1$.
\item \label{item:admissible_N_1_narrow_transformation}
For each $0 \leq h \leq \ell(\mathcal{D})-1$, if $\omega_h$ is a narrow transformation, then one of the following two conditions is satisfied:
\begin{itemize}
\item $K^{(h)}$ intersects with $[y^{(h)}] \cap M$, and $[y^{(h+1)}] = [y^{(h)}]$;
\item $K^{(h)}$ is apart from $[y^{(h)}] \cap M$, hence $[y^{(h+1)}] \cap M = [y^{(h)}] \cap M$.
\end{itemize}
\item \label{item:admissible_N_1_wide_transformation}
For each $0 \leq h \leq \ell(\mathcal{D})-1$, if $\omega_h$ is a wide transformation, then one of the following two conditions is satisfied:
\begin{itemize}
\item $J^{(h+1)} \neq J^{(h)}$, $K^{(h)}$ is of type $A_3$, $K^{(h)} \smallsetminus \{t^{(h)}\} = J^{(h)} \cup J^{(h+1)}$, $J^{(h+1)} \subseteq [y^{(h)}] \cap M$, and the action of $\omega_h$ exchanges the unique element of $J^{(h)}$ and the unique element of $J^{(h+1)}$;
\item $J^{(h+1)} = J^{(h)}$ and $[y^{(h+1)}] = [y^{(h)}]$.
\end{itemize}
\end{enumerate}
Moreover, we say that such a sequence $s_1,s_2,\dots,s_{\mu}$ is \emph{tight} if $M = \bigcup_{h=0}^{\ell(\mathcal{D})} J^{(h)}$.
\end{defn}
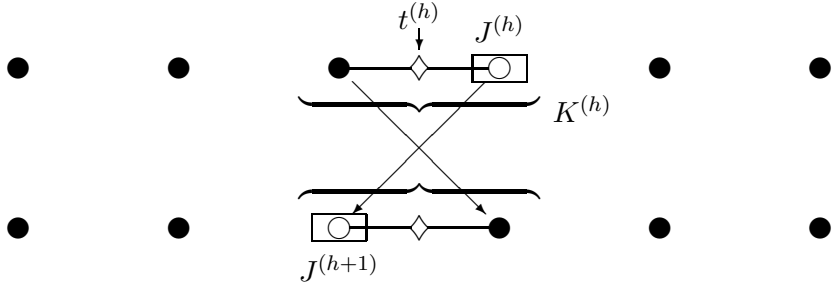
\begin{figure}[hbt]
\centering
\begin{picture}(350,110)(-0,-110)
\multiput(10,-30)(60,0){3}{\circle*{8}}
\put(190,-30){\circle{8}}
\multiput(250,-30)(60,0){2}{\circle*{8}}
\put(180,-35){\framebox(20,10){}}
\put(160,-33){\hbox to0pt{\hss$\diamondsuit$\hss}}
\put(134,-30){\line(1,0){23}}
\put(186,-30){\line(-1,0){23}}
\put(190,-20){\hbox to0pt{\hss$J^{(h)}$\hss}}
\put(160,-15){\hbox to0pt{\hss$t^{(h)}$\hss}}
\put(160,-15){\vector(0,-1){8}}
\put(115,-40){$\underbrace{\hspace*{90pt}}$}
\put(210,-50){$K^{(h)}$}
\put(115,-80){$\overbrace{\hspace*{90pt}}$}
\put(135,-35){\vector(1,-1){50}}
\put(185,-35){\vector(-1,-1){50}}
\multiput(10,-90)(60,0){2}{\circle*{8}}
\put(130,-90){\circle{8}}
\multiput(190,-90)(60,0){3}{\circle*{8}}
\put(120,-95){\framebox(20,10){}}
\put(160,-93){\hbox to0pt{\hss$\diamondsuit$\hss}}
\put(134,-90){\line(1,0){23}}
\put(186,-90){\line(-1,0){23}}
\put(130,-110){\hbox to0pt{\hss$J^{(h+1)}$\hss}}
\end{picture}
\caption{Admissible sequence when $N = 1$; here $\omega_h$ is a wide transformation of the first type in Definition \ref{defn:admissible_for_N_1}(\ref{item:admissible_N_1_wide_transformation}), the circles in each row signify elements of $M$, and the diamond signifies the element $t^{(h)}$}
\label{fig:finitepart_secondcase_N_1_definition}
\end{figure}

Note that, if a sequence $s_1,s_2,\dots,s_{\mu}$ is admissible of type $A_N$ with respect to a semi-standard decomposition $\mathcal{D} = \omega_{\ell(\mathcal{D})-1} \cdots \omega_1\omega_0$, then the subsequence of $s_1,s_2,\dots,s_{\mu}$ consisting of the elements of $\bigcup_{j=0}^{\ell(\mathcal{D})} J^{(j)}$ is admissible of type $A_N$ with respect to $\mathcal{D}$ and is tight (for the case $N \geq 2$, the property of wide transformations in Definition \ref{defn:admissible_for_N_large}(\ref{item:admissible_N_large_wide_transformation}) implies that $\bigcup_{j=0}^{\ell(\mathcal{D})} J^{(j)} = \{s_i \mid \lambda(k) \leq i \leq \rho(k')\}$ for some $k,k' \in \{0,1,\dots,\ell(\mathcal{D})\}$).
Moreover, the sequence $s_1$, $s_2,\dots,s_{\mu}$ is also admissible of type $A_N$ with respect to $\mathcal{D}^{-1}$.

The above definitions are relevant to our purpose in the following manner:
\begin{lem}
\label{lem:claim_holds_when_admissible}
Let $\mathcal{D} = \omega_{\ell(\mathcal{D})-1} \cdots \omega_1\omega_0$ be a semi-standard decomposition of $w$ with respect to $L$.
If there exists a sequence which is admissible of type $A_N$ with respect to $\mathcal{D}$, then $w$ fixes $\Pi_L$ pointwise.
\end{lem}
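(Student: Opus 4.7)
The plan is to track the action of $w$ on $\Pi_L = \Pi_{J^{(0)}}$ factor by factor through $\mathcal{D} = \omega_{\ell(\mathcal{D})-1}\cdots\omega_0$, using the admissible sequence $s_1,\ldots,s_\mu$ as a coordinate system for the simple roots appearing at each intermediate stage $\Pi_{J^{(h)}}$. The aim is to show that the window $J^{(h)}$ returns to its starting position at the end of $\mathcal{D}$, and that each factor $\omega_h$ preserves the offset $j$ with $s = s_{\lambda(h)+j}$, so that the overall composition acts as the identity on $\Pi_L$.

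First, since $w \in Y_I = Y_{x_I,x_I}$, the endpoints of $\mathcal{D}$ satisfy $y^{(0)} = y^{(\ell(\mathcal{D}))} = x_I$, and in particular $[y^{(0)}] \cap M = [y^{(\ell(\mathcal{D}))}] \cap M$. When $N \geq 2$, the explicit description of $[y^{(h)}] \cap M$ in Definition~\ref{defn:admissible_for_N_large}(\ref{item:admissible_N_large_J_in_M}) determines $\lambda(h)$ uniquely (e.g., as $2$ plus the largest odd index appearing in the left part); when $N=1$, the disjointness $J^{(h)} \cap [y^{(h)}] = \emptyset$ (which propagates from $J^{(0)} \cap [y^{(0)}] = L \cap I = \emptyset$ through each factor, by matching the two descriptions of $\Pi_{J^{(i+1)}}$ and $\Pi_{[y^{(i+1)}]}$) together with Definition~\ref{defn:admissible_for_N_1}(\ref{item:admissible_N_1_J_in_M}) gives $J^{(h)} = M \smallsetminus ([y^{(h)}] \cap M)$. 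Either way, $J^{(\ell(\mathcal{D}))} = J^{(0)} = L$.

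Next, for each narrow $\omega_h$, both clauses of condition (\ref{item:admissible_N_large_narrow_transformation}) (resp.\ (\ref{item:admissible_N_1_narrow_transformation})) yield $[y^{(h+1)}] \cap M = [y^{(h)}] \cap M$, and hence $J^{(h+1)} = J^{(h)}$ by the previous paragraph. Because $\omega_h = w_{y^{(h)}}^{t^{(h)}} \in W_{K^{(h)}}$ with $K^{(h)} \subseteq [y^{(h)}] \cup \{t^{(h)}\}$ disjoint from $J^{(h)}$, for each $s \in J^{(h)}$ we have $\omega_h \cdot \alpha_s = \alpha_s + v$ with $v \in V_{K^{(h)}}$; the requirement $\omega_h \cdot \alpha_s \in \Pi_{J^{(h+1)}} = \Pi_{J^{(h)}}$, together with the linear independence of $\Pi_{J^{(h)}} \cup \Pi_{K^{(h)}} \subseteq \Pi$, forces $v=0$ and hence $\omega_h$ fixes $\Pi_{J^{(h)}}$ pointwise. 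For each wide $\omega_h$, condition (\ref{item:admissible_N_large_wide_transformation}) of Definition~\ref{defn:admissible_for_N_large} records explicitly that $\alpha_{s_{\lambda(h)+j}} \mapsto \alpha_{s_{\lambda(h+1)+j}}$ for every offset $j$, and condition (\ref{item:admissible_N_1_wide_transformation}) of Definition~\ref{defn:admissible_for_N_1} plays the analogous role when $N=1$ (for $N=1$ the conclusion is in fact immediate from $J^{(\ell(\mathcal{D}))} = J^{(0)} = L$ being a singleton and $w \cdot \Pi_L = \Pi_{J^{(\ell(\mathcal{D}))}}$). Composing all these identity and offset-preserving maps, $w \cdot \alpha_{s_{\lambda(0)+j}} = \alpha_{s_{\lambda(\ell(\mathcal{D}))+j}} = \alpha_{s_{\lambda(0)+j}}$ for each valid $j$, proving the lemma. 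The only subtle point is the linear-independence argument pinning down each narrow action to the identity rather than a nontrivial permutation of $\Pi_{J^{(h)}}$; everything else amounts to unwinding the bookkeeping encoded in the definition of admissibility.
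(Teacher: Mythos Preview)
Your proof is correct and follows essentially the same approach as the paper's: use $y^{(0)} = y^{(\ell(\mathcal{D}))} = x_I$ to conclude $[y^{(0)}] \cap M = [y^{(\ell(\mathcal{D}))}] \cap M$, deduce $J^{(\ell(\mathcal{D}))} = J^{(0)} = L$ from the admissibility conditions, and then compose the offset-preserving action of each $\omega_h$ on $\Pi_{J^{(h)}}$. The paper simply asserts that narrow transformations fix $\Pi_{J^{(h)}}$ pointwise ``by the definition''; your linear-independence argument makes this explicit, but otherwise the two arguments coincide.
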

\begin{proof}
First, note that $y^{(\ell(\mathcal{D}))} = x_I = y^{(0)}$ since $w \in Y_I$, therefore $[y^{(\ell(\mathcal{D}))}] \cap M = [y^{(0)}] \cap M$ where $M$ is as defined in Definition \ref{defn:admissible_for_N_large} (when $N \geq 2$) or Definition \ref{defn:admissible_for_N_1} (when $N = 1$).
Now it follows from the properties in Definition \ref{defn:admissible_for_N_large}(\ref{item:admissible_N_large_J_in_M}) when $N \geq 2$, or Definition \ref{defn:admissible_for_N_1}(\ref{item:admissible_N_1_J_in_M}) when $N = 1$, that $J^{(\ell(\mathcal{D}))} = J^{(0)} = L$.
Hence $w$ fixes $\Pi_L$ pointwise when $N = 1$.
Moreover, when $N \geq 2$, the property in Definition \ref{defn:admissible_for_N_large}(\ref{item:admissible_N_large_wide_transformation}) implies that $\omega_h \ast s_{\lambda(h)+j} = s_{\lambda(h+1)+j}$ for every $0 \leq h \leq \ell(\mathcal{D})-1$ and $0 \leq j \leq N-1$.
Now by this property and the above-mentioned property $J^{(\ell(\mathcal{D}))} = J^{(0)}$, it follows that $w$ fixes the set $\Pi_{J^{(0)}} = \Pi_{L}$ pointwise.
Hence the proof is concluded.
\end{proof}

As mentioned above, a standard decomposition of $w$ with respect to $L$ exists.
Therefore, by virtue of Lemma \ref{lem:claim_holds_when_admissible}, it suffices to show that there exists a sequence which is admissible with respect to this standard decomposition.
More generally, we prove the following proposition (note that the above-mentioned standard decomposition of $w$ satisfies the assumption in this proposition):
\begin{prop}
\label{prop:admissible_sequence_exists}
Let $\mathcal{D} = \omega_{\ell(\mathcal{D})-1} \cdots \omega_1\omega_0$ be a semi-standard decomposition of an element.
Suppose that $J^{(0)}$ is of type $A_N$ with $1 \leq N < \infty$, and $\Pi_{J^{(0)}}$ is an irreducible component of $\Pi^{[y^{(0)}]}$.
Then there exists a sequence which is admissible of type $A_N$ with respect to $\mathcal{D}$.
\end{prop}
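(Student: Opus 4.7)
I will prove the proposition by induction on $\ell(\mathcal{D})$. The base case $\ell(\mathcal{D}) = 0$ is immediate: since $J^{(0)}$ is of type $A_N$, let $(s_1, \ldots, s_N)$ be a standard labelling of $J^{(0)}$ and put $\mu = N$, $\lambda(0) = 1$, $\rho(0) = N$. Condition (1) of Definition \ref{defn:admissible_for_N_large} (or Definition \ref{defn:admissible_for_N_1} when $N = 1$) is the second hypothesis of the proposition, condition (2) reflects the type of $J^{(0)}$, condition (3) reduces to $J^{(0)} = M$ and $[y^{(0)}] \cap M = \emptyset$ (the latter because $\Pi_{J^{(0)}} \subseteq \Pi^{[y^{(0)}]}$ forces $J^{(0)} \cap [y^{(0)}] = \emptyset$), and the remaining conditions are vacuous.

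For the inductive step, put $\mathcal{D}' = \omega_{\ell(\mathcal{D})-2} \cdots \omega_0$, which shares the initial data $(y^{(0)}, J^{(0)})$ with $\mathcal{D}$ and hence satisfies the hypotheses of the proposition. The induction hypothesis furnishes an admissible sequence $(s_1, \ldots, s_\mu)$ for $\mathcal{D}'$; I will extend it, possibly enlarging $\mu$, into an admissible sequence for $\mathcal{D}$, verifying the new conditions at index $h = \ell(\mathcal{D})-1$ while preserving those at smaller indices. The cases are driven by the nature of the final transformation. In the narrow case, $J^{(\ell(\mathcal{D}))} = J^{(\ell(\mathcal{D})-1)}$ and the support $K^{(\ell(\mathcal{D})-1)}$ lies in $[y^{(\ell(\mathcal{D})-1)}] \cup \{t^{(\ell(\mathcal{D})-1)}\}$; if this support is apart from $[y^{(\ell(\mathcal{D})-1)}] \cap M$ then the sequence passes through untouched, and if it meets some $s_j \in [y^{(\ell(\mathcal{D})-1)}] \cap M$, then condition (4) for $\mathcal{D}'$ makes $\{s_j\}$ an isolated $A_1$ in $[y^{(\ell(\mathcal{D})-1)}]$, so Lemma \ref{lem:proof_special_second_case_transformations_narrow} confines $K^{(\ell(\mathcal{D})-1)}$ to type $A_2$ or $I_2(m)$ with $m$ odd whenever $[y^{(\ell(\mathcal{D}))}] \neq [y^{(\ell(\mathcal{D})-1)}]$, permitting a local patch of the sequence (typically by inserting $t^{(\ell(\mathcal{D})-1)}$ next to $s_j$). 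In the wide case, Lemma \ref{lem:proof_special_second_case_transformations_wide} pins $K^{(\ell(\mathcal{D})-1)}$ down to type $A_{N+2}$, or to type $E_7$ when $N = 1$; in the $A$-type subcase the transformation shifts $J^{(\ell(\mathcal{D})-1)}$ by two positions along a chain of length $N + 2$, so the two new vertices of $K^{(\ell(\mathcal{D})-1)} \smallsetminus J^{(\ell(\mathcal{D})-1)}$ are prepended or appended to the sequence and the indexing adjusted to realize one of the two branches of the wide-transformation condition in Definition \ref{defn:admissible_for_N_large}, while in the exceptional $E_7$ subcase the action fixes $J^{(\ell(\mathcal{D})-1)}$ setwise and pointwise and we are placed in the second branch of the wide-transformation condition of Definition \ref{defn:admissible_for_N_1}, with the sequence unchanged.

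The main obstacle is the structural bookkeeping triggered whenever $M$ is enlarged: each newly introduced element $s_j$ must satisfy condition (4) at every earlier index $h \leq \ell(\mathcal{D})-1$, i.e., whenever condition (3) places it in $[y^{(h)}] \cap M$ it must form an $A_1$ irreducible component of $[y^{(h)}]$. The intuition is that earlier supports $K^{(h)}$ are tightly controlled by the admissibility of $(s_1, \ldots, s_\mu)$ with respect to $\mathcal{D}'$, so they cannot disturb a vertex freshly introduced to absorb the final transformation; making this precise will require tracing the new vertex backwards through $\mathcal{D}'$, for which Lemma \ref{lem:another_decomposition_Y_shift_x} (with the new vertex substituted into the tuple $y$ as the element $r$ in its statement) is the natural vehicle. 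This backwards-propagation step, rather than the case analysis of the final transformation itself, is where the bulk of the technical work concentrates.
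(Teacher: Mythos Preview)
Your inductive framework and the identification of the backward-propagation issue are both correct and match the paper's structure. The paper likewise inducts on the length of $\mathcal{D}$, packaging the step as two mutually-inductive lemmas: one (Lemma~\ref{lem:admissible_sequence_extends}) classifying exactly what the last transformation $\omega_{n}$ can do given an admissible sequence for $\mathcal{D}'$, and one (Lemma~\ref{lem:admissible_sequence_existence_from_lemma}) extending the sequence accordingly. The backward pass you anticipate is carried out there, and Lemma~\ref{lem:another_decomposition_Y_shift_x} is indeed one of the tools used.

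The genuine gap is your handling of the ``bad'' narrow case. You write that when $K^{(n)}$ meets some $s_j \in [y^{(n)}] \cap M$ and $[y^{(n+1)}] \neq [y^{(n)}]$, Lemma~\ref{lem:proof_special_second_case_transformations_narrow} forces $K^{(n)}$ to be of type $A_2$ or $I_2(m)$ with $m$ odd, ``permitting a local patch of the sequence (typically by inserting $t^{(n)}$ next to $s_j$)''. No such patch exists. Look again at condition~(\ref{item:admissible_N_large_narrow_transformation}) of Definition~\ref{defn:admissible_for_N_large} (or~(\ref{item:admissible_N_1_narrow_transformation}) of Definition~\ref{defn:admissible_for_N_1}): for a narrow transformation, either its support is apart from $[y^{(n)}]\cap M$, or it meets $[y^{(n)}]\cap M$ \emph{and} $[y^{(n+1)}]=[y^{(n)}]$. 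There is no third clause, and you cannot manufacture one by enlarging $M$: since $s_j$ already lies in some $J^{(h')}$, any admissible $M$ must contain $s_j$, and then the condition fails outright at index $n$. (For $N\geq 2$ the rigid chain structure and parity constraint $\mu\equiv N\pmod 2$ make this even clearer.) The same problem arises in the wide case when the element $s' \in [y^{(n)}]\cap K^{(n)}$ lands at a ``wrong'' position in $M$, e.g.\ on the far side of $J^{(n)}$ rather than adjacent to it.

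What the paper actually does is prove that these bad configurations \emph{cannot occur}. This is the content of Lemma~\ref{lem:admissible_sequence_extends}: assuming one of the bad cases (labelled (I), (II-1), (II-2), (II-3) in the text), one traces back to the last index $h_0$ at which the offending vertex $\overline{s}$ belonged to $J^{(h_0)}$, and then---via the simplification of the intermediate decomposition, Lemma~\ref{lem:another_decomposition_Y_shift_x}, and explicit root constructions inside the relevant $K^{(j)}$---produces a root $\beta \in \Pi^{[y^{(h_0)}]}\smallsetminus \Pi_{J^{(h_0)}}$ that is not orthogonal to $\Pi_{J^{(h_0)}}$, contradicting condition~(\ref{item:admissible_N_large_J_irreducible_component}). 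These orthogonality/irreducibility contradictions, not the extension bookkeeping, are the technical heart of the proof; your proposal does not contain them, and without them the induction does not close.
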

To prove Proposition \ref{prop:admissible_sequence_exists}, we give the following key lemma, which will be proven below:
\begin{lem}
\label{lem:admissible_sequence_extends}
Let $n \geq 0$.
Let $\mathcal{D} = \omega_n\omega_{n-1} \cdots \omega_1\omega_0$ be a semi-standard decomposition of an element, and put $\mathcal{D}' := \omega_{n-1} \cdots \omega_1\omega_0$, which is also a semi-standard decomposition of an element satisfying that $y^{(0)}(\mathcal{D}') = y^{(0)}(\mathcal{D})$ and $J^{(0)}(\mathcal{D}') = J^{(0)}(\mathcal{D})$.
Suppose that $s_1,\dots,s_{\mu}$ is a sequence which is admissible of type $A_N$ with respect to $\mathcal{D}'$.
For simplicity, put $y^{(j)} = y^{(j)}(\mathcal{D})$, $J^{(j)} = J^{(j)}(\mathcal{D})$, $t^{(j)} = t^{(j)}(\mathcal{D})$, and $K^{(j)} = K^{(j)}(\mathcal{D})$ for each index $j$.
\begin{enumerate}
\item \label{item:lem_admissible_sequence_extends_narrow}
If $\omega_n$ is a narrow transformation, then we have either $[y^{(n+1)}] = [y^{(n)}]$, or $K^{(n)}$ is apart from $[y^{(n)}] \cap \bigcup_{j=0}^{n} J^{(j)}$.
\item \label{item:lem_admissible_sequence_extends_wide_N_1_fix}
If $N = 1$, $\omega_n$ is a wide transformation and $J^{(n+1)} = J^{(n)}$, then we have $[y^{(n+1)}] = [y^{(n)}]$.
\item \label{item:lem_admissible_sequence_extends_wide_N_1}
If $N = 1$, $\omega_n$ is a wide transformation and $J^{(n+1)} \neq J^{(n)}$, then $K^{(n)}$ is of type $A_3$, $K^{(n)} \smallsetminus (J^{(n)} \cup \{t^{(n)}\}) \subseteq [y^{(n)}]$, and the action of $\omega_n$ exchanges the unique element of $J^{(n)}$ and the unique element of $K^{(n)} \smallsetminus (J^{(n)} \cup \{t^{(n)}\})$ (the latter belonging to $[y^{(n)}] \cap J^{(n+1)}$).
\item \label{item:lem_admissible_sequence_extends_wide_N_large}
If $N \geq 2$ and $\omega_n$ is a wide transformation, then $K^{(n)}$ is of type $A_{N+2}$, the unique element $s'$ of $K^{(n)} \smallsetminus (J^{(n)} \cup \{t^{(n)}\})$ belongs to $[y^{(n)}]$, and one of the following two conditions is satisfied:
\begin{enumerate}
\item \label{item:lem_admissible_sequence_extends_wide_N_large_left}
$t^{(n)}$ is adjacent to $s'$ and $s_{\lambda(n)}$, and the action of $\omega_n$ maps the elements $s_{\lambda(n)}$, $s_{\lambda(n)+1}$, $s_{\lambda(n)+2},\dots,s_{\rho(n)}$ and $s'$ to $s'$, $t^{(n)}$, $s_{\lambda(n)},\dots,s_{\rho(n)-2}$ and $s_{\rho(n)}$, respectively.
Moreover;
\begin{enumerate}
\item \label{item:lem_admissible_sequence_extends_wide_N_large_left_branch}
if $\lambda(n) \geq 3$ and $s_{\lambda(n)-2} \in \bigcup_{j=0}^{n} J^{(j)}$, then we have $s' = s_{\lambda(n)-2}$ and $t^{(n)} = s_{\lambda(n)-1}$;
\item \label{item:lem_admissible_sequence_extends_wide_N_large_left_terminal}
otherwise, we have $s' \not\in \bigcup_{j=0}^{n} J^{(j)}$.
\end{enumerate}
\item \label{item:lem_admissible_sequence_extends_wide_N_large_right}
$t^{(n)}$ is adjacent to $s'$ and $s_{\rho(n)}$, and the action of $\omega_n$ maps the elements $s_{\rho(n)}$, $s_{\rho(n)-1}$, $s_{\rho(n)-2},\dots,s_{\lambda(n)}$ and $s'$ to $s'$, $t^{(n)}$, $s_{\rho(n)},\dots,s_{\lambda(n)+2}$ and $s_{\lambda(n)}$, respectively.
Moreover;
\begin{enumerate}
\item \label{item:lem_admissible_sequence_extends_wide_N_large_right_branch}
if $\rho(n) \leq \mu - 2$ and $s_{\rho(n)+2} \in \bigcup_{j=0}^{n} J^{(j)}$, then we have $s' = s_{\rho(n)+2}$ and $t^{(n)} = s_{\rho(n)+1}$;
\item \label{item:lem_admissible_sequence_extends_wide_N_large_right_terminal}
otherwise, we have $s' \not\in \bigcup_{j=0}^{n} J^{(j)}$.
\end{enumerate}
\end{enumerate}
\end{enumerate}
\end{lem}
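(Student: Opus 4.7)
The plan is to carry out a case analysis based on whether $\omega_n$ is a narrow or wide transformation and on whether $N = 1$ or $N \geq 2$, in each case invoking the classification results Lemma \ref{lem:proof_special_second_case_transformations_narrow} (for narrow transformations) and Lemma \ref{lem:proof_special_second_case_transformations_wide} (for wide transformations), and combining them with the structural constraints on the admissible sequence $s_1, \ldots, s_\mu$ with respect to $\mathcal{D}'$.

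For part (\ref{item:lem_admissible_sequence_extends_narrow}), I argue contrapositively. If $K^{(n)}$ meets $[y^{(n)}] \cap \bigcup_{j=0}^n J^{(j)}$ at some element $s$, then clauses (\ref{item:admissible_N_large_y_isolated}) or (\ref{item:admissible_N_1_y_isolated}) force $s$ to be an isolated $A_1$ component of $[y^{(n)}]$. Since $\omega_n$ is narrow, $K^{(n)} \subseteq [y^{(n)}] \cup \{t^{(n)}\}$, and as any element of $K^{(n)}$ is connected to $t^{(n)}$ through $[y^{(n)}]$-edges, the isolation of $s$ forces $K^{(n)} = \{s, t^{(n)}\}$. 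If moreover $[y^{(n+1)}] \neq [y^{(n)}]$, then Lemma \ref{lem:proof_special_second_case_transformations_narrow} restricts $K^{(n)}$ to types $A_2$ or $I_2(m)$ with $m$ odd. I expect to derive a contradiction by tracing the history of $s$: since $s \in \bigcup_{j=0}^n J^{(j)}$, there is a latest step $j_1 < n$ at which a wide transformation, described explicitly in (\ref{item:admissible_N_large_wide_transformation}) or (\ref{item:admissible_N_1_wide_transformation}), evicted $s$ from the $J$-side, pinning down the Coxeter-graph neighbourhood of $s$ among the elements of $M$; this neighbourhood, combined with the admissibility rules governing the narrow transformations $\omega_{j_1+1}, \ldots, \omega_{n-1}$ that come after, turns out to be inconsistent with the small configuration $K^{(n)} = \{s, t^{(n)}\}$.

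For parts (\ref{item:lem_admissible_sequence_extends_wide_N_1_fix}) and (\ref{item:lem_admissible_sequence_extends_wide_N_1}), Lemma \ref{lem:proof_special_second_case_transformations_wide} leaves exactly two possibilities for $K^{(n)}$: type $A_3$ or type $E_7$. In the $E_7$ case, the explicit action in Lemma \ref{lem:proof_special_second_case_transformations_wide}(2) permutes $[y^{(n)}] \cap K^{(n)}$ within itself and fixes $J^{(n)}$ setwise, yielding both $J^{(n+1)} = J^{(n)}$ and $[y^{(n+1)}] = [y^{(n)}]$; this settles part (\ref{item:lem_admissible_sequence_extends_wide_N_1_fix}) and shows that whenever $J^{(n+1)} \neq J^{(n)}$ we must be in the $A_3$ case. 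In the $A_3$ case, the unique element $s'$ of $K^{(n)} \setminus (J^{(n)} \cup \{t^{(n)}\})$ lies in $[y^{(n)}]$ and is exchanged with the unique element of $J^{(n)}$ by $\omega_n$, which is precisely the conclusion of part (\ref{item:lem_admissible_sequence_extends_wide_N_1}).

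For part (\ref{item:lem_admissible_sequence_extends_wide_N_large}), the $E_7$ possibility of Lemma \ref{lem:proof_special_second_case_transformations_wide} is excluded since there $J^{(n)}$ has type $A_1$ whereas here $N \geq 2$, so $K^{(n)}$ is forced to be of type $A_{N+2}$ with the explicit cyclic-shift action of Lemma \ref{lem:proof_special_second_case_transformations_wide}(1); in particular $[y^{(n)}] \cap K^{(n)} = \{s'\}$ and $t^{(n)}$ sits between $s'$ and one endpoint of $J^{(n)}$ in the chain. The two sub-cases (a), (b) correspond to whether the admissibility chain $s_{\lambda(n)}, \ldots, s_{\rho(n)}$ matches the standard labelling $r_3, \ldots, r_{N+2}$ of $J^{(n)}$ inside $K^{(n)}$ in forward or reverse order. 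The main obstacle will be the \lq\lq Moreover\rq\rq\ dichotomy (i)/(ii) inside each sub-case: in sub-case (a), if $\lambda(n) \geq 3$ and $s_{\lambda(n)-2} \in \bigcup_{j=0}^n J^{(j)}$, admissibility (\ref{item:admissible_N_large_M_line}) already furnishes a chain $s_{\lambda(n)-2}, s_{\lambda(n)-1}, s_{\lambda(n)}, \ldots, s_{\rho(n)}$ of type $A_{N+2}$ in $S$, and I would argue, by retracing the evolution of $\lambda$ through the wide transformations of (\ref{item:admissible_N_large_wide_transformation}) back to a step where $s_{\lambda(n)-2}$ lay on the $J$-side, that this chain must coincide with the standard labelling of $K^{(n)}$, forcing $s' = s_{\lambda(n)-2}$ and $t^{(n)} = s_{\lambda(n)-1}$; conversely, when the hypothesis of (i) fails, a parity and adjacency analysis together with the $A_1$-isolation (\ref{item:admissible_N_large_y_isolated}) excludes every candidate in $\bigcup_{j=0}^n J^{(j)}$ for the position of $s'$. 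Sub-case (b) is obtained by the symmetry swapping the two ends of $J^{(n)}$.
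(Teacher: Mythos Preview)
Your handling of parts (\ref{item:lem_admissible_sequence_extends_wide_N_1_fix}), (\ref{item:lem_admissible_sequence_extends_wide_N_1}), and the opening structural claim of part (\ref{item:lem_admissible_sequence_extends_wide_N_large}) is correct and matches the paper: these follow directly from Lemma~\ref{lem:proof_special_second_case_transformations_wide}, exactly as you say. The paper disposes of them in one line.

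The substantial gap is in part (\ref{item:lem_admissible_sequence_extends_narrow}) and in the ``Moreover'' dichotomies of part (\ref{item:lem_admissible_sequence_extends_wide_N_large}). Your plan there is to ``trace the history'' of the offending element back to the last wide step where it lay in a $J^{(h)}$ and argue that the resulting Coxeter-graph neighbourhood is ``inconsistent'' with the shape of $K^{(n)}$. But the admissibility axioms only constrain adjacencies \emph{inside} $M$ and between $M$ and the $J^{(h)}$; they say nothing about edges from an element of $M$ to a fresh vertex $t^{(n)} \notin M$. So in your Case~(I) there is no purely combinatorial reason why $s_\eta$ could not acquire an odd-labelled neighbour outside $M$, and in the analogues of the paper's Cases (II-1), (II-2), (II-3) there is no graph-theoretic obstruction to $s'$ or $t^{(n)}$ landing in the wrong place. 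A direct neighbourhood argument does not close these cases.

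The paper's mechanism is different and considerably heavier. First, Lemma~\ref{lem:admissible_sequence_extends} is proved by \emph{simultaneous induction} with Lemma~\ref{lem:admissible_sequence_existence_from_lemma}: the case $n=n_0$ of the present lemma uses both lemmas for all $n<n_0$. You never invoke this, and without it Case~(II-2) cannot be handled (the paper applies the inductive hypotheses inside the proof of Lemma~\ref{lem:proof_lem_admissible_sequence_extends_simplification_2} to control the supports of a reversed decomposition $\mathcal{D}_\nu$). Second, the actual contradictions are not graph-local: in each bad case the paper builds, via elements of suitable $Y_{z',z}$ acting on explicit roots (Lemmas~\ref{lem:proof_lem_admissible_sequence_extends_simplification}, \ref{lem:proof_lem_admissible_sequence_extends_root_other_cases}, \ref{lem:proof_lem_admissible_sequence_extends_simplification_2}, and the five-step construction after them), a root $\beta \in \Pi^{[y^{(h_0)}]}$ (or $\Pi^{[y^{(n_0)}]}$) lying outside $\Pi_{J^{(h_0)}}$ yet not orthogonal to it. This contradicts clause (\ref{item:admissible_N_large_J_irreducible_component}) / (\ref{item:admissible_N_1_J_irreducible_component}) of the admissibility definition, which is the one axiom your outline never uses. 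That clause --- that $\Pi_{J^{(0)}}$ is an irreducible component of $\Pi^{[y^{(0)}]}$, transported along the decomposition --- is the real engine of the proof, and the root constructions are how one brings it to bear.
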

Then Proposition \ref{prop:admissible_sequence_exists} is deduced by applying Lemma \ref{lem:admissible_sequence_extends} and the next lemma to the semi-standard decompositions $\mathcal{D}_{\nu} := \omega_{\nu-1} \cdots \omega_1\omega_0$ ($0 \leq \nu \leq \ell(\mathcal{D})$) successively (note that, when $\nu = 0$, i.e., $\mathcal{D}_{\nu}$ is an empty expression, the sequence $s_1,\dots,s_N$, where $J^{(0)} = \{s_1,\dots,s_N\}$ is the standard labelling of type $A_N$, is admissible of type $A_N$ with respect to $\mathcal{D}_{\nu}$):
\begin{lem}
\label{lem:admissible_sequence_existence_from_lemma}
In the situation of Lemma \ref{lem:admissible_sequence_extends}, we define a sequence $\sigma$ of elements of $S$ in the following manner: For Cases \ref{item:lem_admissible_sequence_extends_narrow}, \ref{item:lem_admissible_sequence_extends_wide_N_1_fix}, \ref{item:lem_admissible_sequence_extends_wide_N_large_left_branch} and \ref{item:lem_admissible_sequence_extends_wide_N_large_right_branch}, let $\sigma$ be the sequence $s_1,\dots,s_{\mu}$; for Case \ref{item:lem_admissible_sequence_extends_wide_N_1}, let $s'$ be the unique element of $K^{(n)} \smallsetminus (J^{(n)} \cup \{t^{(n)}\}) = J^{(n+1)}$, and let $\sigma$ be the sequence $s_1,\dots,s_{\mu},s'$ when $s' \not\in \{s_1,\dots,s_{\mu}\}$ and the sequence $s_1,\dots,s_{\mu}$ when $s' \in \{s_1,\dots,s_{\mu}\}$; for Case \ref{item:lem_admissible_sequence_extends_wide_N_large_left_terminal}, let $\sigma$ be the sequence $s'$, $t^{(n)}$, $s_{\lambda(n)}$, $s_{\lambda(n)+1},\dots,s_{\rho'}$, where $\rho'$ denotes the largest index $1 \leq \rho' \leq \mu$ with $s_{\rho'} \in \bigcup_{j=0}^{n} J^{(j)}$; for the case \ref{item:lem_admissible_sequence_extends_wide_N_large_right_terminal}, let $\sigma$ be the sequence $s'$, $t^{(n)}$, $s_{\rho(n)}$, $s_{\rho(n)-1},\dots,s_{\lambda'}$, where $\lambda'$ denotes the smallest index $1 \leq \lambda' \leq \mu$ with $s_{\lambda'} \in \bigcup_{j=0}^{n} J^{(j)}$.
Then $\sigma$ is admissible of type $A_N$ with respect to $\mathcal{D} = \omega_n \cdots \omega_1\omega_0$.
\end{lem}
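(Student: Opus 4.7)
My plan is to verify, case by case, each of the conditions in Definition \ref{defn:admissible_for_N_large} (when $N \geq 2$) or Definition \ref{defn:admissible_for_N_1} (when $N = 1$) for the proposed sequence $\sigma$ against the extended decomposition $\mathcal{D} = \omega_n \cdots \omega_1 \omega_0$. The basic observation is that admissibility of $s_1, \ldots, s_\mu$ with respect to $\mathcal{D}'$ already gives all the required conditions for indices $0 \leq h \leq n$ (with the sequence $s_1, \ldots, s_\mu$ embedded into $\sigma$). So the only new content is (i) the condition on the transformation $\omega_n$ itself, and (ii) the structure of $y^{(n+1)}$ and $J^{(n+1)}$ relative to $\sigma$. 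Condition (i) is read off directly from the very detailed conclusion of Lemma \ref{lem:admissible_sequence_extends}, which was tailored for exactly this purpose. For (ii), I would track $J^{(n+1)}$ via the action of $\omega_n$ on $J^{(n)}$ and then determine $[y^{(n+1)}] \cap M_\sigma$ (where $M_\sigma$ is the underlying set of $\sigma$) from $[y^{(n)}] \cap M_{\text{old}}$ using the conclusion of Lemma \ref{lem:admissible_sequence_extends} again.

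I would begin with the no-extension cases: Case \ref{item:lem_admissible_sequence_extends_narrow} (narrow), Case \ref{item:lem_admissible_sequence_extends_wide_N_1_fix}, Case \ref{item:lem_admissible_sequence_extends_wide_N_large_left_branch}, Case \ref{item:lem_admissible_sequence_extends_wide_N_large_right_branch}, and the sub-case of Case \ref{item:lem_admissible_sequence_extends_wide_N_1} where $s' \in \{s_1, \ldots, s_\mu\}$. In these cases $\sigma$ equals $s_1, \ldots, s_\mu$, hence $M_\sigma = M$ and conditions \ref{item:admissible_N_large_J_irreducible_component} and \ref{item:admissible_N_large_M_line} (or their $N=1$ analogues) transfer automatically. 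For the narrow case, Lemma \ref{lem:admissible_sequence_extends}(\ref{item:lem_admissible_sequence_extends_narrow}) directly gives the two alternatives in Definition \ref{defn:admissible_for_N_large}(\ref{item:admissible_N_large_narrow_transformation}), and $\lambda(n+1) := \lambda(n)$ makes condition \ref{item:admissible_N_large_J_in_M} hold. For the branch cases, setting $\lambda(n+1) := \lambda(n) \mp 2$ plants $J^{(n+1)}$ correctly inside the existing $\sigma$; the fact that the element $s_{\lambda(n)-2}$ (resp.\ $s_{\rho(n)+2}$) was in $[y^{(n)}] \cap M$ and is swapped with $s_{\rho(n)}$ (resp.\ $s_{\lambda(n)}$) under $\omega_n$ yields the updated $[y^{(n+1)}] \cap M$ as required by condition \ref{item:admissible_N_large_J_in_M}. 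Condition \ref{item:admissible_N_large_y_isolated} at $h = n+1$ follows from condition \ref{item:admissible_N_large_y_isolated} at $h = n$ together with the fact that $\omega_n \in W_{K^{(n)}}$ acts trivially on $[y^{(n)}] \setminus K^{(n)}$, and the newly exposed elements of $[y^{(n+1)}] \cap M$ were already isolated in $[y^{(n)}]$ by the admissibility hypothesis.

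Next I would handle the genuine extension cases: Case \ref{item:lem_admissible_sequence_extends_wide_N_1} with $s' \notin \{s_i\}$, and the terminal cases \ref{item:lem_admissible_sequence_extends_wide_N_large_left_terminal} and \ref{item:lem_admissible_sequence_extends_wide_N_large_right_terminal}. The key points are that $K^{(n)}$ has the explicitly described type ($A_3$ or $A_{N+2}$), which guarantees the adjacency relations $m(s', t^{(n)}) = m(t^{(n)}, s_{\lambda(n)}) = 3$ (and hence condition \ref{item:admissible_N_large_M_line}), and that the newly prepended/appended elements $s'$ and $t^{(n)}$ lie outside $[y^{(n)}] \cup J^{(n)}$ except for $s' \in [y^{(n)}]$. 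In the terminal case, the hypothesis $s' \notin \bigcup_{j=0}^{n} J^{(j)}$ means $s'$ has not appeared as any $t^{(j)}$ either (lest $s' \in J^{(j+1)}$), so $s'$ is genuinely new. Isolation of $s'$ inside $[y^{(n+1)}]$ follows from the fact that $s'$ is an endpoint of the $A_{N+2}$-diagram $K^{(n)}$: its only neighbour in $S$ inside $K^{(n)}$ is $t^{(n)} \notin [y^{(n+1)}]$, and the condition $K^{(n)} \cap [y^{(n)}] = K^{(n)} \setminus \{t^{(n)}\} \setminus \{s\text{-element}\}$ together with $A_{>1}$-freeness of $[y^{(n+1)}]$ prevents adjacency to $[y^{(n+1)}] \setminus K^{(n)}$. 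I would similarly verify parity ($\mu_{\text{new}} \equiv N \pmod 2$ because we add two elements) and set $\lambda(n+1) := 2$ (or the analogous right-end value) so that condition \ref{item:admissible_N_large_J_in_M} correctly places $J^{(n+1)}$ in $\sigma$. Finally, conditions \ref{item:admissible_N_large_J_in_M} and \ref{item:admissible_N_large_y_isolated} for $h \leq n$ still hold because the added elements lie outside the old window $[\lambda(h) - \text{buffer}, \rho(h) + \text{buffer}]$ implicit in the old admissibility data, so no previously-listed $[y^{(h)}] \cap M$ element gains a new neighbour in $M_\sigma$.

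The main obstacle, as I see it, is the isolation condition \ref{item:admissible_N_large_y_isolated} in the terminal extension cases: one must argue that the freshly added $s'$ is not adjacent to any element of $[y^{(n+1)}]$ other than itself. This requires combining (a) $s'$ is an endpoint of the $A_{N+2}$-chain $K^{(n)}$ (so inside $K^{(n)}$ its only neighbour is $t^{(n)} \notin [y^{(n+1)}]$), (b) $[y^{(n)}] \setminus K^{(n)}$ is apart from $K^{(n)}$ by the definition of $K^{(n)}$ as $([y^{(n)}] \cup J^{(n)})_{\sim t^{(n)}}$, and (c) $\omega_n$ permutes elements within $K^{(n)}$, so $[y^{(n+1)}] \setminus K^{(n)} = [y^{(n)}] \setminus K^{(n)}$. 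Everything else should reduce to unpacking the explicit descriptions produced by Lemma \ref{lem:admissible_sequence_extends} and plugging them into Definitions \ref{defn:admissible_for_N_large} and \ref{defn:admissible_for_N_1}.
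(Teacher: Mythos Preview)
Your direct verification works in the no-extension cases and matches the paper there. The gap is in the terminal extension cases (Case \ref{item:lem_admissible_sequence_extends_wide_N_large_left_terminal}, Case \ref{item:lem_admissible_sequence_extends_wide_N_large_right_terminal}, and Case \ref{item:lem_admissible_sequence_extends_wide_N_1} with $s' \notin M'$).

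When $M' = \{s_1,\dots,s_\mu\}$ is enlarged to $M_\sigma = M' \cup \{s',t^{(n)}\}$, conditions \ref{item:admissible_N_large_J_in_M} and \ref{item:admissible_N_large_y_isolated} of Definition \ref{defn:admissible_for_N_large} (or conditions \ref{item:admissible_N_1_J_in_M} and \ref{item:admissible_N_1_y_isolated} of Definition \ref{defn:admissible_for_N_1}) must be verified for \emph{every} index $0 \le h \le n+1$ relative to the \emph{new} set $M_\sigma$. In the new indexing $s'$ occupies an odd endpoint position, so condition \ref{item:admissible_N_large_J_in_M} forces $s' \in [y^{(h)}]$ and $t^{(n)} \notin [y^{(h)}]$ for all $0 \le h \le n$, and condition \ref{item:admissible_N_large_y_isolated} forces $\{s'\}$ to be an irreducible component of $[y^{(h)}]$ for all such $h$. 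You check these only at $h = n$ and $h = n+1$. For $h < n$ the admissibility of $\sigma'$ gives no control over $s'$ or $t^{(n)}$, since neither lies in $M'$; nothing you have written rules out a narrow $\omega_h$ whose support contains $s'$ and moves it out of $[y^{(h)}]$ or attaches it to another component. Your final sentence (``the added elements lie outside the old window'') addresses whether old elements of $[y^{(h)}] \cap M'$ acquire new neighbours in $M_\sigma$, which is a different (and easier) question.

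The paper closes exactly this gap, and this is why Lemmas \ref{lem:admissible_sequence_extends} and \ref{lem:admissible_sequence_existence_from_lemma} are proved by \emph{mutual} induction. The paper runs a reverse induction: for $\mathcal{D}_\nu := (\omega_{n-\nu})^{-1}\cdots(\omega_n)^{-1}$ it shows that $\sigma$ is admissible with respect to $\mathcal{D}_\nu$ and that $s' \in [y^{(\nu+1)}(\mathcal{D}_\nu)] = [y^{(n-\nu)}]$. The key point is that once $\sigma$ (not $\sigma'$) is admissible for $\mathcal{D}_{\nu-1}$, one may apply Lemma \ref{lem:admissible_sequence_extends} at the smaller index $\nu$ to the pair $(\mathcal{D}_\nu,\mathcal{D}_{\nu-1})$; because $s'$ now belongs to the underlying set of $\sigma$, that lemma constrains how $(\omega_{n-\nu})^{-1}$ can interact with $s'$, forcing $s'$ to persist in $[y^{(n-\nu)}]$ as an isolated vertex. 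Without this backward tracking (or an independent substitute for it) the proof is incomplete. The obstacle you singled out, isolation at level $n+1$, is in fact the easy step; the hard step is propagating membership and isolation of $s'$ back through all earlier levels.
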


Now our remaining task is to prove Lemma \ref{lem:admissible_sequence_extends} and Lemma \ref{lem:admissible_sequence_existence_from_lemma}.
For the purpose, we present an auxiliary result:
\begin{lem}
\label{lem:admissible_sequence_non_adjacent_pairs}
Let $s_1,\dots,s_{\mu}$ be a sequence which is admissible of type $A_N$, where $N \geq 2$, with respect to a semi-standard decomposition $\mathcal{D}$ of an element of $W$.
Suppose that the sequence $s_1,\dots,s_{\mu}$ is tight.
If $1 \leq j_1 < j_2 \leq \mu$, $j_2 - j_1 \geq 2$, and either $j_1 \equiv 1 \pmod{2}$ or $j_2 \equiv \mu \pmod{2}$, then $s_{j_1}$ is not adjacent to $s_{j_2}$.
\end{lem}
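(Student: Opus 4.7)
The plan is to exploit the detailed combinatorial description of admissible sequences in Definition~\ref{defn:admissible_for_N_large} together with the tightness hypothesis $M = \bigcup_{h=0}^{\ell(\mathcal{D})} J^{(h)}$. The strategy is: use tightness to locate an index $h^{\ast}$ at which one of $s_{j_1}, s_{j_2}$ sits in $J^{(h^{\ast})}$; then read off from Definition~\ref{defn:admissible_for_N_large}(\ref{item:admissible_N_large_J_in_M}) where the other element must lie relative to $[y^{(h^{\ast})}]$ and $J^{(h^{\ast})}$; and use the parity hypothesis on $j_1$ or $j_2$ to eliminate the sole troublesome boundary position.

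Before the case analysis I would establish three structural facts for any admissible sequence. First, $J^{(h)}$ is apart from $[y^{(h)}]$ for every $h$: this holds for $h=0$ by Definition~\ref{defn:admissible_for_N_large}(\ref{item:admissible_N_large_J_irreducible_component}), and propagates inductively because $\omega_h \in Y_{y^{(h+1)},y^{(h)}}$ preserves the bilinear form while sending $\Pi_{J^{(h)}}$ to $\Pi_{J^{(h+1)}}$ and $\Pi_{[y^{(h)}]}$ to $\Pi_{[y^{(h+1)}]}$, so orthogonality is transported. Second, each $J^{(h)}$ is of type $A_N$ (transported the same way), and since $m(s_j,s_{j+1})=3$ for consecutive elements by Definition~\ref{defn:admissible_for_N_large}(\ref{item:admissible_N_large_M_line}), the path on $s_{\lambda(h)},\dots,s_{\rho(h)}$ already supplies the full quota of $N-1$ edges available in type $A_N$; hence two elements $s_j,s_{j'} \in J^{(h)}$ are adjacent in $\Gamma$ if and only if $|j-j'|=1$. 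Third, Definition~\ref{defn:admissible_for_N_large}(\ref{item:admissible_N_large_y_isolated}) forces each element of $[y^{(h)}] \cap M$ to be a type-$A_1$ irreducible component of $[y^{(h)}]$, hence non-adjacent to every other element of $[y^{(h)}]$.

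With these in hand the argument splits according to the hypothesis. For the case $j_1 \equiv 1 \pmod 2$, choose $h^{\ast}$ by tightness with $s_{j_2} \in J^{(h^{\ast})}$, i.e.\ $\lambda(h^{\ast}) \leq j_2 \leq \rho(h^{\ast})$. If $j_1 \geq \lambda(h^{\ast})$, then $s_{j_1},s_{j_2}$ both lie in $J^{(h^{\ast})}$ with index gap at least $2$, so they are non-adjacent by the $A_N$ structure; otherwise $j_1<\lambda(h^{\ast})$, and since $j_1$ and $\lambda(h^{\ast})$ are both odd we in fact have $j_1\leq \lambda(h^{\ast})-2$, placing $s_{j_1}$ in $[y^{(h^{\ast})}]\cap M$, so apartness concludes. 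For the case $j_2\equiv \mu\pmod{2}$, choose $h^{\ast}$ with $s_{j_1}\in J^{(h^{\ast})}$. If $j_2 \leq \rho(h^{\ast})$ we conclude by the $A_N$ structure again; otherwise $j_2 \geq \rho(h^{\ast})+1$, but since $\rho(h^{\ast})=\lambda(h^{\ast})+N-1 \equiv N \equiv \mu \pmod 2$ (using $\lambda(h^{\ast})$ odd and $\mu\equiv N\pmod 2$), the hypothesis $j_2 \equiv \mu \pmod 2$ rules out $j_2=\rho(h^{\ast})+1$, forcing $j_2\geq \rho(h^{\ast})+2$ and thus $s_{j_2}\in [y^{(h^{\ast})}]\cap M$, whence apartness again concludes. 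The one delicate point I expect is the preliminary step identifying $s_{\lambda(h)},\dots,s_{\rho(h)}$ as the standard labelling of the type-$A_N$ Coxeter graph on $J^{(h)}$, which is what converts adjacency in $\Gamma$ into the clean index-gap condition; once that and apartness are secured, the final case analysis reduces to a parity computation on $\lambda(h^{\ast})$ and $\rho(h^{\ast})$.
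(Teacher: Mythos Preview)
Your proposal is correct and follows essentially the same approach as the paper: pick an index $h$ with one of the two elements in $J^{(h)}$, then use the standard $A_N$ labelling of $J^{(h)}$ when both lie in $J^{(h)}$, and apartness of $J^{(h)}$ from $[y^{(h)}]$ otherwise. The only differences are cosmetic: the paper handles the second parity hypothesis by a one-line appeal to symmetry (reversing the sequence), whereas you write it out explicitly; and you spell out in more detail why $s_{\lambda(h)},\dots,s_{\rho(h)}$ must be the standard labelling of $J^{(h)}$ and why apartness propagates from $h=0$ to all $h$, points the paper leaves implicit.
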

\begin{proof}
By symmetry, we may assume without loss of generality that $j_1 \equiv 1 \pmod{2}$.
Put $\mathcal{D} = \omega_{n-1} \cdots \omega_1\omega_0$.
Since the sequence $s_1,\dots,s_\mu$ is tight, there exists an index $0 \leq h \leq n$ with $s_{j_2} \in J^{(h)}$.
Now the properties \ref{item:admissible_N_large_M_line} and \ref{item:admissible_N_large_J_in_M} in Definition \ref{defn:admissible_for_N_large} imply that $J^{(h)} = \{s_{\lambda(h)},s_{\lambda(h)+1},\dots,s_{\rho(h)}\}$ is the standard labelling of type $A_N$, therefore the claim holds if $s_{j_1} \in J^{(h)}$ (note that $j_2 - j_1 \geq 2$).
On the other hand, if $s_{j_1} \not\in J^{(h)}$, then the property \ref{item:admissible_N_large_J_in_M} in Definition \ref{defn:admissible_for_N_large} and the fact $j_1 < j_2$ imply that $j_1 < \lambda(h)$, therefore $s_{j_1} \in [y^{(h)}]$ since $j_1 \equiv 1 \pmod{2}$.
Hence the claim follows from the fact that $J^{(h)}$ is apart from $[y^{(h)}]$ (see the property \ref{item:admissible_N_large_J_irreducible_component} in Definition \ref{defn:admissible_for_N_large}).
\end{proof}

From now, we prove the pair of Lemma \ref{lem:admissible_sequence_extends} and Lemma \ref{lem:admissible_sequence_existence_from_lemma} by induction on $n \geq 0$.
First, we give a proof of Lemma \ref{lem:admissible_sequence_existence_from_lemma} for $n = n_0$ by assuming Lemma \ref{lem:admissible_sequence_extends} for $0 \leq n \leq n_0$.
Secondly, we will give a proof of Lemma \ref{lem:admissible_sequence_extends} for $n = n_0$ by assuming Lemma \ref{lem:admissible_sequence_extends} for $0 \leq n < n_0$ and Lemma \ref{lem:admissible_sequence_existence_from_lemma} for $0 \leq n < n_0$.
\begin{proof}
[Proof of Lemma \ref{lem:admissible_sequence_existence_from_lemma} (for $n = n_0$) from Lemma \ref{lem:admissible_sequence_extends} (for $n \leq n_0$).]
When $n_0 = 0$, the claim is obvious from the property of $\omega_{n_0}$ specified in Lemma \ref{lem:admissible_sequence_extends}.
From now, we suppose that $n_0 > 0$.
We may assume without loss of generality that the sequence $s_1,\dots,s_{\mu}$ (denoted here by $\sigma'$) which is admissible with respect to $\mathcal{D}'$ is tight, therefore we have $M' := \{s_1,\dots,s_{\mu}\} = \bigcup_{j=0}^{n_0} J^{(j)}$.
We divide the proof according to the possibility of $\omega_{n_0}$ listed in Lemma \ref{lem:admissible_sequence_extends}.
By symmetry, we may omit the argument for Case \ref{item:lem_admissible_sequence_extends_wide_N_large_right} without loss of generality.

In Case \ref{item:lem_admissible_sequence_extends_narrow}, since $M' = \bigcup_{j=0}^{n_0} J^{(j)}$ as above, $\omega_{n_0}$ satisfies the condition for $\sigma'$ in Definition \ref{defn:admissible_for_N_large}(\ref{item:admissible_N_large_narrow_transformation}) (when $N \geq 2$) or Definition \ref{defn:admissible_for_N_1}(\ref{item:admissible_N_1_narrow_transformation}) (when $N = 1$), hence $\sigma = \sigma'$ is admissible of type $A_N$ with respect to $\mathcal{D}$.
Similarly, in Case \ref{item:lem_admissible_sequence_extends_wide_N_1_fix}, Case \ref{item:lem_admissible_sequence_extends_wide_N_large_left_branch}, and Case \ref{item:lem_admissible_sequence_extends_wide_N_1} with $s' \in M'$, respectively, the wide transformation $\omega_{n_0}$ satisfies the condition for $\sigma'$ in Definition \ref{defn:admissible_for_N_1}(\ref{item:admissible_N_1_wide_transformation}), Definition \ref{defn:admissible_for_N_large}(\ref{item:admissible_N_large_wide_transformation}), and Definition \ref{defn:admissible_for_N_1}(\ref{item:admissible_N_1_wide_transformation}), respectively.
Hence $\sigma = \sigma'$ is admissible of type $A_N$ with respect to $\mathcal{D}$ in these three cases.

From now, we consider the remaining two cases: Case \ref{item:lem_admissible_sequence_extends_wide_N_1} with $s' \not\in M'$, and Case \ref{item:lem_admissible_sequence_extends_wide_N_large_left_terminal}.
Note that, in Case \ref{item:lem_admissible_sequence_extends_wide_N_large_left_terminal}, the tightness of $\sigma'$ implies that $\lambda(n_0) = 1$ and $\rho' = \mu$, therefore $\sigma$ is the sequence $s'$, $t^{(n_0)}$, $s_1,\dots,s_{\mu}$.
Moreover, in this case the unique element $s'$ of $K^{(n_0)} \cap [y^{(n_0)}]$ does not belong to $\bigcup_{j=0}^{n_0} J^{(j)} = M'$, therefore $t^{(n_0)}$ cannot be adjacent to $[y^{(n_0)}] \cap M'$; hence $t^{(n_0)} \not\in M'$ by the property of $\sigma'$ in Definition \ref{defn:admissible_for_N_large}(\ref{item:admissible_N_large_J_in_M}).
Note also that, in both of the two cases, we have $s' \in J^{(n_0+1)}$ and $\{s'\}$ is an irreducible component of $[y^{(n_0)}]$.

We prove by induction on $0 \leq \nu \leq n_0$ that the sequence $\sigma$ is admissible of type $A_N$ with respect to $\mathcal{D}_{\nu}$ and $s' \in [y^{(\nu+1)}(\mathcal{D}_{\nu})]$, where
\begin{displaymath}
\mathcal{D}_{\nu} = \omega'_{\nu}\omega'_{\nu-1} \cdots \omega'_1\omega'_0 := (\omega_{n_0-\nu})^{-1}(\omega_{n_0-\nu+1})^{-1} \cdots (\omega_{n_0-1})^{-1}(\omega_{n_0})^{-1}
\end{displaymath}
is a semi-standard decomposition of an element with respect to $J^{(n_0+1)}$.
Note that $y^{(j)}(\mathcal{D}_{\nu}) = y^{(n_0-j+1)}$, $J^{(j)}(\mathcal{D}_{\nu}) = J^{(n_0-j+1)}$, $t^{(j)}(\mathcal{D}_{\nu}) = t^{(n_0-j+1)}$ and $K^{(j)}(\mathcal{D}_{\nu}) = K^{(n_0-j+1)}$ for each index $j$.
When $\nu = 0$, this claim follows immediately from the property of $\omega_{n_0}$ specified in Lemma \ref{lem:admissible_sequence_extends}, properties of $\sigma'$ and the definition of $\sigma$.
Suppose that $\nu > 0$.
Note that $s' \in [y^{(\nu)}(\mathcal{D}_{\nu-1})]$ (which is equal to $[y^{(\nu)}(\mathcal{D}_{\nu})] = [y^{(n_0-\nu+1)}]$) by the induction hypothesis.
First, we consider the case that $\omega'_{\nu}$ (or equivalently, $\omega_{n_0-\nu}$) is a wide transformation.
In this case, the possibility of $\omega_{n_0-\nu}$ is as specified in the condition of $\sigma'$ in Definition \ref{defn:admissible_for_N_large}(\ref{item:admissible_N_large_wide_transformation}) (when $N \geq 2$) or Definition \ref{defn:admissible_for_N_1}(\ref{item:admissible_N_1_wide_transformation}) (when $N = 1$), where $h = n_0-\nu$; in particular, we have $K^{(n_0-\nu)} \smallsetminus \{t^{(n_0-\nu)}\} \subseteq M'$, therefore $[y^{(n_0-\nu+1)}] \smallsetminus M' = [y^{(n_0-\nu)}] \smallsetminus M'$.
Hence the element $s'$ of $[y^{(n_0-\nu+1)}] \smallsetminus M'$ belongs to $[y^{(n_0-\nu)}] = [y^{(\nu+1)}(\mathcal{D}_{\nu})]$, and the property of $\omega_{n_0-\nu} = (\omega'_{n_0})^{-1}$ implies that $\sigma$ is admissible of type $A_N$ with respect to $\mathcal{D}_{\nu}$ as well as $\mathcal{D}_{\nu-1}$.
Secondly, we consider the case that $\omega'_{\nu}$ (or equivalently, $\omega_{n_0-\nu}$) is a narrow transformation.
By applying Lemma \ref{lem:admissible_sequence_extends} (for $n = \nu$) to the pair $\mathcal{D}_{\nu}$, $\mathcal{D}_{\nu-1}$ and the sequence $\sigma$, it follows that either $[y^{(\nu+1)}(\mathcal{D}_{\nu})] = [y^{(\nu)}(\mathcal{D}_{\nu})]$, or the support of $\omega'_{\nu}$ is apart from $[y^{(\nu)}(\mathcal{D}_{\nu})] \cap \bigcup_{j=0}^{\nu} J^{(j)}(\mathcal{D}_{\nu})$.
Now in the former case, we have $s' \in [y^{(\nu)}(\mathcal{D}_{\nu})] = [y^{(\nu+1)}(\mathcal{D}_{\nu})]$.
On the other hand, in the latter case, we have $s' \in [y^{(\nu)}(\mathcal{D}_{\nu})] \cap \bigcup_{j=0}^{\nu} J^{(j)}(\mathcal{D}_{\nu})$ since $s' \in [y^{(\nu)}(\mathcal{D}_{\nu})]$ as above and $s' \in J^{(0)}(\mathcal{D}_{\nu}) = J^{(n_0+1)}$ by the choice of $s'$, therefore $s'$ is apart from the support of $\omega'_{\nu}$.
Hence, it follows in any case that $s' \in [y^{(\nu+1)}(\mathcal{D}_{\nu})]$; and the property of $\omega_{n_0-\nu} = (\omega'_{\nu})^{-1}$ specified by the condition of $\sigma'$ in Definition \ref{defn:admissible_for_N_large}(\ref{item:admissible_N_large_narrow_transformation}) (when $N \geq 2$) or Definition \ref{defn:admissible_for_N_1}(\ref{item:admissible_N_1_narrow_transformation}) (when $N = 1$), where $h = n_0-\nu$, implies that $\sigma$ is admissible of type $A_N$ with respect to $\mathcal{D}_{\nu}$ as well as $\mathcal{D}_{\nu-1}$.
Hence the claim of this paragraph follows.

By using the result of the previous paragraph with $\nu = n_0$, the sequence $\sigma$ is admissible of type $A_N$ with respect to $\mathcal{D}_{n_0} = \mathcal{D}^{-1}$, hence with respect to $\mathcal{D}$ as well.
This completes the proof.
\end{proof}

By virtue of the above result, our remaining task is finally to prove Lemma \ref{lem:admissible_sequence_extends} for $n = n_0$ by assuming Lemma \ref{lem:admissible_sequence_extends} for $0 \leq n < n_0$ and Lemma \ref{lem:admissible_sequence_existence_from_lemma} for $0 \leq n < n_0$ (in particular, with no assumptions when $n_0 = 0$).
Put $M' := \{s_1,\dots,s_{\mu}\}$.
In the proof, we may assume without loss of generality that the sequence $s_1,\dots,s_{\mu}$ (denoted here by $\sigma'$) which is admissible with respect to $\mathcal{D}'$ is tight (hence we have $J^{(0)} = M'$ when $n_0 = 0$).
Now by Lemma \ref{lem:proof_special_second_case_transformations_wide}, the claim of Lemma \ref{lem:admissible_sequence_extends} holds for the case that $N = 1$ and $\omega_{n_0}$ is a wide transformation.
From now, we consider the other case that either $N \geq 2$ or $\omega_{n_0}$ is a narrow transformation.
Assume contrary that the claim of Lemma \ref{lem:admissible_sequence_extends} does not hold.
Then, by Lemma \ref{lem:proof_special_second_case_transformations_wide}, Lemma \ref{lem:proof_special_second_case_transformations_narrow} and the properties of the tight sequence $\sigma'$ in Definition \ref{defn:admissible_for_N_large} (when $N \geq 2$) or Definition \ref{defn:admissible_for_N_1} (when $N = 1$), it follows that the possibilities for the $\omega_{n_0}$ is as follows (up to symmetry):
\begin{description}
\item[Case (I):] $\omega_{n_0}$ is a narrow transformation, $K^{(n_0)}$ is of type $A_2$ or type $I_2(m)$ with $m$ odd, and we have $s_{\eta} \in K^{(n_0)} \cap [y^{(n_0)}]$ for some index $1 \leq \eta \leq \mu$; hence $t^{(n_0)} \not\in M'$, $K^{(n_0)} = \{s_{\eta},t^{(n_0)}\}$ and the action of $\omega_{n_0}$ exchanges the two elements of $K^{(n_0)}$.
\item[Case (II):] $N \geq 2$, $\omega_{n_0}$ is a wide transformation, $K^{(n_0)}$ is of type $A_{N+2}$, and $t^{(n_0)}$ is adjacent to $s_{\lambda(n_0)}$ and the unique element $s'$ of $[y^{(n_0)}] \cap K^{(n_0)}$; hence the action of $\omega_{n_0}$ maps the elements $s_{\lambda(n_0)}$, $s_{\lambda(n_0)+1}$, $s_{\lambda(n_0)+2},\cdots,s_{\rho(n_0)}$ and $s'$ to $s'$, $t^{(n_0)}$, $s_{\lambda(n_0)},\dots,s_{\rho(n_0)-2}$ and $s_{\rho(n_0)}$, respectively.
Moreover, $t^{(n_0)} \not\in M'$, and
\begin{description}
\item[Case (II-1):] $s' = s_{j_0}$ for an index $\rho(n_0)+2 \leq j_0 \leq \mu$ with $j_0 \equiv \mu \pmod{2}$;
\item[Case (II-2):] $\lambda(n_0) \geq 3$ and $s' \not\in \{s_{\lambda(n_0)-2},s_{\lambda(n_0)-1},\dots,s_{\mu}\}$;
\item[Case (II-3):] $\lambda(n_0) \geq 3$ and $s' = s_{\lambda(n_0)-2}$.
\end{description} 
\end{description}
In particular, by the tightness of $\sigma'$, the conditions in the above four cases cannot be satisfied when $n_0 = 0$.
Hence the claim holds when $n_0 = 0$.
From now, we suppose that $n_0 > 0$.

For each of the four cases, we determine an element $\overline{s} \in [y^{(n_0)}] \cap M'$ and an element $\overline{t} \in S \smallsetminus [y^{(n_0)}]$ in the following manner: $\overline{s} = s_{\eta}$ and $\overline{t} = t^{(n_0)}$ in Case (I); $\overline{s} = s_{j_0}$ and $\overline{t} = t^{(n_0)}$ in Case (II-1); $\overline{s} = s_{\lambda(n_0)-2}$ and $\overline{t} = s_{\lambda(n_0)-1}$ in Case (II-2); and $\overline{s} = s_{\lambda(n_0)-2}$ and $\overline{t} = t^{(n_0)}$ in Case (II-3).
Note that $\overline{s}$ and $\overline{t}$ are adjacent by the definition.
Since $\sigma'$ is tight, there exists an index $0 \leq h_0 \leq n_0-1$ with $\overline{s} \in J^{(h_0)}$; let $h_0$ be the largest index with this property.
By the definition of $h_0$, $\omega_{h_0}$ is a wide transformation and $J^{(h_0+1)} \neq J^{(h_0)}$.
Let $\overline{r}$ denote the element of $J^{(h_0+1)}$ with $\omega_{h_0} \ast \overline{s} = \overline{r}$.
Then we have $\overline{r} \in [y^{(h_0)}]$ by the property of $\omega_{h_0}$ and the choice of $\overline{s}$.

Let $\overline{\mathcal{D}} := \omega'_{n'-1} \cdots \omega'_1\omega'_0$ denote the simplification of
\begin{displaymath}
(\omega_{n_0-1} \cdots \omega_{h_0+2}\omega_{h_0+1})^{-1} = (\omega_{h_0+1})^{-1}(\omega_{h_0+2})^{-1} \cdots (\omega_{n_0-1})^{-1}
\end{displaymath}
(see Section \ref{sec:finitepart_decomposition_Y} for the terminology), and let $\overline{u}$ be the element of $W$ expressed by the product $\overline{\mathcal{D}}$.
Here we present the following lemma:
\begin{lem}
\label{lem:proof_lem_admissible_sequence_extends_simplification}
In this setting, the support of each transformation in $\overline{\mathcal{D}}$ does not contain $\overline{t}$ and is apart from $\overline{s}$.
\end{lem}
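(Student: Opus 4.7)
The plan is to show that the element $\overline{s}$ stays present in $[y^{(h)}]$ throughout the range $h_0 < h \le n_0$ as an isolated $A_1$ component, and then use this, together with adjacency of $\overline{t}$ to $\overline{s}$, to rule out any transformation in $\overline{\mathcal{D}}$ whose support contains $\overline{t}$ or is adjacent to $\overline{s}$.

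First I would establish the location of $\overline{s}$ and $\overline{t}$. By the maximality of $h_0$, the element $\overline{s}$ does not belong to $J^{(h)}$ for any $h_0 < h \leq n_0$. Combined with the tightness of $\sigma'$ (so that $\overline{s} \in M' = \bigcup_j J^{(j)}$) and the admissibility conditions in Definition~\ref{defn:admissible_for_N_large}(\ref{item:admissible_N_large_J_in_M},\ref{item:admissible_N_large_y_isolated}) or their $N=1$ counterparts in Definition~\ref{defn:admissible_for_N_1}, this forces $\overline{s} \in [y^{(h)}] \cap M'$ with $\{\overline{s}\}$ forming an irreducible $A_1$-component of $[y^{(h)}]$ for each such $h$. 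Since $\overline{t}$ is adjacent to $\overline{s}$, the isolation of $\overline{s}$ in $[y^{(h)}]$ then forces $\overline{t} \notin [y^{(h)}]$ for every $h_0 < h \le n_0$.

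Next I would run through the transformations $\omega_h$ with $h_0 < h < n_0$ case by case. For a narrow transformation $\omega_h$ whose support $K^{(h)} = [y^{(h)}]_{\sim t^{(h)}}$ is apart from $[y^{(h)}] \cap M'$, the support lies outside $M'$, so by $A_{>1}$-freeness of $[y^{(h)}]$ (inherited from $I$) and the isolation of $\overline{s}$, the support is automatically apart from $\overline{s}$, and $\overline{t} \not\in K^{(h)}$ because $\overline{t}$ is adjacent to $\overline{s}$. For a narrow transformation with $K^{(h)}$ meeting $[y^{(h)}] \cap M'$, admissibility gives $[y^{(h+1)}] = [y^{(h)}]$; if $t^{(h)}$ were adjacent to $\overline{s}$ (the only way the support can touch $\overline{s}$, since $\overline{s}$ is isolated in $[y^{(h)}]$), then $K^{(h)}$ would contain $\overline{s}$. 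A direct computation using $\omega_h = w_0(K^{(h)})w_0(K^{(h)} \smallsetminus \{t^{(h)}\})$, together with Lemma~\ref{lem:proof_special_second_case_transformations_narrow} and the $A_{>1}$-freeness which restricts the small-component possibilities to $A_2$ or $I_2(m)$ odd, shows $\omega_h \ast \overline{s} = t^{(h)} \notin [y^{(h)}]$, contradicting $[y^{(h+1)}] = [y^{(h)}]$. Thus $t^{(h)}$ cannot be adjacent to $\overline{s}$, and in particular $t^{(h)} \neq \overline{t}$.

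For wide transformations $\omega_h$ with $h_0 < h < n_0$, I would invoke Lemma~\ref{lem:proof_special_second_case_transformations_wide} and the admissibility conditions (Definition~\ref{defn:admissible_for_N_large}(\ref{item:admissible_N_large_wide_transformation}) or Definition~\ref{defn:admissible_for_N_1}(\ref{item:admissible_N_1_wide_transformation})): the support $K^{(h)}$ is confined to $M' \cup \{t^{(h)}\}$ with $t^{(h)} \notin M'$, and acts on specific positions $s_{\lambda(h)}, \ldots, s_{\rho(h)}$ together with an endpoint in $[y^{(h)}] \cap M'$. Since $\overline{s}$ is a \emph{different} element of $[y^{(h)}] \cap M'$ and the wide transformation moves only the $J$-window and one adjacent isolated element at the boundary, the assumption that $\overline{s}$ remained in $[y^{(h)}]$ through steps $h_0+1, \ldots, h$ (forced by maximality of $h_0$) implies $\overline{s}$ is not the endpoint being swapped at step $h$. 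This positional separation, combined with Lemma~\ref{lem:admissible_sequence_non_adjacent_pairs}, gives that $K^{(h)}$ is apart from $\overline{s}$ and avoids $\overline{t}$.

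The main obstacle is the narrow-transformation case where the support meets $[y^{(h)}] \cap M'$: one must rule out that $\overline{s}$ itself is in that support without being able to directly forbid it by admissibility alone. The linchpin is the explicit computation that $w_0(K^{(h)})w_0(K^{(h)} \smallsetminus \{t^{(h)}\})$ moves $\overline{s}$ out of $[y^{(h)}]$ in every admissible small-support configuration, so preservation of $[y^{(h)}]$ forces $t^{(h)}$ to be non-adjacent to $\overline{s}$; making this work uniformly across all the types in Lemma~\ref{lem:proof_special_second_case_transformations_narrow} is where the proof requires care.
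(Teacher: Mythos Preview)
Your argument has a genuine gap in the narrow-transformation case where $K^{(h)}$ meets $[y^{(h)}] \cap M'$. You invoke Lemma~\ref{lem:proof_special_second_case_transformations_narrow} to restrict $K^{(h)}$ to type $A_2$ or $I_2(m)$ with $m$ odd, but that lemma has the hypothesis $[y^{(i+1)}] \neq [y^{(i)}]$, whereas you are working precisely under the admissibility conclusion $[y^{(h+1)}] = [y^{(h)}]$. Without that hypothesis the restriction fails: for instance, take $K^{(h)}$ of type $A_3$ with $t^{(h)}$ the middle vertex and $[y^{(h)}]\cap K^{(h)}$ the two end vertices (both isolated $A_1$'s). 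Then $\omega_h$ swaps the two end vertices, so $[y^{(h+1)}]=[y^{(h)}]$ holds, yet $\omega_h \ast \overline{s}$ is the \emph{other} end vertex, not $t^{(h)}$. Your claimed contradiction ``$\omega_h \ast \overline{s} = t^{(h)} \notin [y^{(h)}]$'' therefore does not follow, and the step ``$t^{(h)}$ cannot be adjacent to $\overline{s}$'' is unproven.

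The point you are missing is the role of the simplification itself. For a narrow transformation one always has $J^{(h+1)}=J^{(h)}$, so the admissibility clause ``$K^{(h)}$ meets $[y^{(h)}]\cap M'$ and $[y^{(h+1)}]=[y^{(h)}]$'' is exactly the condition under which $\omega_h$ is \emph{deleted} when passing to $\overline{\mathcal{D}}$. Hence no narrow transformation of this kind survives in $\overline{\mathcal{D}}$, and there is nothing to prove for it. For the narrow transformations that do survive, admissibility forces the other clause, namely $K^{(h)}$ apart from $[y^{(h)}]\cap M'$; since $\overline{s}\in[y^{(h)}]\cap M'$ (by maximality of $h_0$ and an immediate induction along $\overline{\mathcal{D}}$), this already gives $K^{(h)}$ apart from $\overline{s}$, and then $\overline{t}\notin K^{(h)}$ because $\overline{t}$ is adjacent to $\overline{s}$. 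That is the paper's argument, and it bypasses the computation you flag as ``the main obstacle'' entirely. Your wide-transformation case can likewise be shortened: if a surviving wide $\omega_h$ had support not apart from $\overline{s}$, the structure in Definition~\ref{defn:admissible_for_N_large}(\ref{item:admissible_N_large_wide_transformation}) (or its $N=1$ analogue) would force $\overline{s}\in J^{(h+1)}$ or $\overline{s}\in J^{(h)}$, contradicting the maximality of $h_0$.
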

\begin{proof}
We prove by induction on $0 \leq \nu' \leq n'-1$ that the support $K'$ of $\omega'_{\nu'}$ does not contain $\overline{t}$ and is apart from $\overline{s}$.
Let $(\omega_{\nu})^{-1}$ be the term in $(\omega_{h_0+1})^{-1}(\omega_{h_0+2})^{-1} \cdots (\omega_{n_0-1})^{-1}$ corresponding to the term $\omega'_{\nu'}$ in the simplification $\overline{\mathcal{D}}$.
First, by the definition of simplification and the property of narrow transformations specified in Definition \ref{defn:admissible_for_N_large} (when $N \geq 2$) or Definition \ref{defn:admissible_for_N_1} (when $N = 1$), $K'$ is apart from $[y^{(\nu')}(\overline{\mathcal{D}})] \cap M' = [y^{(\nu+1)}] \cap M'$ (see Lemma \ref{lem:another_decomposition_Y_reduce_redundancy} for the equality) if $\omega'_{\nu'}$ (or equivalently, $\omega_{\nu}$) is a narrow transformation.
Now we have $\overline{s} \in [y^{(n_0)}] = [y^{(0)}(\overline{\mathcal{D}})]$ and $\overline{s} \in M'$ by the definition, therefore the induction hypothesis implies that $\overline{s} \in [y^{(\nu')}(\overline{\mathcal{D}})] \cap M'$.
Hence $K'$ is apart from $\overline{s}$ if $\omega'_{\nu'}$ is a narrow transformation.
This also implies that $\overline{t} \not\in K'$ if $\omega'_{\nu'}$ is a narrow transformation, since $\overline{t}$ is adjacent to $\overline{s}$.

From now, we consider the other case that $\omega'_{\nu'}$ (or equivalently, $\omega_{\nu}$) is a wide transformation.
Recall that $\overline{s} \in [y^{(\nu')}(\overline{\mathcal{D}})]$ as mentioned above.
Then, by the property of wide transformation $\omega_{\nu}$ specified in Definition \ref{defn:admissible_for_N_large} (when $N \geq 2$) or Definition \ref{defn:admissible_for_N_1} (when $N = 1$) and the definition of simplification, it follows that $\overline{s} \in J^{(\nu'+1)}(\overline{\mathcal{D}})$ provided $K'$ is not apart from $\overline{s}$.
On the other hand, by the definition of $h_0$, we have $\overline{s} \not\in J^{(j)}$ for any $h_0+1 \leq j \leq n_0$.
This implies that $K'$ should be apart from $\overline{s}$; therefore we have $\overline{t} \not\in K'$, since $\overline{t}$ is adjacent to $\overline{s}$.
Hence the proof of Lemma \ref{lem:proof_lem_admissible_sequence_extends_simplification} is concluded.
\end{proof}
Now, in all the cases except Case (II-2), the following property holds:
\begin{lem}
\label{lem:proof_lem_admissible_sequence_extends_root_other_cases}
In Cases (I), (II-1) and (II-3), there exists a root $\beta \in \Pi^{[y^{(h_0)}]}$ in which the coefficient of $\alpha_{\overline{s}}$ is zero and the coefficient of $\alpha_{\overline{t}} = \alpha_{t^{(n_0)}}$ is non-zero.
\end{lem}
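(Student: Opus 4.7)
My plan is to pin down the explicit form of the wide transformation $\omega_{h_0}$ by bootstrapping off the induction hypotheses and then to construct $\beta$ as a specific positive root inside an augmented parabolic subgroup. First I would invoke Lemma \ref{lem:admissible_sequence_existence_from_lemma} (available inductively at $n < n_0$) to obtain a tight admissible sequence compatible with $\mathcal{D}|_{\leq h_0+1} = \omega_{h_0} \cdots \omega_0$, and then invoke Lemma \ref{lem:admissible_sequence_extends} at $n = h_0$ to determine the precise structure of $\omega_{h_0}$. Since $\omega_{h_0}$ is a wide transformation with $J^{(h_0+1)} \neq J^{(h_0)}$ and $\overline{s} \in J^{(h_0)}$ is sent to $\overline{r} \in [y^{(h_0)}]$, this forces $K^{(h_0)}$ to be of type $A_{N+2}$ (type $A_3$ when $N = 1$), the unique extra element $s'' \in K^{(h_0)} \smallsetminus (J^{(h_0)} \cup \{t^{(h_0)}\})$ to lie in $[y^{(h_0)}]$ as an isolated $A_1$-component, and $\omega_{h_0}$ to act by the canonical cyclic shift, with $\overline{r}$ occupying the extremal position of $J^{(h_0+1)}$ corresponding to $s''$.

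Next I would verify, case by case, that the augmented subset $K' := K^{(h_0)} \cup \{\overline{t}\}$ is of finite type and that the only edge from $\overline{t}$ into $K^{(h_0)}$ is the one to $\overline{s}$. The adjacency of $\overline{t}$ to $\overline{s}$ is built into the definition of the pair $(\overline{s},\overline{t})$. Non-adjacency of $\overline{t}$ to the other elements of $M'$ inside $K^{(h_0)}$ follows from Lemma \ref{lem:admissible_sequence_non_adjacent_pairs} together with the parity constraint on $\overline{s}$; non-adjacency of $\overline{t}$ to $s''$ (which is in $[y^{(h_0)}]$) follows from the known position of $s''$ at the ``far end'' of $K^{(h_0)}$ and from $\overline{t} \not\in M'$; and the remaining elements of $[y^{(h_0)}] \smallsetminus K'$ are apart from $K^{(h_0)}$ by definition of support, while they are apart from $\overline{t}$ because $\overline{t}$'s only neighbour in the vicinity is $\overline{s}$. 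Thus $K'$ acquires an explicit finite-type diagram (an $A$-type chain with at most one $I_2(m)$-edge in Case (I)).

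I would then produce $\beta$ explicitly as a positive root in $\Phi_{K'} \cap \Phi^{\perp [y^{(h_0)}]}$: in the $A$-type part, it is the sum of the simple roots along the path from $\overline{t}$ running through $J^{(h_0)}$ on the side of $\overline{s}$ opposite to $s''$, so as to avoid passing through $\overline{s}$. Concretely, using Table \ref{tab:positive_roots_D_n} with $A$ as a limiting case, such a root has $\alpha_{\overline{s}}$-coefficient zero by construction, $\alpha_{\overline{t}}$-coefficient one, and $\alpha_{s''}$-coefficient zero; orthogonality against $\Pi_{[y^{(h_0)}]}$ then reduces to orthogonality against $\alpha_{s''}$, which holds because $\overline{t}$ is not adjacent to $s''$ and the chain from $\overline{t}$ terminates before reaching $s''$. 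Finally $\beta \in \Pi^{[y^{(h_0)}]}$ is verified using Theorem \ref{thm:reflectionsubgroup_Deodhar} (minimality of the reflection $s_\beta$ inside $W^{\perp [y^{(h_0)}]}$) combined with the irreducibility of $\mathrm{Supp}\,\beta$ provided by Lemma \ref{lem:support_is_irreducible}.

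The hard part will be Case (II-3), where $\overline{s} = s_{\lambda(n_0) - 2}$ sits \emph{interior} to $J^{(h_0)}$ rather than at its endpoint, so $K'$ is no longer a simple $A$-chain but a $T$-shaped diagram branching at $\overline{s}$ with an extra leaf $\overline{t}$. I will need to argue that, even in this branching configuration, $K'$ remains of finite type (so is of type $D$ or $E$) and that the relevant tables (Tables \ref{tab:positive_roots_D_n} and \ref{tab:positive_roots_E_8_1}--\ref{tab:positive_roots_E_8_6}) contain a positive root with the desired coefficient profile. The verification here will be the most delicate, since I must simultaneously arrange that $\alpha_{\overline{s}}$ drops out, $\alpha_{\overline{t}}$ survives, and $\alpha_{s''}$ vanishes; if a single ``path'' root does not work, the construction will take a difference of two explicit positive roots sharing the same $\alpha_{\overline{s}}$-coefficient, and I will then need an additional positivity check to ensure that the difference still lies in $\Phi^+$.
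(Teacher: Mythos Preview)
Your plan has a structural gap that makes the explicit construction collapse.  You aim to show that inside $K' = K^{(h_0)} \cup \{\overline{t}\}$ the only edge from $\overline{t}$ goes to $\overline{s}$, and then to produce $\beta \in \Phi_{K'}$ with $\alpha_{\overline{t}}$-coefficient nonzero and $\alpha_{\overline{s}}$-coefficient zero.  But these two goals are incompatible: if $\overline{t}$ is a leaf of $K'$ attached at $\overline{s}$, then by Lemma~\ref{lem:support_is_irreducible} every root of $\Phi_{K'}$ whose support contains $\overline{t}$ and is larger than $\{\overline{t}\}$ must also contain $\overline{s}$.  So the only candidate in $\Phi_{K'}$ with the required coefficient profile is $\alpha_{\overline{t}}$ itself, and for that to lie in $\Pi^{[y^{(h_0)}]}$ you would need $\overline{t}$ apart from $[y^{(h_0)}]$.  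You have no control over this: $\overline{t} = t^{(n_0)}$ has its adjacencies pinned down relative to $[y^{(n_0)}] \cup J^{(n_0)}$, not relative to $[y^{(h_0)}]$, and elements of $[y^{(h_0)}] \smallsetminus [y^{(n_0)}]$ (or elements of $K^{(h_0)}$ lying outside $[y^{(n_0)}] \cup J^{(n_0)}$, such as even-indexed $s_j$ in the admissible sequence) may well be adjacent to $\overline{t}$.  Your appeal to Lemma~\ref{lem:admissible_sequence_non_adjacent_pairs} does not help, since that lemma concerns pairs of elements of $M'$ and $\overline{t} \notin M'$.  The ``path from $\overline{t}$ avoiding $\overline{s}$'' you describe therefore does not exist, and the fallback of taking a difference of two positive roots need not produce a root at all.

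The paper avoids working at level $h_0$ entirely.  It builds the root at level $n_0$, where the adjacency of $\overline{t} = t^{(n_0)}$ is completely determined by $K^{(n_0)}$.  The trick is to replace $\overline{s}$ in $[y^{(n_0)}]$ by an element $r' \in J^{(n_0)}$ via Lemma~\ref{lem:another_decomposition_Y_shift_x}, obtaining a modified $z$ with $[z] = ([y^{(n_0)}] \smallsetminus \{\overline{s}\}) \cup \{r'\}$; one then writes down $\beta' \in \Pi^{[z]}$ directly (it is $\alpha_{t^{(n_0)}}$ in Cases (I) and (II-3), and a three-term sum in Case (II-1)).  Finally one transports $\beta'$ to $\Pi^{[y^{(h_0)}]}$ by the element $\overline{u}$, using Lemma~\ref{lem:proof_lem_admissible_sequence_extends_simplification} to guarantee that the supports in $\overline{\mathcal{D}}$ avoid both $\overline{s}$ and $\overline{t}$, so the two relevant coefficients are preserved under $\overline{u}$.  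The membership in $\Pi^{[y^{(h_0)}]}$ comes for free from Proposition~\ref{prop:factorization_C}(\ref{item:prop_factorization_C_decomp}), with no appeal to minimality arguments.
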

\begin{proof}
First, Lemma \ref{lem:another_decomposition_Y_reduce_redundancy} implies that $\overline{u} \cdot \Pi_{J^{(n_0)}} = \Pi_{J^{(h_0+1)}}$ and $[y'] = [y^{(h_0+1)}]$ where $y' := y^{(n')}(\overline{\mathcal{D}})$.
Put $r' := \overline{u}^{-1} \ast \overline{r} \in J^{(n_0)}$.
Then by Lemma \ref{lem:proof_lem_admissible_sequence_extends_simplification} and Lemma \ref{lem:another_decomposition_Y_shift_x}, we have $\overline{u} \in Y_{z',z}$, where $z$ and $z'$ are elements of $S^{(\Lambda)}$ obtained from $y^{(0)}(\overline{\mathcal{D}}) = y^{(n_0)}$ and $y'$ by replacing the element $\overline{s}$ with $r'$ and $\overline{r}$, respectively.
Now by the property of the wide transformation $\omega_{h_0}$, it follows that $y^{(h_0)}$ is obtained from $y^{(h_0+1)}$ by replacing $\overline{s}$ with $\overline{r}$; hence we have $[z'] = [y^{(h_0)}]$.

We show that there exists a root $\beta' \in \Pi^{[z]}$ in which the coefficient of $\alpha_{\overline{s}}$ is zero and the coefficient of $\alpha_{t^{(n_0)}}$ is non-zero.
In Case (I), $t^{(n_0)}$ is apart from both $[y^{(n_0)}] \smallsetminus \{\overline{s}\}$ and $J^{(n_0)}$, while we have $[z] \subseteq ([y^{(n_0)}] \smallsetminus \{\overline{s}\}) \cup J^{(n_0)}$ by the definition; hence $\beta' := \alpha_{t^{(n_0)}}$ satisfies the required condition.
In Case (II-1), we have $\overline{r} = s_{\lambda(h_0+1)}$ by the property of $\omega_{h_0}$, therefore $r' = s_{\lambda(n_0)}$ by the property of wide transformations in $\overline{\mathcal{D}}$ (see Definition \ref{defn:admissible_for_N_large}(\ref{item:admissible_N_large_wide_transformation})).
Put $\beta' := \alpha_{t^{(n_0)}} + \alpha_{s_{\lambda(n_0)}} + \alpha_{s_{\lambda(n_0)+1}} \in \Pi^{K^{(n_0)},\{r'\}}$ (note that $N \geq 2$ and $K^{(n_0)}$ is of type $A_{N+2}$).
Now $K^{(n_0)}$ is apart from $[y^{(n_0)}] \smallsetminus \{\overline{s}\} = [z] \smallsetminus \{r'\}$, therefore we have $\beta' \in \Pi^{[z]}$ and $\beta'$ satisfies the required condition.
Moreover, in Case (II-3), we have $\overline{r} = s_{\rho(h_0+1)}$ by the property of $\omega_{h_0}$, therefore $r' = s_{\rho(n_0)}$ by the property of wide transformations in $\overline{\mathcal{D}}$ (see Definition \ref{defn:admissible_for_N_large}(\ref{item:admissible_N_large_wide_transformation})).
Now, since $N \geq 2$ and $K^{(n_0)}$ is of type $A_{N+2}$, $t^{(n_0)}$ is not adjacent to $r'$, while $K^{(n_0)}$ is apart from $[y^{(n_0)}] \smallsetminus \{\overline{s}\} = [z] \smallsetminus \{r'\}$.
Hence $\beta' := \alpha_{t^{(n_0)}}$ satisfies the required condition.

By Lemma \ref{lem:proof_lem_admissible_sequence_extends_simplification}, the action of $\overline{u}$ does not change the coefficients of $\alpha_{\overline{s}}$ and $\alpha_{\overline{t}}$.
Hence by the result of the previous paragraph, the root $\beta := \overline{u} \cdot \beta' \in \Pi^{[z']} = \Pi^{[y^{(h_0)}]}$ satisfies the required condition, concluding the proof of Lemma \ref{lem:proof_lem_admissible_sequence_extends_root_other_cases}.
\end{proof}
Since $\overline{t} \not\in J^{(h_0)}$ and $\overline{t}$ is adjacent to $\overline{s}$, the root $\beta \in \Pi^{[y^{(h_0)}]}$ given by Lemma \ref{lem:proof_lem_admissible_sequence_extends_root_other_cases} does not belong to $\Pi_{J^{(h_0)}}$ and is not orthogonal to $\alpha_{\overline{s}}$.
However, since $\overline{s} \in J^{(h_0)}$, this contradicts the fact that $\Pi_{J^{(h_0)}}$ is an irreducible component of $\Pi^{[y^{(h_0)}]}$ (see Definition \ref{defn:admissible_for_N_large}(\ref{item:admissible_N_large_J_irreducible_component}) when $N \geq 2$, or Definition \ref{defn:admissible_for_N_1}(\ref{item:admissible_N_1_J_irreducible_component}) when $N = 1$).
Hence we have derived a contradiction in the three cases in Lemma \ref{lem:proof_lem_admissible_sequence_extends_root_other_cases}.

From now, we consider the remaining case, i.e., Case (II-2).
In this case, the following property holds:
\begin{lem}
\label{lem:proof_lem_admissible_sequence_extends_simplification_2}
In this setting, the support of each transformation in $\overline{\mathcal{D}}$ does not contain $t^{(n_0)}$ and is apart from $s'$.
\end{lem}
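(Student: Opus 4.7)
The plan is to prove Lemma \ref{lem:proof_lem_admissible_sequence_extends_simplification_2} by an induction on $0 \leq \nu' \leq n'-1$ that mirrors the proof of Lemma \ref{lem:proof_lem_admissible_sequence_extends_simplification}, now tracking the pair $(s', t^{(n_0)})$ in place of $(\overline{s}, \overline{t})$. At the outset I would record two structural facts: $s' \in [y^{(n_0)}] = [y^{(0)}(\overline{\mathcal{D}})]$ by the defining property of $s'$ in Case (II), and $t^{(n_0)} \notin [y^{(n_0)}] \cup J^{(n_0)}$ by the definition of a semi-standard decomposition. I would next argue that $t^{(n_0)} \notin M'$: any element of $M'$ adjacent to $s_{\lambda(n_0)}$ is, by Definition \ref{defn:admissible_for_N_large}(\ref{item:admissible_N_large_M_line}) and Lemma \ref{lem:admissible_sequence_non_adjacent_pairs}, forced to be a chain-neighbour of $s_{\lambda(n_0)}$, and neither $s_{\lambda(n_0)-1}$ nor $s_{\lambda(n_0)+1}$ can coincide with $t^{(n_0)}$ without contradicting the type $A_{N+2}$ shape of $K^{(n_0)}$ together with $t^{(n_0)} \notin [y^{(n_0)}] \cup J^{(n_0)}$.

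For the inductive step I would split $\omega'_{\nu'}$ into its narrow and wide forms exactly as in the first simplification lemma, carrying along the auxiliary claim $s' \in [y^{(\nu')}(\overline{\mathcal{D}})]$. If $\omega'_{\nu'}$ is a surviving narrow transformation, then $[y^{(\nu'+1)}(\overline{\mathcal{D}})] \neq [y^{(\nu')}(\overline{\mathcal{D}})]$ by simplification, so the second alternative of Definition \ref{defn:admissible_for_N_large}(\ref{item:admissible_N_large_narrow_transformation}) applies and $K'$ is apart from $[y^{(\nu')}(\overline{\mathcal{D}})] \cap M'$; combined with $t^{(n_0)} \notin M'$ and the adjacency of $t^{(n_0)}$ to $s'$, this yields $t^{(n_0)} \notin K'$. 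If $\omega'_{\nu'}$ is wide, Definition \ref{defn:admissible_for_N_large}(\ref{item:admissible_N_large_wide_transformation}) confines $K'$ to $M'$ as an $A_{N+2}$-chain, so $t^{(n_0)} \notin K'$ is immediate, and the required $K'$-apartness from $s'$ follows either from the maximality-of-$h_0$ style argument used in Lemma \ref{lem:proof_lem_admissible_sequence_extends_simplification} (when $s' \in M'$, using Definition \ref{defn:admissible_for_N_large}(\ref{item:admissible_N_large_y_isolated}) to identify $\{s'\}$ as an isolated $A_1$-component of $[y^{(\nu')}(\overline{\mathcal{D}})]$, so that $s' \in J^{(\nu'+1)}(\overline{\mathcal{D}})$ would contradict the choice of $h_0$) or from the position of $s'$ relative to the chain (when $s' \notin M'$).

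The main obstacle will be the sub-case $s' \notin M'$, in which Definition \ref{defn:admissible_for_N_large} does not directly constrain the adjacencies of $s'$ to $M'$. Here the anchoring observation is that $s'$, being the unique element of $[y^{(n_0)}] \cap K^{(n_0)}$, is adjacent within $[y^{(n_0)}] \cup J^{(n_0)} \cup \{t^{(n_0)}\}$ only to $t^{(n_0)}$ thanks to the type-$A_{N+2}$ shape of $K^{(n_0)}$. This isolation is propagated along $\overline{\mathcal{D}}$ because Lemma \ref{lem:proof_lem_admissible_sequence_extends_simplification} already guarantees that each $K'$ avoids both $s_{\lambda(n_0)-2}$ and $s_{\lambda(n_0)-1}$, sealing off the only possible $M'$-bridge between $K'$ and $s'$; combining this with the inductive claim $s' \in [y^{(\nu')}(\overline{\mathcal{D}})]$ and Lemma \ref{lem:another_decomposition_Y_shift_x}-style propagation gives $K'$ apart from $s'$, which in turn carries $s' \in [y^{(\nu'+1)}(\overline{\mathcal{D}})]$ to the next step and closes the induction.
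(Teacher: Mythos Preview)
Your argument has a genuine gap in the narrow-transformation step. Knowing that $K'$ is apart from $[y^{(\nu')}(\overline{\mathcal{D}})] \cap M'$ tells you nothing about $s'$, because in Case~(II-2) we have $s' \notin M'$. The sentence ``combined with $t^{(n_0)} \notin M'$ and the adjacency of $t^{(n_0)}$ to $s'$, this yields $t^{(n_0)} \notin K'$'' is a non sequitur; the implication $t^{(n_0)} \notin K'$ would follow \emph{from} $K'$ being apart from $s'$, not the other way around. Your attempted repair in the final paragraph (``sealing off the only possible $M'$-bridge'') presumes that any path from $K'$ to $s'$ must pass through $M'$, but nothing rules out an element of $S \smallsetminus M'$ lying in $K'$ and being adjacent to $s'$. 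Concretely, you have not excluded that $s' = t^{(h)}$ for some surviving narrow $\omega_h$ with $h_0+1 \leq h \leq n_0-1$: then $K^{(h)} = \{a, s'\}$ for some $a \in [y^{(h)}]$ adjacent to $s'$, so $K'$ is certainly not apart from $s'$ and your induction collapses at that step. Lemma~\ref{lem:proof_lem_admissible_sequence_extends_simplification} only controls $s_{\lambda(n_0)-2}$ and $s_{\lambda(n_0)-1}$; it says nothing about such an $a$. (Your ``$s' \in M'$'' subcase for wide transformations is also off: $h_0$ was chosen relative to $\overline{s} = s_{\lambda(n_0)-2}$, not to $s'$, so maximality of $h_0$ does not forbid $s' \in J^{(\nu'+1)}(\overline{\mathcal{D}})$.)

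The paper avoids this by taking a different route. Instead of tracking $s'$ against the \emph{original} admissible sequence $\sigma'$ (whose underlying set $M'$ excludes $s'$), it builds a \emph{new} admissible sequence $\sigma_i = (s', t^{(n_0)}, s_{\lambda(n_0)}, s_{\lambda(n_0)+1}, \ldots)$ with respect to the reversed decompositions $\mathcal{D}_i = (\omega_{n_0-i})^{-1} \cdots (\omega_{n_0})^{-1}$, so that $s' \in J^{(0)}(\mathcal{D}_i) = J^{(n_0+1)}$ now lies in the new $M$-set. It then invokes the \emph{induction hypothesis} of Lemma~\ref{lem:admissible_sequence_extends} itself (for $n = i < n_0$), together with Lemma~\ref{lem:admissible_sequence_existence_from_lemma} for $n = i < n_0$, applied to $\sigma_{i-1}$ and $\mathcal{D}_i$; this yields directly that each support is apart from $s'$ or else is removed by simplification. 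This self-referential bootstrap on the pair of lemmas at smaller $n$, applied to a freshly constructed admissible sequence containing $s'$, is the missing idea, and is exactly why the two lemmas are proved by a simultaneous induction on $n$.
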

\begin{proof}
For each $0 \leq i \leq n_0 - h_0 - 1$, let $\mathcal{D}_i$ denote the semi-standard decomposition of an element defined by
\begin{displaymath}
\mathcal{D}_i = \omega''_i \cdots \omega''_1\omega''_0 := (\omega_{n_0-i})^{-1} \cdots (\omega_{n_0-1})^{-1}(\omega_{n_0})^{-1} \enspace.
\end{displaymath}
For each $0 \leq i \leq n_0 - h_0 - 1$, let $\sigma_i$ denote the sequence $s'$, $t^{(n_0)}$, $s_{\lambda(n_0)}$, $s_{\lambda(n_0)+1},\dots,s_{\overline{\rho}(i)}$, where $\overline{\rho}(i)$ denotes the largest index $s_{\lambda(n_0)} \leq \overline{\rho}(i) \leq \mu$ with $s_{\overline{\rho}(i)} \in \bigcup_{j=0}^{i+1} J^{(j)}(\mathcal{D}_i)$ ($= \bigcup_{j=n_0-i}^{n_0+1} J^{(j)}$).
We prove the following properties by induction on $1 \leq i \leq n_0 - h_0 - 1$: The sequence $\sigma_i$ is admissible with respect to $\mathcal{D}_i$; we have $s' \in [y^{(i+1)}(\mathcal{D}_i)]$; and we have either $[y^{(i+1)}(\mathcal{D}_i)] = [y^{(i)}(\mathcal{D}_i)]$ and $J^{(i+1)}(\mathcal{D}_i) = J^{(i)}(\mathcal{D}_i)$, or the support $K'' = K^{(i)}(\mathcal{D}_i)$ of $\omega''_i$ is apart from $s'$.
Note that, by the properties of $\omega_{n_0}$ and $\sigma'$, we have $s' \in [y^{(n_0)}] = [y^{(1)}(\mathcal{D}_0)]$, and the sequence $\sigma_0$ (which is $s'$, $t^{(n_0)}$, $s_{\lambda(n_0)},\dots,s_{\rho(n_0)}$) is admissible with respect to $\mathcal{D}_0$.

By the induction hypothesis and Lemma \ref{lem:admissible_sequence_extends} for $n = i$ applied to the sequence $\sigma_{i-1}$ and the pair $\mathcal{D}_i$ and $\mathcal{D}_{i-1}$ (note that $i \leq n_0 - h_0 - 1 \leq n_0 - 1$), it follows that the possibilities of $\omega''_i = (\omega_{n_0-i})^{-1}$ are as listed in Lemma \ref{lem:admissible_sequence_extends}.
Now if $\omega''_i$ is a narrow transformation, then as in Case \ref{item:lem_admissible_sequence_extends_narrow} of Lemma \ref{lem:admissible_sequence_extends}, we have either $[y^{(i+1)}(\mathcal{D}_i)] = [y^{(i)}(\mathcal{D}_i)]$, or $K''$ is apart from $s'$ (note that $s' \in [y^{(i)}(\mathcal{D}_i)]$ by the induction hypothesis, while $s' \in J^{(0)}(\mathcal{D}_i) = J^{(n_0+1)}$).
On the other hand, suppose that $\omega''_i = (\omega_{n_0-i})^{-1}$ is a wide transformation.
Then, by the property of $\sigma'$, the support $K''$ of the wide transformation $\omega_{n_0-i}$ is contained in $M'$, therefore $s' \not\in K''$.
This implies that $K''$ is apart from $s'$, since we have $s' \in [y^{(i)}(\mathcal{D}_i)]$ by the induction hypothesis.
Moreover, in any case of $\omega''_i$, we have $s' \in [y^{(i+1)}(\mathcal{D}_i)]$ by the above-mentioned fact $s' \in [y^{(i)}(\mathcal{D}_i)]$ and the above argument.
On the other hand, the sequence $\sigma$ in Lemma \ref{lem:admissible_sequence_existence_from_lemma} corresponding to the current case is equal to $\sigma_i$, therefore $\sigma_i$ is admissible with respect to $\mathcal{D}_i$ by Lemma \ref{lem:admissible_sequence_existence_from_lemma} for $n = i$ (note again that $i \leq n_0 - 1$).
Hence the claim of the previous paragraph holds.

By the above result, the simplification $\overline{D} = \omega'_{n'-1} \cdots \omega'_0$ of $\omega''_{n_0-h_0-1} \cdots \omega''_2\omega''_1$ satisfies the following conditions: For each $0 \leq \nu' \leq n'-1$, we have $s' \in [y^{(\nu')}(\overline{D})]$, and the support of $\omega'_{\nu'}$ is apart from $s'$.
Since $t^{(n_0)}$ is adjacent to $s'$, this implies that the support of each $\omega'_{\nu'}$ does not contain $t^{(n_0)}$.
Hence the proof of Lemma \ref{lem:proof_lem_admissible_sequence_extends_simplification_2} is concluded.
\end{proof}

By Lemma \ref{lem:proof_lem_admissible_sequence_extends_simplification_2}, we have $s' \in [y^{(n')}(\overline{\mathcal{D}})] = [y^{(h_0+1)}]$, therefore the set $K^{(h_0)}$ of type $A_{N+2}$ consisting of $s_{\lambda(n_0)-2}$, $s_{\lambda(n_0)-1},\dots,s_{\rho(n_0)}$ is apart from $s'$.
On the other hand, since $s_{\lambda(n_0)-2} \in [y^{(n_0)}]$, the set $K^{(n_0)}$ of type $A_{N+2}$ is apart from $s'$.
From now, by using these properties, we construct a root $\beta' \in \Pi^{[y^{(n_0)}]} \smallsetminus \Pi_{J^{(n_0)}}$ which is not orthogonal to $\alpha_{s_{\lambda(n_0)+1}} \in \Pi_{J^{(n_0)}}$ (note that $N \geq 2$), in the following five steps.

\textbf{Step 1.}
Note that the set $K^{(n_0)}$ is apart from $[y^{(n_0)}] \smallsetminus K^{(n_0)}$.
Put $z^{(0)} := y^{(n_0)}$.
Then we have $u_1 := w_{z^{(0)}}^{t^{(n_0)}} = s' t^{(n_0)} \in Y_{z^{(1)},z^{(0)}}$, where $z^{(1)} \in S^{(\Lambda)}$ is obtained from $z^{(0)}$ by replacing $s'$ with $t^{(n_0)}$.
Similarly, we have $u_2 := w_{z^{(1)}}^{s_{\lambda(n_0)}} = t^{(n_0)} s_{\lambda(n_0)} \in Y_{z^{(2)},z^{(1)}}$, where $z^{(2)} \in S^{(\Lambda)}$ is obtained from $z^{(1)}$ by replacing $t^{(n_0)}$ with $s_{\lambda(n_0)}$.
Now, since $\beta_0 := \alpha_{s_{\lambda(n_0)}}$ and $\beta'_0 := \alpha_{s_{\lambda(n_0)+1}}$ are non-orthogonal elements of $\Pi_{J^{(n_0)}} \subseteq \Pi^{[z^{(0)}]}$, the roots $\beta_2 := u_2u_1 \cdot \beta_0 = \alpha_{s'}$ and $\beta'_2 := u_2u_1 \cdot \beta'_0 = \alpha_{t^{(n_0)}} + \alpha_{s_{\lambda(n_0)}} + \alpha_{s_{\lambda(n_0)+1}}$ are non-orthogonal elements of $\Pi^{[z^{(2)}]}$.

\textbf{Step 2.}
By the construction, $z^{(2)}$ is obtained from $y^{(n_0)}$ by replacing $s'$ with $s_{\lambda(n_0)}$.
On the other hand, we have $J^{(n_0)} = J^{(h_0+1)}$ and $\overline{u} \ast s_{\lambda(n_0)} = s_{\lambda(n_0)}$ by the property of wide transformations in $\overline{\mathcal{D}}$.
Now by Lemma \ref{lem:another_decomposition_Y_shift_x}, we have $u_3 := \overline{u} \in Y_{z^{(3)},z^{(2)}}$, where $z^{(3)} \in S^{(\Lambda)}$ is obtained from $y^{(n')}(\overline{\mathcal{D}})$ by replacing $s'$ with $s_{\lambda(n_0)}$.
Note that $[z^{(3)}] = ([y^{(n')}(\overline{\mathcal{D}})] \smallsetminus \{s'\}) \cup \{s_{\lambda(n_0)}\} = ([y^{(h_0+1)}] \smallsetminus \{s'\}) \cup \{s_{\lambda(n_0)}\}$.
Put $\beta_3 := u_3 \cdot \beta_2$ and $\beta'_3 := u_3 \cdot \beta'_2$.
Then we have $\beta_3,\beta'_3 \in \Pi^{[z^{(3)}]}$ and $\langle \beta_3,\beta'_3 \rangle \neq 0$.
Moreover, by Lemma \ref{lem:proof_lem_admissible_sequence_extends_simplification} and Lemma \ref{lem:proof_lem_admissible_sequence_extends_simplification_2}, $u_3$ fixes $\alpha_{s'}$, hence $\beta_3 = \alpha_{s'}$; and the action of $u_3$ does not change the coefficients of $\alpha_{s'}$, $\alpha_{t^{(n_0)}}$, $\alpha_{\overline{s}}$ and $\alpha_{\overline{t}}$, hence the coefficients of these four simple roots in $\beta'_3$ are $0$, $1$, $0$ and $0$, respectively.
This also implies that the coefficient of $\alpha_{s_{\lambda(n_0)}}$ in $\beta'_3$ is non-zero, since $t^{(n_0)}$ is adjacent to $s_{\lambda(n_0)} \in [z^{(3)}]$.

\textbf{Step 3.}
Note that the set $K^{(h_0)}$ is apart from $[y^{(h_0+1)}] \smallsetminus K^{(h_0)}$, hence from $[z^{(3)}] \smallsetminus K^{(h_0)}$.
Then we have $u_4 := w_{z^{(3)}}^{\overline{t}} = \overline{t} s_{\lambda(n_0)} \overline{s} \overline{t} \in Y_{z^{(4)},z^{(3)}}$, where $z^{(4)} \in S^{(\Lambda)}$ is obtained from $z^{(3)}$ by exchanging $s_{\lambda(n_0)}$ and $\overline{s}$.
Now we have $\beta_4 := u_4 \cdot \beta_3 = \alpha_{s'} \in \Pi^{[z^{(4)}]}$, $\beta'_4 := u_4 \cdot \beta'_3 \in \Pi^{[z^{(4)}]}$ and $\langle \beta_4,\beta'_4 \rangle \neq 0$.
Moreover, by the property of coefficients in $\beta'_3$ mentioned in Step 2 and the fact that $\overline{t}$ is adjacent to $s_{\lambda(n_0)}$ and $\overline{s}$, it follows that the coefficient of $\alpha_{\overline{s}}$ in $\beta'_4$ is non-zero.

\textbf{Step 4.}
Since $[z^{(4)}] = [z^{(3)}]$, there exists an element $z^{(5)} \in S^{(\Lambda)}$ satisfying that $[z^{(5)}] = [z^{(2)}]$ and $u_5 := \overline{u}{}^{-1} \in Y_{z^{(5)},z^{(4)}}$.
We have $\beta_5 := u_5 \cdot \beta_4 \in \Pi^{[z^{(5)}]}$, $\beta'_5 := u_5 \cdot \beta'_4 \in \Pi^{[z^{(5)}]}$ and $\langle \beta_5,\beta'_5 \rangle \neq 0$.
Now by Lemma \ref{lem:proof_lem_admissible_sequence_extends_simplification} and Lemma \ref{lem:proof_lem_admissible_sequence_extends_simplification_2}, $u_5$ fixes $\alpha_{s'}$, hence $\beta_5 = \alpha_{s'}$; and the action of $u_5$ does not change the coefficient of $\alpha_{\overline{s}}$, hence the coefficient of $\alpha_{\overline{s}}$ in $\beta'_5$ is non-zero.

\textbf{Step 5.}
Put $u_6 := u_2{}^{-1}$ and $u_7 := u_1{}^{-1}$.
Since $[z^{(5)}] = [z^{(2)}]$ as above, there exists an element $z^{(7)} \in S^{(\Lambda)}$ satisfying that $[z^{(7)}] = [z^{(0)}] = [y^{(n_0)}]$ and $u_7u_6 \in Y_{z^{(7)},z^{(5)}}$.
Now we have $\beta_7 := u_7u_6 \cdot \beta_5 = \alpha_{s'}$, since $\beta_5 = \beta_2$.
On the other hand, put $\beta'_7 := u_7u_6 \cdot \beta'_5$.
Then we have $\beta'_7 \in \Pi^{[z^{(7)}]} = \Pi^{[y^{(n_0)}]}$ and $\langle \beta_7,\beta'_7 \rangle \neq 0$.
Moreover, since $u_7u_6 \in W_{S \smallsetminus \{\overline{s}\}}$, the coefficient of $\alpha_{\overline{s}}$ in $\beta'_7$ is the same as the coefficient of $\alpha_{\overline{s}}$ in $\beta'_5$, which is non-zero as mentioned in Step 4.

Hence we have constructed a root $\beta' = \beta'_7$ satisfying the above condition.
However, this contradicts the fact that $\Pi_{J^{(n_0)}}$ is an irreducible component of $\Pi^{[y^{(n_0)}]}$ (see Definition \ref{defn:admissible_for_N_large}(\ref{item:admissible_N_large_J_irreducible_component})).

Summarizing, we have derived a contradiction in any of the four cases, Case (I)--Case (II-3), therefore Lemma \ref{lem:admissible_sequence_extends} for $n = n_0$ holds.
Hence our claim has been proven in the case $\Pi^{J,I \cap J} \subseteq \Phi_{I^{\perp}}$.

This completes the proof of Theorem \ref{thm:YfixesWperpIfin}.

\section{A counterexample for the general case}
\label{sec:counterexample}

In this section, we present an example which shows that our main theorem, Theorem \ref{thm:YfixesWperpIfin}, will not generally hold when the assumption on the $A_{>1}$-freeness of $I \subseteq S$ is removed.

We consider a Coxeter system $(W,S)$ of rank $7$ with Coxeter graph $\Gamma$ in Figure \ref{fig:counterexample}, where the vertex labelled by an integer $i$ corresponds to a generator $s_i \in S$.
Put $I = \{s_4,s_5\}$ which is of type $A_2$ (hence is not $A_{>1}$-free).
\begin{figure}[hbt]
\centering
\begin{picture}(144,80)(0,-80)
\put(8,-8){\circle{16}}\put(8,-12){\hbox to0pt{\hss$1$\hss}}
\put(8,-72){\circle{16}}\put(8,-76){\hbox to0pt{\hss$2$\hss}}
\put(14,-14){\line(1,-1){20}}\put(14,-66){\line(1,1){20}}
\put(40,-40){\circle{16}}\put(40,-44){\hbox to0pt{\hss$3$\hss}}
\multiput(48,-40)(32,0){2}{\line(1,0){16}}
\put(72,-40){\circle{16}}\put(72,-44){\hbox to0pt{\hss$\mathbf{4}$\hss}}
\put(104,-40){\circle{16}}\put(104,-44){\hbox to0pt{\hss$\mathbf{5}$\hss}}
\put(130,-14){\line(-1,-1){20}}\put(130,-66){\line(-1,1){20}}
\put(136,-8){\circle{16}}\put(136,-12){\hbox to0pt{\hss$6$\hss}}
\put(136,-72){\circle{16}}\put(136,-76){\hbox to0pt{\hss$7$\hss}}
\put(128,-8){\line(-3,-1){81}}
\put(80,-20){\hbox to0pt{\hss$\infty$\hss}}
\put(128,-72){\line(-3,1){81}}
\put(80,-64){\hbox to0pt{\hss$\infty$\hss}}
\put(72,-40){\circle{18}}
\put(104,-40){\circle{18}}
\end{picture}
\caption{Coxeter graph $\Gamma$ and subset $I \subseteq S$ for the counterexample; here the two duplicated circles correspond to $I = \{s_4,s_5\}$}
\label{fig:counterexample}
\end{figure}
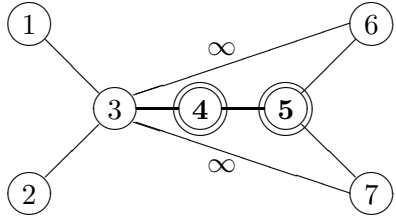

To determine the simple system $\Pi^I$ of $W^{\perp I}$, Proposition \ref{prop:factorization_C}(\ref{item:prop_factorization_C_generator_perp}) implies that each element of $\Pi^I$ is written as $u \cdot \gamma(y,s)$, where $y \in S^{(\Lambda)}$, $u \in Y_{x_I,y}$, $s \in S \smallsetminus [y]$, $[y]_{\sim s}$ is of finite type, $\varphi(y,s) = y$, and $\gamma(y,s)$ is the unique element of $(\Phi_{[y] \cup \{s\}}^{\perp [y]})^+$ as in Proposition \ref{prop:charofBphi}.
In this case, the element $u^{-1} \in Y_{y,x_I}$ admits a decomposition as in Proposition \ref{prop:factorization_C}(\ref{item:prop_factorization_C_generator_Y}).
In particular, such an element $y$ can be obtained from $x_I$ by applying a finite number of operations of the form $z \mapsto \varphi(z,t)$ with an appropriate element $t \in S$.
Table \ref{tab:list_counterexample} gives a list of all the element $y \in S^{(\Lambda)}$ obtained in this way.
In the second and the fourth columns of the table, we abbreviate each $s_i$ ($1 \leq i \leq 7$) to $i$ for simplicity.
This table shows, for each $y$, all the elements $t \in S \smallsetminus [y]$ satisfying that $[y]_{\sim t}$ is of finite type and $\varphi(y,t) \neq y$, as well as the corresponding element $\varphi(y,t) \in S^{(\Lambda)}$ (more precisely, the subset $[\varphi(y,t)]$ of $S$).
Now the list of the $y$ in the table is closed by the operations $y \mapsto \varphi(y,t)$, while it involves the starting point $x_I$ (No.~I in Table \ref{tab:list_counterexample}), therefore the list indeed includes a complete list of the possible $y$.
\begin{table}[hbt]
\centering
\caption{List for the counterexample}
\label{tab:list_counterexample}
\begin{tabular}{|c||c|c||c|c|} \hline
No. & $[y]$ & $\gamma \in \Phi^{\perp [y]}$ & $t$ & $\varphi(y,t)$ \\ \hline
I & $\{4,5\}$ & $[10|0\underline{00}|00]$, $[01|0\underline{00}|00]$ & $3$ & II \\ \cline{4-5}
& & & $6$ & III \\ \cline{4-5}
& & & $7$ & IV \\ \hline
II & $\{3,4\}$ & $[10|\underline{11}1|00]$, $[01|\underline{11}1|00]$ & $1$ & V \\ \cline{4-5}
& & & $2$ & VI \\ \cline{4-5}
& & & $5$ & I \\ \hline
III & $\{5,6\}$ & $[10|00\underline{0}|\underline{0}0]$, $[01|00\underline{0}|\underline{0}0]$ & $4$ & I \\ \cline{4-5}
& & & $7$ & IV \\ \hline
IV & $\{5,7\}$ & $[10|00\underline{0}|0\underline{0}]$, $[01|00\underline{0}|0\underline{0}]$ & $4$ & I \\ \cline{4-5}
& & & $6$ & III \\ \hline
V & $\{1,3\}$ & $[\underline{0}0|\underline{0}01|00]$, $[\underline{1}1|\underline{2}21|00]$ & $2$ & VI \\ \cline{4-5}
& & & $4$ & II \\ \hline
VI & $\{2,3\}$ & $[\underline{0}0|\underline{0}01|00]$, $[\underline{1}1|\underline{2}21|00]$ & $1$ & V  \\ \cline{4-5}
& & & $4$ & II \\ \hline
\end{tabular}
\end{table}

On the other hand, Table \ref{tab:list_counterexample} also includes some elements of $(\Phi^{\perp [y]})^+$ for each possible $y \in S^{(\Lambda)}$.
In the third column of the table, we abbreviate a root $\sum_{i=1}^{7} c_i \alpha_{s_i}$ to $[c_1c_2|c_3c_4c_5|c_6c_7]$.
Moreover, a line is drawn under the coefficient $c_i$ of $\alpha_{s_i}$ if $s_i$ belongs to $[y]$.
Now for each $y$, each root $\gamma \in (\Phi^{\perp [y]})^+$ and each $t$ appearing in the table, the root $w_y^t \cdot \gamma \in (\Phi^{\perp [\varphi(y,t)]})^+$ also appears in the row corresponding to the element $\varphi(y,t) \in S^{(\lambda)}$.
Moreover, for each $y$ in the table, if an element $s \in S \smallsetminus [y]$ satisfies that $[y]_{\sim s}$ is of finite type and $\varphi(y,s) = y$, then the corresponding root $\gamma(y,s)$ always appears in the row corresponding to the $y$.
By these properties, the above-mentioned characterization of the elements of $\Pi^I$ and the decompositions of elements of $Y_{x_I,y}$ given by Proposition \ref{prop:factorization_C}(\ref{item:prop_factorization_C_generator_Y}), it follows that all the elements of $\Pi^I$ indeed appear in the list.
Hence we have $\Pi^I = \{\alpha_{s_1},\alpha_{s_2}\}$ (see the row I in Table \ref{tab:list_counterexample}), therefore both elements of $\Pi^I$ satisfy that the corresponding reflection belongs to $W^{\perp I}{}_{\mathrm{fin}}$.

Moreover, we consider the following sequence of operations:
\begin{displaymath}
\begin{split}
x_I {}:={} & (s_4,s_5) \overset{3}{\to} (s_3,s_4) \overset{1}{\to} (s_1,s_3) \overset{2}{\to} (s_3,s_2) \overset{4}{\to} (s_4,s_3) \\
&\overset{5}{\to} (s_5,s_4) \overset{6}{\to} (s_6,s_5) \overset{7}{\to} (s_5,s_7) \overset{4}{\to} (s_4,s_5) = x_I \enspace,
\end{split}
\end{displaymath}
where we write $z \overset{i}{\to} z'$ to signify the operation $z \mapsto z' = \varphi(z,s_i)$.
Then a direct calculation shows that the element $w$ of $Y_I$ defined by the product of the elements $w_z^t$ corresponding to the above operations satisfies that $w \cdot \alpha_{s_1} = \alpha_{s_2}$.
Hence the conclusion of Theorem \ref{thm:YfixesWperpIfin} does not hold in this case where the assumption on the $A_{>1}$-freeness of $I$ is not satisfied.

\noindent
\textbf{Koji Nuida}\\
Present address: Research Institute for Secure Systems, National Institute of Advanced Industrial Science and Technology (AIST), AIST Tsukuba Central 2, 1-1-1 Umezono, Tsukuba, Ibaraki 305-8568, Japan\\
E-mail: k.nuida[at]aist.go.jp

\end{document}